\theoremstyle{plain}\newtheorem{theo}{Theorem}[section]
\theoremstyle{plain}\newtheorem{cor}[theo]{Corollary}
\theoremstyle{defn}\newtheorem{rem}[theo]{Remark}
\theoremstyle{plain}
\theoremstyle{plain}\newtheorem{lem}[theo]{Lemma}
\theoremstyle{plain}
\theoremstyle{plain}\newtheorem{prop}[theo]{Proposition}
\theoremstyle{defn}
\theoremstyle{defn}
\theoremstyle{defn}
\newenvironment{pfof}[1]{\vspace{1ex}\noindent{\bf Proof of
#1}\hspace{0.5em}}{\hfill\qed\vspace{1ex}}
\def\R{\mathbb{R}}
\def\N{\mathbb{N}}
\def\Z{\mathbb{Z}}
\def\C{\mathbb{C}}
\def\cB{\mathcal{B}}
\def\cJ{\mathcal{J}}\def\eps{\varepsilon}
\def\cL{\mathcal{L}}
\def\cO{\mathcal{O}}
\DeclareMathOperator{\esup}{ess\ sup}
\begin{document}

\title[Sharp error term in LLT and mixing for Lorentz gases with infinite horizon]{Sharp error term in local limit theorems 
and mixing for Lorentz gases with infinite horizon}

\author{Fran\c{c}oise P\`ene}
\address{Univ Brest, Universit\'e de Brest, LMBA,
UMR CNRS 6205, 
6 avenue Le Gorgeu, 29238 Brest cedex, France}
\email{francoise.pene@univ-brest.fr}

\author{Dalia Terhesiu}
\address{Mathematisch Instituut
University of Leiden
Niels Bohrweg 1, 2333 CA Leiden}
\email{d.e.terhesiu@math.leidenuniv.nl}

\keywords{local limit theorem with speed, rates of mixing, infinite measure, Lorentz gases with infinite horizon}

\begin{abstract}
We obtain sharp error rates in the local limit theorem for the Sinai billiard map (one and two dimensional) with infinite horizon. 
This result allows us to further obtain higher order terms and thus, sharp mixing rates in the speed of mixing of 
dynamically H{\"o}lder observables for the planar and tubular infinite horizon Lorentz gases in the map (discrete time) case. We also obtain an asymptotic estimate for the tail probability of the first return time to the initial cell.
In the process, we study  families of transfer operators for
infinite horizon Sinai billiards perturbed with the free flight function and 
obtain higher order expansions for the associated families of eigenvalues and eigenprojectors.
\end{abstract}
\maketitle

\section{Introduction}

\subsection*{Lorentz gas}
The Lorentz gas has been introduced in \cite{Lorentz} to model the displacement 
of electrons in metals. This model describes the evolution of a point particle moving freely with unit velocity and elastic reflections off pairwise disjoint strictly convex obstacles (with smooth boundary) located $\mathbb Z^d$-periodically (with $d\in\{1,2\}$) in the plane if $d=2$ or on the tube $\mathbb T\times\mathbb R=\mathbb R^2/(\mathbb Z\times\{0\})$.
These obstacles are written $\cO_j+\ell$ with $j=1,...,\mathcal J$ and $\ell\in\mathbb Z^d$ (where $\mathcal J$ a non empty finite set). In this model,
the phase space  consists of positions-unit velocity vectors, which we call configurations.

\begin{figure}[ht]
\centering
\includegraphics[trim = 10mm 
45mm 20mm 10mm, clip, width=8cm]{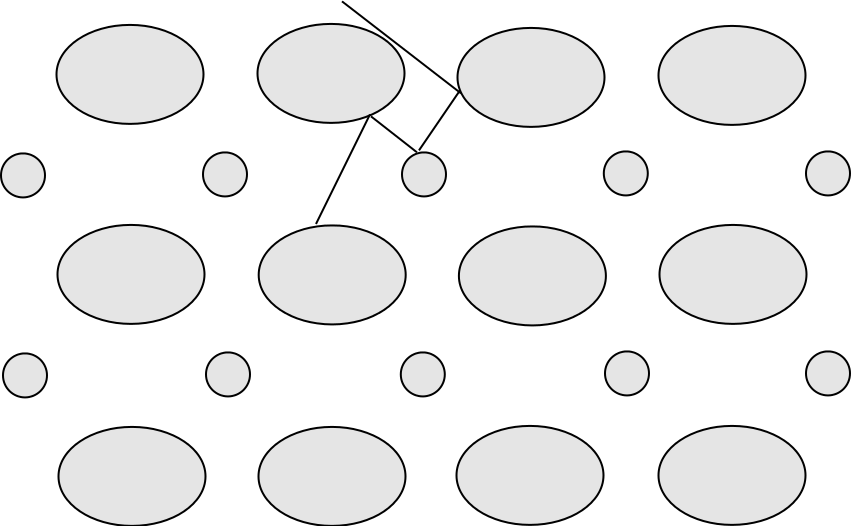}
\caption{A trajectory in a $\mathbb Z^2$-periodic planar domain ($d=2$)}
\label{fig1}
\end{figure}
\begin{figure}[ht]
\centering
\includegraphics[trim = 48mm 89mm 35mm 31mm, clip, width=9cm]{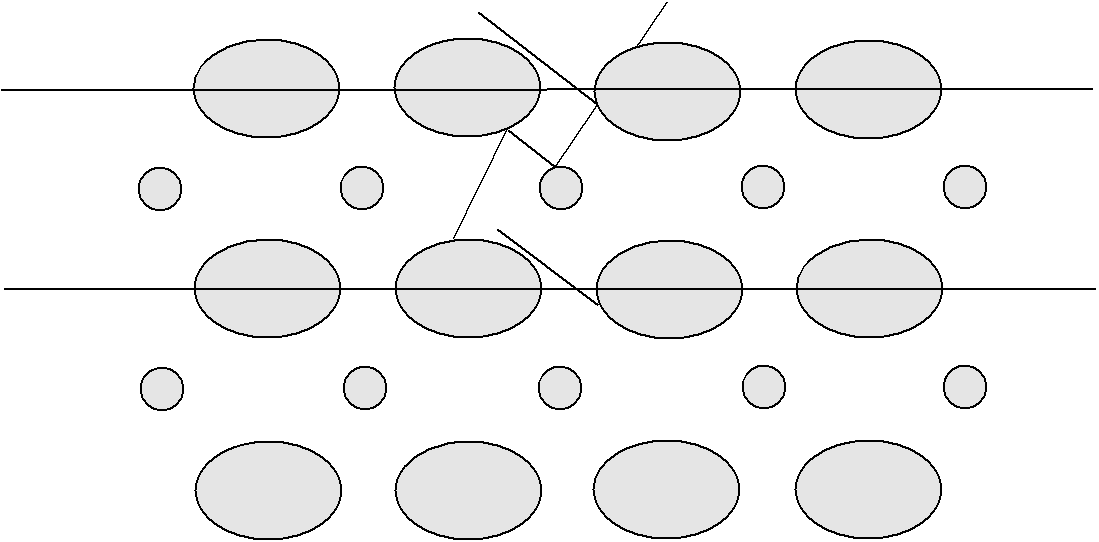}
\caption{A trajectory in a $\mathbb Z$-periodic tubular domain  ($d=1$)}
\label{fig1}
\end{figure}
In this paper, we are interested in discrete time Lorentz gases (the dynamical system corresponding to the collision times) with infinite horizon.
The horizon is said to be infinite if there exists an infinite trajectory intersecting no obstacle and it is said to be finite otherwise. Understanding the stochastic behaviour of the  Lorentz gas in the infinite horizon case is much more challenging than the finite horizon case
and below we recall the main differences along with previous results.
The exposition below focuses on  the $\mathbb Z^2$-periodic case, but we mention that similar statements hold for $\mathbb Z$-periodic tubular model.

The space $M$ of configurations of this dynamical system is the set
of postcollisional  unit vectors based on
$\bigcup_{j\in\cJ,\ \ell\in\mathbb Z^2}\partial \cO_j+\ell$. For any $\ell\in\mathbb Z^2$, we define the $\ell$-th cell $M_\ell$ as the set of configurations with position in $\bigcup_{j\in\mathcal J}\cO_j+\ell$. We write $\kappa_n$ for the label in $\mathbb Z^2$ of the
cell in which the particle is at the $n$-th collision time:
\[
\kappa_n=N\quad\mbox{if the position of the particle is based in }\bigcup_{j\in\mathcal J}\cO_j+N\mbox{ at the }n\mbox{th collision time}\, .
\]
The finiteness of the horizon is equivalent to the uniform boundedness of $\kappa_1$.
Whereas the model we consider is purely deterministic (position and velocity at collisions can be computed explicitly in terms of the initial one), $(\kappa_n)_{n\ge 0}$ behaves asymptotically as a random walk on $\mathbb Z^2$. In the finite horizon case, 
 $(\kappa_n)_{n\ge 0}$ behaves asymptotically as a simple symmetric random walk, while in the infinite horizon we have a  symmetric random walk with displacement of infinite variance.
It is worth noticing that the dynamics of the discrete time Lorentz gas is given by the sequence of couples $(X_n,\kappa_n)_{n\ge 1}$ where $X_n$ with values in $M_0$
is the configuration modulo $\mathbb Z^2$ of the position at the $n$-th collision time. More precisely,
\[
X_n=(q,\vec v)\in M_0\quad\mbox{if the configuration at the $n$-th collision time is }(q+\kappa_n,\vec v)\, .
\]
In this representation $(\kappa_n)_n$ corresponds to the evolution of the Lorentz gas at a macroscopic scale while $(X_n)_n$ corresponds to its evolution at a microscopic scale.
The dynamics of $(X_n)_n$ is refereed to as  Sinai billiard and recall that the ergodicity of this dynamical system has been established in the seminal work by Sinai \cite{Sinai70}.
Several limit laws for  Lorentz process are obtained for the invariant probability measure $\mathbb P=\bar\mu$ on $M_0$ absolutely continuous with respect to Lebesgue and for which $(X_n)_{n\ge 0}$ and $(\kappa_{n+1}-\kappa_n=\kappa(X_n))_{n\ge 0}$ is a stationary process.
Under this invariant probability measure, $\kappa_1$ is not square integrable when the horizon is infinite, whereas it
is (as already mentioned) bounded when the horizon is finite.

Limit properties of discrete time finite horizon Lorentz gases
have been obtained in several recent works, among which we mention~\cite{SV04, pene09IHP, FPBS10, Pene18, PTho18,PTho19a}.
Very recent notable progress for continuous time 
Lorentz gases with finite
horizon has been made for mixing local limit theorem by Dolgopyat and Nandori~\cite{DN1, DN2}, for mixing rates by Dolgopyat, Nandori, P{\`e}ne~\cite{DNP} and for suitable versions of CLT  by P{\`e}ne and Thomine~ \cite{PTho19a}. 
Obtaining the analogue of any the said results in the infinite horizon case is very challenging because $\kappa_1$ has infinite variance. The main difficulty  in carrying out similar arguments  when $\kappa_1$ has infinite variance
comes down to the weak regularity properties of a family $(P_t)_t$ of perturbed operators. 
Whereas in the finite horizon case eigenelements of these operators are $C^\infty$ in $t$, in the infinite horizon case the family of operator is 
just continuous in $t$ as an operator from the Young  Banach space $\cB$ to $L^p$
(see additional explanations in Section \ref{technicalkeyresults}. In this paper we obtain refined expansions going beyond mere continuity estimates and use this to answer unsolved problems in the discrete time model with infinite horizon.
In particular, we obtain: i) higher order (optimal) local limit theorem and mixing; ii) first order expansion of the tail probability of the first return time to the initial cell.
In what follows we provide a simplified statement of our main results, recalling and comparing with previous results. 

\subsection*{Previous results: CLT and Local Limit Theorem (LLT) for $\kappa_n$}
When the horizon is finite, it has been proved in \cite{BunimovichSinai:1981,BCS91,Young98} that $(\kappa_n)_n$ behaves asymptotically as a Gaussian random variable with the standard normalization in $\sqrt{n}$, that is
\[
\forall A_i<B_i,\quad \lim_{n\rightarrow +\infty}
\mathbb P
\left(\frac{\kappa_n}{\sqrt{n}}\in [A_1,B_1]\times[A_2,B_2]\right)=\mathbb P(W\in  [A_1,B_1]\times[A_2,B_2])\, ,
\]
where $W$ is a Gaussian random variable.
When the horizon is infinite, it has been conjectured in \cite{Bleher} and proved rigorously  \cite{SV07} that the Lorentz gas is 
superdiffusive and more precisely that $(\kappa_n)_n$ satisfies the CLT
with nonstandard normalization in $\sqrt{n\log n}$, i.e.
\[
\forall A_i<B_i,\quad \lim_{n\rightarrow+\infty}
\mathbb P
\left(\frac{\kappa_n}{\sqrt{n\log n}}\in [A_1,B_1]\times[A_2,B_2]\right)=\mathbb P(W\in  [A_1,B_1]\times[A_2,B_2])\, ,
\]
where $W$  is some Gaussian random variable. 
These two different behaviours can be heuristically explained by the fact that $\kappa_1$ is uniformly bounded when the horizon is finite and is not square integrable when the horizon is infinite, more precisely that
\begin{equation}\label{tailprobakappa}
\mathbb P(|\kappa_1|>N)\approx N^{-2}\, .
\end{equation}
While~\cite{SV07} focuses on the case where only obstacle (modulo $\mathbb Z^2$) is tangent to a same line, we consider the more general case and establish the following formula for the asymptotic variance matrix of $W$
generalizing \cite{SV07}:
\begin{equation}\label{ExprSigma2}
(a_{i,j})_{i,j=1,2}
:=\frac 1{|\partial\bar Q|}\sum_{C\in\mathcal C}\frac {
\mathfrak d_{C}^2}{|w_C|} w_C\otimes w_C
\, ,
\end{equation}
where $w\otimes w$ represents the matrix $\left(\begin{array}{cc}w_1^2&w_1w_2\\w_1w_2&w_2^2\end{array}\right)$ if $w=(w_1,w_2)$ and where $\mathcal C$ is the set of different "corridors" distinct modulo $\mathbb Z^2$ (see picture and begining of Section \ref{freeflight} for details) that can be drawn in $Q$; for each corridor $C\in\mathcal C$, $\mathfrak d_C$ is its width and $w_C$ a vector in $\mathbb Z^2$  in the direction of the corridor with coprime coordinates.

\begin{figure}[ht]
\centering
\includegraphics[trim = 50mm 50mm 10mm 60mm, clip,
 width=6cm]{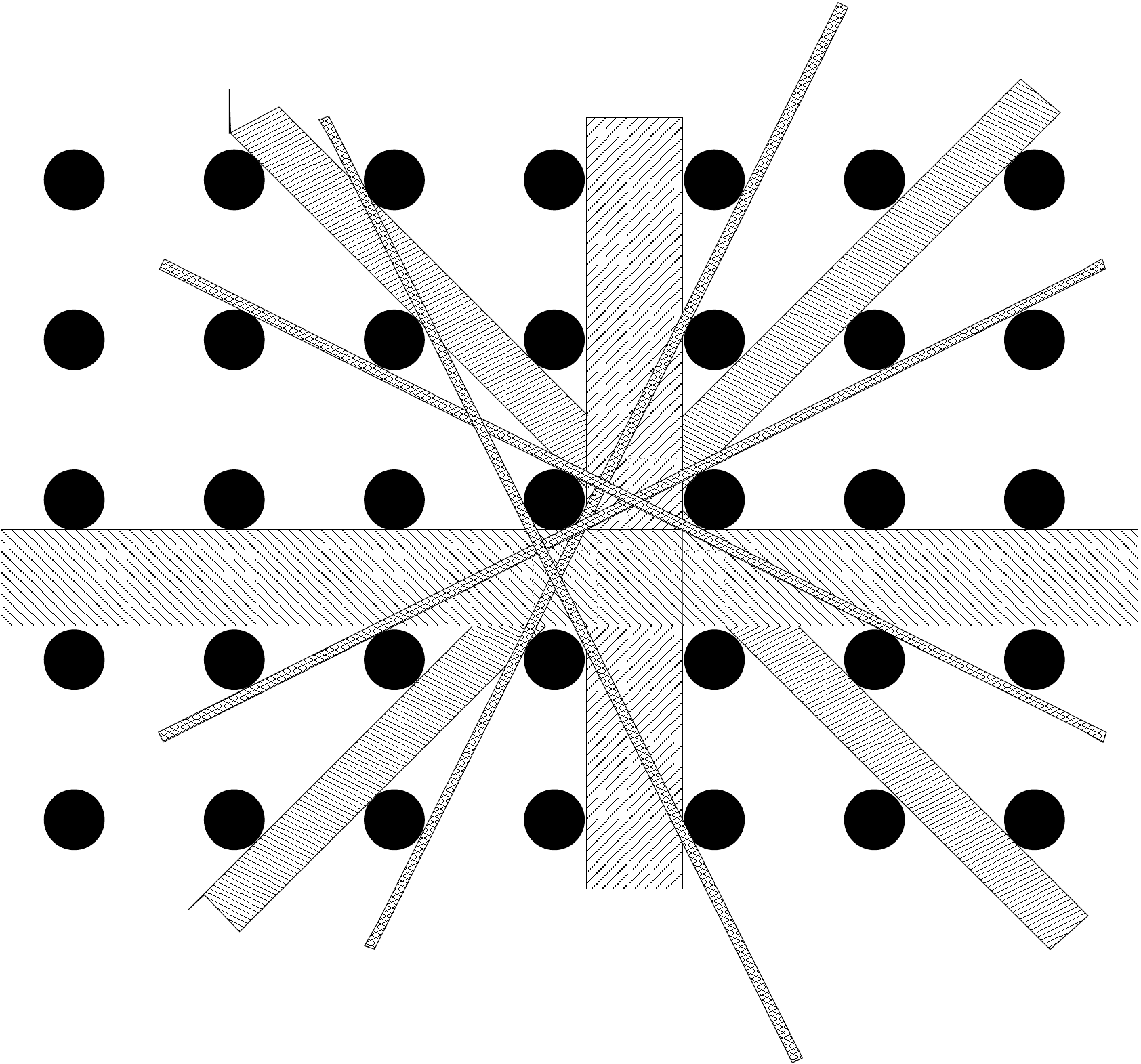}\ \ \ \ 
\includegraphics[trim = 10mm 45mm 30mm 60mm, clip, width=8cm]{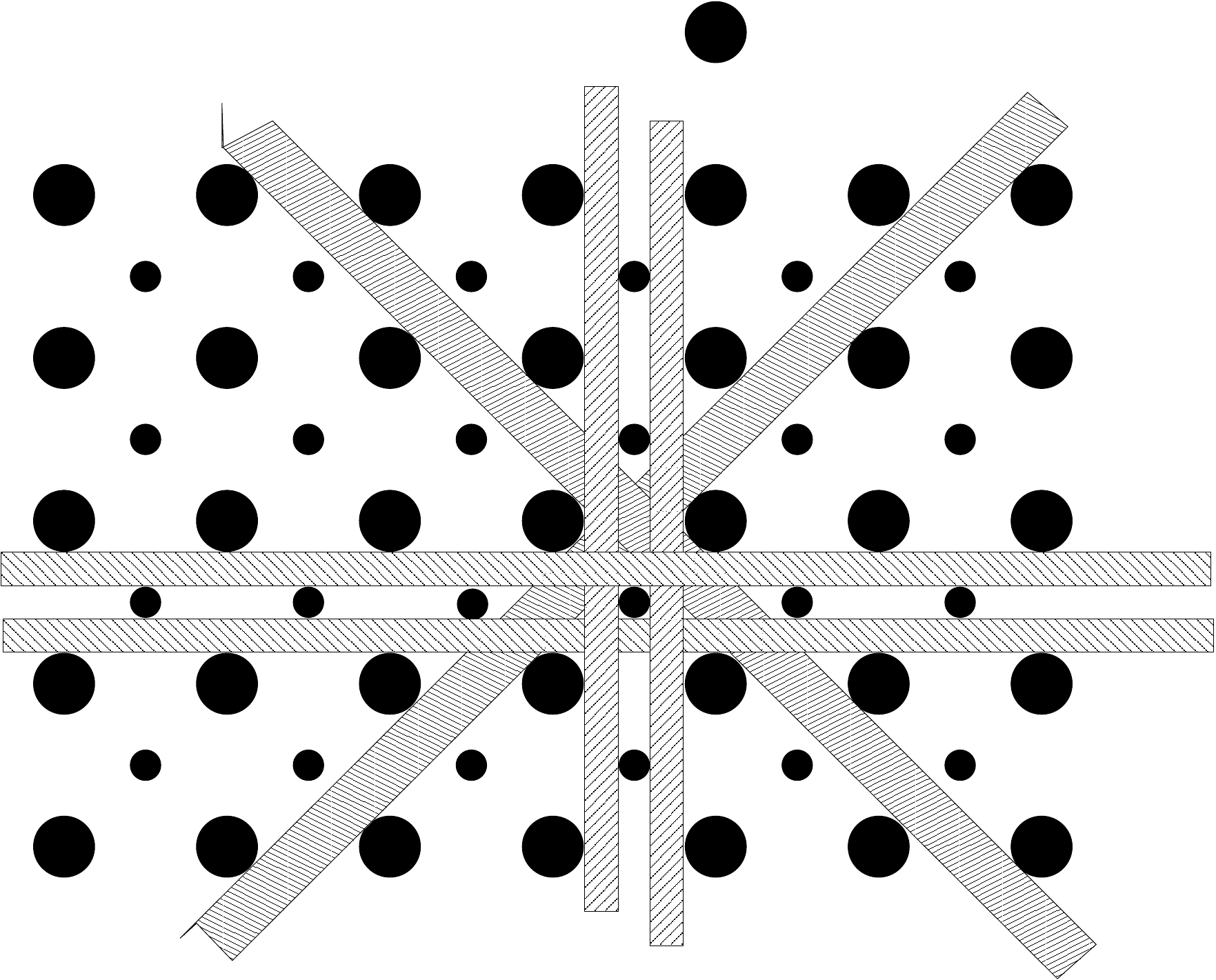}
\caption{Corridors for two different periodic billiard domains}
\label{fig2}
\end{figure}


We recall the  local version of the said CLT, namely LLT gives the asymptotic behaviour of $\mathbb P(\kappa_n=0)$, that is of the probability that the particle returns to the initial cell in $\mathbb Z^2$ at the $n$-th reflection time.
Such LLTs have been obtained  by Sz\'asz and Varj\'u in \cite{SV04}
in the finite horizon case and in \cite{SV07} in the infinite horizon case, stating that
\begin{equation}\label{LLT0}
\mathbb P(\kappa_n=0)\sim \frac{\Phi(0)}{a_n^2}\quad\mbox{as}\ n\rightarrow+\infty\, ,
\end{equation}
with $a_n=\sqrt{n}$ in the finite horizon case and $a_n=\sqrt{n\log n}$ in the
infinite horizon case. Here $\Phi$ is the density function of the corresponding Gaussian random variable $W$. Further, \cite{SV07} uses a version of the local theorem to deduce the recurrence of $(X_n,\kappa_n)_{n\ge 1}$.
We recall that when the horizon is finite, the recurrence comes directly from the CLT thanks to a general argument due to Conze \cite{Conze99} and Schmidt \cite{Schmidt98}.
When the horizon is finite, a sharp error rate in the LLT 
has  been obtained by P{\`e}ne~\cite{pene09IHP}
and further extensions including expansions of any order in the LLT have been shown in \cite{Pene18}. 
In this paper, we establish several extensions of the LLT in the infinite horizon case,  which is much more delicate due to the lack of finite variance. 
We emphasize that previous results on LLT~\cite{SV07} and mixing~\cite{pene09IHP}
for the infinite measure preserving, infinite horizon Lorentz maps reduce to first order terms. 

\subsection*{Tail probability of the first return time to the initial cell (see Theorem~\ref{THMreturntime})}
Let $\tau_0$ be the first return time to the initial cell, that is
\[
\tau_0:=\min\{n\ge 1\, :\, \kappa_n=(0,0)\}\, .
\]
When the horizon is finite, it was proved in \cite{DSV} that $\mathbb P(\tau_0>N)\sim\frac 1{\Phi(0)\log N}$. When the horizon is infinite, we show  that
\begin{equation}\label{returntime0}
\mathbb P(\tau_0>N)\sim\frac 1{\Phi(0)\log\log N}\quad\mbox{as}\quad N\rightarrow +\infty\, .
\end{equation}

\subsection*{Study of long free flights (see Lemma~\ref{lemm-tail})}
An important ingredient of our proofs for higher order LLT and mixing exploits higher order expansion of $\kappa$. We recall that in the case where only one obstacle (modulo $\mathbb Z^2$ is tangent to an infinite line contained in the billiard domain),  \cite[Proposition 6]{SV07} shows that
\[
\mathbb P(\kappa_1=L+Nw)\sim\frac{\mathfrak a}{|Nw|^3}+o(N^{-3})\, .
\]
This form was enough in \cite{SV07} for obtaining the LLT.  In our proofs, we need an error of $\mathcal O(N^{-4})$. 
Our Lemma~\ref{lemm-tail} provides (under an additional regularity assumption) a precise estimate  of the following form (with explicit constants $\mathfrak a$ and $\mathfrak a'$):
\begin{equation}\label{Plongtraj}
\mathbb P(\kappa_1=L+Nw)=\frac{\mathfrak a}{|Nw|^3}+\frac{\mathfrak a'}{|Nw|^4}+o(N^{-4})\, .
\end{equation}

\subsection*{Class of functions} All our results below hold for dynamically H{\"o}lder observables and refer to Section \ref{mainresults} for precise definition
and further assumptions, where required.

\subsection*{Higher order in  Mixing LLT (see Theorem~\ref{THM0} for details)}
We study a stronger version of the LLT, the mixing LLT (MLLT)
which consists in establishing
\begin{equation}\label{MLLT0}
\mathbb E[f(X_0).
\mathbf 1_{\{\kappa_n=N\}}.g(X_n)]\sim \frac{\Phi(0)\mathbb E[f(X_0)]
\mathbb E[g(X_0)]}{n\log n} \quad\mbox{as}\ n\rightarrow +\infty\, ,
\end{equation}
when $\mathbb E[f(X_0)]\mathbb E[g(X_0)]\ne 0$. MLLT is about asymptotic independence of $(X_0,\kappa_n,X_n)$ as $n\rightarrow +\infty$.
When $N=0$, we prove in particular that
\begin{align*}
\mathbb E[f(X_0).
\mathbf 1_{\{\kappa_n=0\}}.g(X_n)]&=\frac{\Phi(0)\mathbb E[f(X_0)]
\mathbb E[g(X_0)]}{n\log(n\log n))}  +O\left( \frac 1{n(\log n)^2}\right)\, ,
\end{align*}
with additional error terms detailed in Theorem~\ref{THM0}.
This result ensures in particular that 
\[
\mathbb E[f(X_0).
\mathbf 1_{\{\kappa_n=0\}}.g(X_n)]=\mathbb E[f(X_0)]
\mathbb E[g(X_0)] \Phi(0)\left(\frac 1{n\log n}-\frac{\log\log n}{n(\log n)^2}\right)  +O\left( \frac 1{n(\log n)^2}\right)
\]
providing a second order term in \eqref{MLLT0}.
When $N\ne 0$ is fixed, we obtain an intermediate term ensuring in particular that
\[
\mathbb E[f(X_0).
\mathbf 1_{\{\kappa_n=\lfloor  N\sqrt{n
\log n
}\rfloor\}}.g(X_n)]=- \frac{ \nabla\Phi(N) \cdot \mathfrak K(f,g)}{(n\log(n\log n))^{\frac {3}2}} +O\left( \frac 1{(n\log n)^{\frac {3}2}\log n}\right)
\]
if $\mathbb E[f(X_0)] \mathbb E[g(X_0)]=0$ with $\mathfrak K(f,g)$ a $\mathbb R^2$-valued bilinear form  linearly independent of $\mathbb E[f(X_0)]
\mathbb E[g(X_0)]$.

\subsection*{Mixing of general observables in the infinite measure case (Theorem~\ref{THM1})} 
The local limit theorem for $\kappa_n$ is strongly related to the notion of mixing of dynamical systems preserving an infinite measure, that is the study of the behaviour of quantities of the form 
\[
\int_M f(X_0,\kappa_0).g(X_n,\kappa_n)\, d\mu\, ,
\]
where $\mu$ is the measure absolutely continuous with respect to the Lebesgue measure which is invariant under $(X_n,\kappa_n)\mapsto (X_{n+1},\kappa_{n+1})$.
A mixing result without error term has been established in~\cite[Theorems 1.1]{Pene18}. In the present  Theorem \ref{THM1} we improve this result to
\begin{align}\label{EQTHM1}
&\int_M f(X_0,\kappa_0).g(X_n,\kappa_n)\, d\mu=O\left(n^{-\frac 32}(\log n)^{-\frac 52}\right)\\
\nonumber&+ \left(\frac {\Phi(0)}{n\log(n\log n)}+O((\log n)^{-1})\right)\int_Mf(X_0,\kappa_0)\, d\mu\, \int_Mg(X_0,\kappa_0)\, d\mu\, .
\end{align}
In  the \emph{finite} horizon case, an expansion of any order of the form $\int_M f(X_0,\kappa_0).g(X_n,\kappa_n)\, d\mu=\sum_{m=1}^{K}\frac{c_m(f,g)}{n^{m}}+o(n^{-K})$ with $c_0(f,g)=\Phi(0)\int_Mf(X_0)\,d\mu\int_Mg(X_0)\,d\mu$ with $(c_m(f,g))_m$ linearly independent
has been established in \cite{Pene18}.  Such a result implies, in particular, that for any positive integer $m$, there exist couples of observables $(f,g)$ such that $\int_M f(X_0,\kappa_0).g(X_n,\kappa_n)\, d\mu\approx n^{-m}$.
In the  infinite horizon case, \eqref{EQTHM1} does not imply directly the  optimal result for zero mean observables and we address this in a result of independent interest.

\subsection*{Mixing of zero integral observables in the infinite measure case (Theorem~\ref{THEOCob})}
In the infinite horizon case,  we  obtain different 
rates of mixing
for null integral functions $f,g$.
In particular, we show that
\begin{equation}\label{decor1int0}
\int_M f(X_0,\kappa_0).g(X_n,\kappa_n)\, d\mu\approx (n\log (n\log n))^{-1}n^{-m}\, ,
\end{equation}
when $g$ and $f$ are coboundaries of respective orders $k,\ell$ with $k+\ell=m$
(a coboundary of order $m$ is a function $g$ of the form $g(X_n,\kappa_n)=g_0(X_n,\kappa_n)-g_0(X_{n-1},\kappa_{n-1})$, where 
$g_0$ is a coboundary of order $m-1$, considering here that a function with non null integral is a coboundary of order 0)
and that
\begin{equation}\label{decor2int0}
\int_M f(X_0,\kappa_0).g(X_n,\kappa_n)\, d\mu\approx (n\log (n\log n)) ^{-2}\, 
\end{equation}
when $f=\phi.(\mathbf 1_{M_N}+\mathbf 1_{M_{-N}}-2M_0)$ with $\phi$ and $g$
having non null integral.
\subsection*{Method of proof and main challenges}
We heavily exploit that the discrete infinite horizon Lorentz gas is a $\mathbb Z^d$ extension of the dynamical system
$X_n\mapsto X_{n+1}$, that is of the form $(X_n,\kappa_n)\mapsto (X_{n+1},\kappa_{n+1}=\kappa_n+\kappa(X_n))$. We refer to Section \ref{mainresults} for further details.
We recall that the LLT (without error term) established by Sz\'asz and Varj\'u in~\cite[Theorem 13]{SV07} 
uses the abstract results~\cite[Theorem 2]{BalintGouezel06} of B{\'a}lint and Gou{\"e}zel, which establishes the non standard CLT for  observables of  infinite variance (so, not $L^2$) observables acting on exponential Young towers.
The method in~\cite{BalintGouezel06} was developed to establish the non standard CLT for the stadium billiard, which, among other things, was possible  due to the work of Young~\cite{Young98} and Chernov~\cite{Chernov99}.
A classical tool for establishing LLTs for chaotic dynamical systems is the perturbed transfer operator method.
As clarified in Section \ref{technicalkeyresults}, a serious challenge for obtaining error terms in the LLT in~\cite[Theorem 13]{SV07} is that 
we need 'sufficiently high'  expansions (not just continuity) for the  families of eigenvalues and eigenprojectors  associated with the transfer operator perturbed with non $L^2$ functions. 
To give a first insight into this difficulty we point out that given the Young Banach space $\cB$ (see Section~\ref{technicalkeyresults} for definition),
the family of operators is continuous as a family of elements of $\cL(\cB\rightarrow L^p)$ for some $p>1$ (but not as elements of $\cL(\cB)$). 
 Propositions~\ref{PROP1} and~\ref{prop-lambda}  provide the expansions  for the  families of eigenvalues and eigenprojectors we use in the proofs of our main results, already mentioned. 
The proofs of  Propositions~\ref{PROP1} and~\ref{prop-lambda} build on the framework put forward in~\cite{BalintGouezel06}
using several geometrical estimates established in~\cite{SV07}. Under assumptions specific to Young towers for Sinai billiards with infinite horizon,  Propositions~\ref{prop-lambda} and ~\ref{PROP1} can be viewed as refined version of the main technical results in~\cite{BalintGouezel06}.
For a summary of the new ingredients used in the proofs of these propositions we refer to the text after the statement of Proposition~\ref{prop-expproj} in  Section \ref{sec:Pi}.

Let us conclude this introduction with a few remarks on the main examples of infinite measure preserving systems of physical interest. 
We have already recalled infinite measure preserving periodic Lorentz gases. As already mentioned, LLTs for Sinai billiards  can be translated into first order mixing for periodic Lorentz gases.
A different type of systems of physical interest are intermittent maps, preserving an infinite measure.
To fix notation, we recall a well known interval map, namely the Liverani Saussol Vaienti map \cite{LSV}, $T:[0,1]\to [0,1]$, $T(x)=x(1+2^\alpha x^\alpha)$ if $x\le 1/2$ and $T(x)=2x-1$ if $x>1/2$. Such maps can be viewed as one sided Markov renewal chains with heavy dependencies.
We only consider $\alpha\ge 1$, as in this case $T$ is preserving an infinite measure.
 First order mixing for such maps was obtained by  Gou\"ezel~\cite{Gouezel11} and by Melbourne and Terhesiu~\cite{MT12}. 
In these works, the $1/\alpha$-stable LLT (for the first return map of intermittent maps, a much more simple dynamical setting than that of Sinai billiards)  is a minor part of the mechanism; in fact, for $\alpha<2$ this type of LLT can be bypassed (see~\cite{MT12}).
In short, the mechanisms for obtaining mixing are  highly non trivial generalizations of : 
\begin{itemize}
\item[i)] the procedure of obtaining the  asymptotic  of renewal sequences for simple symmetric random walks (in the sense that the LLT is the only required ingredient),  in the case of periodic Lorentz gases;
\item[ii)] proofs of strong renewal theorems (for renewal sequences with infinite mean) for one sided Markov renewal chains, in the case of  intermittent maps.
\end{itemize}
Higher order mixing  for (not necessarily Markov) infinite measure preserving intermittent interval maps  have been obtained first in~\cite{MT12} and refined in~\cite{Terhesiu16}; such results have been 
generalized to suspension flows in~\cite{MelTer17, BMT18}. 
We do not know whether results similar to the results on mixing rates for mean zero 
functions and coboundaries in the setup of Lorentz maps (in~\cite{Pene18} and also
in Theorem~\ref{THEOCob} therein) hold for infinite measure preserving intermittent interval maps; the previous results~\cite{MT12, Terhesiu16, MelTer17, BMT18} for
 zero integral
 observables are confined to big $O$ error terms. 
\subsection*{Outline of the paper
}
In Section \ref{mainresults}, we introduce
the precise version of the $\mathbb Z^d$-periodic billiard model with infinite horizon we consider and state our main results  Theorems~\ref{THMreturntime}~\ref{THM0} and~\ref{THM1}.
and~\ref{THEOCob}. In section~\ref{technicalkeyresults}, we present our key technical results Propositions~\ref{PROP1},~\ref{prop-lambda}.
In Section \ref{freeflight}, we obtain an expansion  for the probability of long free flights, which is crucial for Proposition~\ref{prop-lambda}.
In Section \ref{sec:Pi}, we prove our first key  result Proposition~\ref{PROP1}, stating
an expansion of the dominating eigenprojector.
In Section \ref{sec:lambda}, we prove our second key result Proposition~\ref{prop-lambda}, which gives an expansion of the eigenvalue using results contained in the two previous sections.
In Section \ref{LLT}, we state an expansion in the LLT
 in a general context and use it to prove our main result
as well as a general decorrelation result for some $\mathbb Z^d$-extensions.
Some further  technical estimates, as well as the proof of Theorem~\ref{THMreturntime}, are included in Appendix.

\section{Model and main results}\label{mainresults}
We start by presenting the two dimensional case, that is when $d=2$.
We consider a planar billiard domain $Q$ given by
$
Q:=\mathbb R^2\setminus\bigcup_{j \in\mathcal J,\, \ell\in\mathbb Z^2} \mathcal O_{j,\ell}$, 
with $\mathcal J$ a non empty finite set and with $\mathcal O_{j,\ell}:=\mathcal O_{j}+\ell$, where the $\mathcal O_j$ are open convex set with boundary $C^3$ and with nonzero curvature, such that $\mathcal O_{j,\ell}$ have pairwise disjoint closures.
We assume that the billiard has infinite horizon, i.e. that $Q$ contains at least one line.
When $d=1$, we replace $Q$ and the $\cO_{j,\ell}$ by their quotients modulo $\mathbb Z\times\{0\}$, that is $Q$ is a subset of the tube $\mathbb T\times\mathbb R$.


\begin{figure}[ht]
\centering
\includegraphics[trim = 10mm 80mm 20mm 60mm, clip, width=12cm]{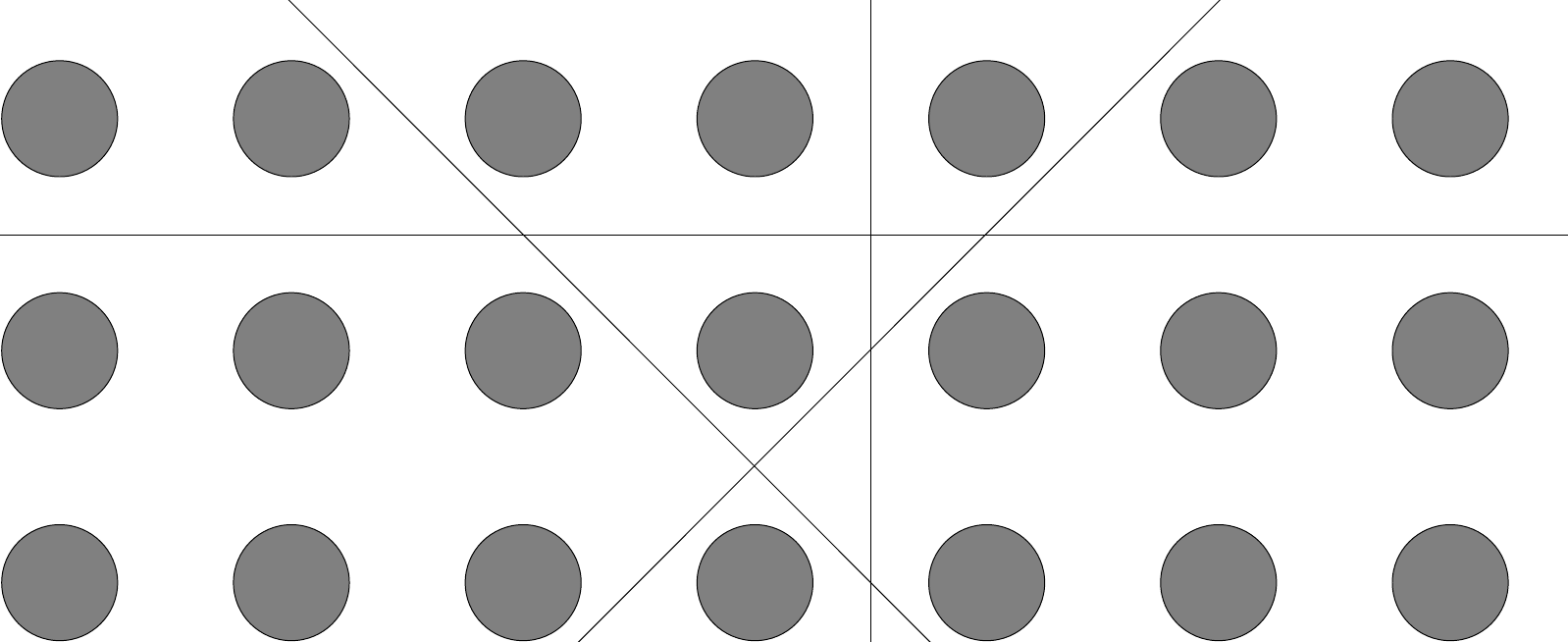}
\caption{A  periodic billiard domain with 4 infinite horizon directions}
\label{fig1}
\end{figure}

We denote by $(M,T,\mu)$ the original billiard dynamical system map corresponding to collision times. The configuration space $M$ is the set of couples of
position and velocity $(q,\vec v)$ with $q\in \partial Q$ and $\vec v$ 
a unit reflected vector, i.e. a unit vector $\vec v$ oriented inside $Q$.
The billiard map $T$ maps a configuration $(q,\vec v)$ corresponding to a collision time to the configuration
corresponding to the next collision time. The measure $\mu$ is the measure on $M$ with density proportional to $\cos\varphi$, where $\varphi$ is the angle of $\vec v$ with the normal vector to $\partial Q$ directed inside $Q$, normalized so that
$\mu(\{(q,\vec v)\in M\, :\, q\in\bigcup_{j\in\mathcal J} \partial \mathcal O_{j}\})=1$.
The infinite measure preserving dynamical system $(M,T,\mu)$ is canonically isomorphic to the $\mathbb Z^d$-extension of $(\bar M,\bar T,\bar\mu)$ 
by $\kappa:\bar M\rightarrow\mathbb Z^d$,
where $(\bar M,\bar T,\bar\mu)$ is the probability preserving billiard dynamical system in the billiard domain in 
$\bar Q
=Q/\mathbb Z^2\subset \mathbb T^2$ if $d=2$ (and $\bar Q
=Q/(\{0\}\times\mathbb Z)\subset \mathbb T^2$ if $d=1$)
and $\bar\mu$ the probability measure with density proportional to $\cos\varphi$.
Let us give the formula of assymptotic variance $\Sigma^2$.
We take $\Sigma^2=(a_{i,j})_{i,j=1,...,d}$ with $(a_{i,j})_{i,j=1,2}$ given in \eqref{ExprSigma2}.
Note that \eqref{ExprSigma2} coincide with the formula of \cite[Theorem 20]{SV07} in the case of a single obstacle since in this case a corridor corresponds to four points $x\in R_0$ s.t. $Tx=x$ (two positions, one on each side of the corridor, and two directions $\pm\frac{w_C}{|w_C|}$).
{\bf We assume $\Sigma^2$ invertible}, i.e. the interior of $Q$ contains at least $d$ unbounded lines not parallel to each other (one may observe that when $d=1$ the invertibility of $\Sigma^2=a_{1,1}$ just means that $a_{1,1}\ne 0$). 
For any $x\in\mathbb R^d$, we write $\Phi_{\Sigma^2}(x):=\frac{e^{-\frac 12 \Sigma^{-2}x\cdot x}}{\sqrt{(2\pi)^d\det \Sigma^2}}$, $\Phi_{\Sigma^2}$
is the density function of a Gaussian distribution with expectation 0 and variance
matrix $\Sigma^2$. 
We set $\tau_0:=\min\{n\ge 1\, :\, \kappa_n=(0,0)\}$ with $\kappa_n:=\sum_{k=0}^{n-1}\kappa\circ\bar T^k$. 
\begin{theo}[Tail probability of the first return time in the initial cell]\label{THMreturntime}
\begin{align}
\label{returntime}
\mbox{If }d=2 :&\quad\bar\mu(\tau_0>N)\sim\frac {2\pi \sqrt{\det \Sigma^2}}{\log\log N}\quad\mbox{as}\quad N\rightarrow +\infty\\
\label{returntimedim1}
\mbox{If }d=1 :&\quad
\bar\mu\left(\tau_0>N\right)\sim \sqrt{\frac {2 a_{1,1}\log N}{\pi N}}\quad\mbox{as}\quad N\rightarrow +\infty\, .
\end{align}
\end{theo}
Our other main theorems will require the following additional assumption (ensuring \eqref{Plongtraj}):
\begin{equation}\label{H0}
\mbox{$\partial Q$ is $C^4$ at points $q\in\partial Q$ such that the tangent line to $\partial Q$ at $q$ is contained in $Q$.}
\end{equation}
Let us introduce the class of smooth functions we consider. Let  $R_0\subset M$ be the set of reflected vectors that are tangent to $\partial Q$. The billiard map $T$
defines a $C^1$-diffeomorphism from $M\setminus(R_0\cup T^{-1}R_0)$ onto  $M\setminus(R_0\cup T R_0)$.
For any integers $k\le k'$, we set $\xi_k^{k'}$ for the partition
of $M\setminus \bigcup_{j=k}^{k'} T^{-j}R_0$ in connected components and $\xi_k^\infty:=\bigvee _{j\ge k}\xi_k^j$.
For any $\phi:M\rightarrow\mathbb R$ and any $\eta\in(0,1)$, we set
\begin{equation}\label{DefiHolder}
L_{\phi,\eta}:=\sup_{k\ge 0}\sup_{A\in \xi_{-k}^k}\sup_{x,y\in A}\frac{|\phi(x)-\phi(y)|}{\eta^k}
\quad\mbox{and}\quad
\Vert\phi\Vert_{(\eta)}:=\Vert\phi\Vert_\infty+L_{\phi,\eta}\, .
\end{equation}
For $\phi: \bar M\rightarrow \mathbb R$, we define $R_0$, $\xi_{k}^j$, $L_{\phi,\eta}$, $\Vert\phi\Vert_{(\eta)}$ in the same way with $(\bar M,\bar T)$ instead of $(M,T)$.
\begin{theo}[Mixing local limit theorem]\label{THM0}
Assume
 \eqref{H0}.
Let $\phi,\psi: \bar M\rightarrow \mathbb R$ be  two measurable functions such that $\Vert \phi\Vert_{(\eta)}+\Vert \psi\Vert_{(\eta)}<\infty$,
then, uniformly in $N\in\mathbb Z^d$,
\begin{align}\label{LLTfinal}
&\int_{\bar M} \phi 1_{\{\kappa_n=N\}}\psi\circ \bar T^n\, d\bar\mu
=\frac{\mathbb E_{\bar\mu}[\phi]\mathbb E_{\bar\mu}[\psi]}{(n\log( n\log n))^{\frac d2}}\left(\Phi_{\Sigma^2}\left(\frac N{\sqrt{n\log (n\log n)}}\right)
+O((\log n)^{-1})\right)\\
\nonumber& - \nabla\Phi_{\Sigma^2}\left(\frac N{\sqrt{n\log (n\log n)}}\right)
\cdot\frac{\mathfrak K(\phi,\psi)}{(n\log (n\log n))^{
\frac{d+1}2}}
 +O\left(\frac{
1}{(n\log n)^{\frac {d+1}2}\log n}\right)\, .
\end{align}
with $\mathfrak K(\phi,\psi):=\mathbb E_{\bar\mu}[\psi]\sum_{j\ge 0}
\mathbb E_{\bar\mu}[\kappa\circ \bar T^{
j}\phi]+\mathbb E_{\bar\mu}[\phi]\sum_{j\le -1}
\mathbb E_{\bar\mu}[\kappa\circ \bar T^{
j}\psi]$, these sums being absolutely convergent, and
with $\nabla$ the gradient operator.
\end{theo}
Observe that the two first terms of \eqref{LLTfinal} both contain expansions since the first term can be rewritten $
 \frac {\mathbb E_{\bar\mu}[\phi]\mathbb E_{\bar\mu}[\psi]}{(n\log n)^{\frac d2}}\left(
\Phi_{\Sigma^2}\left(\frac N{\sqrt{n\log n}}\right)
 \left(1+
\left(\frac{\Sigma^{-2}N\cdot N}{n\log n}-d\right)\frac{\log\log n}{2\log n}\right)
 + O\left(\frac 1{\log n}\right)
     \right)$ and the second one
$-\nabla\Phi_{\Sigma^2}\left(\frac N{\sqrt{n\log n}}\right)
\cdot\frac {\mathfrak K(\phi,\psi)}{(n\log n)^{\frac{
d+1}2}}
\left(1-\frac12\left(d+2-\frac{\Sigma^{-2}N\cdot N}{n\log n}\right)
 \frac{\log\log n}{\log n}\right)$. 
The assumption that the interior of $Q$ contains at least $d$ non parallel infinite lines ensures that $\det \Sigma^2\ne 0$.
For any $N\in\mathbb Z^d$, we write $M_N$ for the set of $(q,v)\in M$ such that
$q\in \bigcup_{j\in\mathcal J}\partial \mathcal O_{j,N}$.
\begin{rem}
Observe that the bilinear forms $\mathbb E_\mu[\phi]\mathbb E_{\mu}[\psi]$
and $\mathfrak K(\phi,\psi)$ are linearly independent.
Indeed, under the assumptions of Theorem~\ref{THM0},  then $\mathfrak K(\phi-\phi\circ \bar T,\psi)=-\mathbb E_\mu[\psi]\mathbb E_\mu[\kappa.\phi\circ \bar T]$
which is non zero in general. 
\end{rem}
\begin{theo}
[Decorrelation in infinite measure]
\label{THM1}
Assume \eqref{H0}.
Let $\eta,\gamma\in(0,1)$.
Let $\phi,\psi:M\rightarrow\mathbb R$ be two measurable observables such that
$
\sum_{N\in\mathbb Z^2}(1+
|N|^\gamma
)\left(\Vert \phi 1_{M_N}\Vert_{\infty}+\Vert \psi 1_{M_N}\Vert_{\infty}\right)<\infty$
and
$\sum_{N\in\mathbb Z^2}L_{\phi 1_{M_N},\eta}<\infty$.
Then
\[
\int_M \phi. \psi\circ T^n\, d\mu=
\frac {
\Phi_{\Sigma^2}(0)}{(n\log (n\log n))^{\frac d2}}
\int_M\phi\, d\mu\, \int_M\psi\, d\mu+O\left(\frac 1{(n\log n)^{\frac d2}\log n}\right)\, .
\]
If moreover
$
\sum_{N\in\mathbb Z^2}(1+
|N|^{1+\gamma}
)\left(\Vert \phi 1_{M_N}\Vert_{\infty}+\Vert \psi 1_{M_N}\Vert_{\infty}\right)<\infty$, then
\begin{align*}
\int_M \phi. \psi\circ T^n\, d\mu&=
\frac {
\Phi_{\Sigma^2}(0)}{(n\log (n\log n))^{\frac d2}}
\left(1
+O\left(\frac 1{\log n}\right)\right)\int_M\phi\, d\mu\, \int_M\psi\, d\mu\\
&+O\left(\frac 1{(n\log n)^{\frac {d+1}2}
\log n
}\right)\, .
\end{align*}
\end{theo}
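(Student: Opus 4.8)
The plan is to reduce the correlation integral over the infinite-measure space $M$ to a double series of local-limit-type integrals over the compact factor $\bar M$, to apply Theorem~\ref{THM0} to each term, and then to resum, using Gaussian concentration together with the moment and regularity hypotheses to absorb all remainders.

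First I would decompose $\phi=\sum_{N\in\Z^2}\phi 1_{M_N}$ and $\psi=\sum_{N'\in\Z^2}\psi 1_{M_{N'}}$, the series converging absolutely in $L^1(\mu)$ since $\sum_N\Vert\phi 1_{M_N}\Vert_\infty<\infty$ (and likewise for $\psi$), so that in particular $\phi,\psi\in L^1(\mu)$. Recalling that $(M,T,\mu)$ is canonically the $\Z^2$-extension of $(\bar M,\bar T,\bar\mu)$ by $\kappa$ --- so that $M\cong\bar M\times\Z^2$, $T(\bar x,\ell)=(\bar T\bar x,\ell+\kappa(\bar x))$, and $\mu$ restricted to $M_N$ is, via the projection $M\to\bar M$, a copy of $\bar\mu$ --- let $\bar\phi_N,\bar\psi_{N'}\colon\bar M\to\C$ be the functions induced on the fibres by $\phi 1_{M_N}$, $\psi 1_{M_{N'}}$. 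Then $\mathbb E_{\bar\mu}[\bar\phi_N]=\int_{M_N}\phi\,d\mu$, hence $\sum_N\mathbb E_{\bar\mu}[\bar\phi_N]=\int_M\phi\,d\mu$, while $\Vert\bar\phi_N\Vert_\infty=\Vert\phi 1_{M_N}\Vert_\infty$ and, by $\Z^2$-periodicity of the singularity set, $L_{\bar\phi_N,\eta}=L_{\phi 1_{M_N},\eta}$ (similarly for $\psi$). Since $T^n(\bar x,N)=(\bar T^n\bar x,\,N+\kappa_n(\bar x))$ with $\kappa_n=\sum_{k=0}^{n-1}\kappa\circ\bar T^k$, Fubini (the double series being absolutely convergent, as $\sum_{N,N'}\Vert\bar\phi_N\Vert_\infty\Vert\bar\psi_{N'}\Vert_\infty<\infty$) yields
\[
\int_M\phi\cdot\psi\circ T^n\,d\mu=\sum_{N,N'\in\Z^2}\int_{\bar M}\bar\phi_N\, 1_{\{\kappa_n=N'-N\}}\,\bar\psi_{N'}\circ\bar T^n\,d\bar\mu.
\]

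I would then apply Theorem~\ref{THM0} to each summand, with $(\bar\phi_N,\bar\psi_{N'})$ in place of $(\bar\phi,\bar\psi)$ and $z:=N'-N$ in place of $N$, in the version in which the second observable need only lie in $L^\infty$ and every remainder --- as well as the coefficient $\mathfrak K(\bar\phi_N,\bar\psi_{N'})$ --- is $O(\Vert\bar\phi_N\Vert_{(\eta)}\,\Vert\bar\psi_{N'}\Vert_\infty)$ times the displayed rates, uniformly in $z$; this variant (already used in~\cite{Pene18}) follows from Theorem~\ref{THM0} by approximating $\bar\psi_{N'}$ by its conditional expectation on a finite dynamical partition. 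Extracting the value at $z=0$ of the Gaussian bracket, which is $1+\tfrac d2\tfrac{\log\log n}{\log n}+O(1/\log n)$, and using $\sum_N\mathbb E_{\bar\mu}[\bar\phi_N]=\int_M\phi\,d\mu$, the main part of the double sum equals $\frac{1+\frac d2\frac{\log\log n}{\log n}+O(1/\log n)}{\sqrt{(2\pi)^d\det\Sigma^2}\,(n\log n)^{d/2}}\int_M\phi\,d\mu\int_M\psi\,d\mu$, and three remainder sums have to be absorbed: the sum of the local-limit remainders, which is $O((n\log n)^{-(d+1)/2}/\log n)$ because $(\sum_N\Vert\bar\phi_N\Vert_{(\eta)})(\sum_{N'}\Vert\bar\psi_{N'}\Vert_\infty)<\infty$; the sum of the $\mathfrak K$-terms, which using $e^{-\frac12\Sigma^{-2}z\cdot z/a_n^2}|z|/\sqrt{n\log n}\le C$ is $O((n\log n)^{-(2d+1)/2})$; and the sum of the non-constant part $F(z)$ of the Gaussian bracket (with $F(0)=0$), for which Taylor-expanding the exponential yields $|F(z)|\le C\min(\Sigma^{-2}z\cdot z/a_n^2,\,1)$.

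The heart of the matter is this last sum, and it is where the infinite horizon intervenes: $\kappa\notin L^2$, so $z=N'-N$ carries no finite second moment under the hypotheses and one cannot Taylor-expand the Gaussian to second order. Instead I would use, for any $s\in(0,1]$, $\min(u,1)\le u^{s}$, whence $|F(z)|\le C|z|^{2s}(n\log n)^{-s}$, and take $2s=\gamma$ in the first statement and $2s=1+\gamma$ in the second (admissible since then $s\le1$); combined with the sub-additivity $|N'-N|^{2s}\le C(|N|^{2s}+|N'|^{2s})$, the bound $|\mathbb E_{\bar\mu}[\bar\phi_N]|\le\Vert\bar\phi_N\Vert_\infty$ and the relevant moment hypothesis, this gives $\sum_{N,N'}|F(z)|\,|\mathbb E_{\bar\mu}[\bar\phi_N]|\,|\mathbb E_{\bar\mu}[\bar\psi_{N'}]|=O((n\log n)^{-s})$. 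Since $(n\log n)^{-\gamma/2}=O(1/\log n)$ for every $\gamma>0$, all three remainder sums are then $O((n\log n)^{-d/2}/\log n)$ in the first case and $O((n\log n)^{-(d+1)/2}/\log n)$ in the second; absorbing, in the first case only, the $O(1/\log n)$ inside the leading coefficient into the error produces the two stated identities. Thus the main obstacle is precisely the passage to sub-quadratic moments, handled by the interpolation $\min(u,1)\le u^s$ together with Gaussian decay; a secondary technical point, already noted, is that since the hypotheses impose no regularity on $\psi$ one must invoke Theorem~\ref{THM0} in its form where only the first argument is dynamically H\"older, with bilinear and uniform error terms.
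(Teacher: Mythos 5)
Your overall strategy is the paper's: decompose $\int_M\phi\,\psi\circ T^n\,d\mu$ into the double sum $\sum_{N_1,N_2}\int_{\bar M}\bar\phi_{N_1}1_{\{\kappa_n=N_2-N_1\}}\bar\psi_{N_2}\circ\bar T^n\,d\bar\mu$, apply a local limit theorem with errors uniform in $z=N_2-N_1$ to each summand, and resum using that the Gaussian kernels are H\"older at $0$ with exponent $\gamma$ (resp.\ $1+\gamma$) matched to the moment hypotheses; this is exactly how the paper deduces Theorem~\ref{THM1} from Theorem~\ref{LLT2}. The genuine gap is the input you feed into this scheme. You invoke ``Theorem~\ref{THM0} in the version in which the second observable need only lie in $L^\infty$ and every remainder is $O(\Vert\bar\phi_N\Vert_{(\eta)}\Vert\bar\psi_{N'}\Vert_\infty)$ times the displayed rates, uniformly in $z$'', and justify this variant by approximating $\bar\psi_{N'}$ by conditional expectations on finite dynamical partitions. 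Theorem~\ref{THM0} is stated for a fixed pair of dynamically H\"older observables and its error terms carry no stated dependence on $\Vert\phi\Vert_{(\eta)},\Vert\psi\Vert_{(\eta)}$; moreover the $(\eta)$-norm of the approximant $\psi^{(k)}=\mathbb E_{\bar\mu}[\psi\,|\,\xi_{-k}^{k}]$ grows like $\eta^{-k}$, so one cannot simply plug the approximants into Theorem~\ref{THM0} and let the depth go to infinity. In the paper this uniform, bilinear-error, one-sided-regularity LLT is precisely Theorem~\ref{LLT20}, and it is not obtained from Theorem~\ref{THM0}: it is proved by rerunning the spectral argument of Proposition~\ref{LLT1} with a time shift $k=k(n)\to\infty$ inside the Fourier/spectral computation (controlling $\sup_t\Vert P_t^kP^k\tilde\phi^{(k)}\Vert_{\mathcal B_0}$, the coefficients $\mathfrak C_{k}$, the extra term $J_3$, and using time reversal for the backward half of $\mathfrak K$). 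That is the bulk of the proof of Theorem~\ref{THM1}, and your proposal leaves it as an unproved black box.

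A second, repairable, defect concerns the sum of the $\mathfrak K$-terms. Your bound $O((n\log n)^{-(2d+1)/2})$ takes the normalization displayed in Theorem~\ref{THM0} at face value; comparing with \eqref{LLTfinal0}, that correction is in fact of size $a_n^{-(d+1)}\bigl|I_1\bigl((N_2-N_1)/a_n\bigr)\bigr|$, so your crude estimate $|z|e^{-c|z|^2/a_n^2}/a_n\le C$ only yields $O(a_n^{-(d+1)})$ after summation. This is harmless for the first statement, but it does not reach the error $O\bigl(a_n^{-(d+1)}/\log n\bigr)$ claimed in the second one. The correct treatment (and the paper's) uses that $I_1,I_3$ vanish at $0$ and satisfy $|I_1(x)|+|I_3(x)|\ll|x|^{\gamma}$, so that the $|N|^{\gamma}$ moment assumption produces the extra factor $a_n^{-\gamma}=O(1/\log n)$ --- the same interpolation you already apply to the zero-order bracket, so the fix is immediate, but as written this step rests on a miscalibrated formula rather than on an argument.
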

Again, in the above result, $(n\log(n\log n))^{-\frac d2}$ can be replaced by $\frac{1-\frac d2\frac{\log\log n}{\log n}}{(n\log n)^{\frac d2}}$ providing a second term in $\frac{\log\log n}{(n\log n)^{\frac d2}\log n}$.
When $\phi$ or $\psi$ has zero mean, Theorem~\ref{THM1} only provides
an estimate in $O(\cdot)$. 
Luckily, our method enables us to establish sharp decorrelation rates
for  zero  mean observables under natural regularity assumptions. This includes smooth
coboundaries.
\begin{theo}[Sharper decorrelation rates for particular functions with zero integral]\label{THEOCob}
Assume  \eqref{H0}.
Let $\gamma\in(0,1)$.
\begin{itemize}
\item[(a)]
Let
$\phi,\psi:M\rightarrow\mathbb C$ be observables such that 
$\sum_{N\in\mathbb Z^d}(1+
|N|^\gamma
)\left(\Vert \phi 1_{M_N}\Vert_{\infty}+\Vert \psi 1_{M_N}\Vert_{\infty}\right)<\infty$
and
$\sum_{N\in\mathbb Z^d}L_{\phi 1_{M_N},\eta}<\infty$.
Then
\begin{align*}
\int_M \phi.\psi\circ(id-T)^m\circ T^n\, d\mu&=-
\frac {\Phi_{\Sigma^2}(0)
\int_M\phi\, d\mu\, \int_M\psi\, d\mu}{(n\log (n\log n))^{\frac d2}n^m}
(-2)^{-m}d(d+2)\cdots(d+2m-2)\\
&\ \ \ +O\left((n\log n)^{-\frac d2}n^{-m}(\log n)^{-1}\right)\, ,
\end{align*}
with $(-2)^{-m}d(d+2)\cdots(d+2m-2)=(-1)^m m!$ when $d=2$.
\item[(b)] Let $N\in\mathbb Z^d$.
If $\phi:M\rightarrow\mathbb C$ is invariant by translation of positions by $\mathbb Z^d$ and satisfies  $\Vert\phi\Vert_{(\eta)}<\infty$ and if there exists
$\delta\in(0,1]$ such that $\sum_{N\in\mathbb Z^d}\left(\Vert \psi 1_{M_N}\Vert_{(\eta)}+N^\delta \Vert  \psi 1_{M_N}\Vert_{\infty}\right)<\infty$,
then, setting $f_0=\phi(1_{M_N}+1_{M_{-N}}-2\times 1_{M_0})$,
\begin{align*}
\int_M f_0.\psi\circ T^n\, d\mu
&=-
\frac {\Phi_{\Sigma^2}(0)\int_{M_0}\phi\, d\mu\, \int_M\psi\, d\mu}{(n\log (n\log n))^{\frac d2+1}}\left(\Sigma^{-2}N\cdot N
+O((\log n)^{-1})\right) \\
&
+O\left(\frac{\log n}{(n\log n)^{\frac {d+3}2}}+a_n^{-d-2-\delta}\right)
\, .
\end{align*}
\end{itemize}
\end{theo}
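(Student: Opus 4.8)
The plan is to reduce everything to the general local limit expansion behind Theorem~\ref{THM0} and then sum over the $\Z^d$-lattice. Recall that for a $\Z^d$-extension of $(\bar M,\bar T,\bar\mu)$ by $\kappa$, an integral of the form $\int_M \phi\cdot g\circ T^n\,d\mu$ decomposes (using the $M_N$ decomposition of both observables) as a double sum over $N_1,N_2\in\Z^d$ of terms $\int_{\bar M}\phi_{N_1}1_{\{\kappa_n=N\}}\psi_{N_2}\circ\bar T^n\,d\bar\mu$ with $N=N_2-N_1$ (shifting by the base points); here $\phi_{N}$, $\psi_{N}$ denote the natural $\bar M$-representatives of $\phi 1_{M_N}$, $\psi 1_{M_N}$. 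Into each such term I feed the sharp LLT expansion \eqref{LLTfinal}. The summability hypotheses on $\sum_N(1+|N|^\gamma)\Vert\phi 1_{M_N}\Vert_\infty$ etc.\ are exactly what is needed to exchange the $N_1,N_2$ sums with the asymptotic expansion and to control the remainder uniformly; the $|N|^\gamma$ (resp.\ $|N|^{1+\gamma}$, $|N|^{1+\delta}$) weight absorbs the polynomial growth of $N\mapsto N\cdot N$ that appears once we expand the Gaussian profile in the relevant order.

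For part~(a), the new ingredient is the operator $(\mathrm{id}-T)^m$. I would first write $\psi\circ(\mathrm{id}-T)^m\circ T^n=\sum_{k=0}^m\binom{m}{k}(-1)^k\psi\circ T^{n+k}$, so that the left side becomes an alternating combination of decorrelation integrals at times $n,n+1,\dots,n+m$. Plugging the LLT expansion for each and taking the $m$-th finite difference in $n$ is, to leading order, the same as differentiating the Gaussian profile $\mathcal G(x)=e^{-x\cdot x/2}$ in its argument $x=\Sigma^{-1}N/a_n$; each application of the difference operator loses one power of $n\log n$ (since the relevant scale is $a_n^2=n\log n$) and replaces $\mathcal G$ by a Laplacian-type derivative. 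Carrying this out, the dominant contribution comes from the $\mathbb E_{\bar\mu}[\phi]\mathbb E_{\bar\mu}[\psi]$ term in \eqref{LLTfinal}, and summing $e^{-\frac12\Sigma^{-2}(N_2-N_1)\cdot(N_2-N_1)/a_n^2}$-type expressions against $\Delta^m\mathcal G$ over the lattice, then Taylor-expanding in the small parameter $1/a_n^2$, produces exactly $\sum_{N_1,N_2}\big(\Delta^m\mathcal G(0)-\Delta^{m+1}\mathcal G(0)\frac{\log\log n}{2\log n}\big)$ with the stated prefactor $-(\log n+\log\log n)^m/(2^m\sqrt{(2\pi)^d\det\Sigma^2}(n\log n)^{d/2+m})$; the $(\log n+\log\log n)^m$ is the $m$-th power of the factor $\log n$ picked up from $a_n^{-2}=(n\log n)^{-1}$ combined with the $\log\log n/\log n$ secondary correction. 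The $\mathfrak K(\phi,\psi)$-term and the $O(1/\log n)$ in \eqref{LLTfinal} both feed into the announced remainder $O((n\log n)^{-d/2-m}(\log n)^{m-1})$.

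For part~(b) the simplification is that $\phi$ is $\Z^d$-periodic in position, so $\phi 1_{M_N}$ has the same $\bar M$-representative $\bar\phi$ for every $N$; the observable $\phi(1_{M_N}+1_{M_{-N}}-2\cdot 1_{M_0})$ therefore probes the second symmetric difference of the LLT kernel at lattice displacement $\pm N$ versus $0$. Applying \eqref{LLTfinal} termwise and summing over the $M_N$-decomposition of $\psi$, the leading $\mathbb E_{\bar\mu}[\bar\phi]\mathbb E_{\bar\mu}[\psi]$ contributions at $+N$, $-N$, $0$ combine as $\mathcal G(x+y)+\mathcal G(x-y)-2\mathcal G(x)$ with $x$ the $0$-displacement argument and $y=\Sigma^{-1}N/a_n$ small; this is $y\cdot D^2\mathcal G(x)\,y+O(|y|^4)$, and at the relevant scale $|y|^2=\Sigma^{-2}N\cdot N/a_n^2=\Sigma^{-2}N\cdot N/(n\log n)$, which is the source of the $(n\log n)^{-d/2-1}$ order and the factor $\Sigma^{-2}N\cdot N$. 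The secondary term $\frac d2\frac{\log\log n}{\log n}$ from \eqref{LLTfinal} upgrades to $\frac{(d+2)\log\log n}{2\log n}$ after this one extra differentiation (each derivative in the scaling variable shifts the $\log\log n/\log n$ coefficient by $\frac12\cdot\frac{\log\log n}{\log n}$ as in part~(a)), and the $\delta$-weighted summability on $\psi$ together with $\Vert\psi 1_{M_N}\Vert_{(\eta)}$-summability controls the tail, yielding the remainder $O\big(\log n/(n\log n)^{(d+3)/2}+a_n^{-d-2-\delta}\big)$.

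The main obstacle I anticipate is \textbf{uniformity of the LLT remainder under the lattice summation}: the $O(a_n^{-d-1}/\log n)$ error in \eqref{LLTfinal} is stated ``uniformly in $N$'', but to sum it against $\sum_{N}\Vert\psi 1_{M_N}\Vert_\infty$ and still land in the claimed (smaller, e.g.\ $a_n^{-d-2-\delta}$) error in part~(b) one must extract extra decay in $|N_2-N_1|$ from the large-deviation regime — i.e.\ one needs the LLT expansion not just uniformly but with a Gaussian-type tail bound in $N$, so that only $|N_2-N_1|=O(\sqrt{n\log n})$ contributes to the main term and the far tail is negligible. Establishing this refined uniform control, presumably by revisiting the proof of \eqref{LLTfinal} via Propositions~\ref{PROP1} and~\ref{prop-lambda} and tracking the dependence on $N$ through the Fourier inversion, is the technical heart; once it is in place, parts~(a) and~(b) are finite-difference/Taylor bookkeeping of the kind already done for Theorems~\ref{THM0} and~\ref{THM1}.
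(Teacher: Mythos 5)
There is a genuine gap, and it lies at the heart of your plan rather than in the uniformity issue you flag at the end. You propose to apply the finished LLT expansion \eqref{LLTfinal} termwise and then take an $m$-th finite difference in $n$ (part (a)), respectively a second symmetric difference in the lattice variable (part (b)). But \eqref{LLTfinal} carries an absolute remainder of size $O(a_n^{-d-1}/\log n)$, and the relative error $O(1/\log n)$ inside the main bracket contributes $O(a_n^{-d}/\log n)$; finite differencing an asymptotic expansion differences its explicit terms but \emph{not} its $O(\cdot)$ terms, which simply persist at their original size. The quantities you are asked to produce are much smaller: in (a) the leading term is of order $a_n^{-d-2m}(\log n)^m$ with remainder $a_n^{-d-2m}(\log n)^{m-1}$ (already for $m=1$ this is negligible compared with $a_n^{-d}/\log n$), and in (b) the leading term is of order $a_n^{-d-2}$, below the $a_n^{-d-1}/\log n$ error of \eqref{LLTfinal}. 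So the target is buried inside the error of the tool you propose to use, and no amount of Gaussian-tail control in $N$ repairs this; your heuristic correctly predicts the shape of the leading terms but cannot be turned into a proof along this route.

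The paper's proof (Proposition~\ref{LLT00-bill}, and its abstract version Proposition~\ref{LLT00}, combined with \eqref{relationIJ} and \eqref{I20N0}) avoids the problem by making the cancellation exact \emph{before} any asymptotics, at the level of the perturbed transfer operator in the Fourier representation. For (a), the alternating sum over the times $n,\dots,n+m$ is rewritten inside the characteristic-function integral as the operator factor $(I-P_t)^m$, which on the dominant spectral part becomes the scalar $(1-\lambda_t)^m$; by Proposition~\ref{prop-lambda} this is $\approx(\Sigma^2 t\cdot t\,\log(1/|t|))^m$, and after the change of variables $t=u/a_n$ it produces both the extra $a_n^{-2m}$ decay and the $(\log n+\log\log n)^m$ factor, with the Laplacian powers $\Delta^m\mathcal G(0)$ arising from the Fourier integrals $J_{0,m,n,\cdot,\cdot}$. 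For (b), the factor $e^{-it\cdot N}+e^{it\cdot N}-2=(it\cdot N)^2+O(|t\cdot N|^4)$ is exploited exactly inside the integral, yielding $J_{2,0,n,\cdot,N}$ and hence $\Sigma^{-2}N\cdot N$ via \eqref{I20N0}; the approximation of $\phi,\psi$ by $\xi_{-k}^{k}$-measurable functions with $k(n)\asymp\log n$ and the regularity of $\Pi_t$ (via \eqref{Pi}) control the remaining errors. If you want to salvage your outline, you must therefore go back inside the Fourier integral and perform the differencing there, which is precisely the content of Propositions~\ref{LLT00} and~\ref{LLT00-bill}.
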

Again $((n\log (n\log n))^{-\frac d2-m}$ in the above formulas can be replaced by
$\frac{1-\frac{(d+2m)\log\log n}{2\log n}}{(n\log n)^{\frac d2+m}}$
an expansion with two terms.
Let us make several observations on this last result.
First, whereas in the finite horizon case, we only have leading terms in $n^{-d/2-m}$ in the decorrelation of smooth functions, in the infinite horizon case we can have leading terms in $n^{-m}(n\log n)^{-d/2}$ 
but also in $(n\log n)^{-d/2-1}$.
Other orders are possible. For example, we can easily adapt our proof
to obtain sharp decorrelation rate in $n^{-\frac d2-m-1}(\log n)^{-d/2-1}$ in case (b) with $\psi$ a coboundary of order $m$.
Observe that, when $m=1$, Case (a) of Theorem~\ref{THEOCob} corresponds to the study of $\int_M \phi.\psi\circ T^n\, d\mu-\int_M \phi.\psi\circ T^{n-1}\, d\mu$ and the dominating term given by (a) is equivalent to the difference between the two leading terms of $\int_M \phi.\psi\circ T^n\, d\mu$ 
and of $\int_M \phi.\psi\circ T^{n-1}\, d\mu$ obtained in Theorem~\ref{THM1}.
The leading term is of order $(n\log n)^{-d}-((n+1)\log (n+1))^{-d}\sim d(n\log n)^{-\frac d2-1}\log n $.
Observe that the case when $\phi$ is a coboundary and the case of two coboundaries is included in item (a) of theorem~\ref{THEOCob}. Indeed, by $T$-invariance of $\mu$,
\begin{align*}
\int_{M}\phi\circ(id-T).\psi\circ T^n\, d\mu
&= \int_{M}\phi.\psi\circ T^{n}\, d\mu-\int_{M}\phi.\psi\circ T^{n-1}\, d\mu\\
&=- \int_{\Omega}\phi.\psi\circ(id-T)\circ T^{n-1}\, d\mu\, ,
\end{align*}
and thus
\[
\int_{M}\phi\circ(id-T)^r.\psi\circ(id-T)^s\circ T^n\, d\mu=(-1)^r
      \int_{M}\phi.\psi\circ(id-T)^{r+s}\circ T^{n-r}\, d\mu\, .
\] 
Theorems~\ref{THM0} and~\ref{THM1} are contained in the more technical Theorems~\ref{LLT20} and~\ref{LLT2} (valid for a class of less regular observables)
which are consequences
of Theorem~\ref{LLT1} that gives higher order terms in LLT and speed of mixing 
under abstract assumptions on families of eigenvalues and eigenprojectors. Most of our work consist in proving the results contained in the next section
enabling the application of Theorem~\ref{LLT1} to the quotiented tower $(\Delta,f,\mu)$ and $\hat\kappa$.

\section{Key technical estimates}\label{technicalkeyresults}
We focus on the case $d=2$ since the similar results in the case $d=1$ follow from them.
We do not assume here that the interior of $Q$ contains at least $d$ non parallel infinite lines.
Our results are based on Fourier analysis and Young towers.
It is known that $(\bar M,\bar T,\bar\mu)$ is a factor, under a projection written $\mathfrak p_1:\bar\Delta\rightarrow \bar M$, of 
a Young tower $(\bar\Delta,\bar f,\bar\mu_\Delta)$ with stable 
and unstable curves. By factorizing/collapsing the stable curves we can reduce it to a one-dimensional Young tower $(\Delta, f,\mu_\Delta)$ by $\mathfrak p_2:\bar\Delta\rightarrow\Delta$.
Throughout we let $\hat\kappa:\Delta\to\Z^d$ be the version of $\kappa$ on $\Delta$, that is $\hat\kappa\circ\mathfrak p_2=\kappa\circ\mathfrak p_1$ (the existence of such a $\hat\kappa$ comes from the fact that $\kappa$ is constant on the stable curves).
Let $P$ be the transfer operator for $(\Delta, f,\mu_\Delta)$. We consider the family $(P_t)_{t\in\mathbb R}$ of perturbations of $P$ given by 
$P_t:=P\left(e ^{i t\cdot \hat\kappa}\cdot \right)$, where $\cdot$ denotes the standard scalar product on $\mathbb R^d$. Note that $P_0\equiv P$. 
As shown in~\cite{SV07} thanks to \cite{Young98,Chernov99} there exist $\beta\in(0,\pi]$ and $\theta\in(0,1)$ such that for every
$t\in[-\beta,\beta]^d$,
\begin{equation}
\label{spgap-Sz}
P_t^n=\lambda_t^n\Pi_t+N_t^n\, ,
\end{equation}
\begin{equation}
\label{spgap-Sz-bis}
\mbox{with}\quad\quad\quad\quad\sup_{t\in[-\beta,\beta]^d}\Vert N_t^n\Vert_{\mathcal B}+\sup_{t\in[-\pi,\pi]^d\setminus[-\beta,\beta]^d}\Vert P_t^n\Vert_{\mathcal B}=O\left(\theta^n\right)\, ,
\end{equation}
\begin{equation}
\label{spgap-Sz-bisbis}
\mbox{and}\quad\quad\quad\quad
\lim_{t\rightarrow 0}\left\Vert \Pi_t-\mathbb E_{\mu_\Delta}[\cdot]\mathbf 1\right\Vert_{\cL(\cB\rightarrow L^1(\mu_\Delta))}=0\, ,
\end{equation}
where $\mathcal B$ is a complex Banach space of $\mathbb C$-valued and $\mu_\Delta$-integrable functions
(considered by Young in \cite{Young98}).
As in the finite horizon case, $(P_t)_t$ defines a family of operators on $\cB$.
But, whereas in the finite horizon case $t\mapsto P_t$ is $C^\infty$ from $\mathbb R$ to $\cL(\cB)$ the set of linear continuous operators on $\cB$, in the infinite horizon case we can just say that $t\mapsto P_t$ is 
continuous from $\mathbb R$ to $\cL(\cB\rightarrow L^1(\mu_\Delta))$.
Additionally the derivative of $P_t$ at $0$ should be $ P(i\hat\kappa\cdot)$ which is not in $\cL(\cB)$ not even in $\cL(L^1)$
(see Lemma~\ref{discont}) but is in $\cL(L^p\rightarrow L^q)$
as soon as $\frac 1p+\frac 12> \frac 1q$.
As shown in~\cite[Proposition 6]{SV07}, $\bar\mu(\kappa=L+N\vec w)\sim C_{L,\vec w}N^{-3}(1+o(1))$ for some $C_{L,\vec w}\ge 0$, strictly positive for some $L$ if there exists a line of direction $\vec w$ contained inside $Q$.
Combining this with~\eqref{spgap-Sz}, ~\eqref{spgap-Sz-bis}, several lemmas obtained inside~\cite[Proof of Theorem 13]{SV07}
and ~\cite[Proposition 4.17]{BalintGouezel06}, Sz\'asz and Varj\'u established in \cite{SV07} the following estimate
\begin{equation}\label{devlambda}
\lambda_t=1-\Sigma^2 t\cdot t\, \log (1/|t|)(1+o(1)),\quad\mbox{as}\ t\rightarrow 0\, .
\end{equation}

Inside the proof of our Proposition~\ref{prop-lambda} below, which gives a higher order expansion of $\lambda_t$ at $t=0$, we provide a precise summary of the results in~\cite{SV07} needed to obtain~\eqref{devlambda}.
Let us recall that \eqref{spgap-Sz}, \eqref{spgap-Sz-bis}, \eqref{spgap-Sz-bisbis}
and \eqref{devlambda}
imply the LLT for $\bar T$ and 
thus first order mixing (speed of mixing) for $T$ as in~\cite[Theorem 1.1]{Pene18}.
Here we are interested in higher order terms in both the LLT and mixing for suitable classes of functions.

\begin{prop}\label{PROP1}
There exists a functional Banach space $\mathcal B_0$ of bounded functions 
and $\Pi'_0 \in\mathcal L(\mathcal B_0\rightarrow L^s(\mu_\Delta))$ 
for every $s\in[1,2)$ such that, for any $p>2$, any $p'\in[1,\frac43)$, 
any  $\gamma\in (1,\min(2\frac{p-1}p,\frac 4{p'}-2))$, there exists $C>0$ 
such that
$\|(\Pi_t-\Pi_0-t\cdot \Pi_0')(w)\|_{\mathcal L(\cB_0\rightarrow L^{
p'}(\mu_\Delta))}\le  C\,|t|^\gamma$ as $t\rightarrow 0$.
\end{prop}
Proposition~\ref{PROP1} follows directly from Proposition~\ref{prop-expproj} by taking $v=1_\Delta$.
\begin{rem}
A precise formula  for $\Pi'_0$ is given by~\eqref{eq-derivPi0} of Proposition~\ref{prop-expproj} together with~\eqref{eq1yderivpi} of Proposition~\ref{prop-expproj-base}.
\end{rem}
\begin{prop}
\label{prop-lambda}
Assume \eqref{H0}.
As $t\to 0$, 
$\lambda_t=1- \Sigma^2 t\cdot t\, \log(|t|^{-1})
+O(t^2)$
with the notations introduced at the begining of Section~\ref{freeflight}.
\end{prop}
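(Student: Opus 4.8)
The plan is to start from the exact identity
\[
1-\lambda_t=\frac{\int_\Delta (1-e^{it\cdot\hat\kappa})\,\Pi_t(1)\,d\mu_\Delta}{\int_\Delta \Pi_t(1)\,d\mu_\Delta}\, ,
\]
valid for $t$ small since both integrals are then close to $\int_\Delta 1\,d\mu_\Delta=1$: it follows by integrating the relation $P_t\Pi_t(1)=\lambda_t\Pi_t(1)$ against $\mu_\Delta$ and using $\int_\Delta P(w)\,d\mu_\Delta=\int_\Delta w\,d\mu_\Delta$ together with $P_tw=P(e^{it\cdot\hat\kappa}w)$. The two expansions feeding the right-hand side are the expansion $\Pi_t(1)=1+t\cdot\Pi_0'(1)+O(|t|^\gamma)$ in $L^{p'}(\mu_\Delta)$ from Proposition~\ref{PROP1} (recall $\Pi_0(1)=1$, the $P$-invariant density) and the asymptotics of $t\mapsto\int_\Delta(1-e^{it\cdot\hat\kappa})\,d\mu_\Delta$, obtained from the expansion of the probability of long free flights proved in Section~\ref{freeflight}. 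Before combining them one records from~\cite{SV07} the ingredients behind~\eqref{devlambda} --- the uniform spectral picture~\eqref{spgap-Sz}--\eqref{spgap-Sz-bis}, the tail estimate $\bar\mu(\kappa=L+N\vec w)\sim C_{L,\vec w}N^{-3}$, the mean-zero property $\mathbb E_{\mu_\Delta}[\hat\kappa]=0$ (a consequence of the time-reversal symmetry of the billiard), and \cite[Proposition 4.17]{BalintGouezel06} --- so that the argument below amounts to retaining one further order throughout.

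First, the marginal term. Using $\mathbb E_{\mu_\Delta}[\hat\kappa]=0$ one writes $\int_\Delta(1-e^{it\cdot\hat\kappa})\,d\mu_\Delta=\int_\Delta(1-e^{it\cdot\hat\kappa}+it\cdot\hat\kappa)\,d\mu_\Delta$ and truncates $\hat\kappa$ at level $|t|^{-1}$: on $\{|\hat\kappa|\le|t|^{-1}\}$ the integrand equals $\tfrac12(t\cdot\hat\kappa)^2+O(|t\cdot\hat\kappa|^3)$, and on $\{|\hat\kappa|>|t|^{-1}\}$ it is $O(1+|t|\,|\hat\kappa|)$. Summing over the corridors and inserting the \emph{quantitative} tail expansion $\bar\mu(\kappa=L+N\vec w)=C_{L,\vec w}N^{-3}+O(N^{-3-\delta})$ of Section~\ref{freeflight}, the $N^{-3}$ main term produces exactly the corridor sum defining $\Sigma^2$, namely $\mathbb E_{\mu_\Delta}[(t\cdot\hat\kappa)^2\,1_{\{|\hat\kappa|\le|t|^{-1}\}}]=2\,\Sigma^2\, t\cdot t\,\log(|t|^{-1})+O(t^2)$, while the $O(N^{-3-\delta})$ correction, the cubic error $O(|t|^3\,\mathbb E_{\mu_\Delta}[|\hat\kappa|^3\,1_{\{|\hat\kappa|\le|t|^{-1}\}}])$ and the far tail are all $O(t^2)$; the rate $\delta>0$ is essential here, for a mere $o(1)$ in the tails would yield only an error $o(t^2\log(1/|t|))$. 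Hence $\int_\Delta(1-e^{it\cdot\hat\kappa})\,d\mu_\Delta=\Sigma^2\, t\cdot t\,\log(|t|^{-1})+O(t^2)$, which already recovers and sharpens~\eqref{devlambda}.

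It remains to show that the eigenprojector contributes only $O(t^2)$, which is the \textbf{main obstacle}. Differentiating $\Pi_t^2=\Pi_t$ at $t=0$ gives $\Pi_0(\Pi_0'(1))=0$, i.e.\ $\int_\Delta\Pi_0'(1)\,d\mu_\Delta=0$; hence the denominator is $1+O(|t|^\gamma)$, which is harmless since the numerator is $O(t^2\log(1/|t|))$, and one is left with
\[
\int_\Delta(1-e^{it\cdot\hat\kappa})\big(1+t\cdot\Pi_0'(1)\big)\,d\mu_\Delta+\int_\Delta(1-e^{it\cdot\hat\kappa})\,r_t\,d\mu_\Delta\, ,\qquad r_t:=\Pi_t(1)-1-t\cdot\Pi_0'(1)\, .
\]
For the cross term one uses the first-order perturbation formula (the precise expression is the one announced in the Remark following Proposition~\ref{PROP1}), which here gives $\Pi_0'(1)=i\sum_{m\ge1}P^m\hat\kappa$, together with the duality $\int_\Delta g\cdot P^m h\,d\mu_\Delta=\int_\Delta (g\circ f^m)\,h\,d\mu_\Delta$, to obtain
\begin{align*}
t\cdot\int_\Delta(1-e^{it\cdot\hat\kappa})\,\Pi_0'(1)\,d\mu_\Delta
&= i\sum_{m\ge1}t\cdot\!\int_\Delta\big(1-e^{it\cdot(\hat\kappa\circ f^m)}\big)\,\hat\kappa\,d\mu_\Delta\\
&= \Big(\sum_{m\ge1}\mathbb E_{\mu_\Delta}[\hat\kappa\otimes(\hat\kappa\circ f^m)]\Big)\,t\cdot t+O(t^2)\, ,
\end{align*}
after expanding $1-e^{ix}=-ix+O(x^2)$ and truncating; the decisive point, resting on the geometric estimates of~\cite{SV07} for the joint law of $\hat\kappa$ and $\hat\kappa\circ f^m$, is that $\sum_{m\ge1}\mathbb E_{\mu_\Delta}[\hat\kappa\otimes(\hat\kappa\circ f^m)]$ converges to a finite matrix, although the diagonal term $\mathbb E_{\mu_\Delta}[\hat\kappa\otimes\hat\kappa]$ does not, so that this cross term is $O(t^2)$ with no logarithm. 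For the last term a crude H\"older estimate only gives $O(|t|^{2-\varepsilon})$ --- since $\gamma<2$ and $\Pi_0'(1)$ just fails to lie in $L^2(\mu_\Delta)$ --- which would be fatal after the $\lambda_t^n$-integration performed in the LLT at scale $|t|\sim(n\log n)^{-1/2}$; instead one extracts from the proof of Proposition~\ref{PROP1} (Section~\ref{sec:Pi}) the next-order, order-$t^2\log(|t|^{-1})$, term $\mathcal Q(1)$ of $\Pi_t(1)$, which by the second-order projector identity $\Pi_0\mathcal Q+\mathcal Q\Pi_0=\mathcal Q$ also satisfies $\int_\Delta\mathcal Q(1)\,d\mu_\Delta=0$, and uses that $\|1-e^{it\cdot\hat\kappa}\|_{L^q(\mu_\Delta)}=O(|t|^{2/q})$ for $q>2$ (again from the $N^{-3}$ tails) to control the genuine remainder in $L^{p'}(\mu_\Delta)$. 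Collecting the pieces and dividing by $1+O(|t|^\gamma)$ yields $1-\lambda_t=\Sigma^2\, t\cdot t\,\log(|t|^{-1})+O(t^2)$; the one-dimensional statement, with $\Sigma^2$ replaced by $a_{1,1}$, follows verbatim after restricting $\hat\kappa$ and $t$ to a single coordinate.
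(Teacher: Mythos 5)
Your overall skeleton matches the paper's: you use the same identity $1-\lambda_t=\int_\Delta(1-e^{it\cdot\hat\kappa})v_t\,d\mu_\Delta$ with $v_t=\Pi_t1/\mu_\Delta(\Pi_t1)$, split it into a scalar term plus a projector term, and your treatment of the scalar term (centering, truncation at $|\hat\kappa|\le|t|^{-1}$, and the quantitative tail $c_{L,w}N^{-3}+O(N^{-4})$ from Lemma~\ref{lemm-tail}) is essentially the paper's Lemma~\ref{lem:Psi}. The gap is in the projector term, i.e.\ precisely in the analogue of the paper's Lemma~\ref{lem:V}. You expand $\Pi_t(1)=1+t\cdot\Pi_0'(1)+r_t$ globally and then (i) assert $\Pi_0'(1)=i\sum_{m\ge1}P^m\hat\kappa$ and the absolute convergence of $\sum_{m\ge1}\mathbb E_{\mu_\Delta}[\hat\kappa\otimes(\hat\kappa\circ f^m)]$, and (ii) propose to handle $r_t$ by extracting a second-order, $t^2\log(|t|^{-1})$, term $\mathcal Q(1)$ of $\Pi_t$. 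Neither ingredient is available. The naive perturbation formula in (i) is not what Propositions~\ref{PROP1} and~\ref{prop-expproj-base} provide (their $\Pi_0'(1)$ is built from $Q_0'(1_Y)$, the constant $c_0$, and the partial sums $\hat\kappa_{\omega(x)}\circ\pi_0$), and justifying it, as well as the summability of the all-times cross-correlations, is nontrivial here because $\hat\kappa\notin L^2(\mu_\Delta)$ and $\hat\kappa\notin\mathcal B$, so neither H\"older (each single term $\mathbb E[|\hat\kappa|\,|\hat\kappa\circ f^m|]$ is not obviously finite) nor the spectral gap applies directly; the estimates of \cite{SV07} you invoke (Propositions 11--12, Lemma 16) only control configurations at nearby times $|j|\le V\log(n+2)$, not arbitrary $m$. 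As for (ii), the paper proves no second-order expansion of $\Pi_t$, and Proposition~\ref{PROP1} alone cannot yield one (you correctly note that its $|t|^\gamma$, $\gamma<2$, bound is insufficient); "one extracts the next-order term" is exactly the missing work, not a step you can cite.

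The paper's actual route around this is worth comparing with. In Lemma~\ref{lem:V} it never expands $\Pi_t(1)$ beyond the base: it uses the exact fiberwise eigen-identity $\Pi_tv(x)=\lambda_t^{-\omega(x)}e^{it\cdot\hat\kappa_{\omega(x)}(\pi_0(x))}\Pi_tv(\pi_0(x))$ to rewrite $(\Pi_t-\Pi_0)1_\Delta$ as three terms, of which only the base terms require the expansion of Proposition~\ref{prop-expproj-base} (where first order with $O(|t|^\gamma)$ suffices, because these terms carry an extra factor $|1-e^{it\cdot\hat\kappa}|=O(|t\hat\kappa|)$), while the delicate piece is the purely combinatorial bound $\int_Y\sum_{k=1}^{\sigma-1}|\hat\kappa\circ f^k|\,|\hat\kappa_k|\,d\mu_\Delta<\infty$ — a within-excursion correlation estimate, which is exactly what the cited \cite{SV07} estimates are tailored to (excursion lengths have exponential tails, so only $k=O(\log n)$ matters on $\{\hat\kappa=n\}$). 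Your proposal replaces this with an all-times correlation sum plus a second-order projector expansion; to make it rigorous you would have to supply both of these, which amounts to more (and different) work than the paper's argument rather than a shortcut.
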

Since the norms of $\mathbb R^d$ are all metrically equivalent, Proposition~\ref{prop-lambda} is true for any choice of norm $|\cdot|$ on $\mathbb R^d$.

\section{Estimate of the probability of long free flights}\label{freeflight}
Our proof of Proposition~\ref{prop-lambda} provided in Section~\ref{sec:lambda}
is based on estimates established in Section~\ref{sec:Pi} and on an higher order expansion of the tail of the free flight $\kappa$.
Lemma~\ref{lemm-tail} below gives the required expansion (to be precise we just use $O(N^{-4})$). We mention that Lemma~\ref{lemm-tail} can be  regarded as a refinement of  \cite[Proposition 6]{SV07}. First, it  provides higher order of the tail of the free flight
 and second, 
we compute the dominating term without any restriction on
the number of types
 of scatterer at the boundary of a given corridor.
As in \cite{SV07}, we introduce the notion of corridor. 
We call corridor $C$ 
a strip contained in $Q$ delimited by  $(A+\mathbb R w)\cup(B+L+\mathbb R  w)$ for some 
$A,B\in \bigcup_{j\in\mathcal J}\partial\mathcal O_j$ and $L,w\in\mathbb Z^2$.
We write $\mathcal C$ for the set of corridors.
For any corridor $C\in\mathcal C$, we write $\mathfrak d_{C}$ for its width and
$w_{ C}$ for its direction that is the prime $w\in\mathbb Z^2\setminus\{0\}$ with non negative first coordinate integer coordinates (and with positive second coordinate if the first one is null) such that the $ C$ contains a line of direction
$w_{C}$. 

\begin{figure}[ht]
\centering
\includegraphics[trim = 0mm 10mm 10mm 100mm, clip,
 width=7cm]{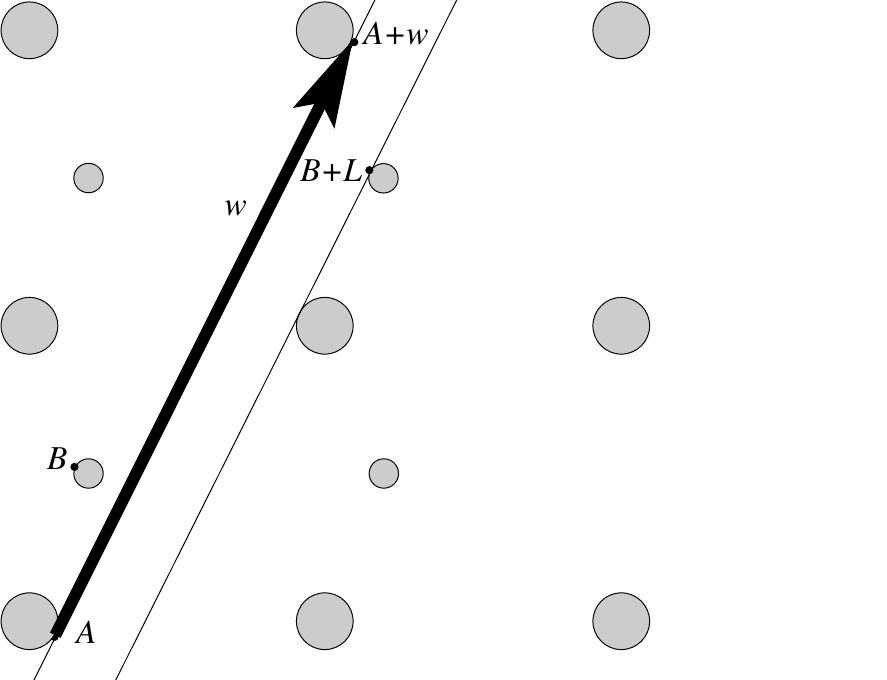}
\caption{A corridor}
\label{fig3}
\end{figure}

Observe that a corridor is fully determined by $A$ and that, given $A$ defining a corridor, there are two possible choices of prime $w\in\mathbb Z^2$, one being the opposite of the other.
We write $\mathcal A$ for the finite set of $(A,w)\in (\bigcup_{j\in\mathcal J}\partial\mathcal O_j)\times\mathbb Z^2$, with $w$ prime, such that there exists $(B,L)$ as above and $d_{A}$ for the width of the corresponding corridor.

\begin{rem}
Let $(A,w)\in \mathcal A$ then $(A, v=w/| w|)\in\bigcap_{k\in\mathbb Z} T^{k}(R_0)$.
With the above notations, let $\mathcal O$ be the obstacle containing $B+L$.
If
$(q',v')\in\bigcap_{k\in\mathbb Z} T^{k}(R_0)\setminus\{(A,v)\}$ is close to $(A,v)$, 
the line $q'+\mathbb R v'$ passes between $\cO+Mw$ and  $\cO+(M+1)w$ for some $M\in\mathbb Z$ without passing through them. But, this is impossible if  $ |\angle( v, v')|<\arctan\frac{2r}{|w|}$ with $r$ the minimal radius of curvature of $\mathcal O$). Thus $(A,v)$ is isolated in $\bigcap_{k\in\mathbb Z} T^{k}(R_0)$. Therefore $\mathcal A$ is finite.
\end{rem}
Given $L,w\in\mathbb Z^2$, 
we write $E_{L, w}$ for the set of $(A,B)$ such that $(A,w)\in \mathcal A$
and $B\in\partial Q$ is as described above (on the other line delimiting the corridor).



\begin{lem}
\label{lemm-tail}
Assume \eqref{H0}.
Let $ w\in\mathbb Z^2$ prime and $L\in\mathbb Z^2$,
then
\[
\bar\mu(\kappa=L+N w)
=\sum_{(A,B)\in E_{L, w}}\left(\frac{ d_{A}^2\, \mathfrak a_{(A,B)}}{2|\partial\bar Q|\, |w|N^3 }+\frac{\mathfrak a'_{(A,B)}}{2|\partial \bar Q|\, (|w|\, N)^4}\right)
+o(N^{-4})\, ,
\]
with 
$\mathfrak a_{(A,B)}:= \frac{AA'\, B''B}{|w| ^2}$ and
\[
\mathfrak a'_{(A,B)}:=
\frac{3d_A^2AA'\, B''B(AA'+B''B-2\frac{w.\overrightarrow{A(B+L)}}{|w|})}2
+
\frac{d_A^3}2\left( B''B\left(\frac 1{\mathfrak c_{A}}-\frac 1{\mathfrak c_{A'}}
\right)+AA'\left(\frac 1{ \mathfrak c_{B}}-\frac 1{ \mathfrak c_{B''}}\right)\right)\, .
\]
where $B''$ is the first point of $\partial Q$ met by the half-line $B-\mathbb R_+ w$
and where $A'$ is the first point of $\partial Q$ met by the half-line $A+\mathbb R_+ w$ and where $\mathfrak c_{E}$ is the curvature of $\partial Q$ at any $E\in\partial Q$.
\end{lem}
For example, in the case of a billiard with a single original obstacle (if $\mathcal J=1$), then
$\bar\mu(\kappa=(N,1))=\frac{d^2}{2|\partial \bar Q|}(N^{-3}+ 3(1-h)N^{-4}+o(N^{-4}))$ (then $A=A'$, $B=B''$ and so $\frac 1{\mathfrak c_A}-\frac 1{\mathfrak c_{A'}}=\frac 1{\mathfrak c_B}-\frac 1{\mathfrak c_{B''}}=0$), where $d$ is the vertical distance between two different obstacles and $h$ is the horizontal gap between the highest point and the lowest point of the obstacle ($h$ can be negative or positive).
\begin{cor}\label{lemm-tailrem}
Assume \eqref{H0}. Let us fix a corridor $C$.
The sum  $\mathfrak a_C$ of the $\mathfrak a_{(A,B)}$ over the couples $(A,B)$ associated to $C$ is 2.
\end{cor}
\begin{proof}
The corridor $C$ is delimited by two lines $\cL$ and $\cL'$.
Let $(A_i)_{i\in \mathbb Z/N\mathbb Z}$ (resp. $(B_j)_{j\in \mathbb Z/N'\mathbb Z}$) be the points of $\bigcup_{j=1}^{\mathcal J}\partial \cO_j\cap (\cL+\mathbb Z^2)$ (resp. $\bigcup_{j=1}^{\mathcal J}\partial \cO_j\cap(\cL'+\mathbb Z^2)$) ordered so that $A'_i=A_{i+1}+\ell_i$ and $B''_j=B_{j-1}+\ell'_j$ for some $\ell_i,\ell'_j\in\mathbb Z^2$.
Then $\sum_i\overrightarrow{A_iA'_i}=\sum_j\overrightarrow{B''_jB_j}= w_C$,
and so \[
\mathfrak a_C=\sum_{i,j}\left(\mathfrak a_{(A_i,B_j)}+\mathfrak a_{(B_j,A_i)}\right)
=\sum_{i,j}\frac{A_iA'_i\, B''_jB_j+B_jB'_j\, A''_iA_i}{|w_C|^2}
=2\, .
\]
\end{proof}

\begin{proof}[Proof of Lemma~\ref{lemm-tail}]
Observe that, for every $\varepsilon$, outside the $\varepsilon$-neighbourhood of $\bigcap_{k\in\mathbb Z} T^{k} (R_0)$, $\kappa$ is uniformly bounded.  Moreover, outside the $\varepsilon$-neighbourhood of points $(A_0, w/| w|)$
with $(A_0,B)\in E_{L, w}$, for some $N_0$ large enough, $\kappa\not\in L+(N_0+\mathbb N)  w$.
Therefore, it is enough to prove that, for any
 $(A_0,B)\in E_{L, w}$ and any $i_0,i_1\in\mathcal J$ so that $A_0\in\partial \mathcal O_{i_0}$ and $B\in\partial \mathcal O_{i_1}$,
\begin{equation}\label{mu0L+Nw}
\mu(M_{(i_0 ,0)}\cap T^{-1}(M_{(i_1,L+N w)}))=\frac{d_0^2 \, \mathfrak a_{(A_0,B)}}{2|\partial\bar Q|\, |w|\, N^3}
+\frac{\mathfrak a'_{(A_0,B)}}{2|\partial \bar Q|\, N^4} + o(N^{-4})\, ,
\end{equation}
where
$d_0:=d_{A_0}$ is the width of the corresponding corridor and $M_{(i,\ell)}$ the set of elements of $ M$ based on a point in $\partial \mathcal O_{i,\ell}$.
Set $B_0:=B+L$.
Let $\mathcal A_N$ be the image set of
$M_{(i_0 ,0)}\cap T^{-1}(M_{(i_1,\ell_1+N w)})$ by the projection map $(q,\vec v)\mapsto (A,\vec v)$
where $A$ is the intersection point of $q+\mathbb R\, \vec v$
with the line $A_0+\mathbb R w^\perp$, where $ w^\perp$ is the unit vector perpendicular to $\vec w$ 
directed inside $Q$ at $A_0$.
Because of the invariance of $\bar\mu$ under the billiard map
and under the inversion $(q,\vec v)= (q,-\vec v)$,
thus, using also coordinates $(x,\alpha)$ on $(A_0+\mathbb R w^\perp)\times S^1$
with $x$ the absciss on the line $A_0+\mathbb R w^\perp$ and $\alpha$ for angle between $w$ and $\vec v$, we know that 
\begin{equation}\label{mu0L+Nw2}
\mu(M_{(i_0 ,0)}\cap T^{-1}(M_{(i_1,\ell_1+N w)})=\frac 1{2|\partial \bar Q|}\int_{\mathcal A_N}\cos\alpha\, dx\, d\alpha \, .
\end{equation}
\begin{figure}[ht]
\centering
\includegraphics[trim = 40mm 65mm 40mm 20mm, clip, width=8cm]{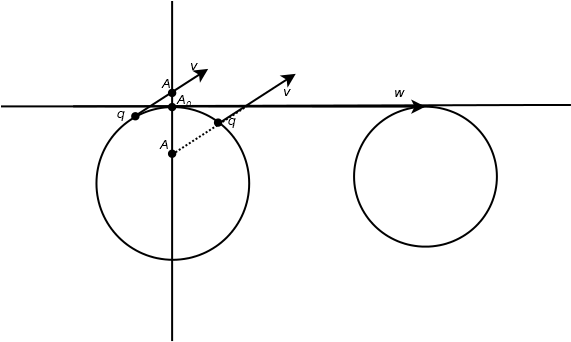}
\caption{Projected point $A$.}
\label{fig0}
\end{figure}

We write $\mathcal O'$ for the first obstacle touched by $A_0+\mathbb R_+ w$
(at $A'_0\in\partial\mathcal O'$) and $\mathcal O"$ for the first obstacle touched by $B_0-\mathbb R_+ w$ (at $B"\in\partial\mathcal O"$).
Let us write $\vec v_\alpha$ for the unit vector making angle $\alpha$ with $ w$, for $\alpha\in[0,\pi/2]$ and we consider $x$ close to 0.
First, for a given position $A=A_0+x w^\perp$ close to $A_0$ with $x<0$,
$(A,\vec v_\alpha)$ is in $\mathcal A_N$ if and only if
the first obstacle (other than $\mathcal O_{(i_0,0)}$) met by
the half-line $A+\vec v_\alpha\, \mathbb R_+$ is $\mathcal O_{(i_1,\ell_1+N w)}$, that is
if and only if
$ \max(\beta,\alpha_{N})\le \alpha<\alpha'_{N}$.
where we write $\beta$ for the angle such that the line $A+\vec v_\beta\mathbb R$ is tangent to $\mathcal O'$ from above, $\alpha_{N}$ for the angle such that the line $q+\vec v_{\alpha_N} \mathbb R$
is tangent to $\mathcal O_{(i_1,\ell_1+N w)}$ from underneath (at some point $B'_N$) and $\alpha'_{N}$ for the angle such that the line $q+\vec v_{
\alpha'_N} \mathbb R$
is tangent to $\mathcal O"+N w$ from underneath (at some point $B"_N$).
\begin{figure}[ht]
\centering
\includegraphics[trim = 32mm 0mm 40mm 30mm, clip,
 width=15cm]{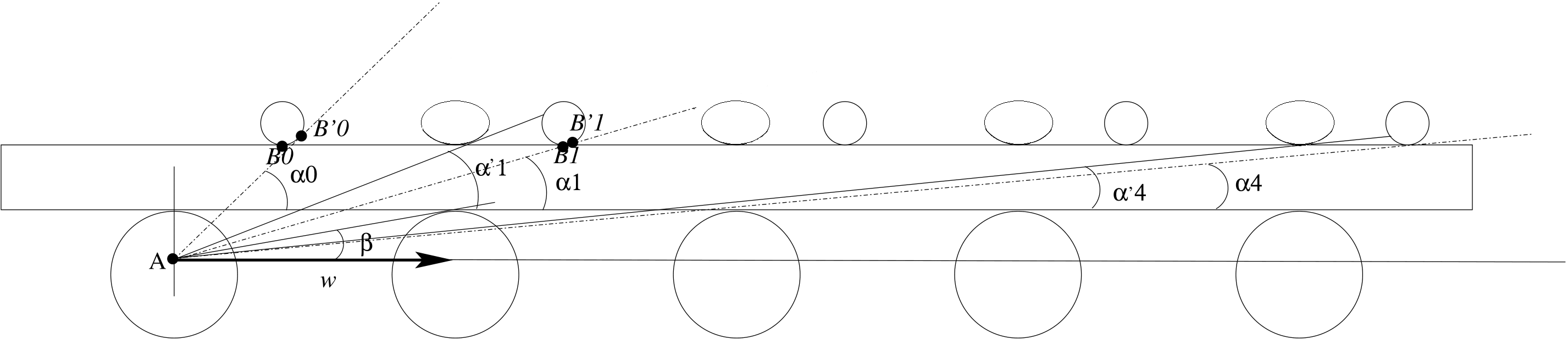}
\caption{The $\alpha_i$ and $\alpha'_i$ in a corridor}
\label{fig1b}
\end{figure}
Second, for a given position $A=A_0+x w^\perp$ close to $A_0$ with $x>0$, 
$(A,\vec v_\alpha)$ is in $\mathcal A_N$ if and only if
the first obstacle met by
the half-line $A+\vec v_\alpha\, \mathbb R_+$ is $\mathcal O_{(i_1,\ell_1+Nw)}$ and if
$A+\vec v_\alpha\, \mathbb R_-$ intersects $\mathcal O_{(i_0,0)}$, that is if
$\max(\beta',\alpha_{N})\le \alpha<\alpha'_{N}$,
with $\beta'$ the smallest positive angle such that $A+\vec v_{\beta'}\mathbb R$
is tangent to $\partial \mathcal O_{i_0}$.
Thus, the integral \eqref{mu0L+Nw2} becomes
\begin{eqnarray}\label{mu0L+Nw3}
\frac {I_N^{-}+I_N^+}{2|\partial \bar Q|}:=\frac {1}{2|\partial \bar Q|}
\left( \int_{x^-}^0 \int_{\max( \beta,\alpha_N) }^{\alpha'_N} \cos\alpha\, d\alpha \, dx \, 
+  \int_0^{x^+} \int_{\max( \beta',\alpha_N)}^{\alpha'_N} \cos\alpha\, d\alpha \, dx\right)\, .
\end{eqnarray}
The proof continues now by giving precise estimates of
$\beta,\beta'$ and $\alpha_N, \alpha'_N$ and also of the numbers $x^- < 0 < x^+$ beyond which there are no solutions $\alpha$ to respectively
$ \max(\beta,\alpha_{N})\le \alpha<\alpha'_{N}$ and $\max(\beta',\alpha_{N})\le \alpha<\alpha'_{N}$ anymore.\\
\noindent\underline{\bf Estimate of $\beta$.}
Let $q(u)=A'_0-q_1(u)\frac{w}{|w|}-q_2(u)w^\perp$ be the  parametrization of $\partial\cO'$ by arc-length such that $q(0)=A'_0$ (i.e. $(q_1(0),q_2(0))=(0,0)$) and $A\in q(s)+\mathbb R q'(s)$. 
Observe that $(q_1'(0),q'_2(0))=(1,0)$ and
$(q_1''(0),q''_2(0))=(0,\mathfrak c_{A'_0})$ where  $\mathfrak c_{A'_0}$ is the curvature of $\partial \mathcal O'$ at $A'_0$.
Then
$\tan\beta=\frac{q'_2(s)}{q'_1(s)}= \frac{\mathfrak c_{A'_0}s+\frac {q_2'''(0)}2 s^2}{1+\frac {q_1'''(0)}2 s^2}+ O(s^3)$.
So
$\beta= \mathfrak c_{A'_0}s+\frac {q_2'''(0)}2 s^2+ O(s^3)$,
with $s$ such that
\begin{align*}
AA_0&=q_2(s)+(A_0A'_0-q_1(s))\frac{q'_2(s)}{q'_1(s)}=\frac{\mathfrak c_{A'_0}}2s^2
    + (A_0A'_0-s)(\mathfrak c_{A'_0} s+\frac {q_2'''(0)}2s^2)+O(s^3)
\end{align*}
Thus $
s=\frac{AA_0}{A_0A'_0\mathfrak c_{A'_0}}\left(1-\frac 1{2}\left(\frac{q_2'''(0)}{\mathfrak c_{A'_0}}
    -\frac 1{A_0A'_0}\right)\frac{AA_0}{A_0A'_0\mathfrak c_{A'_0}}\right)+O((AA_0)^3)$.
It follows that 
\begin{equation}\label{beta}
\beta=\frac{AA_0}{A_0A'_0}+ \frac{(AA_0)^2}{2(A_0A'_0)^3\mathfrak c_{A'_0}}+ O((AA_0)^3)\, .
\end{equation}

\noindent\underline{\bf Estimate of $\beta'$.}
We proceed analogously for $\beta'$. This time
we consider that $q(u)=A'_0-q_1(u)\frac{w}{|w|}-q_2(u)w^\perp$ is the  parametrization of $\partial\cO_{(i_0,0)}$ by arclength such that $q(0)=A_0$ (i.e. $(q_1(0),q_2(0))=(0,0)$) and $A\in q(s)+\mathbb R q'(s)$. 
Note that $(q_1'(0),q'_2(0))=(1,0)$ and
$(q_1''(0),q''_2(0))=(0,\mathfrak c_{A'_0})$ where where  $\mathfrak c_{A'_0}$ denotes the curvature of $\partial \mathcal O'$ at $A'_0$.
Then
$\tan\beta'=\frac{q'_2(s)}{q'_1(s)}=\mathfrak c_{A_0}s+O(s^2)$,
with $s$ such that
$-AA_0=q_2(s)-q_1(s)q'_2(s)=-\frac{\mathfrak c_{A_0}}2s^2
    +\cO(s^3)$
and so 
\begin{equation}\label{beta'}
\beta'
=  \sqrt{2c_{A_0}AA_0}+O(AA_0)\, .
\end{equation} 

\noindent\underline{\bf Estimates of $\alpha_N$ and of $\alpha'_N$.}
Note that $\alpha'_N$ is $\alpha_{N+m_1}$
for another choice of point $B_0$ and for some $m_1\in \mathbb Z^2$ (depending on $B_1$). Thus it is enough to estimate $\alpha_N$.
The notation $O(...)$ below will be uniform in $A$.
Let $B_N:=B_0+N w$.
We parametrize $ \partial\mathcal O_{(i_1,\ell_1+N w)}$ by arc-length as $q(u)=B_N+q_1(u)\frac{w}{|w|}+q_2(u)w^\perp$ such that $q(0)=B_N$,  $q(s)=B'_N$, $q'(0)=\frac{w}{|w|}$ and $q''(0)=\mathfrak c_{B_0}w^\perp$ where  $\mathfrak c_{B_0}$ is the curvature of $\partial \mathcal O_{(i_1,\ell_1)}$ at $B_0$. 
Since $\langle q'(s),q'(s)\rangle=1$, 
$\langle q''(s),q'(s)\rangle=0$,
$0=\langle q'''(0),q'(0)\rangle+\langle q''(0),q''(0)\rangle=q'''_1(0)+\mathfrak c_{B_0}^2$ 
and 
so $q'''_1(0)=-\mathfrak c_{B_0}^2$.
Then
$B_NB'_N
=|q(s)-q(0)|
=  s
-\frac {\mathfrak c_{B_0}^2}{24} s^3 +o(s^3)$,
and so
\begin{align}
\nonumber \tan\alpha_N=&\frac {q'_2(s)}{q'_1(s)}=\frac {\mathfrak c_{B_0}s+\frac 12q_2'''(0)s^2+\frac 16q_2''''(0)s^3+o(s^3))}{1+\frac 12q_1'''(0)s^2
+o(s^2)}\\
\label{tanalphaN1}&=\mathfrak c_{B_0}B_NB'_N+\frac{q_2'''(0)}{2}(B_NB'_N)^2+\mathfrak b_{B_0}(B_NB'_N)^3+o((B_NB'_N)^3)\, ,
\end{align}
with $\mathfrak b_{B_0}:=\frac{q_2''''(0)}{6}+\frac{13\mathfrak c_{B_0}^3}{24}$.
Also
\begin{align*}
\tan \alpha_N&=\tan\left(T_{B_N} \partial \mathcal O_{(i_1,\ell_1+N w)}, \overrightarrow{AB'_N}\right)=
       \frac{  w^\perp\cdot  \overrightarrow{AB'_N}}{  w\cdot  \overrightarrow{AB'_N}/| w|}
= 
\frac{d'_0(A)+w^\perp\cdot \overrightarrow{B_NB'_N}
}{|w|\, N+\frac{w.(\overrightarrow{A_0B_0}+\overrightarrow{B_NB'_N})}{|w|}}\, ,
\end{align*}
with 
$d'_0(A):=w^\perp\cdot \overrightarrow{AB_0}=d_0-w^\perp\cdot\overrightarrow{A_0A}$.
Thus $\alpha_N=O(N^{-1})$, $B_NB'_N=O(N^{-1})$, $A_0A=O(N^{-1})$.
Since $B_N\in \partial \mathcal O_{(i_1,\ell_1+N w)}$ and since
$T_{B_N}\partial  \mathcal O_{(i_1,\ell_1+N w)}=B_N+\mathbb R w$, it follows that
$ w^\perp\cdot \overrightarrow{B_NB'_N}=q_2(s)=\frac 12
\mathfrak c_{B_0}(B_NB'_N)^2+o(N^{-2})$ and 
$w\cdot \overrightarrow{B_NB'_N}
=|w|B_NB'_N
+o\left(N^{-2}\right)$. 
So
\begin{align}
\tan \alpha_N
\label{a2bis}&=  
\frac{d'_0(A)}{\frac{w.\overrightarrow{A_0B_N}}{|w|}}+
\frac 12
\frac{\mathfrak c_{B_0}(B_NB'_N)^2
}{\frac{w.\overrightarrow{A_0B_N}}{|w|}}
-\frac{d'_0(A)  B_NB'_N}{(\frac{w.\overrightarrow{A_0B_N}}{|w|})^2}
+o(N^{-3})
\, .
\end{align}
Identifying \eqref{tanalphaN1} with \eqref{a2bis}, we obtain 
that
\[
B_NB'_N= \frac{d_1}{ \frac{w.\overrightarrow{A_0B_N}}{|w|}}+\frac{d_2}{(\frac{w.\overrightarrow{A_0B_N}}{|w|})^2}+\frac{d_3}{(\frac{w.\overrightarrow{A_0B_N}}{|w|})^3}+o\left(N^{-3}\right)\, ,\]
with
$
d_1:=\frac{d'_0(A)}{\mathfrak c_{B_0}}$,
$d_2:=-\frac{q_2'''(0)d_1^2}{2\mathfrak c_{B_0}}$, 
$d_3:=
-\frac{d_1^2}{2}
-q_2'''(0)\frac{d_1d_2}{\mathfrak c_{B_0}}-\mathfrak b_{B_0}\frac{d_1^3}{\mathfrak c_{B_0}}$.
Using \eqref{tanalphaN1} and $\arctan u=u-\frac {u^3}3$, it follows that
\begin{align}
\label{alphaN}
\alpha_N= \frac{d'_0(A)}{\frac{w.\overrightarrow{A_0B_N}}{|w|}} -\frac{  \frac{(d'_0(A))^2}{2 \mathfrak c_{B_0}}+\frac{(d'_0(A))^3}{3}    }{(\frac{w.\overrightarrow{A_0B_N}}{|w|})^3}+o\left(N^{-3}\right)\, .
\end{align}
Let us recall that
$d'_0(A):=w^\perp\cdot \overrightarrow{AB_0}$.
Therefore, we also have
\begin{equation}\label{alpha'N}
\alpha'_N=\frac{d'_0(A)}{\frac{w.\overrightarrow{A_0B''_N}}{|w|}} -\frac{  \frac{(d'_0(A))^2}{2 \mathfrak c_{B''}}+\frac{(d'_0(A))^3}{3}    }{(\frac{w.\overrightarrow{A_0B''_N}}{|w|})^3}+o\left(N^{-3}\right)   \, .
\end{equation}
Hence
\begin{equation}\label{alpha'N-alphaN}
\alpha'_N-\alpha_N=d'_0(A)\left(\frac 1{\frac{w.\overrightarrow{A_0B''_N}}{|w|}}
-\frac{1}{\frac{w.\overrightarrow{A_0B_N}}{|w|}}\right)
+\frac{d_0^2}{2(\frac{w.\overrightarrow{A_0B''_N}}{|w|})^3}\left(\frac 1{ \mathfrak c_{B_0}}-\frac 1{ \mathfrak c_{B''}}\right)
+o\left(N^{-3}\right) \, .
\end{equation}

\noindent\underline{\bf Conclusion}
Observe that since $\alpha_N=O(N^{-1})$ and $\alpha'_N-\alpha_N=O(N^{-2})$,
\[
\int_{\max(\beta'',\alpha_N)}^{\alpha'_N}\cos(\varphi)\, d\varphi=\sin \alpha'_N- \sin\max(\beta'',\alpha_N)=\alpha'_N-\max(\beta'',\alpha_N)+O(N^{-4})\, ,
\]
for $\beta''\in\{\beta,\beta'\}$.
Recalling \eqref{mu0L+Nw3}, since $AA_0=O(N^{-1})$, it follows that
\[
I_N^++I_N^-=O(N^{-5})+\int_{0}^{x^+}\max(0,\alpha'_N- \max(\beta',\alpha_N))\, dx+\int_{x^-}^{0}\max(0,\alpha'_N-\max(\beta,\alpha_N))\, dx\, .
\]
Due \eqref{alpha'N} and  \eqref{beta'} and since $A_0A=O(N^{-1)}$,
for $N$ large enough, $\beta'<\alpha'_N$ if and only if
\[
  \sqrt{2\mathfrak c_{A_0}AA_0}+O(AA_0)
    <   \frac{d_0}{|w|N}+O(N^{-2})\, , \mbox{i.e.}\quad
  AA_0
    <   \frac{d_0^2}{2\mathfrak c_{A_0}|w|^2N^2}+O(N^{-3})\, .
\]
Analogously the condition $\alpha_N<\beta'<\alpha'_N$ is satisfied if and only if
$  AA_0
    = \frac{d_0^2}{2c_{A_0}|w|^2N^2}+O(N^{-3})$.
Therefore, using the fact that $\alpha'_N-\alpha_N=O(N^{-2})$, we obtain
\begin{align}
I_N^+&=\int_{0}^{ \frac{d_0^2}{2\mathfrak c_{A_0}|w|^2N^2}+O(N^{-3})}\!\!\!\!\left(\alpha'_N-\alpha_N\right)\, dx
= \frac{d_0^3 B''B_0}{2\mathfrak c_{A_0}|w|^4N^4}+O(N^{-5})\, ,\label{IntAN+}
\end{align}
where we used \eqref{alpha'N-alphaN}.
Now \eqref{alpha'N} combined with \eqref{beta} implies that  $\beta<\alpha'_{N}$ if and only if
$$
\frac{AA_0}{A_0A'_0}+ \frac{(AA_0)^2}{2(A_0A'_0)^3\mathfrak c_{A'_0}}+ O((AA_0)^3)<\frac{d_0+A_0A}{\frac{w.\overrightarrow{A_0B''_N}}{|w|}}+O(N^{-3})
$$
i.e.
$
A_0A<\mathfrak T'_N:=\frac{d_0A_0A'_0}{\frac{w.\overrightarrow{A_0B''_N}}{|w|}}+\left(d_0(A_0A'_0)^2-\frac{d_0^2}{2\mathfrak c_{A'_0}}\right)
\frac {1}{(\frac{w.\overrightarrow{A_0B''_N}}{|w|})^2}
+O(N^{-3})$.
Analogously
$\beta<\alpha_{N}$ if and only if
\begin{equation}\label{beta<alphaN}
A_0A<\mathfrak T_N:=\frac{d_0A_0A'_0}{\frac{w.\overrightarrow{A_0B_N}}{|w|}}+\left(d_0(A_0A'_0)^2-\frac{d_0^2}{2\mathfrak c_{A'_0}}\right)
\frac {1}{(\frac{w.\overrightarrow{A_0B_N}}{|w|})^2}
+O(N^{-3})\, .
\end{equation}
So
$\mathfrak T'_N-\mathfrak T_N\sim \frac{d_0A_0A'_0\, B''B_0}{(\frac{w.\overrightarrow{A_0B_N}}{|w|})^2}$.
Since $\alpha_N=O(N^{-1})$ and 
$\alpha'_N-\alpha_N=O(N^{-2})$, it follows that
\begin{equation}
I_N^-
=\int_{-\mathfrak T'_N}^{0}(\alpha'_N-\max(\beta,\alpha_N))\, dx\, +O(N^{-5})\, .
\end{equation}
Now observe that 
$\alpha_N<\beta<\alpha'_N$ if and only if $\mathfrak T_N<A_0A<\mathfrak T'_N$ 
and in this case:
\[
\alpha'_N-\max(\beta,\alpha_N)=\alpha'_N-\beta
=(\mathfrak T'_N-AA_0)\left(\frac 1{A_0A'_0}-\frac 1{\frac{w.\overrightarrow{A_0B''_N}}{|w|}}\right)+O(N^{-3})\, .
\]
Otherwise $\alpha'_N-\max(\beta,\alpha_N)=\alpha'_N-\alpha_N$ and so
\begin{align*}
\alpha'_N-\max(\beta,\alpha_N)&=(d_0+A_0A)\left(\frac 1{\frac{w.\overrightarrow{A_0B''_N}}{|w|}}-\frac{1}{\frac{w.\overrightarrow{A_0B_N}}{|w|}}\right)
+\frac{d_0^2}{2(\frac{w.\overrightarrow{A_0B_N}}{|w|})^3}\left(\frac 1{ \mathfrak c_{B_0}}-\frac 1{ \mathfrak c_{B''}}\right)
+o\left(N^{-3}\right) \, .
\end{align*}
Therefore
$
I_N^-
=\frac{\Gamma_1}{(\frac{w.\overrightarrow{A_0B_N}}{|w|})^3}+\frac{\Gamma_2}{(\frac{w.\overrightarrow{A_0B_N}}{|w|})^4}+o(N^{-4})$,
with $\Gamma_1:=d_0^2A_0A'_0\, B''B_0$ and 
\[
\Gamma_2:= \frac{3d_0^2\, A_0A'_0B''B_0(A_0A'_0+B''B_0)}{2}
-\frac{d_0^3B''B_0}{2\mathfrak c_{A'_0}}+\frac{d_0^3A_0A'_0}{2}\left(\frac 1{ \mathfrak c_{B_0}}-\frac 1{ \mathfrak c_{B''}}\right) \, .
\]
Thus, due to \eqref{mu0L+Nw2}, \eqref{mu0L+Nw3} and \eqref{IntAN+}, we conclude that
\begin{align*}
\bar\mu(M_{(i_0 ,0)}\cap T^{-1}(M_{(i_1,L+N w)}))
&=\frac{\Gamma_1}{2|\partial \bar Q|(\frac{w.\overrightarrow{A_0B_N}}{|w|})^3}+\frac{\Gamma_3}{2|\partial \bar Q|(\frac{w.\overrightarrow{A_0B_N}}{|w|})^4}+o(N^{-4})\, ,
\end{align*}
with 
$
\Gamma_3:=\Gamma_2+\frac{d_0^3 B''B_0}{2\mathfrak c_{A_0}}
$.
We obtain \eqref{mu0L+Nw} by noticing that, since $\overrightarrow{BB_N}=N w$,
\[
\frac 1{(\frac{w.\overrightarrow{A_0B_N}}{|w|})^3}=
\frac 1{(N|w|+\frac{w.\overrightarrow{A_0B_0}}{|w|})^3}=
\frac 1{(N|w|)^3}\left(1-3\frac{w.\overrightarrow{A_0B_0}}{N|w|^2}\right)+o(N^{-4})\, ,
\]
and analogously that
$
\frac 1{(\frac{w.\overrightarrow{A_0B_N}}{|w|})^4}=
\frac 1{(N|w|)^4}+o(N^{-4})$.

\end{proof}

\section{Regularity of the projector $t\mapsto\Pi_t$ at $t=0$}
\label{sec:Pi}
In this section we state and prove Proposition~\ref{prop-expproj}, which is a generalization of Proposition~\ref{PROP1}. We do not assume \eqref{H0}.
We write $\pi_0:\Delta\rightarrow Y$ for the vertical projection from $\Delta$ to
its base $Y=\Delta_0$ given by $\pi_0(x,\ell)=(x,0)$ and $\omega:\Delta\rightarrow\mathbb N$ for the level map given by $\omega((x,\ell))=\ell$. 
To simplify notations we write $\mu_\Delta(h)$ for $\int_\Delta h\, d\mu_\Delta$.

\subsection{Banach spaces and regularity of the dominating eigenprojector}\label{sec:Banach}
We start by recalling  results from \cite{BCS91,Young98,Chernov99,SV07}.
First recall that the operator $P$ can be written as follows
\begin{equation}\label{formulaP}
\forall h\in L^1(\mu_\Delta),\quad Ph(x)=\sum_{Y\in f^{-1}(x)}e^{-\alpha(y)}h(y)\, ,\quad x\in \Delta\, ,
\end{equation}
with $\alpha\equiv 0$ outside the base of $\Delta$.
We write $s_0(\cdot,\cdot)$ for the separation time for $f$ on $\Delta$ corresponding to the separation time $s(\cdot,\cdot)$ in \cite{Young98}.
In particular, if $s_0(x,y)\ge n$ then the corresponding elements in $\bar M$ (or in $M_N$) are in the
same atom of $\xi_0^n$.
Recall the class of $\eta\in (0,1)$-H\"older functions defined in \eqref{DefiHolder}.
Choose $\beta\in(\eta,1)$ large enough so that $|\alpha(y_1)-\alpha(y_2)|\le C_\alpha \beta^{s_0(y_1,y_2)+1}$
if $s_0(y_1,y_2)\ge 1$.
The condition $\beta>\eta$ will ensure that the Banach spaces $\mathcal B_0$ and $\mathcal B$ described below will be tailored to the approximation of observables
considered in Theorem~\ref{THM1}.
We define the space $\mathcal B_0$ of Lipschitz functions $h:\Delta\rightarrow\mathbb C$ with respect to the metric $\beta^{s_0(\cdot,\cdot)}$, with norm
\[
\Vert h\Vert_{\mathcal B_0}:=\Vert h\Vert_\infty+\esup_{x,y\in\Delta\, :\, s_0(x,y)\ge 1}\frac{|h(x)-h(y)|}{\beta^{s_0(x,y)}}\, .
\]
Let $p>2$ and choose $\varepsilon>0$ small enough so that in particular $\sum_{\ell\ge 0} e^{p\ell\varepsilon}\mu_\Delta(\Delta_\ell)<\infty$,
writing $\Delta_\ell
$ for the $\ell$-th floor of the tower $\Delta$.
We let $\mathcal B$ be the space
of functions $h:\Delta\rightarrow\mathbb C$ such that the following quantity is finite
\[
\Vert h\Vert_{\mathcal B}:=\sup_{\ell\ge 1}e^{-\ell\varepsilon}\left(\Vert h_{|\Delta_\ell}\Vert_\infty+\esup_{x,y\in\Delta_\ell\, :\, s_0(x,y)\ge 1}\frac{|h(x)-h(y)|}{\beta^{s_0(x,y)}}\right)\, .
\]
 The choice of $\eps$ 
ensures that the Young Banach space $\mathcal B$ can be continuously injected in $L^p(\mu_\Delta)$. While $\cB_0$ is independent of $p$, $\cB$ depends on $p$ via $\eps$.
\begin{lem}\label{discont}
Let $H:g\mapsto P(\hat\kappa g)$. Then
$H^3(\mathbf 1)\not\in L^1(\mu_\Delta)$.
As a consequence, since $\mathbf 1\in\cB\subset L^1(\mu_\Delta)$, $P'_0=iH$ does acts neither on $\cB$ nor on $L^1(\mu_\Delta)$
\end{lem}
\begin{proof}
Note that
$H^3(\mathbf 1)=
P(\hat\kappa P(\hat\kappa P(\hat\kappa)))
=P^3(\hat\kappa\circ f^2. \hat\kappa\circ f. \hat\kappa)$,
using the fact that $\hat\kappa P(g)=P(g.\hat\kappa\circ f)$.
In particular $H^3(\mathbf 1)$ is integrable if and only if
$
\kappa\circ \bar T^2. \kappa\circ\bar  T.\kappa$
is integrable, i.e. if and only if $
\kappa\circ \bar T.\kappa. \kappa\circ \bar T^{-1}
$.
Let us prove that this random variable is not integrable.
Due to \cite[Proposition 9]{SV07}, there exist $K,K_0>0$ such that $|\kappa\circ \bar  T^{-1}|$ and $|\kappa\circ\bar  T|$
are both larger that $K\sqrt{|\kappa|}-K_0$.
Since $\kappa^2$ is not integrable, nor is
%
$\left| \kappa\circ\bar  T. \kappa. \kappa\circ \bar T^{-1}
\right|$.
\end{proof}
In what follows we exploit that $\cB$ is continuously embedded in $L^p(\mu_\Delta)$ and satisfies~\eqref{spgap-Sz} and~\eqref{spgap-Sz-bis}.
Using just  the information on big tail $\bar\mu(|\kappa|>N)=O( N^{-2})$,
\begin{lem}
\label{lem-expproj} Let $q\in[1,\frac{2p}{p+2})$ (so that $2(\frac 1q-\frac 1p)>1$) and $\gamma\in(1, 2\left(\frac 1q-\frac 1p\right))$, there exists  $C=C(q,\gamma)>0$  such that for all  $t\in[-\pi,\pi]^2$, 
$\|P_t-P_0-t\cdot P_0'\|_{\cB\to L^q}\le C t^\gamma$, 
with $P'_0:=P(i\kappa \cdot)$.
\end{lem}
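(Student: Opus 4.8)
\textbf{Proof plan for Lemma~\ref{lem-expproj}.}

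The plan is to control the operator norm $\|P_t-P_0-t\cdot P_0'\|_{\cB\to L^q}$ by splitting the phase factor $e^{it\cdot\hat\kappa}$ into a Taylor remainder and then estimating the resulting operator against the tail information $\bar\mu(|\kappa|>N)=O(N^{-2})$. For $h\in\cB$ and $x\in\Delta$, we write, using~\eqref{formulaP},
\[
(P_t-P_0-t\cdot P_0')h(x)=\sum_{y\in f^{-1}(x)}e^{-\alpha(y)}\bigl(e^{it\cdot\hat\kappa(y)}-1-it\cdot\hat\kappa(y)\bigr)h(y)=P\bigl(R_t\cdot h\bigr)(x),
\]
where $R_t(y):=e^{it\cdot\hat\kappa(y)}-1-it\cdot\hat\kappa(y)$. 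Since $P$ is a bounded positive operator on $L^q(\mu_\Delta)$ with $\|P\|_{L^q\to L^q}\le 1$ (it is the transfer operator of a measure-preserving map, hence an $L^1$-contraction, and also an $L^\infty$-contraction, so by interpolation an $L^q$-contraction), it suffices to bound $\|R_t\cdot h\|_{L^q(\mu_\Delta)}$.

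The key pointwise estimate is the interpolation inequality $|R_t(y)|=|e^{it\cdot\hat\kappa(y)}-1-it\cdot\hat\kappa(y)|\le C_\gamma\,|t|^\gamma\,|\hat\kappa(y)|^\gamma$ valid for any $\gamma\in[1,2]$ (obtained by combining $|e^{is}-1-is|\le s^2/2$ with $|e^{is}-1-is|\le 2|s|$, raised to powers $\gamma-1$ and $2-\gamma$ respectively). Hence
\[
\|(P_t-P_0-t\cdot P_0')h\|_{L^q}\le \|R_t\,h\|_{L^q}\le C_\gamma|t|^\gamma\,\bigl\||\hat\kappa|^\gamma\,h\bigr\|_{L^q}\le C_\gamma|t|^\gamma\,\bigl\||\hat\kappa|^\gamma\bigr\|_{L^r}\,\|h\|_{L^p},
\]
by H\"older's inequality with $\tfrac1q=\tfrac1r+\tfrac1p$, i.e. $\tfrac1r=\tfrac1q-\tfrac1p$. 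It remains to check that $|\hat\kappa|^\gamma\in L^r(\mu_\Delta)$, i.e. that $\hat\kappa\in L^{\gamma r}(\mu_\Delta)$; since $\mu_\Delta$ pushes to $\bar\mu$ under the factor maps and $\hat\kappa$ corresponds to $\kappa$, this is the condition $\int|\kappa|^{\gamma r}\,d\bar\mu<\infty$. Using $\bar\mu(|\kappa|>N)=O(N^{-2})$, the moment $\int|\kappa|^s\,d\bar\mu$ is finite precisely when $s<2$, so we need $\gamma r<2$, that is $\gamma<2/r=2\bigl(\tfrac1q-\tfrac1p\bigr)$, which is exactly the hypothesis; the constraint $q<\tfrac{2p}{p+2}$ is just what makes $2\bigl(\tfrac1q-\tfrac1p\bigr)>1$ so that such a $\gamma>1$ exists. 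Finally, since $\cB\hookrightarrow L^p(\mu_\Delta)$ continuously, $\|h\|_{L^p}\le C\|h\|_{\cB}$, and combining everything gives the claim with a constant depending only on $q$ and $\gamma$. The bound is uniform over $t\in[-\pi,\pi]^2$ because the estimate holds for all $t$ with the stated $|t|^\gamma$ dependence.

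The only mild subtlety — not really an obstacle — is checking that $P_0'=P(i\kappa\cdot)$ genuinely acts $\cB\to L^q$ (so that the difference operator makes sense), which follows from the same H\"older argument applied with the first-order term $it\cdot\hat\kappa(y)h(y)$: $\||\hat\kappa|\,h\|_{L^q}\le\|\hat\kappa\|_{L^{r'}}\|h\|_{L^p}$ is finite as soon as $r'<2$, and since $q<p$ we have $\tfrac1{r'}=\tfrac1q-\tfrac1p<\tfrac12$ under the standing hypothesis, so $\|\hat\kappa\|_{L^{r'}}<\infty$. I expect the genuine technical care in this lemma to be minimal; it is essentially a soft consequence of the second moment being barely infinite, and the main work of the paper lies instead in the companion Proposition~\ref{PROP1} and Proposition~\ref{prop-lambda}, where one must upgrade from the crude tail $O(N^{-2})$ to the sharp asymptotics of Lemma~\ref{lemm-tail} and track the logarithmic corrections.
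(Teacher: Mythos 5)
Your proposal is correct and follows essentially the same route as the paper: bound the $\cB\to L^q$ norm via $\cB\hookrightarrow L^p$, use that $P$ contracts $L^q$, apply H\"older with $\tfrac1q=\tfrac1p+\tfrac1r$ to the multiplication by $e^{it\cdot\hat\kappa}-1-it\cdot\hat\kappa$, and invoke the pointwise bound $|e^{ix}-1-ix|\ll|x|^\gamma$ together with the fact that $\hat\kappa$ has moments of every order below $2$, so that $\gamma r<2$ is exactly what is needed. The only (harmless) additions are your explicit justification that $P$ is an $L^q$-contraction and that $P_0'$ itself maps $\cB$ into $L^q$, which the paper leaves implicit.
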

\begin{proof}
Note that $q<p$.
Set $r:=\frac{qp}{p-q}$ so that $\frac 1q=\frac 1p+\frac 1r$. With this notation,
the upper bound on $\gamma$ can be rewritten $\gamma r<2$ and 
\begin{align*}
\|P_t-&P_0-t\cdot P_0'\|_{\cB\to L^q}\ll\|P_t-P_0-t\cdot P_0'\|_{L^p\to L^q}\\
&\le\left\|v\mapsto P\left((e^{it\cdot\hat\kappa}-1-it\cdot \hat\kappa )v\right)\right\|_{L^p\to L^q}\\
&\le\left\|v\mapsto (e^{it\cdot\hat\kappa}-1-it\cdot \hat\kappa )v\right\|_{L^p\to L^q}\le\left\|e^{it\cdot\hat\kappa}-1-it\cdot \hat\kappa \right\|_{L^r}\\
&\le\left\|t^\gamma\hat\kappa^\gamma \right\|_{L^r}
=t^\gamma\left(\mathbb E\left[\hat\kappa^{\gamma r}\right]\right)^{1/r}=O(t^\gamma)\, ,
\end{align*}
where we used the H\"older inequality at the penultimate line and the fact that $\hat\kappa$ admits moments of every order smaller than 2
at the last line. We have also used the fact that $|e^{i\hat\kappa}-1-i\hat\kappa|\le \min(|\hat\kappa|, | \hat\kappa|^2)\le |\hat\kappa|^\gamma$  for any $\gamma\in[1,2]$ and for all $x$ and that $\gamma r<2$.
\end{proof}
Given $q\in[1,2)$ up to taking $p>2q/(2-q)$ large enough (and so up to our choice of $\mathcal B$), we can adjust $\gamma$ so that $\gamma<2/q$ is as close to $2/q$ as we wish
(in particular as close to 2 as we wish if 
$q=1$).
Let $Y:=\Delta_0$ be the base of the tower $\Delta$. Throughout, we let $\mu_Y=\mu_\Delta(\cdot|Y)$.
As shown in~\cite{Chernov99, SV07},
the height of the tower $(\Delta, f,\mu_\Delta)$, which we denote by $\sigma:Y\to \N$, has exponential tail $\mu_Y(\sigma>n)\ll \theta_1^n$ for some $\theta_1\in(0,1)$. 
Using Lemma~\ref{lem-expproj} and building on the arguments used in~\cite{BalintGouezel06}, we obtain the following expansion of eigenprojector $\Pi_t$. 

\begin{prop}
\label{prop-expproj}
Let $b\in(p,+\infty]$.
For every $w\in\mathcal B_0$, every $v\in L^b(\mu_\Delta)$ constant on each $(a,\ell)$ (with $a\in\alpha$ and $0\le \ell<\sigma(a)$) such that $vw\in\mathcal B$, 
there exists $\Pi_0'(vw)$ belonging to $L^{\eta}(\mu_\Delta)$ for every $\eta\in [1,2)$ such that $1_Y\Pi'_0(vw)\in\mathcal B_0$.\\
Moreover, for every $p'\in(1, 4/3)$ and every $\gamma\in (1,\min(2\frac{p-1}p,\frac 4{p'}-2))$, there exists $C,\delta>0$ such that, for every $(v,w)$ as above and all  $t\in B_\delta(0)$,
\[
\|(\Pi_t-\Pi_0-t\cdot \Pi_0')(vw)\|_{L^{
p'}(\mu_\Delta)}\le  C\,|t|^\gamma (\|vw\|_{\mathcal B}+
\|w\|_{\mathcal B_0}\|v\|_{L^{b}(\mu_\Delta)}).
\]
and  $\Vert \Pi'_0(vw)\Vert_{L^\eta}\le C (\|vw\|_{\mathcal B}+
\|w\|_{\mathcal B_0}\|v\|_{L^{b}(\mu_\Delta)})$.
Moreover $\Pi'_0$ satisfies
\begin{equation}
\label{eq-derivPi0}
\Pi_0' (vw)(x)=\Pi_0' (vw)\circ\pi_0(x)+
i\sum_{m=0}^{\omega(x)-1}\hat\kappa\circ f^m(\pi_0(x))\int_\Delta vw\, d\mu_\Delta\, .
\end{equation}
\end{prop}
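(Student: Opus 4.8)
The plan is to follow the strategy of B\'alint--Gou\"ezel~\cite{BalintGouezel06} that expresses the eigenprojector $\Pi_t$ via the resolvent of $P_t$ restricted to a small circle around $1$, and then to push the expansion of $P_t$ from Lemma~\ref{lem-expproj} through the spectral formula. Concretely, $\Pi_t = \frac{1}{2\pi i}\oint_{|z-1|=\rho}(z-P_t)^{-1}\,dz$ for $\rho$ small and $t$ in a small ball $B_\delta(0)$, the spectral gap~\eqref{spgap-Sz}--\eqref{spgap-Sz-bis} guaranteeing that for such $t$ the only spectrum of $P_t$ inside the circle is the simple eigenvalue $\lambda_t$. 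First I would record that $z\mapsto (z-P_0)^{-1}$ is bounded on $\mathcal B$ (and, composing with the embedding $\mathcal B\hookrightarrow L^p$, also as an operator into $L^{p'}$) uniformly on the circle, away from $z=1$; the issue is only the behaviour near $z=1$, where the resolvent blows up like $(z-1)^{-1}$ on the one-dimensional eigenspace. The decomposition $\Pi_t - \Pi_0 - t\cdot\Pi_0'$ is then obtained by writing the difference of resolvents via the second resolvent identity,
\[
(z-P_t)^{-1}-(z-P_0)^{-1}=(z-P_t)^{-1}(P_t-P_0)(z-P_0)^{-1},
\]
iterating once more to isolate the term linear in $t$, and integrating; the remainder involves $P_t-P_0-t\cdot P_0'$ (which is $O(|t|^\gamma)$ by Lemma~\ref{lem-expproj}) plus a quadratic-type term $(P_t-P_0)(z-P_0)^{-1}(P_t-P_0)$ which one again controls by $\|P_t-P_0\|_{\mathcal B\to L^q}=O(|t|)$ together with a smoothing/boundedness step to re-enter $\mathcal B$.

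The subtlety, exactly as in~\cite{BalintGouezel06}, is that $P_t-P_0$ and $P_t-P_0-t\cdot P_0'$ are only bounded \emph{from $\mathcal B$ into $L^q$} (with $q<2$), not from $\mathcal B$ into $\mathcal B$, so one cannot naively multiply these operators together inside the resolvent formula. The standard fix is to exploit the tower structure: $P$ (and $P_t$) gains regularity off the base, and the return map to $Y=\Delta_0$ has a genuine spectral gap on a good Banach space, so that after applying $P$ once more (or after one return to the base) an $L^q$ bound can be converted back into a $\mathcal B_0$-bound on $Y$. This is why the statement splits $x$ into its base point $\pi_0(x)$ and its height $\omega(x)$, and why the hypothesis forces $v$ to be constant on each column $(a,\ell)$: the operator $\Pi_0'$ is built from the base piece $1_Y\Pi_0'(vw)\in\mathcal B_0$ together with the explicit ``cocycle'' correction $i\sum_{m=0}^{\omega(x)-1}\hat\kappa\circ f^m(\pi_0(x))\,\mu_\Delta(vw)$ coming from $P_0'=P(i\hat\kappa\,\cdot)$ acting on a function supported by the whole tower. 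I would first establish formula~\eqref{eq-derivPi0} by a formal differentiation of $\Pi_t$ at $t=0$ using $\Pi_0 P_0'\Pi_0$ and the off-diagonal blocks $\Pi_0 P_0'(I-\Pi_0)$, $(I-\Pi_0)P_0'\Pi_0$, then verify that the resulting object has the claimed integrability: $\hat\kappa$ has moments of all orders $<2$, $\omega$ has exponential tail (via $\sigma$ with $\mu_Y(\sigma>n)\ll\theta_1^n$ from \cite{Chernov99,SV07}), so $\sum_{m=0}^{\omega(x)-1}\hat\kappa\circ f^m(\pi_0(x))$ lies in $L^\eta$ for every $\eta<2$ by a direct tail computation (splitting on the value of $\omega$ and using that each $\hat\kappa\circ f^m$ is in $L^{\eta'}$ for $\eta'$ slightly bigger than $\eta$).

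For the quantitative bound I would choose the auxiliary exponent $q\in[1,\tfrac{2p}{p+2})$ so that both $\|P_t-P_0\|_{\mathcal B\to L^q}=O(|t|)$ and $\|P_t-P_0-t\cdot P_0'\|_{\mathcal B\to L^q}=O(|t|^\gamma)$ hold (Lemma~\ref{lem-expproj}), track how composing with the uniformly bounded resolvent $(z-P_0)^{-1}$ and with the extra copy of $P$ maps $L^q$ back into a space embedding into $L^{p'}$, and optimise the interpolation exponents to land exactly in the range $\gamma\in(1,\min(2\tfrac{p-1}{p},\tfrac{4}{p'}-2))$ and $p'\in(1,4/3)$; the constant $C$ then depends only on $q,\gamma,p'$ and the uniform resolvent bounds, and the two-term right-hand side $\|vw\|_{\mathcal B}+\|w\|_{\mathcal B_0}\|v\|_{L^b(\mu_\Delta)}$ appears because $vw$ enters through the $\mathcal B$-norm in the resolvent formula while the base correction term in~\eqref{eq-derivPi0} only sees $\mu_\Delta(vw)$ and the $\mathcal B_0$-norm of $w$ (with $v$'s size measured in $L^b$, $b>p$, to absorb the column heights). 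The main obstacle is precisely this regularity mismatch --- making rigorous the passage from $L^q$-bounds on the perturbation to $\mathcal B$- or $L^{p'}$-bounds on the projector difference --- which is handled by the tower-adapted "one extra application of $P$" argument borrowed and slightly refined from~\cite{BalintGouezel06}; everything else is bookkeeping with H\"older's inequality and the exponential tail of $\sigma$.
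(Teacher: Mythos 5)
Your overall frame --- the resolvent representation of $\Pi_t$, Lemma~\ref{lem-expproj} as the analytic input, the identification of the regularity mismatch ($P_t-P_0$ only bounded $\mathcal B\to L^q$) as the main obstacle, and the tail computation (exponential tail of $\sigma$, moments of $\hat\kappa$ below $2$) giving $\Pi_0'(vw)\in L^\eta(\mu_\Delta)$ for $\eta<2$ --- agrees with the paper, and that last integrability argument is essentially the one used. But the central step is left unresolved. You propose to control the quadratic remainder $(P_t-P_0)(z-P_0)^{-1}(P_t-P_0)$ and the term involving $P_t-P_0-t\cdot P_0'$ by a ``smoothing/boundedness step to re-enter $\mathcal B$'' after landing in $L^q$. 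No such step is available: there is no bounded map from $L^q(\mu_\Delta)$ back into $\mathcal B$, and $(z-P_0)^{-1}$ is not even bounded on $L^q$ for $z$ on the circle $|z-1|=\delta$ (part of the circle lies inside the unit disc, where the Neumann series $\sum_j z^{-j-1}P_0^j$ diverges, since $P_0$ has no spectral gap on $L^q$). The paper never composes these operators in an operator norm; instead it integrates over $Y$ first and proves a purely \emph{scalar} expansion of $\int_Y(\Pi_t-\Pi_0)(vw)\,d\mu_Y$ with the explicit constant $c_0(vw)$ (Lemma~\ref{lem-eigf2}), using $t$-dependent truncations $N',N,L$ of the Neumann series, the splitting $P_0^N=\Pi_0+N_0^N$, and in an essential way the hypothesis that $v$ is constant on each column $(a,\ell)$. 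That multi-scale scalar estimate is where most of the work lies, and it is absent from your plan.

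Second, your route to \eqref{eq-derivPi0} through the block formula built from $\Pi_0 P_0'(I-\Pi_0)$, $(I-\Pi_0)P_0'\Pi_0$ runs into the same undefined compositions. The paper instead uses the pointwise eigen-relation $\Pi_t(vw)(x)=\lambda_t^{-\omega(x)}e^{it\cdot\hat\kappa_{\omega(x)}(\pi_0(x))}\,\Pi_t(vw)(\pi_0(x))$, which reduces everything to the base $Y$: there one induces, observes (Lemma~\ref{lem-projbase}) that $1_Y\Pi_t(vw)$ is an eigenfunction at eigenvalue $1$ of the renewal-type operator $\tilde R_t=\sum_{n\ge1}\lambda_t^{-n}P_t^n(1_{Y\cap\{\sigma=n\}}\cdot)$, hence proportional to $Q_t(1_Y)$; and the family $\tilde R_t$ \emph{is} differentiable in operator norm on the base space (Lemmas~\ref{lem-Rn}--\ref{lem-deriv}), because $\hat\kappa_\sigma$ retains all moments $<2$ while $\sigma$ has exponential tails, so classical perturbation theory applies to $Q_t$. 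Combining the $Q_t$ expansion with the scalar expansion of $\int_Y\Pi_t(vw)\,d\mu_Y$ gives Proposition~\ref{prop-expproj-base}, and only then is the result propagated up the tower, the propagation producing both the cocycle term in \eqref{eq-derivPi0} and the error terms $I_1,\dots,I_4$ (the $\lambda_t^{-\omega(x)}-1$ term requiring the sup-norm control of $1_Y\Pi_t(vw)$ that only the base analysis provides). Without the inducing step, the eigenfunction identity, and the scalar expansion, your ``one extra application of $P$'' does not yield the $\mathcal B_0$-expansion of $1_Y\Pi_t(vw)$ nor the constant $c_0(vw)$, so the proof as sketched does not go through.
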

We postpone the proof of Proposition~\ref{prop-expproj} to the end of this section. We remark that Proposition~\ref{prop-expproj} cannot be proved
merely via the continuity arguments in~\cite{KellerLiverani99}, which is why we resort to building on the arguments in~\cite{BalintGouezel06}.
The key elements used in the proof of Proposition~\ref{prop-expproj} are : i)  obtain Lemma~\ref{lem-eigf2}, which gives 
the expansion of $t\mapsto\int_Y\Pi_t v\, d\mu_Y$  for suitable functions $v$, as  in Proposition~\ref{prop-expproj-base};  ii) the key observation in i) is that it is sensible to study the smoothness in $t$ of $\int_Y\Pi_t\, d\mu_Y$, first and use this to define $\mathbf 1_Y\Pi'_0$
and further, to control $\|1_Y\Pi_0'v\|_{\cB_0}$  for suitable functions $v$.
iii) use  i) and ii) together with formula \eqref{eq-derivPi0} to control $\Pi_0'v$ in $L^{p'}$
for suitable $v$ and $p'$. For further details on the use of Lemma~\ref{lem-eigf2} in defining $\mathbf 1_Y\Pi'_0$ we refer 
to the last paragraph in Subsection~\ref{subsec-def}.

\subsection{Regularity of $t\mapsto \mathbf 1_Y\Pi_t$.}
\label{subsec-def}

Recall that $\sigma$ corresponds to the first return time of $f$ to the base  $Y$.
In what follows, we let $F=f^\sigma:Y\rightarrow Y$ with
$F(y)=f^{\sigma(y)}(y)$ for all $y\in Y$ be the first return map.
Recall that $(Y, F,\mu_Y)$ is a Gibbs Markov map
 with respect to a suitable countable partition $\mathcal{Y}$ and that $\sigma$ is constant on each atom
of $\mathcal{Y}$ (the required definitions are recalled below).
Let $R: L^1(\mu_Y)\to L^1(\mu_Y)$ be the transfer operator of the Gibbs Markov base map $(Y, F=f^\sigma,\mu_Y)$ given by 
\begin{equation}\label{defR}
Rv(x):=\sum_{y\, :\, Fy=x}\chi(y)v(y)=\sum_{n\ge 1}P^n(1_{Y\cap\{\sigma=n\}}v)\, ,
\end{equation}
with $\chi=\frac{d\mu_Y}{d\mu_Y\circ F}:Y\to\R$. 
In particular, on $f^{-1}(Y)$, $e^{-\alpha}=\chi\circ\pi_0$.
Define $d_\beta(y,y')=\beta^{s(y,y')}$ where the  separation time for $F$, $s(y,y')$, is the least integer $n\ge0$ 
such that $F^ny$ and $F^ny'$ lie in distinct partition elements in $\alpha$.
The partition $\mathcal{Y}$ separates trajectories with $s(y,y')=\infty$ if and only if $y=y'$;  so $d_\beta$ is a metric.
The map $F$ is a {\em (full-branch) Gibbs-Markov map}, which means that
\begin{itemize}
\item $F|_a:a\to Y$ is a measurable bijection for each $a\in\mathcal{Y}$, and
\item 
$|\log\chi(y)-\log\chi(y')|\le C_\alpha d_\beta(y,y')$ for all $y,y'\in a$, $a\in\mathcal{Y}$
(since $s(\cdot,\cdot)\le s_0(\cdot,\cdot)$).
\end{itemize}
A consequence of this definition is that there is a constant $C>0$ such that 
\begin{align} \label{eq:GM}
\chi(y)\le C\mu_Y(a) \qquad\text{and} \qquad
|\chi(y)-\chi(y')|\le C\mu_Y(a)d_\beta(y,y'),
\end{align}
for all $a\in\mathcal{Y}$ and $y,y'\in a$.
Since $F$ is Gibbs Markov, it follows that
 (see, for instance,~\cite[Section 5]{Sarig02}):
\begin{itemize}
\item
The space $(\cB_1,\Vert\cdot\Vert_{\mathcal B_0})$ of $\beta$-H\"older continuous functions on $Y$ contains constant functions and  $\mathcal{B}_1\subset L^\infty( \mu_Y)$.
Note that $\cB_1$ corresponds to functions $h\in\mathcal B_0$ supported on $Y$.
For this reason and for the reader convenience, we have chosen to write also $\Vert\cdot\Vert_{\mathcal B_0}$
for the norm of $\cB_1$ (in order to avoid the introduction of unnecessary notation).
\item $R$ is quasi-compact on $\cB_1$ and $1$ is a simple eigenvalue for $R$, isolated in
the spectrum of $R$.
\end{itemize}

Building on the argument of~\cite[Lemma 3.15]{BalintGouezel06}, in this section we obtain
\begin{prop}
\label{prop-expproj-base}
Let $b\in(p,+\infty]$ and $\gamma\in (1,2\frac{p-1}p)$.
There exists $C>0$ such that,
for every $w\in\mathcal B_0$ and $v\in L^b(\mu_\Delta)$ constant on each $(a,\ell)$ (with $a\in\alpha$ and $0\le \ell<\sigma(a)$)
so that $vw\in\mathcal B$, the following holds true
\[\|1_Y(\Pi_t-\Pi_0-t\cdot \Pi_0')(vw)\|_{\cB_0}\le C\,|t|^\gamma (\|vw\|_{\mathcal B}+\|w\|_{\mathcal B_0}\|v\|_{L^{b}(\mu_\Delta)})\, ,
\]
\begin{equation}
\label{eq1yderivpi}
\mbox{with}\quad\quad\quad\quad\quad 1_Y\Pi'_0(vw):=\mu_\Delta(vw)Q'_0(1_Y)+c_0(vw)1_Y\in\cB_0\, ,
\end{equation}
\[
\mbox{and}\quad\quad\quad
c_0(vw):=i   \left(\sum_{j\ge 0}\int_\Delta \hat\kappa\circ f^j \, .vw\, d\mu_\Delta+ \mu_\Delta( vw )\sum_{j\ge 0}\int_{\Delta} \hat\kappa\,\frac{ 1_{Y}\circ f^{j+1}}{\mu_\Delta(Y)}\, d\mu_\Delta\right)\, ,
\]
and with $Q'_0(1_Y)\in\mathcal B_1$ given by \eqref{eq-Qtderiv} and $\Vert 1_Y\Pi'_0(vw)\Vert_\infty\le C'  
\|w\|_{\mathcal B_0}\|v\|_{L^{b}(\mu_\Delta)}$.
\end{prop}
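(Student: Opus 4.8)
The plan is to follow the argument of~\cite[Lemma 3.15]{BalintGouezel06}, now keeping track of the linear term. Write $\Pi_t=h_t\otimes\nu_t$ with $P_th_t=\lambda_th_t$, $\nu_tP_t=\lambda_t\nu_t$, $\nu_t(h_t)=1$, so that $h_0\equiv1$, $\nu_0=\mu_\Delta$ and $1_Y\Pi_t(vw)=\nu_t(vw)\,\hat h_t$ where $\hat h_t:=1_Yh_t$, $\hat h_0=1_Y$. Everything is reduced to the Gibbs--Markov base. Set $\hat\kappa_\sigma:=\sum_{j=0}^{\sigma-1}\hat\kappa\circ f^j$ and $\hat\kappa_\ell:=\sum_{j=0}^{\ell-1}\hat\kappa\circ f^j$ (both constant on the atoms of $\mathcal Y$, being determined by the first-return itinerary) and let $\tilde R_t:=R(\lambda_t^{-\sigma}e^{it\cdot\hat\kappa_\sigma}\,\cdot)$. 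By the first-return (renewal) identity $\sum_{n\ge0}z^n\,1_YP_t^n1_Y=(\mathrm{Id}-R(z^{\sigma}e^{it\cdot\hat\kappa_\sigma}\,\cdot))^{-1}$, the pole at $z=\lambda_t^{-1}$ identifies $\hat h_t$ with the normalized leading eigenfunction of $\tilde R_t$ (which has leading eigenvalue $1$, since $\lambda_0=1$ makes $\tilde R_0=R$ and $R1_Y=1_Y$), and, applied with a general $h\in\mathcal B$ on the right, it gives $\nu_t(vw)=\hat\nu_t(\Sigma_t(vw))$ up to a smooth normalization, where $\hat\nu_t$ is the leading left eigenfunctional of $\tilde R_t$ and $\Sigma_t$ is the column-reduction $\Sigma_th(y)=\sum_{\ell=0}^{\sigma(y)-1}w_t(y,\ell)\,h(y,\ell)$ with weights $w_t(y,\ell)=\lambda_t^{\,\ell}e^{-it\cdot\hat\kappa_\ell(y)}$, so that $w_0\equiv1$ and $\Sigma_0$ is column-summation. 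The dependence on $\lambda_t$ throughout will be harmless because $\lambda_t=1+O(|t|^\gamma)$: this follows from Lemma~\ref{lem-expproj} and the uniform quasi-compactness~\eqref{spgap-Sz}--\eqref{spgap-Sz-bis} by the standard refined perturbation theorem, the linear coefficient $\lambda_0'=i\,\mu_\Delta(\hat\kappa)$ vanishing by time-reversibility.

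The key point is that \emph{on the base} the twisted operator admits a first-order expansion in the \emph{strong} norm. Since $\hat\kappa_\sigma$ is constant on each $a\in\mathcal Y$, for $h\in\mathcal B_1$ one has
\[
\tilde R_th-Rh-t\,R(i\hat\kappa_\sigma h)=R\big((\lambda_t^{-\sigma}e^{it\hat\kappa_\sigma}-1-it\hat\kappa_\sigma)\,h\big),
\]
and, using $|\lambda_t^{-\sigma(a)}-1|\lesssim\sigma(a)|\lambda_t-1|$, the elementary bound $|e^{ix}-1-ix|\le|x|^\gamma$ ($1\le\gamma\le2$), and the Gibbs--Markov distortion~\eqref{eq:GM}, the $\mathcal B_1$-norm of the right-hand side is at most $C|t|^\gamma\big(\mu_Y(\sigma)+\mu_Y(|\hat\kappa_\sigma|^\gamma)\big)\|h\|_{\mathcal B_1}$; here $\mu_Y(|\hat\kappa_\sigma|^\gamma)<\infty$ for every $\gamma<2$, because the exponential tail of $\sigma$ and the bound $\mu_\Delta(|\hat\kappa|>N)=O(N^{-2})$ force $\hat\kappa_\sigma$ to have all moments strictly less than $2$. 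Hence $\|\tilde R_t-R_0-t\cdot R_0'\|_{\mathcal B_1\to\mathcal B_1}\lesssim|t|^\gamma$ with $R_0':=R(i\hat\kappa_\sigma\,\cdot)$ bounded on $\mathcal B_1$, so $t\mapsto\tilde R_t$ is $C^1$ into the bounded operators on $\mathcal B_1$ with $(\gamma-1)$-H\"older derivative. Kato-type perturbation theory of the quasi-compact operator $R_0$ with simple isolated eigenvalue $1$ then gives $\hat h_t=1_Y+t\,\hat h_0'+O(|t|^\gamma)$ in $\mathcal B_1$ and $\hat\nu_t=\mu_Y+t\,\hat\nu_0'+O(|t|^\gamma)$ in the $L^1(\mu_Y)$-dual (the left eigendata extends to $L^1(\mu_Y)$, its density lying in $\mathcal B_1\subset L^\infty$); $\hat h_0'$ and $\hat\nu_0'$ are explicit in $R_0'$ and produce the function $Q_0'(1_Y)\in\mathcal B_1$ of~\eqref{eq-Qtderiv}.

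It remains to expand $\nu_t(vw)=\hat\nu_t(\Sigma_t(vw))$, and this is where the rough factor $v\in L^b(\mu_\Delta)$ ($b>p$) enters---only through scalar coefficients. Expanding the weights columnwise, $w_t(y,\ell)=1-it\cdot\hat\kappa_\ell(y)+O\big(|t|^\gamma(|\hat\kappa_\ell(y)|^\gamma+\ell)\big)$, so that $\Sigma_0'(vw)\in L^1(\mu_Y)$ and
\[
\big\|\Sigma_t(vw)-\Sigma_0(vw)-t\,\Sigma_0'(vw)\big\|_{L^1(\mu_Y)}\lesssim|t|^\gamma\,\|w\|_\infty\!\int_\Delta\!\big(|\hat\kappa_\omega|^\gamma+\omega\big)|v|\,d\mu_\Delta\lesssim|t|^\gamma\,\|w\|_{\mathcal B_0}\|v\|_{L^b(\mu_\Delta)},
\]
where $\hat\kappa_\omega:=\sum_{j=0}^{\omega-1}\hat\kappa\circ f^j\circ\pi_0$ and the last inequality is H\"older's: $(|\hat\kappa_\omega|^\gamma+\omega)\in L^{b/(b-1)}(\mu_\Delta)$ because $\omega$ has exponential tails and $\hat\kappa_\omega$ has all moments $<2$, which requires $\gamma\cdot\tfrac{b}{b-1}<2$, i.e.\ (since $b>p$) exactly $\gamma<2\tfrac{p-1}{p}$---the restriction in the statement. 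Multiplying by the $L^1(\mu_Y)$-expansion of $\hat\nu_t$ yields $\nu_t(vw)=\mu_\Delta(vw)+t\,c_0(vw)+O\big(|t|^\gamma(\|vw\|_{\mathcal B}+\|w\|_{\mathcal B_0}\|v\|_{L^b})\big)$, where $c_0(vw)$---read off as $\hat\nu_0'(\Sigma_0(vw))+\mu_Y(\Sigma_0'(vw))$ plus a normalization-derivative term proportional to $\mu_\Delta(vw)$---telescopes into the stated Birkhoff-sum formula; its finiteness, and likewise the identity $1_Y\Pi_0'(vw)=\mu_\Delta(vw)Q_0'(1_Y)+c_0(vw)1_Y$ obtained by expanding the product $\nu_t(vw)\hat h_t$ and collecting, use the spectral gap~\eqref{spgap-Sz}--\eqref{spgap-Sz-bis} on $\mathcal B$ together with $\hat\kappa\in L^{p/(p-1)}(\mu_\Delta)$ ($p>2$) to sum $\sum_{j\ge0}\int_\Delta\hat\kappa\circ f^j\cdot vw\,d\mu_\Delta$ and $\sum_{j\ge0}\int_\Delta\hat\kappa\,1_Y\circ f^{j+1}\,d\mu_\Delta$. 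Finally, since $\gamma<2$, every term of the product $\nu_t(vw)\hat h_t$ beyond the linear order is $O\big(|t|^\gamma(\|vw\|_{\mathcal B}+\|w\|_{\mathcal B_0}\|v\|_{L^b})\big)$ in $\mathcal B_1=\mathcal B_0|_Y$, which is the asserted bound. The main obstacle is precisely this last reassembly: meshing the strong-norm base expansion with the treatment of the rough factor $v$ through $\Sigma_t$, extracting the $L^1(\mu_Y)$-remainder with the sharp exponent, and verifying that no dependence on $v$ beyond its $L^b$-norm escapes the scalar coefficients and the remainder.
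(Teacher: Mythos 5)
Your reduction to the base is set up correctly (the identity $\nu_t(vw)=c_t\,\hat\nu_t(\Sigma_t(vw))$ with weights $\lambda_t^{\ell}e^{-it\cdot\hat\kappa_\ell}$ can indeed be checked from $\nu_tP_t=\lambda_t\nu_t$, modulo justifying the interchange of $\nu_t$ with the level sum, which does not converge in $\cB$ in general), and your strong-norm expansion of $\tilde R_t$ on $\cB_1$ is essentially the paper's Lemmas~\ref{lem-Rn}--\ref{lem-deriv}. The fatal step is the pairing: you expand $\Sigma_t(vw)$ only in $L^1(\mu_Y)$ (unavoidably so, since $v$ is merely $L^b$ and the weights involve $\hat\kappa_\ell$), and then you pair it with an expansion of $\hat\nu_t$ claimed to hold ``in the $L^1(\mu_Y)$-dual'', asserting that the left eigendata has a density in $\cB_1\subset L^\infty$. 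That assertion is false for $t\neq 0$. If $\hat\nu_t$ had a density $\rho_t\in L^1(\mu_Y)$, the eigen-equation $\hat\nu_t\circ\tilde R_t=\hat\nu_t$ and the duality $\int_Y R(h)\,g\,d\mu_Y=\int_Y h\,g\circ F\,d\mu_Y$ would force $\rho_t=\lambda_t^{-\sigma}e^{it\cdot\hat\kappa_\sigma}\,\rho_t\circ F$ a.e., hence $\int_Y|\rho_t|\,d\mu_Y\ge|\lambda_t|^{-1}\int_Y|\rho_t\circ F|\,d\mu_Y=|\lambda_t|^{-1}\int_Y|\rho_t|\,d\mu_Y$; since $|\lambda_t|<1$ for small $t\neq0$ (by \eqref{devlambda}), this gives $\rho_t\equiv0$. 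So $\hat\nu_t$ (like $\nu_t$ itself) is a genuinely non-$L^1$ functional, Kato perturbation theory only controls it in $\cB_1^*$, and an $O(|t|^\gamma)$ remainder measured in $L^1(\mu_Y)$ cannot be fed into it. This is precisely the obstruction created by the rough factor $v$, and it is the reason the paper does not argue softly at this point.

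What the paper does instead, and what your argument is missing, is a direct proof of the scalar expansion $\int_Y(\Pi_t-\Pi_0)(vw)\,d\mu_Y=t\cdot c_0(vw)+O\bigl(|t|^\gamma(\Vert vw\Vert_{\cB}+\Vert w\Vert_{\cB_0}\Vert v\Vert_{L^b})\bigr)$ (Lemma~\ref{lem-eigf2}): a resolvent computation on the tower, splitting $(\xi I-P_t)^{-1}-(\xi I-P_0)^{-1}$, truncating the Neumann series at $t$-dependent ranks, using $P_0^j=\mu_\Delta(\cdot)\mathbf 1_\Delta+N_0^j$, decomposing $vw$ over the atoms $(a,\ell)$ with the exponential tail of $\sigma$ to make sense of the contributions of high levels, and exploiting a cancellation (the term $U_{1,N'}(t)$ vanishes by Cauchy's formula). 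This is where the constant $c_0(vw)$ and the mixed norm $\Vert vw\Vert_{\cB}+\Vert w\Vert_{\cB_0}\Vert v\Vert_{L^b}$ actually come from. One could also not salvage your route by replacing $\hat\nu_t$ with $\mu_Y(Q_t\,\cdot)$ and invoking Lemma~\ref{lem-eigf}, because $\Sigma_t(vw)$ and its first-order term $\sum_{\ell<\sigma}\hat\kappa_\ell\,v(\cdot,\ell)w(\cdot,\ell)$ are not of the admissible form ``($L^b$ function constant on atoms)$\times$($\cB_1$ function)'' treated there. Once that scalar lemma is available, the remaining assembly you describe (proportionality of $1_Y\Pi_t(vw)$ and $Q_t(1_Y)$ via Lemma~\ref{lem-projbase}, expansion of $Q_t(1_Y)$ with $Q_0'(1_Y)$ from \eqref{eq-Qtderiv}, and $\mu_Y(Q_0'1_Y)=0$) is indeed the paper's route.
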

\begin{rem}
The expression of  $c_0(vw)$ comes from Lemma~\ref{lem-eigf2}.
 As shown in Lemma~\ref{lemma-lemexpk1y} below, the sums in the formula for $c_0(vw)$ are absolutely convergent and in $O( \|w\|_{\mathcal B_0}\|v\|_{L^{b}(\mu_\Delta)})$.
\end{rem}

\begin{rem}\label{int00}
Under the assumptions of Proposition~\ref{prop-expproj},
if $\mu_\Delta(vw)=0$, then
\[
\Pi'_0(vw)=\Pi'_0(vw)\circ\pi_0=c_0(vw)=i   \sum_{j\ge 0}\int_\Delta \hat\kappa\circ f^j \, .vw\, d\mu_\Delta\, .
\]
If moreover $vw=v'w'-(v'w')\circ f$ is a coboundary of functions of the same kind, then
\[
\Pi'_0(vw)=\Pi'_0(vw)\circ\pi_0=c_0(vw)=
-i  \int_\Delta \hat\kappa \, .(v'w')\circ f\, d\mu_\Delta\, .
\]
\end{rem}
Recall that  $\sigma$ is the first return time of $f$ to the base $Y$. 
Write $\hat \kappa_n:=\sum_{j=0}^{n-1}\hat\kappa\circ f^j$
 and note that $\hat\kappa_\sigma:=\sum_{j=0}^{\sigma(\cdot)-1}\hat\kappa\circ f^j$ is the induced (to the base $Y$) version of $\hat\kappa$.
In order to define $1_Y\Pi'_0v$ we will justify that the derivative at $t=0$ of LHS of the following identity
$$1_Y\Pi_t (vw)=\int_Y\Pi_t (vw)\, d\mu_Y\frac{Q_t(1_Y)}{\int_Y Q_t (1_Yvw)\, d\mu_Y} \, $$
is well defined. Here, $Q_t$ is an eigenprojector of a perturbation $\tilde R_t$ of $R$.
More precisely, while $\Pi_t$ is the eigenprojector of $ P(\lambda_t^{-1}e^{it\hat\kappa}\cdot)$ associated to the eigenvalue 1, $Q_t$ is the main eigenprojector of $\tilde R_t=R(\lambda_t^{-\sigma(\cdot)}e^{it\hat\kappa_\sigma}\cdot)$  associated to the eigenvalue 1 (see below for the formal definition of $\tilde R_t$).
The above displayed formula allows us to exploit that in the RHS we only have 
$\int_Y\Pi_t (vw)\, d\mu_Y$ and $Q_t$. As explained  below $Q_t$ is much easier to understand than $\Pi_t$. Among other technical lemmas,  in the next subsection, we obtain the required expansion (in norm) for $Q_t$ (see Lemma~\ref{lem-eigf}).
Lemma~\ref{lem-eigf2} below allows us to control the derivative  of $\int_Y\Pi_t (vw)\, d\mu_Y$ at $0$.

%
%
%
%
%
\subsection{Technical lemmas}
\label{subsec-techlem}
We start with the following lemma on the integrability of $\hat\kappa_\sigma$.\begin{lem}
\label{lem-indk} For any $r\in (1,2)$, $\int_Y|\hat\kappa_\sigma|^r\, d\mu_Y<\infty$.
\end{lem}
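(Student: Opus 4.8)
The statement to prove is Lemma~\ref{lem-indk}: for $r\in(1,2)$, $\int_Y|\hat\kappa_\sigma|^r\,d\mu_Y<\infty$, where $\hat\kappa_\sigma=\sum_{j=0}^{\sigma-1}\hat\kappa\circ f^j$ is the induced free flight function. The basic strategy is to dominate $|\hat\kappa_\sigma|$ by a sum controlled by the return time $\sigma$, and exploit two facts already available in the excerpt: first, $\sigma$ has exponential tail, $\mu_Y(\sigma>n)\ll\theta_1^n$; second, $\hat\kappa$ (equivalently $\kappa$) has a polynomial tail of order $3$, i.e. $\bar\mu(|\kappa|>N)=O(N^{-2})$ in the crude form used in Lemma~\ref{lem-expproj} (and the sharper $\sim C N^{-3}$ from Lemma~\ref{lemm-tail}), so $\hat\kappa\in L^q(\mu_\Delta)$ for every $q<2$.

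\textbf{Step 1: a pointwise bound.} On each atom $a\in\mathcal Y$ the return time $\sigma$ is constant, say $\sigma\equiv\sigma(a)$, so for $y\in a$ we have $|\hat\kappa_\sigma(y)|\le\sum_{j=0}^{\sigma(a)-1}|\hat\kappa\circ f^j(y)|$. Using the elementary inequality $(\sum_{j=0}^{m-1}b_j)^r\le m^{r-1}\sum_{j=0}^{m-1}b_j^r$ for $r\ge1$ (Jensen / power-mean), we get
\[
|\hat\kappa_\sigma(y)|^r\le \sigma(a)^{r-1}\sum_{j=0}^{\sigma(a)-1}|\hat\kappa\circ f^j(y)|^r\, .
\]
Integrating over $Y$ and decomposing into atoms,
\[
\int_Y|\hat\kappa_\sigma|^r\,d\mu_Y\le\sum_{a\in\mathcal Y}\sigma(a)^{r-1}\sum_{j=0}^{\sigma(a)-1}\int_{a}|\hat\kappa\circ f^j|^r\,d\mu_Y
=\sum_{n\ge1}n^{r-1}\sum_{j=0}^{n-1}\int_{\{\sigma=n\}}|\hat\kappa\circ f^j|^r\,d\mu_Y\, .
\]

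\textbf{Step 2: control each inner integral and sum.} The sets $\{\sigma=n\}\subset Y$, for $j<n$, have images $f^j(\{\sigma=n\})$ that are pairwise disjoint (over $j$) and contained in $\Delta$; more precisely the union over $0\le j<\sigma$ of the $j$-th iterates tiles $\Delta$, and $\mu_Y$ restricted to $Y$ relates to $\mu_\Delta$ by $\mu_\Delta(Y)=1/\bar\mu_\Delta(\Delta)\cdot$const (the tower has finite measure since $\sigma\in L^1$). The clean way is: for fixed $n$, $\sum_{j=0}^{n-1}\int_{\{\sigma=n\}}|\hat\kappa\circ f^j|^r\,d\mu_Y\le C\int_{\Delta_{(n)}}|\hat\kappa|^r\,d\mu_\Delta$ where $\Delta_{(n)}$ is the part of the tower sitting over $\{\sigma=n\}$; summing over $n$ with the weight $n^{r-1}$ and invoking the exponential tail $\mu_Y(\sigma>n)\ll\theta_1^n$ together with $\hat\kappa\in L^r(\mu_\Delta)$ (valid since $r<2$) gives convergence. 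Concretely, bound $\int_{\{\sigma=n\}}|\hat\kappa\circ f^j|^r\,d\mu_Y$ crudely by $\|\hat\kappa\|_{L^r(\mu_\Delta)}^r$-type terms times $\mu_Y(\sigma=n)^{1-r/s}$ for a slightly larger exponent $s<2$ via H\"older, so the $j$-sum contributes a factor $n$ and one is left with $\sum_n n^r\,\mu_Y(\sigma=n)^{1-r/s}$, which converges because $\mu_Y(\sigma=n)$ decays exponentially. Alternatively, and more simply, since $\hat\kappa$ is $f$-integrable to the power $r$ over all of $\Delta$ and $\Delta$ has finite measure, $\sum_{n\ge1}\sum_{j=0}^{n-1}\int_{\{\sigma=n\}}|\hat\kappa\circ f^j|^r\,d\mu_Y = \int_\Delta|\hat\kappa|^r\,d\mu_\Delta<\infty$; the only extra ingredient needed to absorb the polynomial weight $n^{r-1}$ is the exponential tail, which dominates any polynomial, so one splits $n^{r-1}\le C_\epsilon e^{\epsilon n}$ and uses $\sum_n e^{\epsilon n}\int_{\{\sigma=n\}\times\{0\le j<n\}}|\hat\kappa|^r<\infty$ — this is exactly the type of summability ensured by the choice of $\varepsilon$ made for the Banach space $\mathcal B$ ($\sum_\ell e^{\ell\varepsilon}\theta_1^\ell<\infty$).

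\textbf{Main obstacle.} The only genuine subtlety is that the bound $\hat\kappa\in L^r(\mu_\Delta)$ just barely holds — it fails for $r=2$ because of the $N^{-3}$ tail — so the argument must be quantitative enough to keep the exponent $r$ strictly below $2$ and carefully pair the polynomial weight $n^{r-1}$ (and the extra factor $n$ from the $j$-sum) against the exponential return-time tail rather than against integrability of $\hat\kappa$. Handling the interaction of these two tails cleanly (via H\"older with a buffer exponent $s\in(r,2)$, or via the $e^{\epsilon n}$ trick) is the crux; everything else is the routine power-mean inequality and rearrangement of the sum.
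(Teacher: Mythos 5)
Your main argument is correct and essentially the same as the paper's: both proofs combine the power-mean bound $|\hat\kappa_\sigma|^{r}\ll \sigma^{r-1}\sum_{j<\sigma}|\hat\kappa\circ f^j|^{r}$ with a H\"older step that trades a small loss in the exponent on $\hat\kappa$ (kept strictly below $2$, where $\hat\kappa$ is still integrable on $\Delta$) for a factor $\mu_Y(\sigma=n)^{c}$, whose exponential decay absorbs the polynomial weights in $n$; the paper merely applies H\"older before the power-mean inequality instead of after, which is immaterial. (Your parenthetical "simpler alternative" via $n^{r-1}\le C_\epsilon e^{\epsilon n}$ alone would not be justified without exactly this H\"older decoupling, but since you present the buffer-exponent route as the actual argument, the proof stands.)
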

\begin{proof} First, due to the H\"older inequality,
\[
 \int_Y|\hat\kappa_\sigma|^r\, d\mu_Y 
 =\sum_{n\ge 1}\int_{Y\cap\{\sigma=n\}}|\hat\kappa_n|^r\, d\mu_Y
 \le\sum_{n\ge 1}\mu(\sigma=n)^{1/q}\int_Y|\hat\kappa_n|^{rp}1_{\{\sigma=n\}}\, d\mu_Y\, ,
\]
with $p\in(1,2/r)$ and $q=p/(p-1)$ so that $1/p+1/q=1$.
Let $s=rp/(rp-1)$ so that $1/(rp)+1/s=1$.
Using the H{\"o}lder inequality for inner products, we have that for any $n\ge 1$
\[
 |\hat\kappa_n|^{rp} =|\sum_{j=0}^{n-1}\hat\kappa \circ f^j\cdot 1|^{rp}
 \le 
 \left(\left|\sum_{j=0}^{n-1}1\right|^{1/s} \left(\sum_{j=0}^{n-1}|\hat\kappa \circ f^j|^{rp}\right)^{1/rp}\right)^{rp}\\
  \le n^{rp/s}\sum_{j=0}^{n-1}|\hat\kappa \circ f^j|^{rp}\, ,
\]
and thus
$ \int_Y|\hat\kappa_\sigma|^r\, d\mu_Y 
 \le\sum_{n\ge 1}\mu(\sigma=n)^{1/q}n^{rp/s}\int_Y\sum_{j=0}^{n-1}|\hat\kappa \circ f^j|^{rp} 1_{\{\sigma=n\}}\, d\mu_Y$
which leads to
\[
\int_Y|\hat\kappa_\sigma|^r\, d\mu_Y \le\sum_{n\ge 1}C_1\theta_1^{n/q}n^{rp/s}(\mu(Y))^{-1}\int_\Delta|\hat\kappa |^{rp}\, d\mu_\Delta<\infty\, .
\]
\end{proof}

We define $\tilde R_t:=\sum_{n\ge 1}\lambda_t^{-n}P_t^n(1_{Y\cap\{\sigma=n\}}v)=\sum_{n\ge 1}\lambda_t^{-n}R_n(e^{it\hat\kappa_n}\cdot)$,
with $R_nv:=R(1_{\{\sigma=n\}}v)=P^n(1_{Y\cap\{\sigma=n\}}v)$.
The next lemma 
provides some useful estimates on $R_n$.

\begin{lem}\label{lem-Rn}
Let $b\in(2,+\infty]$.
Fix $q\in (\frac b{b-1},2)$ (with convention $\frac b{b-1}=1$ if $b=\infty$) and $\varepsilon_0\in(0,1)$. Then, for every   $\gamma\in [1,2/q)$, there exist $C_0$ and $\rho\in(0,1)$ so that  for all $t$ small enough, all $w \in \cB_1$, all $n\ge 1$ and all $v_Y\in  L^b(\mu_Y)$ constant on each atom of the partition $\alpha$,
\begin{align}
\label{eq-rnv0}
\|R_n((e^{it\hat\kappa_\sigma}-1-it\hat\kappa_\sigma) v_Y w)\|_{\cB_0}
&\le C_0 \rho^n |t|^\gamma\, \|\hat\kappa_\sigma\|^\gamma_{L^{\gamma q}(\mu_Y)}\, \|v_Y\|_{ L^{b}(\mu_Y)} \| w \|_{\cB_0}\\
\label{eq-rnv1}\Vert R_n(\hat \kappa_\sigma v_Y w)\Vert_{\cB_0} &\le C_0\rho^n \|v_Y\|_{L^b(\mu_Y)}
 \| \hat \kappa_\sigma\|^\gamma_{L^{\gamma q}(\mu_Y)} \| w\|_{\cB_0}\, , \\
 \label{eq-rnv2} \Vert R_n(v_Y w)\Vert_{\cB_0} &\le C_0\rho^n \|v_Y\|_{L^2(\mu_Y)}  \| w\|_{\cB_0}\, .
\end{align}
\end{lem}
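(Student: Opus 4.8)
The plan is to estimate each of the three norms $\|R_n(\cdot)\|_{\cB_0}$ by splitting the $\cB_0$-norm into its sup-norm part and its Lipschitz part (with respect to the metric $\beta^{s_0(\cdot,\cdot)}$), and to exploit that $R_n v = P^n(1_{Y\cap\{\sigma=n\}}v)$ together with the exponential tail $\mu_Y(\sigma=n)\ll\theta_1^n$ and the Gibbs--Markov distortion bounds \eqref{eq:GM}. For the sup-norm parts, one uses that $\|R_n u\|_\infty \le \|P^n\|_{L^1\to\cdot}$-type bounds combined with the Gibbs--Markov contraction on the return partition; writing $R_n = R(1_{\{\sigma=n\}}\cdot)$ and using that $R$ contracts the Lipschitz seminorm while $1_{\{\sigma=n\}}$ localizes on a union of atoms of total $\mu_Y$-mass $O(\theta_1^n)$, one gets a gain of a geometric factor $\rho^n$ (with $\rho$ slightly larger than $\theta_1$ to absorb polynomial losses). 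The mass $\mu_Y(\{\sigma=n\})$ is converted into the $L^2$ (resp.\ $L^b$) norm of $v_Y$ via H\"older: $\int_{\{\sigma=n\}}|v_Y|\,|w|\,d\mu_Y \le \mu_Y(\sigma=n)^{1-1/b}\|v_Y\|_{L^b}\|w\|_\infty$, and similarly for the weighted versions.

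For \eqref{eq-rnv2} one takes $b$ such that $q=2$ works directly: bound $\|R_n(v_Y w)\|_{\cB_0} \le C\rho^n \|1_{\{\sigma=n\}} v_Y w\|_{\mathcal Y\text{-adapted norm}}$ and then use Cauchy--Schwarz $\int_{\{\sigma=n\}}|v_Y w| \le \mu_Y(\sigma=n)^{1/2}\|v_Y\|_{L^2}\|w\|_\infty$, noting $\|w\|_\infty\le\|w\|_{\cB_0}$; the distortion estimate \eqref{eq:GM} handles the Lipschitz seminorm of $R_n(v_Y w)$ since $v_Y$ is constant on atoms of $\alpha$ (hence on atoms of the return partition $\mathcal Y$) so it does not contribute to oscillation after applying $R$. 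For \eqref{eq-rnv1}, the same scheme is applied with an extra factor $\hat\kappa_\sigma$; since $\hat\kappa_\sigma$ is \emph{not} constant on atoms in general but \emph{is} measurable and in $L^r$ for all $r<2$ by Lemma~\ref{lem-indk}, one uses H\"older with exponents $\gamma q$ (for $\hat\kappa_\sigma$, noting $\gamma q<2$) and absorbs the rest, picking up $\|\hat\kappa_\sigma\|_{L^{\gamma q}}^\gamma$. Actually the cleanest route is to treat $\hat\kappa_\sigma v_Y$ as the new ``weight'' and to control its $L^{q'}$-norm on $\{\sigma=n\}$ for a suitable $q'$, again multiplying by $\mu_Y(\sigma=n)^{1/b}$ from the $v_Y$ factor and by a polynomially-in-$n$ bounded quantity from $\hat\kappa_\sigma$ on $\{\sigma=n\}$; the polynomial growth is swallowed into $\rho^n$. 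For \eqref{eq-rnv0}, one further uses the elementary inequality $|e^{ix}-1-ix|\le |x|^\gamma$ for $\gamma\in[1,2]$, so that $|(e^{it\hat\kappa_\sigma}-1-it\hat\kappa_\sigma)| \le |t|^\gamma |\hat\kappa_\sigma|^\gamma$ pointwise, reducing \eqref{eq-rnv0} to \eqref{eq-rnv1} with $\hat\kappa_\sigma$ replaced by $\hat\kappa_\sigma^\gamma$ and an extra $|t|^\gamma$; one must check $\gamma\cdot(\gamma q)$-integrability, i.e.\ that $\gamma^2 q<2$, which holds for $\gamma$ close enough to $1$ given $q<2$ — alternatively phrase the H\"older split so that only $\|\hat\kappa_\sigma\|_{L^{\gamma q}}^\gamma$ appears, which is finite by Lemma~\ref{lem-indk} since $\gamma q<2$.

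The main obstacle, and the place requiring care, is the interplay between the three exponents: one must choose $\rho\in(\theta_1,1)$, the H\"older exponent pairing up $v_Y\in L^b$, $\hat\kappa_\sigma\in L^{\gamma q}$, and the residual factor, and simultaneously keep all integrals over $\{\sigma=n\}$ bounded by $C_0\rho^n$ times the stated norms uniformly in $n$ and in small $t$. The constraint $q\in(\frac{b}{b-1},2)$ is exactly what makes $\frac1b + \frac1{?}+\frac1{\gamma q}$-type H\"older inequalities close with room to spare for the $\hat\kappa_\sigma$ moments; the polynomial-in-$n$ factors (coming from $\|\hat\kappa_n\|$ on $\{\sigma=n\}$ being $O(n)$ in sup-norm, or from distortion constants) are harmless because $n^k\theta_1^n \le C_\rho \rho^n$. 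A secondary technical point is that $R_n$ acts nicely on $\cB_0$ only after observing that $R_n = P^n\circ(1_{Y\cap\{\sigma=n\}}\cdot)$ maps into functions supported on $Y$, that on such functions the $\cB_0$-norm reduces to the $\cB_1$-norm, and that $R$ (hence $R_n$) enjoys the standard Gibbs--Markov Lasota--Yorke / distortion inequality giving $\|R_n u\|_{\cB_1}\le C(\|u\|_\infty + \text{localized oscillation})$ on the union of $\{\sigma=n\}$-atoms, with the key gain being that the number and total measure of those atoms is $O(\theta_1^n)$. Once these ingredients are lined up, each of \eqref{eq-rnv0}--\eqref{eq-rnv2} follows by a direct though somewhat bookkeeping-heavy H\"older computation, exactly parallel to \cite[Lemma 3.15 and surrounding estimates]{BalintGouezel06}.
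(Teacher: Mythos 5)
Your overall strategy coincides with the paper's: a distortion-based bound for $R_n$ acting on products, H\"older against the exponential tail of $\sigma$ to produce the factor $\rho^n$, and $|e^{ix}-1-ix|\le|x|^\gamma$ to reduce \eqref{eq-rnv0} to a $\gamma q$-moment of $\hat\kappa_\sigma$; the paper channels all of this through the single estimate \eqref{RnDA} (taken from Melbourne--Terhesiu and justified in Appendix~\ref{sec-RnDA}) followed by a three-exponent H\"older inequality. However, there is a genuine flaw in your treatment of the $\hat\kappa_\sigma$-weighted bounds \eqref{eq-rnv0}--\eqref{eq-rnv1}. You assert that $\hat\kappa_\sigma$ is not constant on the atoms of $\mathcal Y$, and then compensate by invoking ``a polynomially-in-$n$ bounded quantity from $\hat\kappa_\sigma$ on $\{\sigma=n\}$'' (and later ``$\|\hat\kappa_n\|$ on $\{\sigma=n\}$ being $O(n)$ in sup-norm''), to be absorbed into $\rho^n$. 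In the infinite-horizon setting this is false: $\hat\kappa$ is unbounded, with tail $\bar\mu(|\kappa|>N)\approx N^{-2}$, and nothing in the tower construction bounds $|\hat\kappa|$ in terms of $\sigma$, so there is no sup-norm control of $\hat\kappa_\sigma$ on $\{\sigma=n\}$ — this is precisely why Lemma~\ref{lem-indk} only provides $L^r$-integrability for $r<2$. With that claim removed, your control of the Lipschitz seminorm of $R_n(\hat\kappa_\sigma v_Y w)$ and of $R_n((e^{it\hat\kappa_\sigma}-1-it\hat\kappa_\sigma)v_Yw)$ is unjustified: the distortion bound \eqref{eq:GM} handles the oscillation coming from $\chi$ and from $w$, but one still needs to know that the weight $h=\hat\kappa_\sigma v_Y$ does not oscillate within an atom, otherwise the seminorm cannot be bounded by $\|1_{\{\sigma=n\}}h\|_{L^1(\mu_Y)}\|w\|_{\cB_0}$ alone.

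The repair is exactly the fact you rejected: $\hat\kappa$ is constant on each level set $(a,\ell)$ and $\sigma$ is constant on each $a\in\mathcal Y$, so $\hat\kappa_\sigma=\sum_{j=0}^{\sigma(a)-1}\hat\kappa\circ f^j$ \emph{is} constant on every atom, as are $v_Y$ and $e^{it\hat\kappa_\sigma}-1-it\hat\kappa_\sigma$. This constancy is what makes the key estimate \eqref{RnDA} (the analogue of Assumption (A1) in Melbourne--Terhesiu) applicable to all the weights needed here, giving $\|R_n(hw)\|_{\cB_0}\ll\|1_{\{\sigma=n\}}h\|_{L^1(\mu_Y)}\|w\|_{\cB_0}$. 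The three estimates then follow from the H\"older splitting $\|1_{\{\sigma=n\}}|\hat\kappa_\sigma|^\gamma v_Y\|_{L^1(\mu_Y)}\le\|v_Y\|_{L^b(\mu_Y)}\,\|\hat\kappa_\sigma\|^\gamma_{L^{\gamma q}(\mu_Y)}\,(\mu_Y(\sigma=n))^{1-\frac1b-\frac1q}$, where $q>\frac b{b-1}$ guarantees $1-\frac1b-\frac1q>0$ and hence the exponential factor; note this is the version of your own alternative in which only $\|\hat\kappa_\sigma\|^\gamma_{L^{\gamma q}(\mu_Y)}$ appears, so no spurious condition such as $\gamma^2q<2$ is required.
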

Note that, in this lemma,  $1<\gamma<2(1-\frac 1b)$ and that up to adapting the value of $q$, we can take $\gamma$ 
as close  to $2(1-\frac 1b)$ as we wish.
\begin{proof}
By the arguments used in ~\cite[Proof of Proposition 12.1]{MelTer17} and exploiting that $v_Y$ and $\hat\kappa_\sigma$ are constant on every $a\in\mathcal{A}$,
\begin{equation}\label{RnDA}
\forall w\in\cB_1,\ \forall h\in L^1(\mu_Y),\quad \|R_n( h w)\|_{\cB_0}\ll \|1_{\{\sigma=n\}} h\|_{L^1(\mu_Y)}\|w\|_{\cB_0}\, ,
\end{equation}
for $h\in\{(e^{it\hat\kappa_\sigma}-1-it\hat\kappa_\sigma) v_Y, v_Y,\hat\kappa_\sigma, \hat\kappa_\sigma v_Y\}$.
A justification of \eqref{RnDA} based on ~\cite[Proof of Proposition 12.1]{MelTer17} is provided in Appendix~\ref{sec-RnDA}.
We note that  since $\sigma$ has exponential tail, equation~\eqref{RnDA}  and H{\"o}lder inequality imply~\eqref{eq-rnv1} and \eqref{eq-rnv2}.
Next, by Lemma~\ref{lem-indk},  
for any $r \in (1,2)$,
$\hat\kappa_\sigma\in L^r(\mu_Y)$.
Since $\gamma q\in[1,2)$, using the same argument as in the proof of Lemma~\ref{lem-expproj} combined with H\"older inequality, we have
\[
\int_Y 1_{\{\sigma=n\}}|(e^{it\cdot \hat\kappa_\sigma}-1-it\cdot \hat\kappa_\sigma)\, v_Y|\, d\mu_Y\le  \Vert v_Y\Vert_{L^b(\mu_Y)}\Vert (t\cdot \hat\kappa_\sigma)^\gamma\Vert_{L^q(\mu_Y)}  (\mu_Y(\sigma=n))^{1-\frac 1b-\frac 1q}\, ,
\]
which leads to \eqref{eq-rnv0}.
Note that $\gamma q<2$ ensures that $\hat\kappa_\sigma^\gamma\in L^{q}(\mu_Y)$.
The result follows  from the previous two displayed inequalities since $\sigma$ has exponential tail.~\end{proof}

Note that $\tilde R_0=R$ and that \eqref{devlambda}
implies that  $\lambda_0'= \frac{d}{dt}\lambda_t|_{t=0}=0$.
The next lemma shows that $t\mapsto \tilde R_t \in\mathcal B_1$ is differentiable at $t=0$
with derivative $\tilde R'_0:=R(i\hat\kappa_\sigma \cdot)=\sum_{n=0}^\infty R_n(i\hat\kappa_n \cdot)$, and that this is also true if we replace $w\in\mathcal B_1$
by $v_Yw$ as in Lemma~\ref{lem-Rn}.
\begin{lem}
\label{lem-deriv} 
Let $b,q,\gamma$ as in Lemma~\ref{lem-Rn}.
Then for every   $\gamma\in [1,2/q)$, there exists 
$C>0$ such that for $t$ small enough and all $w \in \cB_1$
and for any $v_Y$ as in Lemma~\ref{lem-Rn},
\begin{align}
\|R(v_Yw)\|_{\cB_0} &\le C  \|v_Y\|_{ L^{b}(\mu_Y)}\Vert w\Vert_{\cB_0},\label{Rp-0}\\
\|\tilde R'_0(v_Yw)\|_{\cB_0} &\le C \|\hat\kappa_\sigma\|^\gamma_{L^{\gamma q}(\mu_Y)}\, \|v_Y\|_{ L^{b}(\mu_Y)}\Vert w\Vert_{\cB_0},\label{Rp-a}\\
\|(\tilde R_t-\tilde R_0-t\cdot \tilde R'_0)(v_Y w)\|_{\cB_0} &\le C |t|^\gamma(\|\hat\kappa_\sigma\|_{L^{\gamma q}(\mu_Y)}^\gamma\, \|v_Y\|_{ L^{b}(\mu_Y)})\Vert w\Vert_{\cB_0}\, .\label{Rp-c}
\end{align}

\end{lem}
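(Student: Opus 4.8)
The plan is to decompose the three estimates of Lemma~\ref{lem-deriv} as telescoping sums over the return levels $n$ and to apply Lemma~\ref{lem-Rn} term by term, summing the geometric factors $\rho^n$. Concretely, since $\tilde R_t v=\sum_{n\ge 1}\lambda_t^{-n}R_n(e^{it\hat\kappa_n}v)$ and $\tilde R_0=R=\sum_{n\ge 1}R_n$, $\tilde R'_0=\sum_{n\ge 1}R_n(i\hat\kappa_n\cdot)$ (using $\lambda'_0=0$, which follows from~\eqref{devlambda}), I would write
\[
(\tilde R_t-\tilde R_0-t\cdot\tilde R'_0)(v_Yw)=\sum_{n\ge 1}\Big[\lambda_t^{-n}R_n\big((e^{it\hat\kappa_n}-1-it\cdot\hat\kappa_n)v_Yw\big)+(\lambda_t^{-n}-1)R_n(v_Yw)+(\lambda_t^{-n}-1)R_n(it\cdot\hat\kappa_n v_Yw)\Big],
\]
and bound each of the three families of summands. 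The first family is handled directly by~\eqref{eq-rnv0}, which already has the geometric factor $\rho^n$ and the correct $|t|^\gamma$ dependence; summing over $n$ gives a convergent series. For the second and third families I would use the expansion~\eqref{devlambda}, which gives $|\lambda_t-1|=O(|t|^2\log(1/|t|))=O(|t|^\gamma)$ for $\gamma<2$, and hence $|\lambda_t^{-n}-1|\le n\,|\lambda_t-1|\,\sup|\lambda_t^{-n+1}|=O(n|t|^\gamma)$ for $t$ small (since $|\lambda_t|$ stays bounded below near $1$); the extra factor $n$ is absorbed by the $\rho^n$ coming from~\eqref{eq-rnv2} (resp.~\eqref{eq-rnv1}), keeping the series summable. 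This yields~\eqref{Rp-c}. Estimates~\eqref{Rp-0} and~\eqref{Rp-a} are simpler: \eqref{Rp-0} follows from $\|R(v_Yw)\|_{\cB_0}\le\sum_n\|R_n(v_Yw)\|_{\cB_0}\le C\sum_n\rho^n\|v_Y\|_{L^2}\|w\|_{\cB_0}$ by~\eqref{eq-rnv2} together with $\|v_Y\|_{L^2}\le\|v_Y\|_{L^b}$ (as $b\ge 2$ and $\mu_Y$ is a probability measure), and~\eqref{Rp-a} follows the same way from~\eqref{eq-rnv1}.

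The one point requiring a little care is that the $\hat\kappa_n$ appearing inside $R_n$ agrees on $Y\cap\{\sigma=n\}$ with $\hat\kappa_\sigma$, so that the $L^{\gamma q}(\mu_Y)$ norm of $\hat\kappa_\sigma$ really is the relevant quantity and Lemma~\ref{lem-Rn} applies verbatim; this is exactly why $R_n(h\cdot)=R(1_{\{\sigma=n\}}h\cdot)$ with $h\in\{\hat\kappa_\sigma,(e^{it\hat\kappa_\sigma}-1-it\hat\kappa_\sigma)v_Y,\dots\}$ was set up that way. I also need $\gamma q<2$ so that $\hat\kappa_\sigma^\gamma\in L^q(\mu_Y)$ by Lemma~\ref{lem-indk}, which is guaranteed by the hypothesis $\gamma\in[1,2/q)$ inherited from Lemma~\ref{lem-Rn}, together with $q>b/(b-1)$, giving $\gamma<2(1-1/b)$. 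No genuinely new idea is needed beyond Lemma~\ref{lem-Rn} and~\eqref{devlambda}; the main (and only mild) obstacle is bookkeeping the three telescoping pieces and checking that the $|\lambda_t^{-n}-1|\le Cn|t|^\gamma$ bound combines with the exponential decay $\rho^n$ to leave a summable series with the stated $|t|^\gamma$ rate, uniformly for $t$ in a small ball.
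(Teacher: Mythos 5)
Your proposal is essentially the paper's own proof: the same telescoping decomposition of $(\tilde R_t-\tilde R_0-t\cdot\tilde R'_0)(v_Yw)$ into the three families of terms, each bounded by~\eqref{eq-rnv0}, \eqref{eq-rnv2} and~\eqref{eq-rnv1} respectively, with \eqref{Rp-0} and \eqref{Rp-a} obtained by summing \eqref{eq-rnv2} and \eqref{eq-rnv1} over $n$. The only imprecision is your claim $|\lambda_t^{-n}-1|=O(n|t|^\gamma)$ uniformly in $n$ ``since $|\lambda_t|$ stays bounded below near $1$'': because $|\lambda_t|<1$, the factor $|\lambda_t|^{-n}$ is \emph{not} bounded uniformly in $n$; the paper instead writes $|\lambda_t^{-n}-1|=|1-\lambda_t|\bigl|\sum_{k=1}^n\lambda_t^{-k}\bigr|\le n|1-\lambda_t|\,|\lambda_t|^{-n}$ and uses $|\lambda_t^{-1}|<\rho^{-1/2}$ for $t$ small (a consequence of \eqref{devlambda}), so that both the factor $n$ and the exponential growth $\rho^{-n/2}$ are absorbed by the $\rho^n$ from Lemma~\ref{lem-Rn} — a point you gesture at but should state explicitly to make the bound uniform in $n$.
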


\begin{proof}
Summing \eqref{eq-rnv1} (resp. \eqref{eq-rnv2}) gives \eqref{Rp-a} (resp. \eqref{Rp-0}).
%
%
Next, note that
\begin{align*}
&\ \|(\tilde R_t-\tilde R_0-t\cdot \tilde R'_0)(v_Y w)\|_{\cB_0} \le \sum_{n\ge 1} |\lambda_t^{-n}|\, \|R_n((e^{it\cdot \hat\kappa_\sigma}-1-it\cdot \hat\kappa_\sigma)(v_Yw))\|_{\cB_0}\\
&\ \ \ \ \ \ \ \ \ \ \ \ \ \ \ \ \ \ \ \ \ \ \ \ \ + \sum_{n\ge 1} |\lambda_t^{-n}-1|\left(\|R_n (v_Yw)\|_{\cB_0}+\|R_n(\hat\kappa_\sigma (v_Yw))\|_{\cB_0}\right)\\
&\le C_0|t|^\gamma(\|\hat\kappa_\sigma\|^\gamma_{L^{\gamma q}(\mu_Y)}\, \|v_Y\|_{ L^{b}(\mu_Y)}) \sum_{n\ge 1} 
 \rho^{\frac n2}\Vert w\Vert_{\cB_0}	\\
&\ \ \  + |1-\lambda_t|
\sum_{n\ge 1} n
\, |\lambda_t^{-n}|
\left(\|R_n (v_Yw)\|_{\cB_0}+
t\|R_n(\hat\kappa_\sigma (v_Yw))\|_{\cB_0}\right)\, ,
\end{align*}
where in the last inequality we have used Lemma~\ref{lem-Rn}
for a suitable $\rho$
together with the fact that $ |\lambda_t^{-n}-1|= |1-\lambda_t|\left|\sum_{k=1}^{n}\lambda_t^{-k}\right|$
and $|\lambda^{-1}_t|<\rho^{-\frac 12}$ if $t$ is small enough (due to \eqref{devlambda}).
We conclude by applying \eqref{eq-rnv1} and \eqref{eq-rnv2}
combined with $|\lambda^{-1}_t|<\rho^{-\frac 12}$.
\end{proof}
Recall that $\Pi_t$ acts on $\cB$. The next lemma is a restatement of~\cite[Lemma 3.14]{BalintGouezel06} in terms of the 
eigenprojection $\Pi_t$, as opposed to the (normalized) eigenvector $\frac{\Pi_t 1}{\int_\Delta \Pi_t 1\, d\mu_\Delta}$, as there.
\begin{lem}
\label{lem-projbase}For any small $t$ small enough
 and for any $v\in\cB$
$\tilde R_t(1_Y\Pi_tv)=(1_Y\Pi_t v)$.
\end{lem}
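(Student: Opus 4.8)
The plan is to establish the identity $\tilde R_t(1_Y\Pi_t v)=1_Y\Pi_t v$ by relating it to the analogous relation on the full tower, namely $P_t^n=\lambda_t^n\Pi_t+N_t^n$ together with $P_t\Pi_t=\lambda_t\Pi_t=\Pi_t P_t$, which is the fact that $\Pi_t$ is the spectral (eigen)projection of $P_t$ associated with the eigenvalue $\lambda_t$. First I would recall from the definition that $\tilde R_t:=\sum_{n\ge 1}\lambda_t^{-n}P_t^n(1_{Y\cap\{\sigma=n\}}\cdot)$, where $\sigma$ is the first return time of $f$ to $Y$ and $\{\sigma=n\}$ decomposes $Y$ into the atoms on which the return time equals $n$. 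The key algebraic input is that the indicator functions $1_{Y\cap\{\sigma=n\}}$, $n\ge 1$, partition $Y$, so $\sum_{n\ge 1}1_{Y\cap\{\sigma=n\}}=1_Y$.

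The core of the argument is then a computation: apply $\tilde R_t$ to $1_Y\Pi_t v$ and use that $\Pi_t v$, restricted to $Y$, is being fed back into the operators $P_t^n$ on the atoms $\{\sigma=n\}$. Concretely, I would write
\[
\tilde R_t(1_Y\Pi_t v)=\sum_{n\ge 1}\lambda_t^{-n}P_t^n\big(1_{Y\cap\{\sigma=n\}}\,\Pi_t v\big),
\]
and the plan is to recognize the right-hand side as $1_Y$ times the action of the return dynamics. The mechanism is that the first-return decomposition of $P_t$ to the base $Y$ is exactly $\tilde R_t$ up to the normalizing factors $\lambda_t^{-n}$, and since $\Pi_t$ is the eigenprojection for $P_t$ with eigenvalue $\lambda_t$, projecting down to $Y$ and inducing reproduces $1_Y\Pi_t v$. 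Equivalently, one can invoke the standard operator renewal identity: if $\hat P_t(z):=\sum_{n\ge 1}z^n P_t^n(1_{Y\cap\{\sigma=n\}}\cdot)$ denotes the generating function of first-return operators, then $1_Y(\mathrm{Id}-zP_t|_{\text{off }Y})^{-1}$-type manipulations give $1_Y P_t^m v$ in terms of convolutions of these, and evaluating the eigen-relation $P_t^m v = \lambda_t^m \Pi_t v + N_t^m v$ with $m\to\infty$, dividing by $\lambda_t^m$, and using $\|N_t^m\|_{\cB}=O(\theta^m)$ with $|\lambda_t|$ bounded below (from \eqref{devlambda}), isolates the $\Pi_t v$ component and yields $\tilde R_t(1_Y\Pi_t v)=1_Y\Pi_t v$.

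I expect the main obstacle to be the bookkeeping in passing between the tower $\Delta$ and its base $Y$: one must check that $\Pi_t v\in\cB$ is regular enough that the first-return series defining $\tilde R_t$ converges when applied to it (this uses Lemma~\ref{lem-Rn}-type estimates and that $\sigma$ has exponential tail), and that the formal identity $\sum_{n\ge 1}\lambda_t^{-n}P_t^n(1_{Y\cap\{\sigma=n\}}\Pi_t v)=1_Y\Pi_t v$ is justified rather than merely formal. A clean way to handle this is to first prove the identity at the level of the normalized eigenvector $h_t:=\Pi_t 1/\mu_\Delta(\Pi_t 1)$, where it is precisely \cite[Lemma 3.14]{BalintGouezel06}, namely $\tilde R_t(1_Y h_t)=1_Y h_t$; then, since $\Pi_t$ has rank one for small $t$ (it is the projection onto the span of $h_t$), we have $\Pi_t v=c_t(v)\,h_t$ for a scalar $c_t(v)$ depending linearly and continuously on $v\in\cB$, and multiplying the rank-one identity through by $c_t(v)$ gives the claimed statement for all $v\in\cB$. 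This reduces the new lemma to the cited one plus the rank-one structure of $\Pi_t$, which holds by \eqref{spgap-Sz}--\eqref{spgap-Sz-bis} for $t\in[-\beta,\beta]^d$ small.
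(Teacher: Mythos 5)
Your final reduction is correct, but it is a different route from the one the paper takes. The paper proves Lemma~\ref{lem-projbase} directly and self-containedly: it first establishes the pointwise relation \eqref{eq-Pit}, namely $\Pi_t v(x)=\lambda_t^{-\omega(x)}e^{it\cdot\hat\kappa_{\omega(x)}(\pi_0(x))}\Pi_t v(\pi_0(x))$, using $P_t\Pi_t=\lambda_t\Pi_t$ and the fact that the only $\omega(x)$-step preimage of a tower point $x$ is its base point $\pi_0(x)$ (with trivial weight), and then expands $\Pi_t v(y)=\lambda_t^{-1}P_t\Pi_t v(y)$ over $f^{-1}(y)$ for $y\in Y$, re-indexing by $F^{-1}(y)$ via $\omega(x)+1=\sigma(\pi_0(x))$ to recognize exactly $\tilde R_t(1_Y\Pi_t v)(y)$. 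You instead reduce the statement to \cite[Lemma 3.14]{BalintGouezel06} (the identity for the normalized eigenvector $h_t$) together with the rank-one structure $\Pi_t v=c_t(v)h_t$ and linearity of $\tilde R_t$; this is legitimate and is essentially the reduction the paper itself alludes to when it calls the lemma ``a restatement of \cite[Lemma 3.14]{BalintGouezel06} in terms of the eigenprojection,'' so what you buy is brevity, at the cost of importing from the cited work precisely the pointwise computation that the paper chooses to reproduce (needed anyway to check it transfers to the present tower and perturbation). Two caveats: your middle paragraph (the operator-renewal generating-function identity and the $m\to\infty$ limit of $\lambda_t^{-m}P_t^m v$) is only a heuristic as written and should not be presented as the proof mechanism — the convergence and the identification of the limit with $1_Y\Pi_t v$ are exactly what needs justifying; and rank-one-ness of $\Pi_t$ does not follow from \eqref{spgap-Sz}--\eqref{spgap-Sz-bis} alone, but from the continuity (analyticity) of $t\mapsto\Pi_t$ together with simplicity of the eigenvalue $1$ of $P_0$, which forces the rank to be constant for small $t$ — a fact the paper also uses, e.g.\ when asserting proportionality of $Q_t(1_Y)$ and $1_Y\Pi_t(vw)$ in the proof of Proposition~\ref{prop-expproj-base}.
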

\begin{proof}
Since $\Pi_tv\in\cB$, observe that $1_Y\Pi_tv\in\mathcal B_1$.
For all $x\in f^{-1}(Y)$,
\begin{align}
\label{eq-Pit}
\Pi_t v(x)=\lambda_t^{-{\omega(x)}}P_t^{\omega(x)}(\Pi_t v)(x)=\lambda_t^{-{\omega(x)}} e^{it\cdot \hat\kappa_{\omega(x)}(
\pi_0(x))}\Pi_t (v)\circ\pi_0(x)\, .
\end{align}
Therefore, for all $y\in Y$,
\begin{align*}
\label{eq-Pit}
\Pi_t v(y)&=\lambda_t^{-1}P_t\Pi_tv(y)=\sum_{x\in f^{-1}(y)}\chi\circ\pi_0(x)\lambda_t^{-1}e^{it\cdot\hat\kappa(x)}\Pi_t(v)(x)\\
&=\sum_{z\in F^{-1}(y)}\chi(z)\lambda_t^{-\sigma(z)} e^{it\cdot \hat\kappa_{\sigma(z)}(z)}\Pi_t (v)(z)=\tilde R_t(1_Y\Pi_tv)(x)\, ,
\end{align*}
due to \eqref{eq-Pit}, since $\omega(x)+1=\sigma(\pi_0(x))$ and since, for every $x\in f^{-1}(Y)$, 
$x=f^{\omega(x)}(\pi_0(x))$.~\end{proof}
By Lemma~\ref{lem-projbase}, for $t$ small enough, the eigenvalue $\tilde\lambda_t$  of $\tilde R_t$ associated with   the projection $(1_Y\Pi_t) v$
is so that $\tilde\lambda_t=1$; this lemma tells us how the projection acts on $\cB$.
Let $Q_t$ be the eigenprojection for $\tilde R_t$ associated with  $\tilde\lambda_t=1$. 
Since $1$ is an isolated eigenvalue in the spectrum of $\tilde R_0=R$ and  $\tilde R_t$  is a continuous family of operators (by Lemma~\ref{lem-deriv}),
we have that $\tilde \lambda_t=1$ is  isolated in the spectrum of $\tilde R_t$ for every $t$ small enough.
Hence,  there exists $\delta_0>0$ so that for any $\delta\in(0,\delta_0)$,
\begin{equation}
\label{eq-Qt}
Q_t=\frac 1{2i\pi}\int_{|\xi-1|=\delta}(\xi I-\tilde R_t)^{-1}\, d\xi.
\end{equation}
Since $(\xi I-\tilde R_0)^{-1}$ and  $\tilde R'_0$ are well defined  operators in $\cB_1\subset L^\infty(\mu_Y)$, so 
is the derivative at $t=0$ of $t\mapsto Q_t\in\cB_1$ and write
\begin{align}
\label{eq-Qtderiv}
Q'_0=\frac 1{2i\pi}\int_{|\xi-1|=\delta}(\xi I-\tilde R_0)^{-1}\tilde R'_0(\xi I-\tilde R_0)^{-1}\, d\xi\, ,
\end{align}
for $\delta>0$ small enough.
Recall that $Q_0, Q_0'$ are well defined in $\cB_1$.
The next result shows that $\|Q_0'h\|_{\cB_0}$ is also well defined for $h=v_Yw$ with $v_Y,w$ as in Lemma~\ref{lem-Rn}.
\begin{lem}
\label{lem-eigf} 
Let $b,q,\gamma$ as in Lemma~\ref{lem-Rn}.
Then  there exist $C_1, C_2>0$ such that for $t$ small enough,
for every $v_Y\in L^{b}(\mu_Y)$ constant on  each $a\in\alpha$ and every $w\in \cB_1$,
\begin{align*}
\Vert Q_t(v_Yw)\Vert_{\cB_0}&\le C_1
\Vert w\Vert_{\cB_0}\|v_Y\|_{L^b(\mu_Y)}\, ,\\
\|Q_0'(v_Yw)
\|_{\cB_0}&\le C_1\|\hat\kappa_\sigma\|_{L^{\gamma q}(\mu_Y)}^\gamma\, \|v_Y\|_{ L^{b}(\mu_Y)}\Vert w\Vert_{\cB_0}\, ,\\
 \|(Q_t -Q_0-tQ_0')(v_Yw)\|_{\cB_0}&\le C_2 |t|^\gamma(\|\hat\kappa_\sigma\|_{L^{\gamma q}(\mu_Y)}^\gamma\, \|v_Y\|_{ L^{b}(\mu_Y)})\Vert w\Vert_{\cB_0}\, .
\end{align*}
\end{lem}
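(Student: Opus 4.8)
\textbf{Proof plan for Lemma~\ref{lem-eigf}.}
The plan is to derive all three estimates from the contour-integral formulas \eqref{eq-Qt} and \eqref{eq-Qtderiv} together with the estimates on $\tilde R_t$ and its derivative obtained in Lemma~\ref{lem-deriv}. The essential observation is the same one that underlies Lemma~\ref{lem-deriv}: although $v_Y w$ need not lie in a space on which $\tilde R_t$ has a spectral gap, applying $\tilde R_n$ (even a single summand) to a function of the form $v_Y w$ produces an element of $\cB_0$ with norm controlled by $\|v_Y\|_{L^b(\mu_Y)}\|w\|_{\cB_0}$ (with the appropriate $\rho^n$ or $\|\hat\kappa_\sigma\|$ factors), because $v_Y$ and $\hat\kappa_\sigma$ are constant on atoms of $\alpha$ and so behave like scalars under $R_n$. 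Thus I would first record the elementary fact that for $h=v_Yw$ as in the statement, $\tilde R_t h$, $\tilde R_0 h = R h$, and $\tilde R'_0 h$ all belong to $\cB_1$, with the bounds \eqref{Rp-0}, \eqref{Rp-a}, \eqref{Rp-c} from Lemma~\ref{lem-deriv}; from the second application onward everything stays inside $\cB_1$ and one may use the ordinary operator norms of $(\xi I-\tilde R_0)^{-1}$ and of $\tilde R_t$ acting on $\cB_1$, which are uniformly bounded on the contour $|\xi-1|=\delta$ for $t$ small (by quasi-compactness of $R$ on $\cB_1$ and the continuity established in Lemma~\ref{lem-deriv}).

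With that in hand, the first bound is immediate: write $Q_t(v_Yw)=\frac1{2i\pi}\int_{|\xi-1|=\delta}(\xi I-\tilde R_t)^{-1}(v_Yw)\,d\xi$ and split $(\xi I-\tilde R_t)^{-1}=\frac1\xi I+\frac1\xi(\xi I-\tilde R_t)^{-1}\tilde R_t$; the first term integrates to zero around the contour (so it contributes nothing, or one simply notes $Q_t h$ is $\cB_1$-valued anyway), and for the second term $\tilde R_t(v_Yw)\in\cB_1$ with $\cB_0$-norm $\ll\|v_Y\|_{L^b}\|w\|_{\cB_0}$ by \eqref{Rp-0}, after which $(\xi I-\tilde R_t)^{-1}$ acts as a bounded operator on $\cB_1$ uniformly in $\xi$ on the contour and in small $t$. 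For the second bound, \eqref{eq-Qtderiv} already has two resolvents and one copy of $\tilde R'_0$ sandwiched between them; the rightmost resolvent hits $v_Yw$, which I again handle by peeling off one $\tilde R_0=R$ to land in $\cB_1$ (using \eqref{Rp-0}), then $\tilde R'_0$ acts with norm $\ll\|\hat\kappa_\sigma\|_{L^{\gamma q}}^\gamma$ on $\cB_1$ by \eqref{Rp-a} — or more directly, apply $\tilde R'_0$ to $v_Yw$ via \eqref{Rp-a} and then the two resolvents as bounded operators on $\cB_1$. Either way one gets $\|Q_0'(v_Yw)\|_{\cB_0}\le C_1\|\hat\kappa_\sigma\|_{L^{\gamma q}(\mu_Y)}^\gamma\|v_Y\|_{L^b(\mu_Y)}\|w\|_{\cB_0}$.

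For the third (the actual differentiability estimate) I would use the standard resolvent identity
\[
(\xi I-\tilde R_t)^{-1}-(\xi I-\tilde R_0)^{-1}=(\xi I-\tilde R_0)^{-1}(\tilde R_t-\tilde R_0)(\xi I-\tilde R_t)^{-1},
\]
iterate it once to isolate the linear-in-$t$ term, and obtain
\begin{align*}
Q_t-Q_0-tQ_0'&=\frac1{2i\pi}\int_{|\xi-1|=\delta}(\xi I-\tilde R_0)^{-1}(\tilde R_t-\tilde R_0-t\tilde R'_0)(\xi I-\tilde R_t)^{-1}\,d\xi\\
&\quad+\frac{t}{2i\pi}\int_{|\xi-1|=\delta}(\xi I-\tilde R_0)^{-1}\tilde R'_0\big((\xi I-\tilde R_t)^{-1}-(\xi I-\tilde R_0)^{-1}\big)\,d\xi.
\end{align*}
In the first integral, apply it to $v_Yw$: the rightmost resolvent keeps us in $\cB_1$ after one peeling (or act directly), then $(\tilde R_t-\tilde R_0-t\tilde R'_0)$ contributes the factor $|t|^\gamma\|\hat\kappa_\sigma\|_{L^{\gamma q}}^\gamma\|v_Y\|_{L^b}\|w\|_{\cB_0}$ by \eqref{Rp-c}, and the outer resolvent is bounded on $\cB_1$. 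In the second integral, the resolvent difference is $O(\|\tilde R_t-\tilde R_0\|_{\cB_1})=o(1)$ by continuity (Lemma~\ref{lem-deriv}), and with the extra factor $t$ this is $o(|t|)$; to get the clean $|t|^\gamma$ bound one applies \eqref{Rp-c} with $\gamma$ replaced by $1$ (allowed since $1\in[1,2/q)$) to see $\|\tilde R_t-\tilde R_0\|\ll|t|$ on the relevant functions, giving an $O(|t|^{1+1})=O(|t|^2)=O(|t|^\gamma)$ contribution since $\gamma<2$. Collecting the two pieces yields the claimed estimate with a constant $C_2$. The main obstacle, and the point requiring care throughout, is exactly the bookkeeping ensuring that at every stage the function being fed into a resolvent either is already in $\cB_1$ or becomes so after one application of $\tilde R_0$ or $\tilde R'_0$ — so that the uniform $\cB_1$-operator bounds from the Gibbs--Markov spectral gap are legitimately available — rather than needing bounds on operators mapping out of the weaker $\cB$-type spaces; this is precisely where the hypothesis that $v_Y$ and $\hat\kappa_\sigma$ are constant on atoms of $\alpha$ is used, via Lemma~\ref{lem-Rn}.
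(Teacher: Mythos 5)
Your plan is correct in substance and uses the same basic toolkit as the paper (the contour formulas \eqref{eq-Qt}--\eqref{eq-Qtderiv}, the resolvent identity, and the bounds of Lemmas~\ref{lem-Rn} and~\ref{lem-deriv}), but it resolves the key technical point --- giving meaning to a resolvent applied to $v_Yw\notin\cB_1$ --- in a genuinely different way. You peel off one operator algebraically, $(\xi I-\tilde R_t)^{-1}=\xi^{-1}I+\xi^{-1}(\xi I-\tilde R_t)^{-1}\tilde R_t$, so that after a single application of $\tilde R_t$, $R$ or $\tilde R'_0$ (legitimate on $v_Yw$ by \eqref{Rp-0}, \eqref{Rp-a}, \eqref{Rp-c}) everything lives in $\cB_1$, where the uniform resolvent bounds on the contour are available; you also prove the bound on $Q_t$ directly and keep $(\xi I-\tilde R_t)^{-1}$ as the rightmost factor when isolating the linear term. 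The paper instead arranges the resolvent identity so that the rightmost factor is always $(\xi I-\tilde R_0)^{-1}$, and controls $(\xi I-R)^{-1}(v_Yw)$ by splitting off the mean, $v_Yw=(v_Yw-\mu_Y(v_Yw)1_Y)+\mu_Y(v_Yw)1_Y$: the constant part gives $(\xi-1)^{-1}1_Y$ explicitly, while the mean-zero part is summed through the Neumann series $\sum_j\xi^{-j-1}R^j$ using the spectral gap $R^j=\mu_Y(\cdot)1_Y+N'_j$ and the Appendix bound $\Vert R(v_Yw-\mu_Y(v_Yw))\Vert_{\cB_0}\ll\Vert w\Vert_{\cB_0}\Vert v_Y\Vert_{L^1(\mu_Y)}$; this yields their uniform claims \eqref{eq-xiRvY}, \eqref{claim}, \eqref{eq-claim2} for $\tilde P\in\{\tilde R_0-\tilde R_t,\ \tilde R'_0,\ \tilde R_0-\tilde R_t-t\tilde R'_0\}$, and the bound on $Q_t$ is then deduced from the other two together with a direct computation for $Q_0$. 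Your peeling is arguably lighter (no Neumann series, no need to tune $\delta$ so that $(1-\delta)^{-1}\rho<1$ at this stage), while the paper's formulation produces the reusable estimate \eqref{eq-xiRvY} that it exploits again later.

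Two caveats, which are presentational rather than fatal since your main route already contains the needed step: the parenthetical shortcuts ``or act directly'' and ``more directly, apply $\tilde R'_0$ to $v_Yw$ and then the two resolvents'' are not legitimate as written, because $(\xi I-\tilde R_t)^{-1}$ cannot be applied to $v_Yw$ before peeling, and in \eqref{eq-Qtderiv} the rightmost resolvent sits between $\tilde R'_0$ and $v_Yw$, so the operators cannot be reordered. Likewise, in your second integral the resolvent difference acts on $v_Yw$ itself, so the $O(|t|)$ operator-norm bound on $\cB_1$ may only be invoked after the same peeling (or after one application of Lemma~\ref{lem-deriv} to $v_Yw$); with that done, your $O(|t|^2)=O(|t|^\gamma)$ conclusion is fine since $\gamma<2$.
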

\begin{proof}
The first estimate will come from our estimates of $\|(Q_t -Q_0-tQ_0')(v_Yw)\|_{\cB_0}$ and $\|Q_0'(v_Yw) \|_{\cB_0}$
and from \eqref{eq-Qt} ensuring that
\begin{align*}\label{eq-xiR0Y}
\Vert Q_0(vw)\Vert_{\cB_0}&=\frac 1{2\pi}
\left\|\int_{|\xi-1|=\delta}(\xi I-\tilde R_0)^{-1}(v_Yw) d\xi\right\Vert_{\cB_0}\\
&=\frac 1{2\pi}\left\|\int_{|\xi-1|=\delta}\!\!\!\!\!\! (\xi I- R)^{-1}(v_Yw-\mu_Y(v_Yw)) d\xi+\mu_Y(v_Yw)\int_{|\xi-1|=\delta}\!\!\!\!\!\! (\xi I-1)^{-1}1_Y\, d\xi\right\Vert_{\cB_0}
\end{align*}
Thus
$\Vert Q_0(vw)\Vert_{\cB_0}\le \frac 1{2\pi}\left( \sum_{j\ge 1}\int_{|\xi-1|=\delta}|\xi|^{-j-1}\|R^j(v_Yw-\mu_Y(v_Yw))\|_{\cB_0} d\xi\right)+   |\mu_Y(v_Yw)|$. So
\[
\Vert Q_0(vw)\Vert_{\cB_0}\ll \left(\sum_{j\ge 1}(1-\delta)^{-j-1}\rho^{j-1}\|v_Y\|_{L^b(\mu_Y)}\|w\|_{\cB_0}\right)+   |\mu_Y(v_Yw)|\ll \Vert w\Vert_{\cB_0}\Vert v_Y\Vert_{L^b(\mu_Y)}\, ,
\]
for some $\rho\in(0,1)$, where we used \eqref{Rp-0} combined with the spectral properties of $R$, up to take $\delta$ small enough so that $(1-\delta)^{-1}\rho<1$.
Second
\begin{align*}
&(Q_t -Q_0)= \frac  1{2i\pi}\int_{|\xi-1|=\delta}(\xi I-\tilde R_t)^{-1}(\tilde R_0- \tilde R_t)(\xi I-\tilde R_0)^{-1}  d\xi\\\
&=E_1(t)+E_2(t)= \frac  1{2i\pi}\int_{|\xi-1|=\delta}(\xi I-\tilde R_0)^{-1}(\tilde R_0- \tilde R_t)(\xi I-\tilde R_0)^{-1} \, d\xi\\
&+\frac  1{2i\pi}
 \int_{|\xi-1|=\delta}\Big((\xi I-\tilde R_t)^{-1}-(\xi I-\tilde R_0)^{-1}\Big)(\tilde R_0- \tilde R_t)(\xi I-\tilde R_0)^{-1} d\xi\, .
\end{align*}
Note that
\[
\|E_2(t)( v_Yw)\|_{\cB_0}
\ll \int_{|\xi-1|=\delta} \Big\|(\xi I-\tilde R_t)^{-1}-(\xi I-\tilde R_0)^{-1} \Big\|_{\cB_0} 
\|(\tilde R_0- \tilde R_t)(\xi I-\tilde R_0)^{-1}(v_Yw)\|_{\cB_0}\, d\xi.
\]
Recall that for all $\xi$ so that $|\xi-1|=\delta$, $\|(\xi I-\tilde R_t)^{-1}\|_{\cB_0}\ll 1$, for all $t$ small enough.
This together with Lemma~\ref{lem-deriv} with $v_Y=1_Y$ implies that for all  $t$ small enough,
\[
\left\|(\xi I-\tilde R_t)^{-1}-(\xi I-\tilde R_0)^{-1}\right\|_{\cB_0}
=\left\|(\xi I-\tilde R_0)^{-1}(\tilde R_t-\tilde R_0)(\xi I-\tilde R_t)^{-1}\right\|_{\cB_0}
\ll  |t|.
\]
Hence 
$
\|E_2(t)(v_Yw )\|_{\cB_0}\ll |t| \int_{|\xi-1|=\delta}\|(\tilde R_0- \tilde R_t)(\xi I-\tilde R_0)^{-1}(v_Yw)\|_{\cB_0}\, d\xi$. 
To simplify notations, we write $\mu_Y(\cdot)$ for $\int_Y\cdot\, d\mu_Y$.
We claim that
\begin{equation}
\label{eq-xiRvY}
\int_{|\xi-1|=\delta}\!\!\!\!\!\!\!\!\! \|(\tilde R_0-\tilde R_t)(\xi I-\tilde R_0)^{-1}(v_Yw)\|_{\cB_0}\, d\xi\ll   |t|  \|\hat\kappa_\sigma\|_{L^{\gamma q}(\mu_Y)}^\gamma   \|v_Y\|_{L^b(\mu_Y)}\|w\|_{\cB_0} 
\, .
\end{equation}
This 
implies that
\begin{align*}
\|E_2(t)(v_Y w)\|_{\cB_0}\ll |t|^2\|\hat\kappa_\sigma
\|_{L^{\gamma q}(\mu_Y)}^\gamma\|v_Y\|_{L^b(\mu_Y)}\|w\|_{\cB_0}\, .
\end{align*}
Next, compute that
\begin{align*}
E_1(t)=tQ_0'  +E(t)&=\frac  t{2i\pi}\int_{|\xi-1|=\delta}(\xi I-\tilde R_0)^{-1}\tilde R'_0(\xi I-\tilde R_0)^{-1} \, d\xi\\
&+\frac  1{2i\pi} \int_{|\xi-1|=\delta}(\xi I-\tilde R_0)^{-1}(\tilde R_0- \tilde R_t-t\tilde R'_0)(\xi I-\tilde R_0)^{-1}  \, d\xi\, .
\end{align*}
Proceeding as in estimating $E_2(t)(v_Yw)$ above and using the first part of the conclusion in Lemma~\ref{lem-deriv}
 and a formula analgous to \eqref{eq-xiRvY},
\begin{align}
\|Q_0'(v_Yw )\|_{\cB_0}&\ll \int_{|\xi-1|=\delta}\|\tilde R'_0(\xi I-\tilde R_0)^{-1}(v_Yw)\|_{\cB_0}\, d\xi\nonumber\\
&\ll\|\hat\kappa_\sigma\|_{L^{\gamma q}(\mu_Y)}^\gamma
\| v_Y\|_{L^b(\mu_Y)}
\|w\|_{\cB_0} \, ,\label{claim}
\end{align}
and thus, the second part of the conclusion follows.
Finally, similarly to~\eqref{eq-xiRvY}, we claim that
\begin{align}
\label{eq-claim2}
\nonumber \|E(t)(v_Y w)\|_{\cB_0}&\ll \int_{|\xi-1|=\delta}\|(\tilde R_0- \tilde R_t-t\cdot \tilde R'_0)(\xi I-\tilde R_0)^{-1}(v_Yw)\|_{\cB_0}\, d\xi\\
&\ll |t|^\gamma|\,  \|\hat\kappa_\sigma\|_{L^{\gamma q}(\mu_Y)}^\gamma\|v_Y\|_{L^b(\mu_Y)}\|w\|_{\cB_0}\, .
\end{align}
The third part of the conclusion follows by putting all the above together.
It remains to prove the claims~\eqref{eq-xiRvY}, \eqref{claim} and~\eqref{eq-claim2}. 
 Note that for any operator $\tilde P$  bounded in $\cB_1$, we have
\begin{align*}
&\int_{|\xi-1|=\delta}\|\tilde P(\xi I-\tilde R_0)^{-1}(v_Yw)\|_{\cB_0}\, d\xi\\
&=\int_{|\xi-1|=\delta} \Big\|\tilde P(\xi I-\tilde R_0)^{-1} (v_Yw-\mu_Y(v_Yw)1_Y)+ (\xi-1)^{-1}\mu_Y( v_Yw)\tilde P 1_Y\Big\|_{\cB_0}\, d\xi\, .
\end{align*}
In the sequel, we take $\tilde P\in\{Id, \tilde R_0- \tilde R_t,\tilde R'_0,\tilde R_0-\tilde R_t-t\tilde R'_0\}$, respectively. 
Since $\tilde R_0=R$ has a spectral gap in $\cB_1$ with decomposition $R^j=\mu_Y(\cdot)1_Y+N'_j$ with $\|N'_j \|_{\cB_0}\le C\rho^j$ for some $C>0$ and some $\rho<1$, for every $j\ge 1$, we can write
\[
R^j(v_Yw-\mu_Y(v_Yw)1_Y)=R^{j-1}R(v_Y-\mu_Y(v_Yw)1_Y)=
N'_{j-1}(R(v_Yw-\mu_Y(v_Yw)1_Y)),
\]
since $\mu_Y(R(v_Yw-\mu_Y(v_Yw)))=0$.
We note that although $(v_Y-\mu_Y(v_Y)1_Y)\notin\cB_1$. But, by the first two displayed inequalities in Appendix~\ref{sec-RnDA}
(with $vw$ replaced by $v_Yw$ and $\mu_Y(v_Yw)1_Y$),
\[
\Big\|R(v_Yw-\mu_Y(v_Yw))\|_{\cB_0}\le C' \|w\|_{\mathcal B_0}\|v_Y\|_{L^1(\mu_Y)}\, ,
\]
for some $C'>0$. Thus, there exist $C">0$ and $\rho<1$ so that for $j\ge 1$, 
\begin{equation}\label{majoRj}
\Big\|R^j(v_Yw-\mu_Y(v_Yw))\Big\|_{\cB_0}\le C"\rho^j
 \|w\|_{\mathcal B_0}\|v_Y\|_{L^1(\mu_Y)}
\end{equation}
Below we write  $\gamma(\tilde P)$ to indicate that this is a positive number that depend on the operator $\tilde P\in\{\tilde R_0-\tilde R_t,\tilde R'_0 ,\tilde R_0-\tilde R_t-t\tilde R_0\}$.
Using the fact that $\tilde R_0=R$ and that $(\xi I-R)^{-1}=\sum_{j\ge 0}\xi^{-j-1}R^j$ and putting the above together, we obtain
\begin{align*}
&\int_{|\xi-1|=\delta} \|\tilde P(\xi I-R)^{-1} v_Yw\|_{\cB_0}\, d\xi\le\int_{|\xi-1|=\delta} \left\| \xi^{-1}\tilde P(v_Yw-\mu_Y( v_Yw))\right\|_{\cB_0}\, d\xi\\
&\ \ \ \ \ \ \ \ +\int_{|\xi-1|=\delta}\left\Vert \sum_{j\ge 1} \xi^{-j-1}\tilde P R^{j}(v_Yw-\mu_Y( v_Yw))+ (\xi-1)^{-1}\mu_Y( v_Yw)\tilde P 1\right\Vert_{\cB_0}\, d\xi\\
&\ll t^{\gamma(\tilde P)} \left(\|\hat\kappa_\sigma\|_{L^{\gamma q}(\mu_Y)}^\gamma\, \|v_Y\|_{L^b(\mu_Y)}\|w\|_{\cB_0}+\sum_{j\ge 1}\int_{|\xi-1|=\delta}  |\xi|^{-j-1}\Big\|  R^{j}(v_Yw-\mu_Y( v_Yw))\Big\|_{\cB_0}\, d\xi\right)\\
&\ll t^{\gamma(\tilde P)} \left(\|\hat\kappa_\sigma\|_{L^{\gamma q}(\mu_Y)}^\gamma\, \|v_Y\|_{L^b(\mu_Y)}\|w\|_{\cB_0}+\sum_{j\ge 1}\int_{|\xi-1|=\delta}  |\xi|^{-j-1}\rho^j \Vert v_Y\Vert_{L^1(\mu_Y)}\|w\|_{\cB_0}\, d\xi\right).
\end{align*}
due to Lemma \ref{lem-deriv} and to \eqref{majoRj}, with $\gamma(\tilde R_0-\tilde R_t)=1$, $\gamma(\tilde R'_0)=0$, $\gamma(\tilde R_0-\tilde R_t-t\tilde R'_0)=\gamma$.
So
\[
\int_{|\xi-1|=\delta} \|\tilde P(\xi I-R)^{-1} (v_Yw)\|_{\cB_0}\, d\xi\ll 
 t^{\gamma(\tilde P)}   \|\hat\kappa_\sigma\|_{L^{\gamma q}(\mu_Y)}^\gamma  \|v_Y\|_{L^b(\mu_Y)}\|w\|_{\cB_0}
\left( 1+\delta \sum_{j\ge 1} \rho^j(1-\delta)^{-j}\right)
\]
The claims follow by choosing $\delta$ 
so that 
$\sum_{j\ge 2}\rho^j|\xi|^{-j}\le \sum_{j\ge 2}\rho^j(1-\delta)^{-j}<\infty$.
~\end{proof}

\subsection{Expansion of $t\mapsto\int_Y\Pi_t\, d\mu_Y$}

The next estimate, of independent interest, requires a more careful analysis and strongly exploits that the modulus is outside of the  integral.
Its proof uses arguments somewhat similar to the ones in~\cite{KellerLiverani99} together with arguments exploiting symmetries on the tower $\Delta$.
Recall that $p>2$.
\begin{lem}\label{lem-eigf2}
\label{cor-betcont} 
Let $b\in(p,+\infty]$. Let $c_0 (v, w)$ be as  defined in 
Proposition~\ref{prop-expproj-base}, namely
\[
c_0(vw)=i   \left(\sum_{j\ge 0}\int_\Delta \hat\kappa\circ f^j \, .vw\, d\mu_\Delta+ \int_\Delta vw \, d\mu_\Delta\sum_{j\ge 0}\int_{\Delta} \hat\kappa\, \frac{1_{Y}\circ f^{j+1}}{\mu_\Delta(Y)}\, d\mu_\Delta\right)\in\mathbb C^2\, .
\]
Then for every $(v,w)$ as in
Proposition~\ref{prop-expproj}
and for every $\gamma\in (1,2\frac{p-1}p)$,
\begin{equation*}
\left|\int_Y (\Pi_t-\Pi_0)(vw)\, d\mu_Y-c_0(vw)\cdot t \right|\ll |t|^\gamma (\Vert vw\Vert_{\mathcal B}+\Vert w\Vert_{\mathcal B_0}\|v\|_{L^b(\mu_\Delta)}) \mbox{ as }t \to 0\,.
\end{equation*}

\end{lem}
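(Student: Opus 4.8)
The plan is to exploit the identity, valid on $\cB$ for small $t$,
\[
1_Y\Pi_t(vw)=\left(\int_Y\Pi_t(vw)\,d\mu_Y\right)\frac{Q_t(1_Y)}{\int_Y Q_t(1_Y)\,d\mu_Y}\,,
\]
which follows from Lemma~\ref{lem-projbase} (the range of $1_Y\Pi_t$ restricted to $\cB_1$ is the one-dimensional eigenspace of $\tilde R_t$ for the eigenvalue $1$, spanned by $Q_t(1_Y)$) together with the fact that $\int_Y\Pi_t(vw)\,d\mu_Y$ is exactly the coefficient. Integrating against $\mu_Y$ is however not what produces the sharp bound directly; instead I would write, using $\Pi_t^2=\Pi_t$ and Lemma~\ref{lem-projbase} iterated through the tower, a representation of $\int_Y\Pi_t(vw)\,d\mu_Y$ purely in terms of objects on the base. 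Concretely, from \eqref{eq-Pit}, for $x\in f^{-1}(Y)$ one has $\Pi_t v(x)=\lambda_t^{-\omega(x)}e^{it\cdot\hat\kappa_{\omega(x)}(\pi_0 x)}\Pi_t(v)\circ\pi_0(x)$, so that $\Pi_t(vw)$ on all of $\Delta$ is determined by $1_Y\Pi_t(vw)$ plus the phase/eigenvalue factors accumulated along the tower. The first step is therefore to turn $\int_Y\Pi_t(vw)\,d\mu_Y$ into a sum $\sum_{n\ge 0}$ of integrals over $\{\omega=n\}\cap f^{-1}(Y)$-type sets of the base quantity $1_Y\Pi_t(vw)$ times $\lambda_t^{-n}e^{it\cdot\hat\kappa_n}$, and to recognize that the $t$-derivative at $t=0$ contributes (a) the $\cB_0$-derivative of $1_Y\Pi_t(vw)$ already controlled by Lemma~\ref{lem-eigf} and \eqref{Rp-a}, via $Q_0'$, and (b) a term $i\sum_{n}\int \hat\kappa_n(\pi_0 x)\,\Pi_0(vw)\circ\pi_0(x)\,d\mu$, which after using $\Pi_0(vw)=\mu_\Delta(vw)1_\Delta$ and Kac-type bookkeeping on the tower collapses to the two sums defining $c_0(vw)$.

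The heart of the matter is the error estimate, and here the point emphasized in the statement — that the modulus sits \emph{outside} the sum/integral — is essential. I would split
\[
\int_Y(\Pi_t-\Pi_0)(vw)\,d\mu_Y - c_0(vw)\cdot t
= \int_Y\bigl(1_Y(\Pi_t-\Pi_0-t\cdot\Pi_0')(vw)\bigr)\,d\mu_Y \;+\; \text{(tower-tail corrections)},
\]
where the first piece is $O(|t|^\gamma(\|vw\|_{\cB}+\|w\|_{\cB_0}\|v\|_{L^b}))$ by Proposition~\ref{prop-expproj-base} \emph{once that is available} — but since Lemma~\ref{lem-eigf2} is used \emph{inside} the proof of Proposition~\ref{prop-expproj-base}, I must instead build the bound from the ingredients that precede it: Lemma~\ref{lem-eigf} (giving $\|(Q_t-Q_0-tQ_0')(v_Yw)\|_{\cB_0}\ll|t|^\gamma$) and the expansion $\lambda_t=1-\Sigma^2 t\cdot t\log(1/|t|)+O(t^2)$ of \eqref{devlambda}. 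The tower-tail corrections come from expanding $\lambda_t^{-n}e^{it\cdot\hat\kappa_n}-1-it\cdot\hat\kappa_n$ summed over the floors; the eigenvalue factor contributes $n|\log|t||\,|t|^2$-type terms which, against the exponential tail $\mu_Y(\sigma>n)\ll\theta_1^n$ and the $L^r$-integrability of $\hat\kappa_\sigma$ for $r<2$ (Lemma~\ref{lem-indk}), are summable and $o(|t|^\gamma)$; the phase factor contributes the genuine $c_0$ term at first order and an $O(|t|^\gamma)$ remainder via the inequality $|e^{ix}-1-ix|\le|x|^\gamma$ combined with Hölder, exactly as in the proof of Lemma~\ref{lem-expproj} and Lemma~\ref{lem-Rn}.

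The last step is to assemble: combine the base contribution controlled via $Q_0'$ and Lemma~\ref{lem-eigf} with the tower-tail contributions, check that the first-order term matches $c_0(vw)\cdot t$ term by term (the two sums $\sum_{j\ge0}\int_\Delta\hat\kappa\circ f^j\cdot vw\,d\mu_\Delta$ and $\mu_\Delta(vw)\sum_{j\ge0}\int_\Delta\hat\kappa\,\frac{1_Y\circ f^{j+1}}{\mu_\Delta(Y)}\,d\mu_\Delta$ arise respectively from the phase factor along the tower and from the $Q_0'(1_Y)$/renormalization of the eigenvector), and collect all remainders into $O(|t|^\gamma(\|vw\|_{\cB}+\|w\|_{\cB_0}\|v\|_{L^b}))$.

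\medskip
\noindent\textbf{Main obstacle.} The delicate point is obtaining the $|t|^\gamma$ rate with $\gamma>1$ rather than the crude $|t|\log(1/|t|)$ that a naive triangle-inequality bound on $\int_Y|\Pi_t-\Pi_0-t\cdot\Pi_0'|\,d\mu_Y$ would give after plugging in the $\log$-term from $\lambda_t$; one genuinely needs the cancellation afforded by integrating (keeping the modulus outside), i.e. an argument in the spirit of Keller–Liverani exploiting that $\int_Y(\xi I-\tilde R_t)^{-1}(\cdot)\,d\mu_Y$ against the spectral projector kills the problematic $\log$-contributions at the linear order, leaving only the quadratic-in-$t$ (times $\log$) tail of $\lambda_t$, which is $o(|t|^\gamma)$ for $\gamma<2$. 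Making this cancellation precise on the tower — reconciling the Gibbs–Markov base estimates of Lemma~\ref{lem-eigf} with the tower structure and the non-$L^2$ nature of $\hat\kappa$ — is where the real work lies; the symmetry remark (even/odd decomposition of $\hat\kappa$ on $\Delta$, so that the genuinely divergent pieces of $\sum\hat\kappa\circ f^j$ contribute only through $\hat\kappa\otimes\hat\kappa$, i.e. to $\lambda_t$ and not to the linear term) is what ultimately guarantees the surviving remainder is $O(|t|^\gamma)$.
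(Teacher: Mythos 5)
There is a genuine gap at the very centre of your argument: the quantity to be expanded, $\int_Y\Pi_t(vw)\,d\mu_Y$, is exactly the scalar coefficient in the proportionality identity $1_Y\Pi_t(vw)=\mu_Y(1_Y\Pi_t(vw))\,Q_t(1_Y)/\mu_Y(Q_t(1_Y))$, so none of the ingredients you invoke can produce it. Lemma~\ref{lem-projbase} and Lemma~\ref{lem-eigf} control the induced operator $\tilde R_t$ and its eigenvector $Q_t(1_Y)$, i.e.\ the \emph{direction} of $1_Y\Pi_t(vw)$ in $\cB_1$; they say nothing about how the rank-one projector $\Pi_t$ weights the observable $vw$ (its left eigenform), which is precisely what $\mu_Y(1_Y\Pi_t(vw))$ measures and what this lemma asserts. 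Your proposed first step is also not available: \eqref{eq-Pit} propagates the values of $\Pi_t(vw)$ \emph{from} the base upward along the tower, so $\int_Y\Pi_t(vw)\,d\mu_Y$ is already a base integral and admits no decomposition into floor-by-floor terms $\lambda_t^{-n}e^{it\cdot\hat\kappa_n}$ times base quantities (such a decomposition exists for $\int_\Delta\Pi_t(vw)\,d\mu_\Delta$, a different object, and even there it only reproduces the unknown scalar). Likewise, writing the difference as $\int_Y 1_Y(\Pi_t-\Pi_0-t\Pi_0')(vw)\,d\mu_Y$ plus tails is circular: by \eqref{eq1yderivpi} the value $\mu_Y(1_Y\Pi_0'(vw))$ \emph{is} $c_0(vw)$, whose identification is the content of the lemma, and the route from $Q_0'$ to $c_0$ in the paper passes through this very lemma. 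Finally, the cancellation you invoke (an even/odd symmetry of $\hat\kappa$ killing the divergent linear contribution) is not a mechanism that appears anywhere in the estimates you cite, and you give no way to substantiate it.

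What the paper actually does is work directly with the un-induced operator via the Keller--Liverani representation $\Pi_t=\frac1{2i\pi}\int_{|\xi-1|=\delta}(\xi I-P_t)^{-1}d\xi$ and the resolvent identity, splitting $\int_Y(\Pi_t-\Pi_0)(vw)\,d\mu_Y$ into a term $I_1(t)$ linear in $P_t-P_0$ and a term $I_2(t)$ quadratic in it. In $I_1$, the Neumann series for $(\xi I-P_0)^{-1}$ is truncated at $N'\sim\log|t|$, $P_t-P_0$ is replaced by $t\cdot P_0'=tP(i\hat\kappa\,\cdot)$ using Lemma~\ref{lem-expproj} (cost $r^{-N'}|t|^{\gamma+\varepsilon}$), and then the spectral splitting $P_0^j=\mu_\Delta(\cdot)1_\Delta+N_0^j$ together with Cauchy's integral formula identifies the coefficient: one sum in $c_0(vw)$ comes from the $\Pi_0(\hat\kappa N_0^j(vw))$ terms, the other from the $\mu_\Delta(vw)\int\hat\kappa\,1_Y\circ f^{j+1}$ terms, while the remaining block vanishes identically because $\int_{|\xi-1|=\delta}\xi^{-j-r-2}d\xi=0$ (this, not a parity argument, is where "modulus outside the integral" pays off); the interchanges of sums are justified by the constancy of $v$ on levels and the exponential tail of $\sigma$. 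The bound $|I_2(t)|\ll|t|^\gamma\Vert vw\Vert_{\cB}$ is obtained by further truncations at scales $N,L\sim\log|t|$ and by making each of the two factors $P_t-P_0$ contribute $|t|^{\gamma_0}$ with $2\gamma_0=\gamma+\varepsilon$ (H\"older against $\hat\kappa\in L^{s}$, $s<2$), using $\Pi_0$ to convert operator norms into $L^1$ bounds. None of this machinery is replaced by anything in your sketch, so the claimed $O(|t|^\gamma)$ expansion with the explicit constant $c_0(vw)$ remains unproved.
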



\begin{proof}
Let $\gamma\in \left(1, 2\frac{p-1}p\right)$, i.e. $1<\gamma<2\left( 1-\frac 1p\right)<2\left( 1-\frac 1b\right)$. Fix $\varepsilon>0$ so that $\gamma+\varepsilon<2\left( 1-\frac 1p\right)<2$.
Fix $1<q<\frac {2b}{b+2}$ close enough to 1 so that $1<\gamma+\varepsilon<2\left(\frac 1q-\frac 1b\right)$.
Let $p_1\in(1,2)$.
Consider $\theta\in(0,1)$ satisfying \eqref{spgap-Sz-bis} and such that  $\theta_1^{\frac {(p_1-1)}{p_1}}<\theta$. Set $\theta_0:=\sqrt{\theta}$ 
and $r:=\theta_0^{1/\vartheta}$ with $\vartheta
>2$ large enough (i.e. $r$ close enough to 1) so that
\begin{equation}\label{condr}
\frac{2(\gamma+\varepsilon)}{\vartheta-1}<\varepsilon\quad\mbox{and}\quad
\frac{\left(\gamma+\varepsilon\right)(\vartheta-2)}{\vartheta-1}>\gamma-1\, .
\end{equation}
We choose $\delta$ small enough so that $(1-\delta)^{-\frac p{p-1}}\theta_1<1$,
$(1-\delta)^{-\frac b{b-1}}\theta_1^{
\frac b{b-1}(\frac 1p-\frac 1{b})}<1$,
\begin{equation}
\label{eq-delta}
\delta+r<1, \quad
(1-\delta)^{-1}\rho<1,\quad  
(1-\delta)^{-1}\theta_0<1
\end{equation}
and so that 
$\Pi_t=\int_{|\xi-1|=\delta}(\xi I-P_t)^{-1}\, d\xi$ for every $t$ in a small neighbourhood of 0 (this is possible thanks to~\cite{KellerLiverani99} since
1 is isolated in the spectrum of $P_0$).
We will make use of this choice from equation~\eqref{eq-enm} onwards.
Recall that $(\xi I-P_t)^{-1}-(\xi I-P_0)^{-1}=(\xi I-P_0)^{-1}(P_t-P_0)(\xi I-P_t)^{-1}$, and so
\begin{align*}
&\int_Y(\Pi_t -\Pi_0) (vw) \, d\mu_Y=\frac  1{2i\pi}\int_Y \int_{|\xi-1|=\delta}(\xi I-P_0)^{-1}(P_t-P_0)(\xi I-P_t)^{-1} (vw)\, d\xi\, d\mu_Y\\
&=I_1(t)+I_2(t)=\frac  1{2i\pi}\int_Y \int_{|\xi-1|=\delta}(\xi I-P_0)^{-1}(P_t- P_0)(\xi I-P_0)^{-1} (vw)\, d\xi\, d\mu_Y\\
&+\frac  1{2i\pi}\int_Y \int_{|\xi-1|=\delta}(\xi I-P_0)^{-1}(P_t- P_ 0)\Big((\xi I-P_t)^{-1}-(\xi I-P_0)^{-1}\Big) (vw)\, d\xi\, d\mu_Y\, .
\end{align*}
\begin{itemize}
\item Let us start with the computation of $I_1(t)$. Setting 
$N':=\lfloor(\gamma+\varepsilon)\log t/\log \theta_0\rfloor$,
\begin{align}
I_1(t)
&=\frac  1{2i\pi}\int_Y \int_{|\xi-1|=\delta}(\xi I-P_0)^{-1}(P_t- P_0)(\xi I-P_0)^{-1} (vw)\, d\xi\, d\mu_Y\nonumber\\
&= I_{1,N'}(t)+I'_{1,N'}(t)=\frac  1{2i\pi}\sum_{j=0}^{N'-1}\int_Y \int_{|\xi-1|=\delta}\xi^{-j-1}P_0^j(P_t- P_0)(\xi I-P_0)^{-1} (vw)\, d\xi\, d\mu_Y\nonumber\\
&\ \ \ +\frac  1{2i\pi}\int_Y \int_{|\xi-1|=\delta}\xi^{-N'}(\xi I-P_0)^{-1}P_0^{N'}(P_t- P_0)(\xi I-P_0)^{-1} (vw)\, d\xi\, d\mu_Y\, .\label{eqI1}
\end{align}
Observe that $|\xi-1|=\delta$ implies $|\xi^{-1}|\le(1-\delta)^{-1}<r^{-1}$.
Due to Lemma \ref{lem-expproj},
\begin{align*}
& \left|I_{1,N'}(t)-\frac  1{2i\pi}\sum_{j=0}^{N'-1}\int_Y \int_{|\xi-1|=\delta}\xi^{-j-1}P_0^j(t\cdot P'_0)(\xi I-P_0)^{-1} (vw)\, d\xi\, d\mu_Y \right|\\
&\ll r^{-N'}\left\Vert (P_t- P_0-t\cdot P'_0)(\xi I-P_0)^{-1} (vw)\right\Vert_{L^1(\Delta)}\ll r^{-N'}t^{\gamma+\varepsilon}\Vert vw\Vert_{\mathcal B}\, ,
\end{align*}
since our assumption on $\varepsilon,q$ ensures that $\gamma+\varepsilon<2\left(\frac 1q-\frac 1p\right)$.
Therefore
\begin{equation}
 \left|I_{1,N'}(t)-
\frac  1{2\pi}
\sum_{j=0}^{N'-1}\int_{f^{-j-1}Y} \int_{|\xi-1|=\delta}\xi^{-j-1}(t\cdot \kappa)(\xi I-P_0)^{-1} (vw)\, d\xi\, d\mu_Y \right|\ll r^{-N'}t^{\gamma+\varepsilon}\Vert vw\Vert_{\mathcal B}\, .
\end{equation}
Moreover, 
since $\sup_{\vert \xi-1\vert=\delta}\left\Vert(\xi I-P_0)^{-1}\right\Vert_{\mathcal B}<\infty$, using the Fubini theorem, we obtain
\begin{align*}
&I'_{1,N'}(t)
=\frac  1{2i\pi}\int_Y \int_{|\xi-1|=\delta}\xi^{-N'}(\xi I-P_0)^{-1}(\Pi_0+N_0^{N'})(P_t- P_0)(\xi I-P_0)^{-1} (vw)\, d\xi\, d\mu_Y\nonumber\\
&= \frac  1{2i\pi \mu_\Delta(Y)}\int_{|\xi-1|=\delta}\!\!\!\!\!\!\!\! \xi^{-N'}\Pi_0\left(1_Y(\xi I-P_0)^{-1}1\right)\Pi_0((P_t- P_0)(\xi I-P_0)^{-1} (vw))   \, d\xi +O(
r^{-N'}\theta^{N'}
)\nonumber\\
&= 
\frac  1{2\pi\mu_\Delta(Y)}
\int_{|\xi-1|=\delta}\xi^{-N'}\Pi_0\left(1_Y(\xi I-P_0)^{-1}1\right)\Pi_0(t\cdot \hat\kappa(\xi I-P_0)^{-1} (vw))   \, d\xi\,\\
&+O((r^{-N'}|t|^{\gamma+\varepsilon}+r^{-N'}\theta^{N'} 
)\Vert vw\Vert_{\mathcal B})\, ,
\end{align*}
where we used again Lemma~\ref{lem-expproj}.
Since $r=\theta_0^{1/\vartheta}$, $\vartheta>2$ and $\theta=\theta_0^{1/2}$, we obtain
\begin{equation}\label{I1Formule}
I_1(t)= 
\frac  1{2\pi}
t\cdot A_{1,N'}(t) +O((r^{-N'}t^{\gamma+\varepsilon}+\theta_0^{\frac{(\vartheta-2)N'}{2}}
)\Vert vw\Vert_{\mathcal B})\, ,
\end{equation}
\begin{align}
\label{eq-1prime}
 A_{1,N'}(t)&:=U_{N'}(t)+V_{N'}(t)=
\sum_{j=0}^{N'-1}\int_{Y} \int_{|\xi-1|=\delta}\xi^{-j-1}P^{j+1}\left( \hat \kappa(\xi I-P_0)^{-1} (vw)\right)\, d\xi\, d\mu_Y\\
\nonumber &+\frac 1{2\pi\mu_\Delta(Y)}\int_{|\xi-1|=\delta}\xi^{-N'}\Pi_0\left(1_Y(\xi I-P_0)^{-1}1\right)\Pi_0( \hat\kappa(\xi I-P_0)^{-1} (vw))   \, d\xi\, .
\end{align}
Due to our choice of $N'$ and since
$\theta_0=r^\vartheta
$, we have
\begin{equation}\label{estimN'}
r^{N'\vartheta}=\theta_0^{N'} \ll t^{\gamma+\varepsilon}
\end{equation}
so $r^{-N'} \ll t^{-\frac{\gamma+\varepsilon
 }{\vartheta}}\ll t^{-\varepsilon}
$, due to our assumptions on $\vartheta$. 
Note that
\[
(\xi I-P_0)^{-1}(vw)=\sum_{j\ge 0} \xi^{-j-1} P_0^j(vw)\quad\mbox{and}\quad
(\xi I-P_0)^{-1}1_\Delta= (\xi-1)^{-1}1_\Delta
\]
and so $V_{N'}(t)=\frac 1{\mu_\Delta(Y)}\int_{|\xi-1|=\delta}\sum_{j\ge 0}\xi^{-N'-j-1}(\xi-1)^{-1}\mu_\Delta(Y)\Pi_0( \hat\kappa P_0^j (vw))   \, d\xi$. Thus
\begin{align*}
V_{N'}(t)&=\frac 1{\mu_\Delta(Y)}\sum_{j\ge 0}\int_{|\xi-1|=\delta}\xi^{-N'-j-1}(\xi-1)^{-1}\mu_\Delta(Y)\Pi_0( \hat\kappa N_0^j (vw))   \, d\xi
\end{align*}
since $P_0^j=\mu_\Delta(\cdot)1_\Delta+N_0^j$, since $\Pi_0(\hat\kappa)=0$ and since $\sum_{j\ge 1}|\xi^{-j}\Pi_0(\hat \kappa N_0^j(vw))|\ll \sum_{j\ge 1}|(1-\delta)^{-1}\theta|^j\Vert \hat\kappa\Vert_{L^{p/(p-1)}(\mu_\Delta)}\Vert vw\Vert_{\mathcal B}<\infty$, due to our choice of $\delta$.
Therefore,  due to the Cauchy integral formula, 
\begin{align}
\label{formule TN'}
V_{N'}(t)&=2i\pi  \sum_{j\ge 0}\Pi_0( \hat\kappa N_0^j (vw))
=2i\pi  \sum_{j\ge 0}\int_\Delta \hat\kappa\circ f^j \, vw\, d\mu_\Delta\, .
\end{align}
Combining this with \eqref{I1Formule} and \eqref{eq-1prime}, we obtain
\begin{equation}\label{I1Formulebis}
I_1(t)= 
t\cdot \left(\frac  1{2\pi}U_{N'}(t)+i\sum_{j\ge 0}\int_\Delta \hat\kappa\circ f^j \, .vw\, d\mu_\Delta\right) +O(|t|^\gamma\Vert vw\Vert_\mathcal B)\, .
\end{equation}


We compute that
\begin{align}
\label{eq-s12v}
&U_{N'}(t) =\frac 1{\mu_\Delta(Y)}\sum_{j=0}^{N'-1} \int_{|\xi-1|=\delta}\xi^{-j-1}\int_{\Delta} 1_Y P^{j+1}\left( \hat \kappa(\xi I-P_0)^{-1} (vw)\right)\,  d\mu_\Delta\, d\xi\\
 \nonumber &=\frac 1{\mu_\Delta(Y)}\sum_{j=0}^{N'-1}\int_{|\xi-1|=\delta}\xi^{-j-1}\int_{\Delta}  \hat \kappa(\xi I-P_0)^{-1} (vw). 1_Y\circ f^{j+1}\, d\mu_\Delta\, d\xi=\frac {U_{1,N'}(t)+U_{2,N'}(t)}{\mu_\Delta(Y)}\, ,
\end{align}
\begin{align*}
U_{1,N'}(t) &=\sum_{j=0}^{N'-1}\int_{|\xi-1|=\delta}\xi^{-j-1}\int_{\Delta} \hat \kappa\, (\xi I-P_0)^{-1} \left(vw-\mu_\Delta(vw)\right)\, 1_Y\circ f^{j+1}\, d\mu_\Delta\, d\xi\, ,\\
U_{2,N'}(t))&=\mu_\Delta( vw) \sum_{j=0}^{N'-1}\int_{|\xi-1|=\delta}\xi^{-j-1}\,(\xi-1)^{-1}\,\int_{\Delta} \hat \kappa\, 1_Y\circ f^{j+1}\, d\mu_\Delta\, d\xi\, ,
\end{align*}
since $(\xi I-P_0)^{-1}1_\Delta=(\xi-1)^{-1}1_\Delta$.
Due to  the Cauchy
integral
formula, 
\begin{align}
U_{2,N'}(t):=& 2i\pi\mu_\Delta( vw)  \sum_{j=0}^{N'-1}\int_{\Delta} \hat\kappa\, 1_{Y}\circ f^{j+1}\, d\mu_\Delta\nonumber \\
&=2i\pi \mu_\Delta( vw) \sum_{j\ge 0}\int_{\Delta} \hat\kappa\, 1_{Y}\circ f^{j+1}\, d\mu_\Delta +O(\theta_0^{N'}\Vert vw\Vert_{L^1(\mu_\Delta)})\nonumber\\
&=
2i\pi \mu_\Delta( vw) \sum_{j\ge 0}\int_{\Delta} \hat\kappa\, 1_{Y}\circ f^{j+1}\, d\mu_\Delta +O(|t|^\gamma\Vert vw\Vert_{\mathcal B})\, ,\label{U2N't}
\end{align}
where in  the penultimate line we have used that $\int_{\Delta} \hat\kappa\, 1_{Y}\circ f^{j}\, d\bar\mu_\Delta=O(\theta_0^j)$ (see Lemma~\ref{lemma-lemexpk1y} below) and  in the last line we have used~\eqref{estimN'}.
Next, we estimate $U_{1,N'}$ defined in~\eqref{eq-s12v}.
Since $v|_{(a,\ell)}=C_{(a,\ell)}$ where $C_{(a,\ell)}$ are constants, $U_{1,N'}(t)$ is equal to the following quantities
\begin{align}
\nonumber&=\sum_{j=0}^{N'}\int_{|\xi-1|=\delta}\xi^{-j-1}\int_{\Delta}\sum_{r\ge0} \xi^{-r-1}  P_0^{r}\sum_{(a,\ell)}\left(vw1_{(a,\ell)}-\int_{(a,l)} vw\, d\mu_\Delta\right)\, (\hat\kappa1_Y\circ f^j)\, d\mu_\Delta\, d\xi\\
\nonumber&=\sum_{j=0}^{N'}\int_{|\xi-1|=\delta}\xi^{-j-1}\sum_{r\ge0} \xi^{-r-1}\sum_{(a,\ell)}\int_{\Delta}  P_0^{r}\left(vw1_{(a,\ell)}-\int_{(a,l)} vw\, d\mu_\Delta\right)\, (\hat\kappa1_Y\circ f^j)\, d\mu_\Delta\, d\xi\\
\nonumber&=\sum_{j=0}^{N'}\int_{|\xi-1|=\delta}\xi^{-j-1}\sum_{r\ge0} \xi^{-r-1}\sum_{(a,\ell)}C_{(a,\ell)}\int_{\Delta} \left(w1_{(a,\ell)}- \mu_\Delta(w1_{(a,l)})\right)\,(\hat\kappa\, 1_Y\circ f^j)\circ f^r\, d\mu_\Delta\, d\xi\\
\nonumber&= \sum_{r\ge0} \sum_{(a,\ell)}\sum_{j=0}^{N'}C_{(a,\ell)}\int_{\Delta} \left(w1_{(a,\ell)}- \mu_\Delta(w1_{(a,l)}\right)\,(\hat\kappa\, 1_Y\circ f^j)\circ f^r\, d\mu_\Delta\, \int_{|\xi-1|=\delta}\xi^{-j-r-2}\, d\xi
\end{align}
and thus $U_{1,N'}(t)=0$. The interchange of sums and integrals in the above 
equations is justified by Lemma~\ref{lem-justif...}. This is the only part of the proof where this assumption that is crucially used.
Combining \eqref{I1Formulebis}, \eqref{eq-s12v}, \eqref{U2N't} and $U_{1,N'}(t)=0$, we obtain that
\begin{align*}
I_1(t)&=i  t\cdot \left( \sum_{j\ge 0}\int_\Delta \hat\kappa\circ f^j \, .vw\, d\mu_\Delta+ \int_\Delta vw \, d\mu_\Delta\sum_{j\ge 0}\int_{\Delta} \hat\kappa\, \frac{1_{Y}\circ f^{j+1}}{\mu_\Delta(Y)}\, d\bar\mu_\Delta\right)\\
&+O(t^\gamma(\Vert vw\Vert_{\mathcal B}+\|w\|_{\mathcal B_0}\|v\|_{L^b(\mu_\Delta)}))\, .
\end{align*}
\item Let us prove that
$| I_2(t)|\ll |t|^{\gamma}\Vert vw\Vert_{\mathcal B}$.
This will give the conclusion. 
First, compute that for any $N\ge 1$,
\begin{align}
\label{eq-i2}
\nonumber I_2(t)&=\int_Y \int_{|\xi-1|=\delta} (\xi I-P_0)^{-1}(P_t-P_0)(\xi I-P_0)^{-1}(P_t- P_ 0)(\xi I-P_t)^{-1} (vw)\, d\xi\, d\mu_Y\\
\nonumber &=\sum_{j=0}^{N-1}\int_{|\xi-1|=\delta}  \xi^{-j-1} \int_Y P_0^j (P_t-P_0)(\xi I-P_0)^{-1}(P_t- P_ 0)(\xi I-P_t)^{-1} (vw)\,d\mu_Y\, d\xi\\
\nonumber &+\int_Y \int_{|\xi-1|=\delta} \xi^{-N} (\xi I-P_0)^{-1} P_0^N (P_t-P_0)(\xi I-P_0)^{-1}(P_t- P_ 0)(\xi I-P_t)^{-1} (vw)\, d\xi\, d\mu_Y\\
&=S_N(t)+E_N(t).
\end{align}
We further decompose $S_N$ and $E_N$, choosing 
$N:=\lfloor
(\gamma+\varepsilon)\frac{\log t}{\log(\theta_0/r)}\rfloor$ to prove the claim.
We first deal with $S_N$. 
\begin{align}
\label{eq-sn}
\nonumber S_N(t)&=\sum_{j=0}^{N-1} \sum_{\ell=0}^{N-1}\int_{|\xi-1|=\delta}  \xi^{-j
-1}\xi^{-\ell
-1}  \int_Y P_0^j (P_t-P_0)P_0^\ell (P_t- P_ 0)(\xi I-P_t)^{-1} (vw)\,d\mu_Y\, d\xi\\
\nonumber &+\sum_{j=0}^{N-1} \int_Y\int_{|\xi-1|=\delta}  \xi^{-j
-1} P_0^j (P_t-P_0) (\xi I-P_0)^{-1}\xi^{-N}  P_0^N(P_t- P_ 0)(\xi I-P_t)^{-1} (vw)\, d\xi\, d\mu_Y\\
&=S_{N, N}(t)+E_{N, N}(t).
\end{align}
Set $\gamma_0=\frac {\gamma+\varepsilon}2\in(0,1)$.
Note that $\gamma+\varepsilon<2\left(1-\frac 1p\right)$ implies that $\frac p{p-1}\gamma_0<1$.
\begin{align}
\Big |\int_Y P_0^j &(P_t-P_0)P_0^\ell (P_t- P_ 0)(\xi I-P_t)^{-1} (vw)\,d\mu_Y\Big|\nonumber\\
&=\Big |\int_Y (e^{it\cdot\hat\kappa}-1)P_0^\ell (P_t- P_ 0)(\xi I-P_t)^{-1} (vw)\,d\mu_Y\Big|
\nonumber\\
&\ll \|(\xi I-P_t)^{-1}(vw)\|_{\cB}\,\left\Vert ((e^{it\cdot\hat\kappa}-1)1_Y)\circ f^{\ell+1}\, (e^{it\cdot\hat\kappa}-1)\right\Vert_{L^{
\frac p{p-1}}
(\mu_\Delta)}\nonumber\\
&\ll 
|t|^{2\gamma_0}\|\hat\kappa\|_{L^{2\gamma_0\frac p{p-1}}}^{2\gamma_0}\Vert vw\Vert_{\mathcal B}\, ,\label{AAA1}
\end{align}
where 
we have used the H\"older inequality combined with
$\mathcal B\hookrightarrow L^p(\mu_
\Delta
)$ and finally,
in the last line, we used
the inequality $|e^{it\cdot\hat\kappa}-1|\ll |t\hat\kappa|^{\gamma_0}$ combined with
the Cauchy-Schwarz inequality.  Hence,
using the fact that $|\xi|^{-1}\le (1-\delta)^{-1}\le r^{-1}$,
\begin{equation}
\label{eq-snm}
|S_{N, N}(t)|\ll r^{-2N} |t|^{2\gamma_0}=r^{-2N} |t|^{\gamma+\varepsilon}.
\end{equation}

By~\eqref{spgap-Sz}, we have $P_0^N=\Pi_0+N_0^N$ with $\|N_0^N\|_{\cB}\ll \theta_0^N$. Hence,
\begin{align*}
E_{N, N}(t)&=\sum_{j=0}^{N-1} \int_Y \int_{|\xi-1|=\delta}  \xi^{-j
-1}   P_0^j (P_t-P_0)\xi^{-N}(\xi I-P_0)^{-1}\Pi_0\Big((P_t- P_ 0)(\xi I-P_t)^{-1} (vw)\, \Big)\, d\xi\, d\mu_Y\\
&+\sum_{j=0}^{N-1}\int_Y \int_{|\xi-1|=\delta}  \xi^{-j
-1
}  P_0^j (P_t-P_0)\xi^{-N} (\xi I-P_0)^{-1} N_0^N\Big((P_t- P_ 0)(\xi I-P_t)^{-1} (vw)\, \Big)\, d\xi\, d\mu_Y\\
&=E_{N, N}^1(t)+E_{N, N}^2(t).
\end{align*}
But  for any $\xi$ so that $|\xi-1|=\delta$, 
\begin{align*}
\Big|\Pi_0\Big((P_t- P_ 0)(\xi I-P_t)^{-1} (vw)\, \Big)\Big|&\ll \int_\Delta |(P_t- P_ 0)
(\xi I-P_t)^{-1} (vw)
| \, d\mu_\Delta\\
&\ll  \|P_t- P_ 0\|_{\cB\to L^1}\Vert vw\Vert_{\mathcal B}\ll |t| \Vert vw\Vert_{\mathcal B}\, .
\end{align*}
Hence,
\begin{align*}
|E_{N, N}^1(t)|\ll |t|\, \Vert vw\Vert_{\mathcal B} \sum_{j=0}^{N-1} \int_\Delta  \int_{|\xi-1|=\delta} | \xi^{-j
-1
}|\, | \xi^{-N}|\, |(P_t-P_0)(\xi I-P_0)^{-1} (1_Y)|\, d\xi\, d\mu_\Delta.
\end{align*}
The above together with~\eqref{eq-delta} implies that
\begin{equation}
\label{eq-enm}
|E_{N, N}^1(t)|\ll |t|\, r^{-2N}\,  \|P_t- P_ 0\|_{\cB\to L^1}  \Vert vw\Vert_{\mathcal B}\ll |t|^2\, r^{-2N} \Vert vw\Vert_{\mathcal B}\, .
\end{equation}
Recall that $\|N_0^N\|_{\cB}\ll \theta_0^N$. This together with ~\eqref{eq-delta} implies that
\begin{equation}
\label{eq-enm2}
|E_{N, N}^2(t)|\ll (\theta_0/r^2)^{N}\, \|P_t- P_ 0\|_{\cB\to L^1} \Vert vw\Vert_{\mathcal B}\ll |t|\, (\theta_0/r^2)^{N} \Vert vw\Vert_{\mathcal B}\, .
\end{equation}
This together with~\eqref{eq-sn}, \eqref{eq-snm}, \eqref{eq-enm} and~\eqref{eq-enm2}
implies that
\begin{equation}
\label{eq-snest}
|S_N(t)|\ll    (r^{-2N}|t|^{\gamma+\varepsilon}
+|t|^2 r^{-2N}
 + |t|(\theta_0/r^2)^N) \Vert vw\Vert_{\mathcal B}
\, .
\end{equation}
Proceeding as above in estimating $E_{N,N}(t)$, we compute that
\begin{align*}
& E_N(t)=E_{N}^1(t)+E_{N}^2(t)\\
\nonumber&=\int_Y \int_{|\xi-1|=\delta} \xi^{-N} (\xi I-P_0)^{-1} \Pi_0\Big( (P_t-P_0)(\xi I-P_0)^{-1}(P_t- P_ 0)(\xi I-P_t)^{-1} (vw)\Big)\, d\xi\, d\mu_Y\\
\nonumber &+\int_Y \int_{|\xi-1|=\delta} \xi^{-N} (\xi I-P_0)^{-1} N_0^N\Big( (P_t-P_0)(\xi I-P_0)^{-1}(P_t- P_ 0)(\xi I-P_t)^{-1} (vw)\Big)\, d\xi\, d\mu_Y\, .
\end{align*}
Similar to~\eqref{eq-enm2}, $|E_{N}^2(t)|\ll (\theta_0/r)^{N} \Vert vw\Vert_{\mathcal B}$. We need to decompose $E_N^1(t)$ further.
To start we reduce the analysis to $\Pi_0\Big( (P_t-P_0)(\xi I-P_0)^{-1}(P_t- P_ 0)(\xi I-P_t)^{-1} (vw)\Big)$,
for $\xi$ so that  $|\xi-1|=\delta$. With this specified we compute that for any $L\ge 1$,
\begin{align*}
\Pi_0\Big( (P_t-P_0)&(\xi I -P_0)^{-1}(P_t - P_ 0)(\xi I-P_t)^{-1} (vw)\Big)\\
=&\int_Y  (P_t-P_0)(\xi I-P_0)^{-1}(P_t- P_ 0)(\xi I-P_t)^{-1} (vw)\, d\mu\\
=&S'_L(t) +E'_L(t)=\sum_{j=0}^{L-1}\xi^{-j-1} \int_Y  (P_t-P_0)P_0^j (P_t- P_ 0)(\xi I-P_t)^{-1} (vw)\, d\mu \\
&+\int_Y  (P_t-P_0)(\xi I-P_0)^{-1}\xi^{-L}P^L(P_t- P_ 0)(\xi I-P_t)^{-1} (vw)\, d\mu\, .
\end{align*}
Due to \eqref{AAA1},
$|S'_{L}(t)|\ll 
 r^{-L}\, |t|^{\gamma+\varepsilon} \Vert vw\Vert_{\mathcal B}$.
 Next, to estimate $E'_L(t)$, we repeat the argument used in estimating $E_{N,N}(t)$ above. Namely,
\begin{align*}
|E'_L(t)|&\ll \Big|\int_Y  (P_0-P_t)(\xi I-P_0)^{-1}\xi^{-L}\Pi_0(P_t- P_ 0)(\xi I-P_t)^{-1} (vw)\, d\mu \Big|\\
&+\Big|\int_Y  (P_0-P_t)(\xi I-P_0)^{-1}\xi^{-L}N_0^L(P_t- P_ 0)(\xi I-P_t)^{-1}(vw)\, d\mu\Big|\\
&\ll (r^{-L}\, |t|^2+ |t|\, (\theta_0/r)^L) \Vert vw\Vert_{\mathcal B}\, .
\end{align*}
Therefore
$\left|E_N(t)\right|\ll(\theta_0/r)^N+  r^{-N}\left(r^{-L}\, |t|^{\gamma+\varepsilon}+ |t|\, (\theta_0/r)^L\right) \Vert vw\Vert_{\mathcal B}$.
Gathering this and \eqref{eq-snest}, we obtain
\begin{equation}
\label{eq-i2tuselat}
|I_2(t)|\ll    \left(r^{-2N}|t|^{\gamma+\varepsilon}
 + |t|(\theta_0/r^2)^N+(\theta_0/r)^N+  r^{-N}\left(r^{-L}\, |t|^{\gamma+\varepsilon}+ |t|\, (\theta_0/r)^L\right)\right) \Vert vw\Vert_{\mathcal B}\, .
\end{equation}
Recall that 
$N:=\lfloor
(\gamma+\varepsilon)\frac{\log |t|}{\log(\theta_0/r)}\rfloor$, so that
 $(\theta_0/r)^N\sim |t|^{\gamma+\varepsilon}$.
Recall also that $\theta_0= r^\vartheta$ with $\vartheta>
2$ so that
$
\frac{2(\gamma+\varepsilon)}{\vartheta-1}<\varepsilon$
and
$\frac{\left(\gamma+\varepsilon\right)(\vartheta-2)}{\vartheta-1}>\gamma-1$.
Hence 
$\theta_0/r=r^{\vartheta-1}$ and $r^{-N}\sim |t|^{-\frac{\gamma+\varepsilon}{\vartheta-1}}$
and $(\theta_0/r^2)^N=r^{N(\vartheta-2)}\approx |t|^{\frac{(\gamma+\varepsilon)(\vartheta-2)}{\vartheta-1}}$ thus
\begin{align*}
\label{eq-i2tuselat}
|I_2(t)|&\ll  \left(  |t|^{\gamma+\varepsilon-2\frac{\gamma+\varepsilon}{\vartheta-1}}
 +| t|^{1+\frac{(\gamma+\varepsilon)(\vartheta-2)}{\vartheta-1}}+|t|^{\gamma+\varepsilon}+  |t|^{-\frac{\gamma+\varepsilon}{\vartheta-1}}
\left(r^{-L}\, |t|^{\gamma+\varepsilon}+ |t|\, (\theta_0/r)^L\right)\right) \Vert vw\Vert_{\mathcal B}\\
&\ll  \left(|t|^{\gamma} +  |t|^{-\frac{\gamma+\varepsilon}{\vartheta-1}}
\left((\theta_0/r)^{-\frac{L}{\vartheta-1}}\, |t|^{\gamma+\varepsilon}+ |t|\, (\theta_0/r)^L\right)\right) \Vert vw\Vert_{\mathcal B}\, ,
\end{align*}
due to our choice of $\vartheta$.
Now, choose $L:=\left\lfloor\frac{(\gamma+\varepsilon-1)\log |t|}{\log(\theta_0/r)}\right\rfloor$, so, due to our choice of $\vartheta$,
$$
|I_2(t)|
\ll  \left(|t|^{\gamma} +  |t|^{-\frac{\gamma+\varepsilon}{\vartheta-1}}
\left( |t|^{\gamma+\varepsilon-\frac{\gamma+\varepsilon-1}{\vartheta-1}}+ |t|^{\gamma+\varepsilon}\right)\right) \Vert vw\Vert_{\mathcal B}\ll |t|^{\gamma} \Vert vw\Vert_{\mathcal B} \, .$$


\end{itemize}
\end{proof}

\subsection{Proof of the expansion of $\Pi_t$~: proofs of Propositions~\ref{prop-expproj-base} and \ref{prop-expproj}}

We first provide the proof of Propositions~\ref{prop-expproj-base} relying on our technical Lemmas~\ref{lem-eigf} and 
~\ref{lem-eigf2}, and then we prove Proposition~\ref{prop-expproj} .\\

\begin{pfof}{Proposition~\ref{prop-expproj-base}}
We proceed as in~\cite[Proof of Lemma 3.14]{BalintGouezel06} using our estimates.
Since 1 is a simple eigenvalue of $\tilde R_t$, and since $Q_t(1_Y)$ and $1_Y\Pi_t(vw)$ are both eigenfunctions belonging to $\mathcal B_1$ of $\tilde R_t$ associated to 1, these two vectors are proportional and so, 
\begin{equation}\label{1YPit}
1_Y\Pi_t(vw)=\mu_Y(1_Y\Pi_t(vw))\frac{Q_t(1_Y)}{\mu_Y(Q_t(1_Y))}=\left(\mu_\Delta(vw)+t\cdot c_0(vw)+\boldsymbol{\varepsilon}^{(1)}_t
\right)\frac{Q_t(1_Y)}{\mu_Y(Q_t(1_Y))}\, ,
\end{equation}
with
$
\boldsymbol{\varepsilon}^{(1)}_t:=\mu_Y(1_Y(\Pi_t-\Pi_0-t\cdot c_0)(vw))=O(|t|^\gamma(\|vw\|_{\mathcal B}+\|w\|_{\mathcal B_0}\|v\|_{L^b(\mu_\Delta)}))$
in $\mathbb C$, 
by Lemma~\ref{cor-betcont}.
Moreover, due to Lemma~\ref{lem-eigf},
$
\boldsymbol{\varepsilon}^{(2)}_t:=(Q_t-Q_0-t\cdot Q'_0)(1_Y)=O(|t|^\gamma)$
in $\mathcal B_1$.
Hence, still in $\mathcal B_1$,
\begin{align*}
\frac{Q_t(1_Y)}{\mu_Y(Q_t(1_Y))}&=
\frac{1_Y+t\cdot Q'_0(1_Y)+\boldsymbol{\varepsilon}^{(1)}_t}
{\mu_Y(1_Y+t\cdot Q'_0(1_Y)+\boldsymbol{\varepsilon}^{(1)}_t)}
=1_Y+t\cdot(Q'_0(1_Y)-\mu_Y(Q'_0(1_Y))1_Y)+O(|t|^\gamma)\\
&=1_Y+t\cdot Q'_0(1_Y)+O(|t|^\gamma)\, ,
\end{align*} 
since the derivation at $t=0$ of $Q_t^2(1_Y)=Q_t(1_Y)$ combined with $Q_01_Y=1_Y$ leads to $\mu_Y(Q'_01_Y)=Q_0Q'_01_Y=0$.
Combining the above estimate with \eqref{1YPit}, we conclude that
\[
1_Y\Pi_t(vw)=\mu_\Delta(vw)+t\cdot \left(c_0(vw)+\mu_\Delta(vw)Q'_0(1_Y)\right)+O\left(|t|^\gamma(\|vw\|_{\mathcal B}+\|w\|_{\mathcal B_0}\|v\|_{L^b(\mu_\Delta)})\right)\, ,
\]
which leads to \eqref{eq1yderivpi}.
~\end{pfof}
\begin{rem}
\label{rem:derivopi}
By the same reasoning used in obtaining  $\mu_Y(Q'_01_Y)=Q_0Q'_01_Y=0$ with $\Pi_0'$ instead of $Q_0'$ and $1$ instead of $1_Y$ we have $\mu_\Delta(\Pi'_01)=0$.
\end{rem}



Using Proposition~\ref{prop-expproj-base} and equation~\eqref{eq-Pit} we can complete

\begin{pfof}{Proposition~\ref{prop-expproj}} 
To simplify notations, we write $\|(v,w)\|:=\|vw\|_{\mathcal B}+\|w\|_{\mathcal B_0}\|v\|_{L^b(\mu_\Delta)}$.
Starting from~\eqref{eq-Pit}, 
$\Pi_t(vw)(x)=\lambda_t^{-\omega(x)} e^{it\cdot\hat\kappa_{\omega(x)}\circ \pi_0(x)}\Pi_t (vw)\circ\pi_0(x)$.
Using the fact that $\lambda_0=1$ and $\lambda'_0=0$, we conclude that
$t\mapsto \Pi_t(vw)(x)$ is differentiable at $0$, with derivative
\begin{align}
\label{eq-derivPi0ult}
\Pi_0' (vw)(x)&=
i[\hat\kappa_{\omega(x)}\Pi_0 (vw)]\circ\pi_0(x)
+\Pi_0' (vw)\circ\pi_0(x),
\end{align}
where we use the definition of $1_Y\Pi'_0$ in~\eqref{eq1yderivpi}.
This provides the last formula of Proposition~\ref{prop-expproj}.
With this definition, since $\Pi'_0$ is uniformly bounded on $Y$, for every $\eta\in(1,2)$
\begin{align*}
\Vert \Pi'_0 (vw)\Vert^\eta_{L^\eta(\mu_\Delta)}
&\ll \Vert 1_Y\Pi'_0 (vw)\Vert_\infty^\eta+
       \sum_{n\ge 1}\int_{\{\sigma>n\}}|\hat\kappa_n|^\eta\, d\mu_Y\|vw\|_{L^1(\mu_Y)}\\
&\ll \left(1+ \sum_{n\ge 1}(\mu_Y(\sigma>n))^{\frac{2-\eta}{2+\eta}}\Vert|\hat\kappa_n|^\eta\Vert_{L^\frac{\eta+2}{2\eta}(\mu_Y)}\right)\|(v,w)\|\\
&\ll\left( 1+\sum_{n\ge 1}\theta_1^{\frac{2-\eta}{2+\eta}n} n^\eta \Vert\hat\kappa\Vert_{L^\frac{\eta+2}{2}(\mu_\Delta)}^\eta\right)\|(v,w)\|=O(\|(v,w)\|)\, .
\end{align*}
Formula \eqref{eq-derivPi0ult} together with~\eqref{eq-Pit} implies that
\begin{align}
\label{eq-expbigP}
(\Pi_t-&\Pi_0-t\cdot \Pi_0')(vw)(x)=
\left[(e^{it\cdot\hat\kappa_{\omega(x)}}-1-it\cdot \hat\kappa_{\omega(x)})\Pi_0 (vw)\right]
\circ\pi_0(x)\\
&\nonumber
+\left[(e^{it\hat\kappa_{\omega(x)}}-1)t\cdot\Pi'_0(vw)\right]\circ\pi_0
\left[e^{it\cdot\hat\kappa_{\omega(x)}}
(\Pi_t-\Pi_0-t\cdot\Pi_0') (vw)\right]\circ\pi_0(x)
\\
&
+(\lambda_t^{-\omega(x)}-1)
[e^{it\hat\kappa_{\omega(x)}}\Pi_t (vw)]\circ\pi_0(x)\, .\nonumber
\end{align}
This 
leads to
$\left\Vert(\Pi_t-\Pi_0-t\cdot \Pi_0')(vw)\right\Vert_{L^{p'}(\mu_{\Delta})}\le I_1(t)+I_2(t)+I_3(t)+I_4(t)$.
First
\begin{align*}
(I_1(t))^{p'}&:=\int_{\Delta}\left| e^{it\hat\kappa_{\omega(x)}}-1-it\cdot \hat\kappa_{\omega(x)}\right|^{p'}\circ\pi_0(x)d\mu_{\Delta}(x)\left(\int_{Y}|vw|\, d\mu_Y\right)^{p'}\\
&\ll\int_{\Delta}\left|t\cdot  \hat\kappa_{\omega(x)}\circ\pi_0(x)\right|^{p'\gamma} d\mu_{\Delta}(x)\Vert vw\Vert_{L^1(\mu_\Delta)}^{p'}\\
&\ll \sum_{n\ge 1}\int_{\sigma> n}  \left|t \cdot \hat\kappa_{n}\right|^{p'\gamma} d\mu_{Y}\Vert vw\Vert_{L^1(\mu_\Delta)}^{p'}
\end{align*}
Thus $(I_1(t))^{p'}\ll\sum_{n\ge 1}|t|^{p'\gamma}\mu_Y(\sigma\ge n)^{\frac {2-\gamma}{2+\gamma}} \Vert \hat\kappa_n^{p'\gamma}\Vert_{L^{\frac{\gamma+2}{2\gamma}}(\mu_Y)}\Vert vw\Vert_{L^1(\mu_\Delta)} ^{p'}$. It follows that
\[
\ll\sum_{n\ge  1}|t|^{p'\gamma}\mu_Y(\sigma> n)^{\frac {2-\gamma}{2+\gamma}} \Vert \hat\kappa_n\Vert^{p'\gamma}_{L^{p'\frac{\gamma+2}{2}}(\mu_Y)}\Vert vw\Vert_{L^1(\mu_\Delta)}^{p'}\ll |t|^{p'\gamma}\|vw\|^{p'}_{\mathcal B}\, ,
\]
for any $\gamma\in(1,\frac 4{p'}-2)$, since $p'\frac{\gamma+2}2\in(1,2)$ and $\Vert\hat\kappa_n\Vert_{L^{p'\frac{\gamma+2}{2}}(\mu_Y)}\le n\Vert \hat\kappa\Vert_{L^{p'\frac{\gamma+2}{2}}(\mu_\Delta)}$ and $\mu_Y(\sigma>n)\ll \theta_1^n$.
Second, using $|e^{iy}-1|\le|y|$, it comes
\begin{align*}
(I_2(t))^{p'}&:=|t|^{p'}\,\int_{\Delta}\left|(e^{it\cdot \hat\kappa_{\omega(x)}}-1)\Pi'_0(vw)\right|^{p'}\circ\pi_0\, d\mu_{\Delta}(x)
\ll |t|^{2p'}\sum_{n\ge 1}\int_{\sigma> n} |\hat\kappa_n\Pi'_0(vw)|^{p'}\, d\mu_Y \\
&\ll |t|^{2p'} \Vert 1_Y\Pi'_0(vw)\Vert_\infty \sum_{n\ge 1}\mu(\sigma> n)^{\frac {2-p'}{p'+2}}\Vert\hat\kappa_n^{p'}\Vert_{L^{\frac{p'+2}{2p'}}(\mu_\Delta)}\\
&\ll |t|^{2p'} \Vert 1_Y\Pi'_0(vw)\Vert_\infty \ll |t|^{2p'}\|(v,w)\|,
\end{align*}
proceeding as for $I_1$ and using $\Vert 1_Y\Pi'_0(vw)\Vert_\infty\ll\|(v,w)\|$.
Third
\begin{align}
\label{I3}
(I_3(t))^{p'}&:=
\int_{\Delta}\left|
e^{it\cdot \hat\kappa_{\omega(x)}}
(\Pi_t-\Pi_0-t\Pi_0') (vw)\right|^{p'}\circ\pi_0(x)\,d\mu_{\Delta}(x)\\
&=O\left(|t|^{\gamma p'}(\Vert vw\Vert_{
\mathcal B}+\Vert w\Vert_{\mathcal B_0}\|v\|_{L^b(\mu_\Delta)})^{p'}\right)\, ,
\end{align}
with $\gamma$ as in Proposition~\ref{prop-expproj-base}.
For the last term, we observe that the expansion of $\lambda$ at $0$ implies in particular that $1>|\lambda_t|>e^{-a_0\frac {|t|^2\log(1/|t|)} 2}$ if $t$ is small enough and so
\begin{align*}
|I_4(t)|^{p'} &=\int_{\Delta} \left|(\lambda_t^{-\omega(x)}-1)
e^{it\cdot \hat\kappa_{\omega(x)}}\Pi_t
(vw)
\right|^{p'}\circ\pi_0(x)\, d\mu_\Delta(x)\\
&\le \int_{\Delta}\left (\omega(x)\, |\lambda_t-1|\, e^{a_0\frac {|t|^2\log(1/|t|)} 2(\omega(x)-1)}\left\Vert 1_Y\Pi_t (vw)\right\Vert_\infty\right)^{p'}\, d\mu_\Delta(x)\\
&\ll (|t|^2\log(1/|t|)\left\Vert 1_Y\Pi_t (vw)\right\Vert_\infty)^{p'} \sum_{n\ge 1}\int_{\sigma> n}n^{p'} e^{a_0p'\frac {n|t|^2\log(1/|t|)} 2}\, d\mu_Y
\end{align*}
and so $|I_4(t)|^{p'}\ll (|t|^2\log(1/|t|)\left\Vert 1_Y\Pi_t (vw)\right\Vert_\infty)^{p'} \sum_{n\ge 1}n^{p'} e^{a_0p'\frac {n|t|^2\log(1/|t|)} 2}\mu_Y(\sigma> n)$. Thus
\[
|I_4(t)|^{p'}\ll (|t|^2\log(1/|t|) \|(v,w)\|)^{p'}\, ,
\]
for small $|t|$
since $\mu_Y(\sigma>n)\ll \theta_1^n$ and
$\left\Vert 1_Y\Pi_t (vw)\right\Vert_\infty\ll\|(v,w)\|$ (see \eqref{1YPit}).
So, taking $\gamma$ as in Proposition~\ref{prop-expproj},
$\left\Vert(\Pi_t-\Pi_0-t\cdot \Pi_0')(vw)\right \|_{L^{p'}(\mu_{\Delta})}
\ll |t|^\gamma\, \left(\|vw\|_{\mathcal B}+\Vert w\Vert_{\mathcal B_0}\|v\|_{L^b(\mu_\Delta)}\right)$.
\end{pfof}

\section{Expansion of $\lambda_t$: Proof of Proposition~\ref{prop-lambda} }
\label{sec:lambda}

Let $v_t$ be the eigenfunction of $P_t$ associated with $\lambda_t$ so that $\mu_\Delta(v_t)=1$, that is $v_t=\frac{\Pi_t 1_\Delta}{\mu_\Delta(\Pi_t 1_\Delta)}$.
Using a classical argument (see, for instance,~\cite{BalintGouezel06}), we write
\begin{align}
1-\lambda_t&=\int_{\Delta} (P_0-P_t)v_t\, d\mu_\Delta=\int_{\Delta}(1-e^{it\cdot\hat\kappa})v_t\, d\mu_\Delta\nonumber\\
&=\int_{\Delta} (1-e^{it\cdot\hat\kappa})\, d\mu_\Delta+\int_{\Delta}(1-e^{it\cdot\hat\kappa})(v_t-v_0)\, d\mu_\Delta:=\Psi(t) +V(t)\, .\label{eq-lambdato}
\end{align}
Estimates of $V(t)$ and $\Psi(t)$ are given respectively in Lemmas~\ref{lem:V} and \ref{lem:Psi}. 
$\Psi(t)$ is the pure scalar part which will be estimated as if dealing with the characteristic function of an i.i.d. process
and for this we only need to exploit Lemma~\ref{lemm-tail}. 
The function $V(t)$ will be estimated via the estimates used in the proof of Proposition~\ref{prop-expproj}.
We start with the latter.

\begin{lem}\label{lem:V} There exists $C>0$ such that for all $t\in B_\delta(0)$, $|V(t)|\le Ct^2$.
\end{lem}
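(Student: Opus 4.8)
The plan is to estimate $V(t)=\int_\Delta (1-e^{it\cdot\hat\kappa})(v_t-v_0)\, d\mu_\Delta$ by writing $v_t-v_0$ in terms of the eigenprojectors and then invoking the expansion of $\Pi_t$ from Proposition~\ref{prop-expproj} (applied with $v=1_\Delta$, i.e.\ effectively Proposition~\ref{PROP1}). Recall $v_t=\Pi_t 1_\Delta/\mu_\Delta(\Pi_t 1_\Delta)$ and $v_0=1_\Delta$ since $\mu_\Delta(\Pi_0 1_\Delta)=\mu_\Delta(1_\Delta)=1$ and $\Pi_0 1_\Delta = 1_\Delta$. First I would note that $\mu_\Delta(\Pi_t 1_\Delta)=1+O(|t|^\gamma)$ for any admissible $\gamma>1$ by Proposition~\ref{PROP1} together with $\mu_\Delta(\Pi'_0 1_\Delta)=0$ (Remark~\ref{rem:derivopi}), so that
\[
v_t-v_0=\Pi_t 1_\Delta-\Pi_0 1_\Delta+O(|t|^\gamma)\quad\text{in }L^{p'}(\mu_\Delta),
\]
and then $v_t-v_0 = t\cdot\Pi'_0 1_\Delta + O(|t|^\gamma)$ in $L^{p'}(\mu_\Delta)$, again by Proposition~\ref{PROP1}, choosing $\gamma\in(1,2)$ (possible by taking $p$, $p'$ appropriately; in particular with $p'$ close to $1$ we may take $\gamma$ close to $2$).

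Next I would plug this into $V(t)$ and split into the leading term and the remainder. For the remainder, $|1-e^{it\cdot\hat\kappa}|\le |t\cdot\hat\kappa|^{\gamma_1}\le |t|^{\gamma_1}|\hat\kappa|^{\gamma_1}$ for any $\gamma_1\in[1,2)$ bounds $\|1-e^{it\cdot\hat\kappa}\|_{L^{q'}(\mu_\Delta)}\ll |t|^{\gamma_1}$ whenever $\gamma_1 q'<2$, since $\hat\kappa$ has all moments below $2$; pairing with the $O(|t|^\gamma)$ error in $L^{p'}$ via Hölder (with $1/p'+1/q'=1$, so $q'$ large, forcing $\gamma_1$ close to $1$) gives a contribution $O(|t|^{\gamma+\gamma_1})=O(|t|^2)$ as long as $\gamma+\gamma_1\ge 2$, which holds for $\gamma$ close enough to $2$ and $\gamma_1\ge 1$. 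For the leading term $t\cdot\int_\Delta (1-e^{it\cdot\hat\kappa})\,\Pi'_0 1_\Delta\, d\mu_\Delta$, I would use $|1-e^{it\cdot\hat\kappa}|\le |t\cdot\hat\kappa|$ and Hölder: $\Pi'_0 1_\Delta\in L^s(\mu_\Delta)$ for every $s\in[1,2)$ by Proposition~\ref{prop-expproj}, and $\hat\kappa\in L^r(\mu_\Delta)$ for every $r<2$, so picking $1/s+1/r\le 1$ with both $s,r<2$ (e.g.\ $s=r$ slightly below $2$) gives $\int_\Delta |t\cdot\hat\kappa|\,|\Pi'_0 1_\Delta|\,d\mu_\Delta\ll |t|\,\|\hat\kappa\|_{L^r}\|\Pi'_0 1_\Delta\|_{L^s}=O(|t|)$, hence this term is $O(|t|^2)$ after the outer factor of $t$.

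The main obstacle is bookkeeping of exponents: one must choose $p$ (hence $\mathcal B$), $p'$, $\gamma$, $\gamma_1$, $r$, $s$ simultaneously so that (i) $\gamma$ is admissible in Proposition~\ref{prop-expproj} and close enough to $2$, (ii) the Hölder pairing of the $L^{p'}$ error with the $L^{q'}$ bound on $1-e^{it\cdot\hat\kappa}$ yields total exponent $\ge 2$, and (iii) the Hölder pairing in the leading term is legitimate with all integrability indices strictly below $2$. All of these are open conditions compatible with $\gamma\uparrow 2$, $p'\downarrow 1$, so the choices can be made; the verification is routine but must be done carefully. Assembling the two estimates gives $|V(t)|\le C|t|^2$ for $t\in B_\delta(0)$, as claimed.
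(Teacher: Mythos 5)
Your plan has two genuine gaps, both traceable to the fact that $\hat\kappa\notin L^2(\mu_\Delta)$ and that Proposition~\ref{PROP1} only controls the error in $L^{p'}(\mu_\Delta)$ for $p'\in[1,4/3)$ with exponent $\gamma<\frac 4{p'}-2$. For your remainder term, H\"older forces you to measure $1-e^{it\cdot\hat\kappa}$ in $L^{q'}$ with $q'=p'/(p'-1)>4$, and since $\hat\kappa$ only has moments of order strictly less than $2$ you can take at most $\gamma_1<2/q'=2(p'-1)/p'$; hence the total exponent satisfies $\gamma+\gamma_1<\left(\tfrac 4{p'}-2\right)+\tfrac{2(p'-1)}{p'}=\tfrac 2{p'}<2$ for every admissible $p'>1$, while $p'=1$ forces $\gamma_1=0$. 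So the exponents are not an open family of choices compatible with $\gamma\uparrow 2$; they are rigidly capped strictly below $2$, and this pairing can never yield $O(t^2)$ (in particular your claim that one may take $\gamma_1\ge 1$ would require $q'<2$, i.e. $p'>2$, far outside the range of Proposition~\ref{PROP1}). For your leading term, the proposed H\"older pairing with both $r,s<2$ is arithmetically impossible, since $1/r+1/s>1$ whenever $r,s<2$; and this is not a bookkeeping slip that better constants fix: by \eqref{eq-derivPi0}, $\Pi_0'1_\Delta$ grows like $\hat\kappa_{\omega(\cdot)}\circ\pi_0$ up the tower, so $\int_\Delta|\hat\kappa|\,|\Pi_0'1_\Delta|\,d\mu_\Delta$ is comparable to $\int_Y\sum_{k=1}^{\sigma-1}|\hat\kappa\circ f^k|\,|\hat\kappa_k|\,d\mu_Y$, whose finiteness cannot be extracted from moment bounds on $\hat\kappa$ alone.

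This is exactly why the paper does not argue through the $L^{p'}$ expansion. Its proof of Lemma~\ref{lem:V} returns to the tower identity \eqref{eq-Pit} and decomposes $(\Pi_t-\Pi_0)1_\Delta$ as in \eqref{eq-expbigPbis} into three pieces $J_1,J_2,J_3$: the term $J_2$ is paired against $|t\cdot\hat\kappa|\in L^1(\mu_\Delta)$ using the base estimate of Proposition~\ref{prop-expproj-base}, which gives a sup-norm bound $O(t)$ for $1_Y(\Pi_t-\Pi_0)(1_\Delta)$ on $Y$ (an $L^\infty$ control on the base, much stronger than the $L^{p'}$ control on $\Delta$ you invoke); $J_3$ uses the eigenvalue expansion together with the exponential tail of $\sigma$; and the crucial term $J_1\le t^2\int_Y\sum_{k=1}^{\sigma-1}|\hat\kappa\circ f^k|\,|\hat\kappa_k|\,d\mu_\Delta$ is finite only thanks to the billiard-specific estimates of Sz\'asz--Varj\'u (\cite[Propositions 11--12 and Lemma 16]{SV07}), which bound by $O(n^{-3-\varepsilon})$ the measure of the set where a free flight of length $n$ is accompanied by another flight longer than $n^{4/5}$ within a window of length $O(\log n)$. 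Any proof of Lemma~\ref{lem:V} must control this correlation between $\hat\kappa$ and the Birkhoff sums $\hat\kappa_k$ along an excursion, not just their individual moments, so without an input of this geometric kind your argument cannot close.
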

\begin{proof} 
Note that
$v_t-v_0=\frac{\Pi_t(1_\Delta)-\mu_\Delta(\Pi_t(1_\Delta))}{\mu_\Delta(\Pi_t(1_\Delta))}$.
Therefore it is enough to prove that
\begin{equation}\label{Goal}
\left|
\int_{\Delta}(1-e^{it\cdot\hat\kappa})(\Pi_t(1_\Delta)-\mu_\Delta(\Pi_t(1_\Delta)))\, d\mu_\Delta\right|\ll t^2\, .
\end{equation}
We know from Proposition~\eqref{prop-expproj} that
$\mu_\Delta((\Pi_t-\Pi_0)(1_\Delta))=O(t)$, which implies that
\begin{equation}\label{Esti1}
\left|\int_{\Delta}(1-e^{it\cdot\hat\kappa})\mu_\Delta((\Pi_t-\Pi_0)(1_\Delta))\, d\mu_\Delta\right| \ll t^2\Vert \hat\kappa\Vert_{L^1(\mu_\Delta)}\ll t^2\, .
\end{equation}
It remains to estimate
$V_0(t):=\int_{\Delta}(1-e^{it\cdot\hat\kappa})(\Pi_t-\Pi_0)(1_\Delta)\, d\mu_\Delta$.
To this end, thanks to \eqref{eq-Pit}, we use the following decomposition similar to \eqref{eq-expbigP}
\begin{align}
\label{eq-expbigPbis}
(\Pi_t-\Pi_0)1_\Delta(x)=&\lambda_t^{-\omega(x)} e^{it\hat\kappa_{{\omega(x)}}
(\pi_0(x))}\Pi_t (1_\Delta)\circ\pi_0(x)-1\nonumber\\
=&
(e^{it\cdot\hat\kappa_{\omega(x)}(\pi_0(x))}-1)
+
\left[e^{it\cdot\hat\kappa_{\omega(x)}}
(\Pi_t-\Pi_0) (1_\Delta)\right]\circ\pi_0(x)\\
&+(\lambda_t^{-\omega(x)}-1)
[e^{it\cdot\hat\kappa_{\omega(x)}}\Pi_t (1_\Delta)]\circ\pi_0(x)\, ,
\nonumber
\end{align}
which leads to
\begin{equation}\label{DecompJ}
\left|\int_{\Delta}(1-e^{it\cdot\hat\kappa})(\Pi_t-\Pi_0)(1_\Delta)\, d\mu_\Delta\right|\le J_1(t)+J_2(t)+J_3(t)\, ,
\end{equation}
\begin{align*}
J_1(t)&:=\left|\int_{\Delta}(1-e^{it\cdot\hat\kappa(x)}) (e^{it\cdot \hat\kappa_{\omega(x)}(\pi_0(x))}-1)\, d\mu_{\Delta}(x)\right|\, ,\nonumber\\
J_2(t)&:=
\int_{\Delta}\left|
(1-e^{it\cdot\hat\kappa(x)})
e^{it\hat\kappa_{\omega(x)}}
(\Pi_t-\Pi_0) (1_\Delta)\right|\circ\pi_0(x)\,d\mu_{\Delta}(x)\, ,\\
J_3(t)&:=
\int_{\Delta} \left|(1-e^{it\cdot\hat\kappa(x)})(\lambda_t^{-\omega(x)}-1)
e^{it\kappa_{\omega(x)}}\Pi_t
(1_\Delta)
\right|\circ\pi_0(x)\, d\mu_\Delta(x)\, .
\end{align*}
First let us prove that
\begin{align}
J_1(t)=O(t^2)\quad \mbox{as }t\rightarrow 0\, .
\label{MajoJ1}
\end{align}
\begin{align}
J_1(t)&=\left|\int_Y \sum_{k=0}^{\sigma-1}(1-e^{it\cdot \hat\kappa\circ f^k})
(1-e^{it\cdot \hat\kappa_k})\, d\mu_\Delta\right|
\le t^2\int_{Y
} \sum_{k=1}^{\sigma-1}|\hat\kappa\circ f^k|\, |\hat\kappa_{k}|\, d\mu_\Delta
\label{TOTO}\, .
\end{align}
To prove that $J_1(t)=O(t^2)$, we will use the fact that, due to ~\cite[Propositions 11--12]{SV07} and~\cite[Lemma 16]{SV07}, there exists $\varepsilon>0$ such that, for every $V$,
\[
\mu_\Delta\left(A_{n,V}\right)=O(n^{-3-\varepsilon})\quad \mbox{with }A_{n,V}:=\left\{\hat\kappa=n,\ \exists |j|\le V\log (n+2),\ |\hat\kappa\circ f^j|>n^{4/5}\right\}.
\]
For any $y\in Y$, we write $\ell(y)$ for the largest integer in $\{1,...,\sigma(y)-1\}$ such that 
\[
N(y):=\sup_{k=1,...,\sigma(y)-1}|\hat\kappa\circ f^k(y)|=|\hat\kappa\circ f^{\ell(y)}(y)|\, .
\]
Set $Y_n:=Y\cap\{\hat \kappa\circ f^{\ell(y)}=n$, 
$Y'_n:=\left\{y\in Y_n\, :\ \sigma(y)< b\log (n+2)\right\}$
and also
$
Y^{(0)}_n:=\left\{y\in Y'_n\, :\ \forall j<\sigma(y),\ |\hat\kappa\circ f^j|\le n^{4/5}\right\}\, .$
Notice that
\begin{align*}
&\int_{Y} \sum_{k=1}^{\sigma-1}|\hat\kappa\circ f^k|\, |\hat\kappa_{k}|\, d\mu_\Delta\le \sum_{n\ge 0}\int_{Y'_n} \sum_{k=1}^{b\log(n+2)-1}|\hat\kappa\circ f^k|\, |\hat\kappa_{k}|\, d\mu_\Delta+\sum_{n\ge 0}  \int_{Y_n\setminus Y'_n} n^2\sigma\, d\mu_\Delta\\
&\le  \sum_{n\ge 0}\int_{Y^{(0)}_n} b\, n^{9/5}\log (n+2)+\sum_{n\ge 0}\int_{Y'_n\setminus Y_n^{(0)}}n^2b\log(n+2)\, d\mu_\Delta
+\sum_{n\ge 0}   n^2\mathbb E_{\mu_\Delta}[\sigma 1_{Y\setminus Y'_n}]\\
&\le  \sum_{n\ge 0}b\, n^{9/5}\log (n+2)
\mu_\Delta(Y'_n)
+\sum_{n\ge 0}n^2b\log(n+2)\mu_\Delta(Y'_n\setminus Y_n^{(0)})\\
&\ \ \ \ \ \ \ \ \ \ \ \ \ \ \ 
+\sum_{n\ge 0}   n^2\!\!\!\sum_{m\ge b\log(n+2)}\mu_\Delta(\sigma>m)\, .
\end{align*}
The last term of the above right hand side is less than
$\sum_{n\ge 0}   n^2\sum_{m\ge b\log(n+2)}C_1\theta_1^m\le \sum_{n\ge 0}  O( n^2 \theta_1^{ b\log(n+2)})<\infty$ by taking $b$ large enough. The other terms are dominated by
\begin{align*}
&\ \sum_{n\ge 0}O\left(n^{9/5}\log (n+2)
 \sum_{m=0}^{b\log(n+2)}\mu_\Delta(\hat\kappa=n)
\right)
+\sum_{n\ge 0}n^2\log(n+2)\sum_{m=0}^{b\log(n+2)}\mu_\Delta(A_{n,b})
\\
&\le  \sum_{n\ge 0}O\left(n^{9/5-3}(\log (n+2))^2 \right)+\sum_{n\ge 0}n^{2-3-\varepsilon}(\log(n+2))^2<\infty\, ,
\end{align*}
Combined with~\eqref{TOTO}, this ends the proof of \eqref{MajoJ1}.
Second, due to Proposition~\ref{prop-expproj-base}
\begin{align}
J_2(t)&=
\int_{\Delta}\left|
(1-e^{it\cdot\hat\kappa(x)})
e^{it\hat\kappa_{\omega(x)}}
(\Pi_t-\Pi_0) (1_\Delta)\right|\circ\pi_0(x)\,d\mu_{\Delta}(x)\nonumber\\
&\ll \int_{\Delta}\left|t\cdot 
\kappa(x)
(\Pi_t-\Pi_0) (1_\Delta)\right|\circ\pi_0(x)\,d\mu_{\Delta}(x)\ll t^2 \Vert\hat\kappa\Vert_{L^1(\mu_\Delta)}\, ,\label{MajoJ2}
\end{align}
and finally, using the fact that  $1>|\lambda_t|>e^{-\frac {t^2\log(1/|t|)} 2}$ if $t$ is small enough,
\begin{align}
|J_3(t)| &=\int_{\Delta} \left|(1-e^{it\cdot\hat\kappa(x)})(\lambda_t^{-\omega(x)}-1)
e^{it\kappa_{\omega(x)}}\Pi_t
(1_\Delta)
\right|\circ\pi_0(x)\, d\mu_\Delta(x)\nonumber\\
&\le \int_{\Delta} |t\cdot\hat\kappa(x)|\omega(x)\, |\lambda_t-1|\, e^{\frac {t^2\log(1/|t|)} 2(\omega(x)-1)}\left\Vert 1_Y\Pi_t (1_\Delta)\right\Vert_\infty\, d\mu_\Delta(x)\nonumber\\
&\ll t^3\log(1/|t|) \sum_{n\ge 1}\int_{Y\cap\{\sigma> n\}}|\hat\kappa\circ f^n|n e^{\frac {nt^2\log(1/|t|)} 2}\, d\mu_Y\nonumber
\end{align}
Thus $|J_3(t)|\ll t^3\log(1/|t|) \sum_{n\ge 1}n e^{\frac {nt^2\log(1/|t|)} 2}\Vert\hat\kappa\Vert_{L^p(\mu_\Delta)}(\mu_Y(\sigma> n))^{1/q}$ and it follows that
\begin{align}
|J_3(t)|&\ll t^3\log(1/|t|) \sum_{n\ge 1}n e^{\frac {nt^2\log(1/|t|)} 2}\Vert\hat\kappa\Vert_{L^p(\mu_\Delta)}(C_1\theta_1^n)^{1/q}=O(t^2)\, ,
\label{MajoJ3}
\end{align}
by taking $p<2$ close to 2 and using the fact that for $|t|$ small enough
$e^{\frac {t^2\log(1/|t|)} 2}\theta_1<1$. 
Combining \eqref{Esti1}, \eqref{DecompJ}, \eqref{MajoJ1}, \eqref{MajoJ2} and \eqref{MajoJ3}, 
we obtain \eqref{Goal} and conclude.~\end{proof}

To complete the proof of Proposition~\ref{prop-lambda}, we still need to estimate $\Psi(t)$ in~\eqref{eq-lambdato}.
The next lemma can be viewed as an extension of the asymptotic of the pure scalar quantity in~\cite[Proof of Theorem 3.1]{ADb}
under the conclusion of Lemma~\ref{lemm-tail} via the arguments used in~\cite{MT12, Terhesiu16}.
\begin{lem}\label{lem:Psi}
Let $\hat\kappa_0:\Delta\rightarrow \mathbb Z^d$ be a $\mu_\Delta$-centered function. Set $\Psi(t):=\int_{\Delta} (1-e^{it\cdot\hat\kappa_0})\, d\mu_\Delta$.
Assume that there exists $M_0>0$ such that $\{|\hat\kappa_0|\ge M_0\}=\bigcup_{(L,w)\in
\mathcal S}\{\hat\kappa_0\in L+\mathbb N w\}$
with $\mathcal S$ a finite subset of $(\mathbb Z^2)^2$ such that the lattices $L+\mathbb N w\subset\mathbb Z^d$ are disjoint for distinct $(L,w)\in \mathcal S$ outside the open ball $B(0,M_0)$.
Assume moreover that
\[
\forall (L,w)\in \mathcal S,\quad\mu_\Delta\left(\hat\kappa_0= L+ n w\right)=\frac{c_{L,w}}{n^3}+O(n^{-4}),\quad\mbox{as}\quad n\rightarrow+\infty\, .
\]
Then, as $t\to 0$, $\Psi(t)=\sum_{(L,w)\in\mathcal S}\frac{c_{L,w}}2(t\cdot w)^2\log||t|^{-1}|+O(t^2)$.
\end{lem}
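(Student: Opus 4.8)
\textbf{Proof proposal for Lemma~\ref{lem:Psi}.}

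The plan is to reduce the estimate of $\Psi(t)=\int_\Delta(1-e^{it\cdot\hat\kappa})\,d\mu_\Delta$ to a one-dimensional computation for each lattice $L+\mathbb N w$ appearing in $\mathcal S$, and to control everything else by $O(t^2)$. First I would split $\Psi(t)$ according to the partition of $\{|\hat\kappa|\ge M_0\}$ into the disjoint pieces $\{\hat\kappa\in L+\mathbb N w\}$ together with the bounded part $\{|\hat\kappa|<M_0\}$. On the bounded part, $|1-e^{it\cdot\hat\kappa}|\le |t\cdot\hat\kappa|\ll|t|$, and since $\hat\kappa$ is $\mu_\Delta$-centered and the bounded piece has finite measure, I would use a second-order Taylor expansion $1-e^{iy}=-iy+O(y^2)$; the linear term $\int (-it\cdot\hat\kappa)$ over that piece is exactly cancelled (modulo $O(t^2)$) by the linear terms of the infinite pieces, because $\int_\Delta\hat\kappa\,d\mu_\Delta=0$. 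More precisely I would write $1-e^{it\cdot\hat\kappa}=(1-e^{it\cdot\hat\kappa}+it\cdot\hat\kappa)-it\cdot\hat\kappa$, integrate the second summand to $0$ over all of $\Delta$, and then be left with $\int_\Delta(1-e^{it\cdot\hat\kappa}+it\cdot\hat\kappa)\,d\mu_\Delta$, whose contribution from $\{|\hat\kappa|<M_0\}$ is trivially $O(t^2)$ since $\hat\kappa$ is bounded there. So $\Psi(t)=\sum_{(L,w)\in\mathcal S}\int_{\{\hat\kappa\in L+\mathbb N w\}}(1-e^{it\cdot\hat\kappa}+it\cdot\hat\kappa)\,d\mu_\Delta+O(t^2)$.

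Next, for a fixed $(L,w)\in\mathcal S$, I would write the integral over $\{\hat\kappa\in L+\mathbb N w\}$ as $\sum_{n\ge n_0}(1-e^{it\cdot(L+nw)}+it\cdot(L+nw))\,\mu_\Delta(\hat\kappa=L+nw)$, with $n_0$ chosen so that the lattices are disjoint for $n\ge n_0$ (absorbing the finitely many small-$n$ terms into $O(t^2)$ again, as they are bounded). Substituting $\mu_\Delta(\hat\kappa=L+nw)=c_{L,w}n^{-3}+O(n^{-4})$, the error term $\sum_n O(n^{-4})\cdot O((t\cdot(L+nw))^2\wedge 1)$ is controlled by $\sum_n O(n^{-4})\cdot O(\min(1,(nt)^2))\ll t^2\sum_{n\le 1/|t|}n^{-2}+\sum_{n>1/|t|}n^{-4}\ll t^2$, so it is negligible. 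The main term becomes $c_{L,w}\sum_{n\ge n_0}\big(1-e^{it\cdot(L+nw)}+it\cdot(L+nw)\big)n^{-3}$. Writing $\theta:=t\cdot w$ and pulling out the $t\cdot L$ phase (which contributes $O(|t|)\cdot O(n^{-3})$, summable to $O(|t|)$, but combined with the oscillation one expects $O(t^2\log)$; more carefully, replacing $L+nw$ by $nw$ inside $1-e^{i(\cdot)}+i(\cdot)$ costs $O(|t|)\cdot O(\min(1,n|t|))\cdot O(n^{-3})\ll t^2\log(1/|t|)$, which is within the claimed error), it reduces to $c_{L,w}\sum_{n\ge n_0}\big(1-e^{in\theta}+in\theta\big)n^{-3}$.

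The core of the argument is then the classical asymptotic $\sum_{n\ge 1}\big(1-e^{in\theta}+in\theta\big)n^{-3}=\tfrac12\theta^2\log(1/|\theta|)+O(\theta^2)$ as $\theta\to 0$; this is the point where the exponent $3$ in the tail is crucial, and it is the step I expect to require the most care. I would establish it by the same dyadic/Abel-summation device used in \cite{MT12, Terhesiu16} and in \cite[Proof of Theorem 3.1]{ADb}: split the sum at $n\approx 1/|\theta|$, estimate the tail $n>1/|\theta|$ using summation by parts on $\sum e^{in\theta}$ (bounded partial sums) to get $O(\theta^2)$, and for $n\le 1/|\theta|$ Taylor expand $1-e^{in\theta}+in\theta=\tfrac12(n\theta)^2+O((n\theta)^3)$, so that $\sum_{n\le 1/|\theta|}\tfrac12 n^{-1}\theta^2=\tfrac12\theta^2\log(1/|\theta|)+O(\theta^2)$ and the cubic remainder sums to $O(\theta^3\cdot(1/|\theta|))=O(\theta^2)$. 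Since $\theta=t\cdot w$ and $\log(1/|\theta|)=\log(1/|t|)+O(1)=\log(|t|^{-1})+O(1)$ (the $O(1)$ contributing only $O(\theta^2)=O(t^2)$), I obtain $c_{L,w}\sum_n(1-e^{in\theta}+in\theta)n^{-3}=\tfrac{c_{L,w}}2(t\cdot w)^2\log(|t|^{-1})+O(t^2)$. Summing over the finitely many $(L,w)\in\mathcal S$ and collecting all the $O(t^2)$ errors yields $\Psi(t)=\sum_{(L,w)\in\mathcal S}\tfrac{c_{L,w}}2(t\cdot w)^2\log(|t|^{-1})+O(t^2)$, as claimed. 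The main obstacle, as noted, is the clean extraction of the logarithmic term with an honest $O(\theta^2)$ (not merely $o(\theta^2\log)$) error, which forces one to track the interplay between the $n^{-4}$ tail correction, the replacement of $L+nw$ by $nw$, and the near/far split at $n\sim 1/|\theta|$ simultaneously.
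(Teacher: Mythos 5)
Your overall strategy is sound and close in spirit to the paper's: both arguments use the centering of $\hat\kappa$ to replace $1-e^{it\cdot\hat\kappa}$ by $1+it\cdot\hat\kappa-e^{it\cdot\hat\kappa}$, absorb the bounded part of $\hat\kappa$ and the $O(n^{-4})$ correction into $O(t^2)$, and then extract $\frac{c_{L,w}}2(t\cdot w)^2\log(1/|t|)$ from the lattice sums. Where you differ is the final extraction: the paper keeps the phases $e^{it\cdot(L+nw)}$, truncates the sum at $n=\lfloor|t|^{-1}\rfloor$, and applies an Abel transform with $A_{L,w,n}=\sum_{k\ge n}c_{L,w}k^{-3}=\frac{c_{L,w}}{2n^2}+O(n^{-3})$, so that the logarithm appears as $\sum_{2\le n\le 1/|t|}\frac{c_{L,w}}{2n}(t\cdot w)^2$; you instead strip the $L$-phase, reduce to the model sum $\sum_n(1-e^{in\theta}+in\theta)n^{-3}$ with $\theta=t\cdot w$, and evaluate it by a near/far split at $n\sim 1/|\theta|$ with a direct Taylor expansion (where, incidentally, no summation by parts is needed in the tail: the crude bounds $|1-e^{in\theta}|\le 2$ and $|n\theta|\,n^{-3}$ already give $O(\theta^2)$ thanks to the cubic decay). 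Both routes are legitimate and of comparable length.

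However, three of your estimates must be tightened, because as written your error budget exceeds the $O(t^2)$ that the lemma claims. (i) At the phase-removal step you accept a cost $\ll t^2\log(1/|t|)$ and call it ``within the claimed error''; it is not, since an unspecified constant times $t^2\log(1/|t|)$ is of the same order as the main term and would void the statement. Fortunately your own per-term bound repairs this: the difference of the two summands equals $(1-e^{it\cdot L}+it\cdot L)+(e^{it\cdot nw}-1)(1-e^{it\cdot L})=O(t^2)+O(|t|\min(1,n|t|))$, and multiplying by $n^{-3}$ and splitting at $n=1/|t|$ gives $\sum_{n\le 1/|t|}t^2n^{-2}+\sum_{n>1/|t|}|t|\,n^{-3}=O(t^2)$, with no logarithm. (ii) For the $O(n^{-4})$ correction you bound $|1-e^{iy}+iy|$ by $\min(1,y^2)$; the correct bound is $\min(y^2,2+|y|)$, since the linear term grows, but the conclusion survives because for $n>1/|t|$ the terms are $\ll |t|\,n\cdot n^{-4}$, which sums to $O(t^3)$. (iii) The passage from $\log(1/|\theta|)$ to $\log(1/|t|)$: it is not true that these differ by $O(1)$, because $\theta=t\cdot w$ can be arbitrarily small (even zero) compared with $|t|$ when $t$ is nearly orthogonal to $w$. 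What saves you is that $\theta^2\bigl(\log(1/|\theta|)-\log(1/|t|)\bigr)=t^2\,x^2\log(1/x)$ with $x=|\theta|/|t|$ bounded, and $x\mapsto x^2\log(1/x)$ is bounded near $0$, so the discrepancy is still $O(t^2)$. With these repairs your argument yields exactly the claimed expansion.
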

\begin{proof}
We start by writing $\Psi(t)=\int_\Delta (1-e^{it\cdot\hat\kappa_0})\, d\mu_\Delta
=\int_\Delta (1+it\cdot\hat\kappa_0-e^{it\cdot\hat\kappa_0})\, d\mu_\Delta
$. So
\begin{align*}
\Psi(t)
&=\sum_{(L,w)\in\mathcal S}\sum_{n= 1}^{\lfloor |t|^{-1}\rfloor}(1+it\cdot (L+nw)-e^{it\cdot (L+nw)})\mu_{\Delta}(\hat\kappa_0=L+nw)+O(t^2)\\
&=\sum_{(L,w)\in\mathcal S}\sum_{n= 1}^{\lfloor |t|^{-1}\rfloor}(1+it\cdot (L+nw)-e^{it\cdot (L+nw)})\left(\frac{c_{L,w}}{n^3}+O(n^{-4})\right)+O(t^2)\\
&=\sum_{(L,w)\in\mathcal S}\sum_{n= 1}^{\lfloor |t|^{-1}\rfloor}(1+it\cdot (L+nw)-e^{it\cdot (L+nw)})\frac{c_{L,w}}{n^3}+O(t^2)\, ,
\end{align*}
using the properties of $M_0,\mathcal S$.
Setting $a_{L,w,n}:=\frac{c_{L,w}}{n^3}$, $A_{L,w,n}:=\sum_{k\ge n}a_{L,w,k}$ and $b_{L,w,n}:=1+it\cdot (L+nw)-e^{it\cdot (L+nw)}$, 
using the Abel transform, we obtain
\begin{align*}
&\Psi(t)=O(t^2)+\sum_{(L,w)\in\mathcal S}\sum_{n=1}^{\lfloor |t|^{-1}\rfloor}a_{L,w,n}b_{L,w,n}\\
&=O(t^2)+\sum_{(L,w)\in\mathcal S}\left\{A_{L,w,1}b_{L,w,1}-A_{L,w,\lfloor|t|^{-1}\rfloor+1}b_{L,w,\lfloor|t|^{-1}\rfloor}+\sum_{n=2}^{\lfloor |t|^{-1}\rfloor}A_{L,w,n}(b_{L,w,n}-b_{L,w,n-1})\right\}\\
&=O(t^2)+\sum_{n=2}^{\lfloor |t|^{-1}\rfloor}\sum_{(L,w)\in\mathcal S}\left\{\left(\frac{c_{L,w}}{2n^2}+O(n^{-3})\right)(it\cdot w+e^{it\cdot(L+(n-1)w)}-e^{it\cdot(L+nw)})\right\}
\end{align*}
and thus
\begin{align*}
\Psi(t)&=O(t^2)+\sum_{n=2}^{\lfloor |t|^{-1}\rfloor}\sum_{(L,w)\in\mathcal S}\left\{\left(\frac{c_{L,w}}{2n^2}+O(n^{-3})\right)(it\cdot w(1-e^{it\cdot(L+nw)}) +O(t^2))\right\}\\
&=O(t^2)+\sum_{n=2}^{\lfloor |t|^{-1}\rfloor}\sum_{(L,w)\in\mathcal S}\left\{\left(\frac{c_{L,w}}{2n^2}+O(n^{-3})\right)(it\cdot w)(-it\cdot(L+nw)+O(t^2n^2))\right \}\\
&=O(t^2)+(t\cdot w)^2\sum_{(L,w)\in\mathcal S}\sum_{n=2}^{\lfloor |t|^{-1}\rfloor}\frac{c_{L,w}}{2n}=O(t^2)+\sum_{(L,w)\in\mathcal S}\frac{c_{L,w}}2(t\cdot w)^2\log||t|^{-1}|.
\end{align*}
\end{proof}
\begin{proof}[Proof of Proposition~\ref{prop-lambda}]
We first observe that  Lemma~\ref{lemm-tail} ensures that the general conditions of
Lemma~\ref{lem:Psi} are satisfied with
$c_{L,w}:=
\sum_{(A,B)\in E_{L,w}}\frac{ d_{A}^2\, \mathfrak a_{(A,B)}}{2|\partial\bar Q|\, |w| }$, 
and with
$\mathcal S$
 is the set of $(w,L)\in(\mathbb Z^2)^2$, with $w$ with co-prime coordinates and $L\in\mathcal E_w$ (where $\mathcal E_w$ is the set of $L\in\mathbb Z^2$ such that $L\cdot w\ge 0>(L-w)\cdot w$)
for which
there exist $(A,B)\in E_{L,w}$.
The finiteness of $\mathcal S$ comes then from the finiteness of $\mathcal A$ and the finiteness of the possibilities for $B$ once $(A,w)$ is fixed, the finiteness of $L$ once $A,w,B$ are fixed comes from our constraint on $L$.
The disjointness assumption of Lemma~\ref{lem:Psi} comes from our first conditions on $w$ and $L$. Since  $1-\lambda_t=\Psi(t)+V(t)$, due to Lemmas~\ref{lem:V} and~\ref{lem:Psi},
we know that
\[
1-\lambda_t=\sum_{(L,w)\in\mathcal S}\frac{c_{L,w}}2(t\cdot w)^2\log||t|^{-1}|+O(t^2)\, .
\]
For any prime $w\in\mathbb Z^2$, due to Lemma~\ref{lemm-tail} 
and Corollary~\ref{lemm-tailrem}, 
\[
\sum_{L\, :\, (L,w)\in\mathcal S}c_{L,w}=\sum_{C\in\mathcal C\, :\, w_C=\pm w}
\frac{ \mathfrak d_{C}^2}{|\partial\bar Q|\, |w_C| }
\, .\] 
Therefore 
$1-\lambda_t
=2 \sum_{C\in\mathcal C}
\frac{ \mathfrak d_{C}^2}{2|\partial\bar Q|\, |w_C| }
(t\cdot w_C)^2\log||t|^{-1}|+O(t^2)$.
\end{proof}

\section{Expansions in the local limit theorem and of decorrelation rate}\label{LLT}
\subsection{Expansion in the local limit theorem in a general context}
Let $(\Delta, f,\mu_\Delta)$ be a probability preserving dynamical system with transfer
operator $P$. 
Let $p,p_0,p_1\in[ 1,+\infty]$ with $p_0\le p_1$.
Set $\hat \kappa_n:=\sum_{k=0}^{n-1}\hat\kappa\circ f^k$, with $\hat\kappa:\Delta\rightarrow\mathbb Z^d$ integrable with zero mean. Assume that, for every $t\in[-\pi,\pi]^d$, the operator $P_t:f\mapsto P(e^{i t\cdot\hat\kappa}f)$ acts on a complex Banach space $\mathcal B$ of functions $f:\Delta\rightarrow\mathbb C$ and satisfies the following properties.
Assume that $\hat\kappa\in L^p(\mu_\Delta)$, that $\mathcal B\hookrightarrow L^{p_1}(\mu_\Delta)$ and 
that there exists $\beta\in(0,\pi)$ such that for every
$t\in[-\beta,\beta]^d$,
\begin{equation}\label{quasicompact}
P_t^n=\lambda_t^n\Pi_t+N_t^n\, ,
\end{equation}
\begin{equation}\label{borneexpo}
\sup_{t\in[-\beta,\beta]^d}\Vert N_t^n\Vert_{\mathcal L(\mathcal B,L^{p_0}(\mu_\Delta))}+\sup_{t\in[-\pi,\pi]^d\setminus[-\beta,\beta]^d}\Vert P_t^n\Vert_{\mathcal L(\mathcal B,L^{p_0}(\mu_\Delta))}=O\left(\theta^n\right)\, ,\quad\mbox{with}\ 0<\theta<1\, .
\end{equation}
Assume moreover that there exists an invertible positive symmetric matrix $A$ such that
\begin{equation}\label{lambda}
\lambda_t=1- A t\cdot t\, \log (1/|t|)+O(t^2)\, ,\ i.e. \ \lambda_t=e^{- A t \cdot t\, \log (1/|t|)+O(t^2)}\, .
\end{equation}
Moreover assume either that there exists
$\gamma\in(0,1]$ such that
\begin{equation}\label{Pi}
\Pi_t:=
\mathbb E_{\mu_{\Delta}}\left[\cdot\right]\mathbf 1_{\Delta}+O\left( |t|^\gamma\right)\quad\mbox{in}\quad\mathcal L\left(\mathcal B, L^{p_0}(\mu_{\Delta})\right)
\, , 
\end{equation}
or that 
there exists
$\gamma'\in(0,1]$ such that
\begin{equation}\label{Pidiff}
\exists \Pi'_0\in\mathcal L(\mathcal B,L^{p_0}(\mu_\Delta)),\ 
\Pi_t:=\mathbb E_{\mu_{\Delta}}\left[\cdot\right]\mathbf 1_{\Delta}+t\cdot \Pi'_0+ O\left( t^{1+\gamma'}\right)\  \mbox{in}\ \mathcal L(\mathcal B,L^{p_0}(\mu_\Delta))\, .
\end{equation}
Note that,  due to Proposition~\ref{PROP1},
under \eqref{H0}, both~\eqref{Pi} and~\eqref{Pidiff} are satisfied for the tower $(\Delta,f,\mu_\Delta)$ associated to the discrete infinite horizon Lorentz gas.
Let $q_0,q_1\in[1,+\infty]$ be such that $\frac 1{q_i}+\frac 1 {p_i}= 1$.
Let us define
\begin{equation}\label{I0I2}
I_{0}(X):=\frac{e^{-\frac 1{2}  A^{-1}X\cdot X}}{\sqrt{(2\pi)^{d}\det A}},\quad
I_{2}(X):=
-
\frac{ A^{-1} X\cdot X-d}{\sqrt{(2\pi)^{d}\det A}}\, e^{-\frac 1{2}  A^{-1}X\cdot X}\, ,
\end{equation}
\begin{equation}\label{I1I3}
I_{1}(X):=
-i
\frac{A^{-1}X}{\sqrt{(2\pi)^{d}\det(A)}}e^{-\frac 1{2}  A^{-1}X\cdot X},\quad
I_{3}(X):=
-i
\frac{A^{-1}X(d+2- A^{-1} X\cdot X)}{\sqrt{(2\pi)^{d}\det(A)}}\, e^{-\frac 1{2}  A^{-1}X\cdot X}
.
\end{equation}
Under these assumptions, we state the following general local limit theorem with expansion, that can be read first considering $k=0$. The generalization to $k=O(n/\log n)$ will be useful in the proof of our main results (Theorems~\ref{THM0} and \ref{THM1}) due to approximations of observables $\phi,\psi$ using functions that are constant on every stable curve. We set $a_n:=\sqrt{n\log n}$. 
\begin{prop}\label{LLT1} Assume~\eqref{quasicompact}--\eqref{lambda}.

If \eqref{Pi} holds, then, 
for every $h\in\mathcal B
$, for $g\in L^{q_0}(\mu_\Delta)$, uniformly in $N\in\mathbb Z^d$ 
\begin{align}\label{eq:LLT1}
&\mathbb E_{\mu_{\Delta}}[h\mathbf 1_{\{\hat \kappa_n\circ f^k=N\}}g\circ f^n]=
\frac {\mathbb E_{\mu_{\Delta}}[h]\mathbb E_{\mu_{\Delta}}[g]}{a_n^d}
\left(I_0\left(\frac N{a_n}\right)-\frac {I_2\left(\frac N{a_n}\right)\log\log n +O(1)}{2\log n}
\right)\\
&
+O\left(
a_n^{-d}(a_n^{-\gamma}+(k/a_n)^{\min(1,\frac p{q_1},\frac p{p_0})})
\Vert g\Vert_{L^{q_0}(\mu_\Delta)}
(\sup_t\Vert P_t^kP^kh\Vert_{\mathcal B}
\right)\, ,\nonumber
\end{align}
for $k=0$ (without additional assumption on $p_1$) and uniformly in 
$k\le C n/\log n
$.\\

If \eqref{Pidiff} holds, then, for every $h\in\mathcal B$, for $g\in L^{q_0}(\mu_\Delta)$, uniformly in $N\in\mathbb Z^d$, 
\begin{align*}
&\mathbb E_{\mu_{\Delta}}[h\mathbf 1_{\{\hat \kappa_n\circ f^k=N\}}g\circ f^n]\\
&=\frac 1{a_n^d}\mathbb E_{\mu_{\Delta}}[h]\mathbb E_{\mu_{\Delta}}[g]
\left(I_0\left(\frac N{a_n}\right)-\frac {I_2\left(\frac N{a_n}\right)\log\log n+O(1)}{2\log n}
\right)\\
&+\frac {
C_k(g,h)}{a_n^{d+1}}
\left(I_{1}\left(\frac N{a_n}\right)-\frac {I_3\left(\frac N{a_n}\right)}2 \frac{\log\log n}{\log n}\right)+O\left(\frac{\Vert g\Vert_{L^{q_0}(\mu_\Delta)}\sup_u\Vert P_u^{k}P^kh\Vert_{\mathcal B}}{a_n^{d+1}\log n}\right)\\
&
+O\left(\frac {1 }{(2\pi )^d(a_n)^{d+1}}\int_{[-\beta a_n,\beta a_n]^d}\!\!\!\!\!\!\!\!\! |u|e^{-\frac{c_0\min(|u|^{2-\epsilon},|u|^{2+\epsilon})}2} |\mathbb E_{\mu_{\Delta}}[g\Pi'_0(P_{u/a_n}^kP^kh-P^{2k}h)]|\, du\right)
\, ,
\end{align*}
with 
$C_k(g,h):= \mathbb E_{\mu_{\Delta}}[g\Pi'_0P^{2k}h]+ i\mathbb E_{\mu_{\Delta}}[\hat\kappa_k  g]\mathbb E_{\mu_\Delta}[h]+i\mathbb E_{\mu_\Delta}[g]  \mathbb E_{\mu_\Delta}[\hat\kappa_kP^kh]$,
for $k=0$ (without additional assumption on $p_1,p_0,q_0$) and uniformly in 
$k\le C a_n^{\frac{\tilde\gamma}{1+\tilde\gamma}}/(\log n)^{1/(1+\tilde\gamma)}$ 
with $\tilde\gamma=\min(1,\frac{p(q_0-1)-q_0}{q_0},\frac{p-q_1}{q_1})$
if
$q_1<p$, $q_0<+\infty$,  $\frac 1{q_0}+\frac 1 {p}<1$.

\end{prop}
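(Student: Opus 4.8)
The plan is to expand the Fourier-inversion formula $\mathbb E_{\mu_\Delta}[h\mathbf 1_{\{\hat\kappa_n\circ f^k=N\}}g\circ f^n]=\frac1{(2\pi)^d}\int_{[-\pi,\pi]^d}e^{-iN\cdot t}\mathbb E_{\mu_\Delta}[h e^{it\cdot\hat\kappa_n\circ f^k}g\circ f^n]\,dt$, split the domain at $|t|=\beta$ and use~\eqref{borneexpo} on the high-frequency part (it is $O(\theta^n)$, negligible against $a_n^{-d-1}$). On the low-frequency part I would insert the spectral decomposition~\eqref{quasicompact}: since $\hat\kappa_n\circ f^k=\hat\kappa_k+\hat\kappa_n\circ f^k-\hat\kappa_k$, write $\mathbb E_{\mu_\Delta}[he^{it\cdot\hat\kappa_n\circ f^k}g\circ f^n]=\mathbb E_{\mu_\Delta}[P_t^{n}(e^{it\cdot\hat\kappa_k}P^kh)\cdot g]$ after peeling off the first $k$ and last nothing, more precisely $=\mathbb E_{\mu_\Delta}[g\cdot P_t^{n}(P_t^k P^k h)]$ up to the usual rewriting, and then $P_t^{n}=\lambda_t^n\Pi_t+N_t^n$. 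The $N_t^n$ contribution is again $O(\theta^n)$; the main term is $\frac1{(2\pi)^d}\int_{[-\beta,\beta]^d}e^{-iN\cdot t}\lambda_t^n\,\mathbb E_{\mu_\Delta}[g\,\Pi_t(P_t^kP^kh)]\,dt$.

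Next I would substitute $t=u/a_n$ with $a_n=\sqrt{n\log n}$, so the domain becomes $[-\beta a_n,\beta a_n]^d$ and $dt=a_n^{-d}du$. Using~\eqref{lambda2} one gets $\lambda_{u/a_n}^n=\exp\bigl(-n\,A\frac{u}{a_n}\cdot\frac{u}{a_n}\log(a_n/|u|)+O(n|u|^2/a_n^2)\bigr)$; since $n/a_n^2=1/\log n$ and $\log(a_n/|u|)=\tfrac12\log n+\tfrac12\log\log n-\log|u|$, this expands as $e^{-\frac12 A u\cdot u}\bigl(1-\tfrac12(A u\cdot u)\frac{\log\log n}{\log n}+O(\frac{1+\log^2(1+|u|)}{\log n})\bigr)$ on the bulk $|u|\le(\log n)^{1/4}$, say, with a Gaussian-type tail bound $|\lambda_{u/a_n}^n|\le e^{-c_0\min(|u|^{2-\epsilon},|u|^{2+\epsilon})/2}$ valid on all of $[-\beta a_n,\beta a_n]^d$ controlling the complement. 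For the eigenprojector factor I would use~\eqref{Pidiff}: $\Pi_{u/a_n}(P_{u/a_n}^kP^kh)=\mathbb E_{\mu_\Delta}[P_{u/a_n}^kP^kh]\mathbf 1_\Delta+\frac{u}{a_n}\cdot\Pi'_0(P_{u/a_n}^kP^kh)+O((|u|/a_n)^{1+\gamma'}\|P_{u/a_n}^kP^kh\|_{\mathcal B})$, and pair it against $g$. The zeroth-order piece yields, after also expanding $\mathbb E_{\mu_\Delta}[e^{i\frac{u}{a_n}\cdot\hat\kappa_k}P^kh]=\mathbb E_{\mu_\Delta}[h]+\frac{u}{a_n}\cdot i\mathbb E_{\mu_\Delta}[\hat\kappa_kP^kh]+O((k|u|/a_n)^{1+\cdots})$ and $\mathbb E_{\mu_\Delta}[e^{i\frac{u}{a_n}\cdot\hat\kappa_k}g\cdot(\cdot)]$-type corrections, the $I_0,I_2$ terms at order $a_n^{-d}$ and, from the cross terms, the $I_1,I_3$ terms at order $a_n^{-d-1}$ with coefficient $C_k(g,h)$; recognizing $I_0=\widehat{\mathcal G}$-type Gaussian integrals and $I_1,I_2,I_3$ as its derivatives is the routine Fourier/Gaussian-moment computation $\frac1{(2\pi)^d}\int_{\mathbb R^d}e^{-iX\cdot u}u^\alpha e^{-\frac12 Au\cdot u}\,du$.

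The first-order projector piece $\frac{u}{a_n}\cdot\Pi'_0(P_{u/a_n}^kP^kh)$ needs a further split: replace $P_{u/a_n}^kP^kh$ by $P^{2k}h$, which produces the $\mathbb E_{\mu_\Delta}[g\,\Pi'_0P^{2k}h]$ part of $C_k(g,h)$ and, after the Gaussian integration, the remaining $I_1,I_3$ contributions; the error of this replacement, $\frac{u}{a_n}\cdot\Pi'_0(P_{u/a_n}^kP^kh-P^{2k}h)$, integrated against $e^{-iN\cdot t}\lambda_t^n\mathbb E_{\mu_\Delta}[g\,\cdot\,]$, is exactly the explicit integral error term displayed in the statement (bounded using $\|P_{u/a_n}^kP^kh-P^{2k}h\|$ and Hölder with $q_0,p_0$). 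The remaining error terms — the $O((\log n)^{-1})$ inside the $I_0$ bracket, the $O(a_n^{-d-1}(\log n)^{-1})$ from the $\log^2|u|$ weights against the Gaussian tail, and the error from the $\Pi_t$ remainder $O((|u|/a_n)^{1+\gamma'})$ — are all controlled by the integrability of $|u|^{1+\gamma'}e^{-c_0\min(|u|^{2-\epsilon},|u|^{2+\epsilon})/2}$ together with $\|P_{u/a_n}^kP^kh\|_{\mathcal B}\le\sup_u\|P_u^kP^kh\|_{\mathcal B}$, and here one needs $\frac1{q_0}+\frac1{p_0}<1$ and $q_1<p_0$, $q_0<\infty$ to apply Hölder and to bound $\|P^k h\|$-type quantities, and the range $k\le C a_n^{\tilde\gamma/(1+\tilde\gamma)}/(\log n)^{1/(1+\tilde\gamma)}$ arises precisely from forcing the $k$-dependent errors (powers of $k/a_n$ from $e^{i\frac{u}{a_n}\cdot\hat\kappa_k}-1-\cdots$ and from $P_{u/a_n}^kP^kh-P^{2k}h$) to be $o(a_n^{-d-1}/\log n)$. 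The main obstacle, as for the $k=0$ case, is the careful bookkeeping of all error terms — especially keeping the $\log|u|$-weighted Gaussian integrals under control so that the $\log\log n/\log n$ coefficients come out exactly as $-\tfrac12 I_2$ and $-\tfrac12 I_3$ and nothing leaks into a spurious larger error — and tracking the uniformity in $k$ through every Hölder/spectral estimate.
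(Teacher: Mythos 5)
Your proposal follows essentially the same route as the paper's proof: Fourier inversion, discarding $|t|>\beta$ via \eqref{borneexpo}, inserting the spectral decomposition \eqref{quasicompact}, the change of variables $t=u/a_n$ with the expansion of $\lambda_{u/a_n}^{n-2k}$ producing the $\frac{\log\log n}{\log n}$ corrections, the expansion of $\Pi_t$ via \eqref{Pi} or \eqref{Pidiff} with the replacement of $P_{u/a_n}^kP^kh$ by $P^{2k}h$ giving exactly the displayed integral error term, H\"older estimates for the $e^{i\frac u{a_n}\cdot\hat\kappa_k}$ factors yielding the constraints on $k,p_0,q_0,q_1$, and the Gaussian integrals identifying $I_0,\dots,I_3$. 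The only imprecision is the intermediate operator identity: the correct rewriting is $\mathbb E_{\mu_\Delta}\left[P_t^k\left(g\,P_t^{n-2k}P_t^kP^kh\right)\right]$, so that $g$ acquires its own factor $e^{it\cdot\hat\kappa_k}$ inside the outer $P_t^k$ (which is what produces the term $i\,\mathbb E_{\mu_\Delta}[\hat\kappa_k g]\,\mathbb E_{\mu_\Delta}[h]$ in $C_k(g,h)$), not $\mathbb E_{\mu_\Delta}[g\,P_t^{n}(P_t^kP^kh)]$; your subsequent expansion shows you use the correct structure, so this is a slip of notation rather than a gap.
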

When $k=0$, one can take $p_0=p_1=1$, $q_0=q_1=\infty$ and that the last error term in the second estimate of Proposition~\ref{LLT1} vanishes.
\begin{rem}
If $k=0$ and $\mathbb E_{\mu_\Delta}[g]\mathbb E_{\mu_\Delta}[h]=0$,
 the second estimate of Proposition~\ref{LLT1} provides an estimate in $O(a_n^{-d-1}/\log n)$ if $N$ is fixed or uniformly bounded, but provides 
an estimate with a leading term (non null in general) of order $a_n^{-d-1}$ if $N$ has order $a_n$.
\end{rem}

\begin{proof}[Proof of Proposition~\ref{LLT1}]
We assume that $\beta<1$, that
\begin{equation}\label{lambda0}
\forall t\in[-\beta,\beta]^d,\quad e^{-2 At\cdot t\log(1/|t|)}\le \left|\lambda_t\right|\le e^{-\frac 12 At\cdot t\log(1/|t|)}\, 
\end{equation}
for $|\cdot|$ the supremum norm on $\mathbb R^d$
and that
$\forall y>x>\beta^{-1}$, $\frac 12 (x/y)^\epsilon \le \log(x)/\log(y)\le 2 (y/x)^{\varepsilon}$
(using for example Karamata's representation of slowly varying functions).
This last condition will imply that, for every $n$ large enough (so that $a_n>\beta^{-1}$) and for every $|u|\in[-\beta a_n,\beta a_n]$, the following 
inequalities hold true
\begin{equation}\label{logx/logy}
\frac 12\min(|u|^\varepsilon,|u|^{-\varepsilon})\le  \log(a_n/|u|)/\log a_n  \le    2\max(|u|^\varepsilon,|u|^{-\varepsilon})\, .
\end{equation}
Let $h,g$ be as in the statement of Proposition~\ref{LLT1}.
Note that $P_t^n(H.G\circ f^n) =GP_t^n(H)$ (since $\int_\Delta   h  P(H \cdot G\circ f)\,d\mu_{\Delta}=  \int_\Delta  ( h\circ f)\, H\,  (G\circ f) \,d\mu_{\Delta} =  \int_\Delta     h\,G\, P(H) \,d\mu_{\Delta}$). Thus,
\begin{align*}
\mathbb E_{\mu_{\Delta}}&[h\mathbf 1_{\{\hat \kappa_n\circ f^k=N\}}g\circ f^n]
=\frac 1{(2\pi)^d}\int_{[-\pi,\pi]^d}e^{-it\cdot N}\mathbb E_{\mu_{\Delta}}\left[he^{it\cdot\hat \kappa_n\circ f^k}g\circ f^n\right]\, dt\\
&=\frac 1{(2\pi)^d}\int_{[-\pi,\pi]^d}e^{-it\cdot N}\mathbb E_{\mu_{\Delta}}\left[P^{n+k}\left(he^{it\cdot\hat \kappa_n\circ f^k}g\circ f^n\right)\right]\, dt\\
&
=\frac 1{(2\pi)^d}\int_{[-\pi,\pi]^d}e^{-it\cdot N}\mathbb E_{\mu_{\Delta}}\left[P^{k}\left(P^n\left(he^{it\cdot\hat \kappa_{n-k}\circ f^k}(ge^{it\cdot\hat \kappa_k})\circ f^n\right)\right)\right]\, dt\, .
\end{align*}
Thus $\mathbb E_{\mu_{\Delta}}(h\mathbf 1_{\{\hat \kappa_n\circ f^k=N\}}g\circ f^n)
=\frac 1{(2\pi)^d}\int_{[-\pi,\pi]^d}e^{-it\cdot N}\mathbb E_{\mu_{\Delta}}\left[P_t^k\left( gP^n\left(he^{it\cdot\hat \kappa_{n-k}\circ f^k}\right)\right)\right]\, dt$ and so
\[
\mathbb E_{\mu_{\Delta}}[h\mathbf 1_{\{\hat \kappa_n\circ f^k=N\}}g\circ f^n]=\frac 1{(2\pi)^d}\int_{[-\pi,\pi]^d}e^{-it\cdot N}\mathbb E_{\mu_{\Delta}}\left[ P_t^k \left(gP_t^{n-2k}P_t^kP^kh\right)\right]\, dt\, .
\]
Now, due to \eqref{quasicompact} and \eqref{borneexpo}, $\mathbb E_{\mu_{\Delta}}(h\mathbf 1_{\{\hat \kappa_n\circ f^k=N\}}g\circ f^n)$ is equal to
\begin{align}
\nonumber&\frac 1{(2\pi)^d}\int_{[-\beta,\beta]^d}e^{-it\cdot N}\lambda_t^{n-2k}\mathbb E_{\mu_{\Delta}}\left[e^{it\cdot\hat\kappa_k}g\Pi_tP_t^kP^kh\right]\, dt+O(\theta^{n-2k}\Vert g\Vert_{L^{q_0}(\mu_\Delta)}\sup_t\Vert P_t^kP^kh\Vert_{\mathcal B})\\
\label{FormulaInt}&=\frac 1{(2\pi a_n)^d}\int_{[-\beta a_n,\beta a_n]^d}e^{-i\frac u{a_n}\cdot N}\lambda_{u/a_n}^{n-2k}\mathbb E_{\mu_{\Delta}}\left[e^{i\frac u{a_n}\cdot\hat\kappa_k}g\Pi_{u/a_n}P_{u/a_n}^kP^kh\right]\, du\\
&\ \ \ +O\left(\theta^{n-2k}\Vert g\Vert_{L^{q_0}(\mu_\Delta)}\sup_t\Vert P_t^kP^kh\Vert_{\mathcal B}\right)\, .\nonumber
\end{align}
Observe that $a_n^2\sim 2 n\log(a_n)$. 
This, combined 
with~\eqref{lambda0} and~\eqref{logx/logy},
ensures that there exists $c_0,c'_0>0$ such that, 
for every $n$ large enough
and for every $u\in[-\beta a_n,\beta a_n]^d$,
\begin{equation}\label{bornelambda}
e^{-c'_0\max(|u|^{2-\epsilon},|u|^{2+\epsilon})}\le e^{-\frac {2n}{a_n^2} Au\cdot  u\log(a_n/|u|)}\le \left|\lambda_{u/a_n}^{n} \right|\le e^{-\frac {n}{2a_n^2} Au\cdot  u\log(a_n/|u|)}\le 
e^{-c_0\min(|u|^{2-\epsilon},|u|^{2+\epsilon})}  \, .
\end{equation}
Therefore, using \eqref{Pi}, we obtain that
$$\left\Vert\lambda_{u/a_n}^{n-2k}\left(\Pi_{u/a_n}-\mathbb E_{\mu_{\Delta}}[\cdot]\mathbf 1_{\Delta}\right)\right\Vert_{\mathcal L(\mathcal B,L^{p_0}(\mu_{\Delta}))}\le  K_1 e^{-c_0\min(|u|^{2+\epsilon},|u|^{2-\epsilon})}(u/a_n)^\gamma \, ,$$
and so
\begin{align}
&\mathbb E_{\mu_{\Delta}}(h\mathbf 1_{\{\hat \kappa_n\circ f^k=N\}}g\circ f^n)=O\left(a_n^{-d-\gamma}\Vert g\Vert_{L^{q_0}(\mu_\Delta)}\sup_t\Vert P_t^kP^kh\Vert_{\mathcal B}\right)\nonumber\\
&+\frac 1{(2\pi a_n)^d}\int_{[-\beta a_n,\beta a_n]^d}\mathbb E_{\mu_{\Delta}}[e^{i\frac u{a_n}\cdot\hat\kappa_k}g]\, \mathbb E_{\mu_{\Delta}}[e^{i\frac u{a_n}\cdot\hat\kappa_k\circ f^k}h]e^{-i\frac u{a_n}\cdot N}\lambda_{u/a_n}^{n-2k}\, du\nonumber\\
&=\frac 1{(2\pi a_n)^d} \mathbb E_{\mu_{\Delta}}[g] \, \mathbb E_{\mu_{\Delta}}[h]\int_{[-\beta a_n,\beta a_n]^d}e^{-i\frac u{a_n}\cdot N}\lambda_{u/a_n}^{n-2k}\, du\nonumber\\
&\ \ \  +O(
a_n^{-d}(a_n^{-\gamma}+(k/a_n)^{\min(1,\frac p{q_1})}+(k/a_n)^{\min(1,\frac p{p_0})})
\Vert g\Vert_{\mathbb L^{q_0}(\mu_\Delta)}
\sup_t\Vert P_t^kP^kh\Vert_{\mathcal B}
)\, ,\label{CCC1}
\end{align}
due to \eqref{bornelambda}
 combined with the two following estimates
\begin{align*}
\mathbb E_{\mu_\Delta}[(e^{i\frac u{a_n}\cdot\hat\kappa_k}-1)g]&=O\left(|u|^{\gamma_0}\frac{k^{\gamma_0}\Vert\hat\kappa\Vert^{\gamma_0}_{L^{p}(\mu_\Delta)}\Vert g\Vert_{\mathbb L^{q_0}(\mu_\Delta)}}
      {(a_n)^{\gamma_0}}\right)\\
\mathbb E_{\mu_\Delta}[(e^{i\frac u{a_n}\cdot\hat\kappa_k\circ f^k}-1)P^kh]&=O\left(|u|^{\gamma_0}\frac{k^{\gamma_0}\Vert\hat\kappa\Vert^{\gamma_0}_{L^{p}(\mu_\Delta)}\Vert h\Vert_{\mathbb L^{p_1}(\mu_\Delta)}}
      {(a_n)^{\gamma_0}}\right)\, ,
\end{align*}
with $\gamma_0:=\min(1,\frac p{p_0}
,\frac p{q_1})$.
Here we have used  H{\"o}lder inequality,
the fact that  
$|e^{ix}-1|\le 2|x|^{\gamma_0}$
and 
the following inequality applied with $s=\gamma_0$ and $r\in\{p_0,q_1\}$
\begin{equation}\label{normrs}
\forall r\ge 1,\quad\forall s\ge 1/r,\quad 
\||\hat\kappa_k|^{s}\|_{L^r(\mu_\Delta)}=
\|\hat\kappa_k\|^{s}_{L^{rs}(\mu_\Delta)}\le 
 k^{s} \|\hat\kappa\|^s_{L^{rs}(\mu_\Delta)}
\, ,
\end{equation}
where we used the triangular inequality for $\Vert\cdot\Vert_{L^{rs}(\mu_\Delta)}$
and the fact that $\Vert\hat\kappa\circ f^m\Vert_{L^{rs}(\mu_\Delta)}=\Vert\hat\kappa\Vert_{L^{rs}(\mu_\Delta)}$ by $f$-invariance of $\mu_\Delta$.
Moreover, due to \eqref{lambda
} 
and  \eqref{bornelambda}
and since $k=O(n/\log n)$,
\begin{align}
\label{lambdan-k}\lambda_{u/a_n}^{n-2k} 
&=\lambda_{u/a_n}^{n} \lambda_{u/a_n}^{-2k} 
=  e^{-\frac n{ a_n^2} A u\cdot u\, \left(\log (a_n/|u|)\right)+O(n|u|^2/a_n^2)}  e^{O(\frac kn\max(|u|^{2-\epsilon},|u|^{2+\epsilon}))}\\
\nonumber&=  e^{-\frac 1{2 \log n} A u\cdot u\, \left(\log n+\log(\log n/|u|^2)\right)+O(\max(|u|^{2-\epsilon},|u|^{2+\epsilon})/\log n)}\\
\nonumber&=e^{-\frac 1{2} A u\cdot u\, \left(1+\frac{\log(\log n/|u|^2)}{\log n}\right)}
+O\left(e^{-\frac{c_0\min(|u|^{2-\epsilon},|u|^{2+\epsilon})}2}\frac{\max(|u|^{2-\varepsilon},|u|^{2+\varepsilon})}{\log n}\right)\, .
\end{align}
Therefore
\begin{align*}
&\int_{[-\beta a_n,\beta a_n]^d}u^\ell e^{-i\frac u{a_n}\cdot N}\lambda_{u/a_n}^{n-2k}\, du=\int_{\mathbb R^d}u^\ell e^{-i\frac u{a_n}\cdot N}e^{-\frac 1{2} A u\cdot u\, \left(1+\frac{\log(\log n/|u|^2)}{\log n}\right)}\, du+O\left(\frac 1{\log n}\right)\nonumber\\
=&\int_{\mathbb R^d}u^\ell e^{-i\frac u{a_n}\cdot N}e^{-\frac 1{2} A u\cdot u}
\left(1 -\frac 12  A u\cdot u\frac{\log(\log n/|u|^2)}{\log n}\right. \nonumber\\
&\left. \ \ \ +O\left(e^{\frac 12 Au\cdot u\frac{|\log(\log n/|u|^2)|}{\log n}}
\frac{
|u|^4(\log(\log n/|u|^2))^2}{(\log n)^2}\right)\right)\, du+O\left(\frac 1{\log n}\right)\, ,\nonumber
\end{align*}
where we used $e^{x}=1+x+O(e^{|x|}x^2)$. Thus
\begin{align}
&\int_{[-\beta a_n,\beta a_n]^d}u^\ell e^{-i\frac u{a_n}\cdot N}\lambda_{u/a_n}^{n-2k}\, du
=\int_{\mathbb R^d}u^\ell e^{-i\frac u{a_n}\cdot N}e^{-\frac 1{2} A u\cdot u}
\left(1 -\frac 12  A u\cdot u\frac{\log(\log n/|u|^2)}{\log n}\right)\nonumber\\
&\ \ \ +O\left(|u|^{4+\ell} 
e^{-\frac 1{2} A u\cdot u\left(1-\frac{\log\log n}{\log n}-2\frac{|\log |u||}{\log n}\right)}\frac{(\log\log n)^2+|\log |u||^2}{(\log n)^2}\right)\, du+O\left(\frac 1{\log n}\right)\, , \nonumber
\end{align}
\begin{align}
\int_{[-\beta a_n,\beta a_n]^d}u^\ell &e^{-i\frac u{a_n}\cdot N}\lambda_{u/a_n}^{n-2k}\, du=\int_{\mathbb R^d}u^\ell e^{-i\frac u{a_n}\cdot N}e^{-\frac 1{2} A u\cdot u}
\left(1 -\frac 12  A u\cdot u\frac{\log(\log n/|u|^2)}{\log n}\right)\nonumber\\
&\ \ +O\left(|u|^{4+\ell} 
e^{-\frac 1{2} A u\cdot u\left(\frac 12 - 2\max(|u|,|u|^{-1})\right)}\frac{(\log\log n)^2+|\log |u||^2}{(\log n)^2}\right)\, du+O\left(\frac 1{\log n}\right)\nonumber\\
=&\int_{\mathbb R^d}u^\ell e^{-i\frac u{a_n}\cdot N}e^{-\frac 1{2} A u\cdot u}
\left(1 -\frac 12  A u\cdot u\frac{\log\log n}{\log n}\right)\, du+O\left(\frac 1{\log n}\right)\, .\label{intlambda}
\end{align}
Applying this formula with $\ell=0$ and using
 $(k/a_n)^{\min(1/p_1,1/q_1)}=O((\log n)^{-1})$,
\eqref{CCC1} becomes
\eqref{eq:LLT1}
with $I_{2k}(x):=\frac 1{(2\pi)^{d}}\int_{\mathbb R^d}e^{-i u\cdot x}\left( A u\cdot u\right)^{k}e^{-\frac 1{2} A u\cdot u}\, du$, which coincide with \eqref{I0I2}.
Indeed, with the change of variable $v=A^{1/2}u$,
$I_0(x)=\frac{\Phi(A^{-1/2}x)}{\sqrt{\det A}}$ and $I_2(X)=-\frac{(\Delta \Phi)(A^{-1/2}x)}{\sqrt{\det A}}$ where $\Delta$ is the Laplacian operator and with $\Phi(x):=\frac 1{(2\pi)^{d}}\int_{\mathbb R^d}e^{-i u\cdot x}
e^{-\frac 1{2}  u\cdot u}\, du=\frac{e^{-\frac 12 x\cdot x}}{\sqrt{(2\pi)^d}}$ the standard $d$-dimensional Gaussian density.
This ends the proof of the first assertion of Proposition~\ref{LLT1}.
\\
Assume from now on \eqref{Pidiff}.
Then,
\begin{align*}
\mathbb E_{\mu_\Delta}[P_t^k(g\Pi_tP_t^k P^k h)]&=\mathbb E_{\mu_\Delta}[e^{it\cdot\hat\kappa_k}g]\mathbb E_{\mu_\Delta}[P_t^kP^k h]+ t\cdot\mathbb E_{\mu_\Delta}[e^{i t\cdot\hat\kappa_k}g\Pi'_0P_t^kP^k h]\\
&\ \ \  \ \ \ +O(t^{1+\gamma'}\Vert g\Vert_{L^{q_0}(\mu_\Delta)}\Vert P_t^kP^k h\Vert_{\mathcal B})\, ,
\end{align*}
and so \eqref{FormulaInt} leads to
\begin{align*}
&\mathbb E_{\mu_{\Delta}}[h\mathbf 1_{\{\hat \kappa_n\circ f^k=N\}}g\circ f^n]
=\frac 1{(2\pi a_n)^d}\int_{[-\beta a_n,\beta a_n]^d}e^{-i\frac u{a_n}\cdot N}\lambda_{u/a_n}^{n-2k}\\
&
\ \ \ \ \ \  \ \ \ \ \ \ \ \left(\mathbb E_{\mu_{\Delta}}[e^{i\frac u{a_n}\cdot\hat\kappa_k}g]\, \mathbb E_{\mu_{\Delta}}[e^{i\frac u{a_n}\cdot\hat\kappa_k\circ f^k}h]+\frac u{a_n}\cdot\mathbb E_{\mu_\Delta}[e^{i \frac u{a_n}\cdot\hat\kappa_k}g\Pi'_0  P_{\frac u{a_n}}^kP^kh]
\right)
\, du\\
& \ \ \ \ \ \  \ \ \ \ \ \ \ +O\left(a_n^{-d-1-\gamma'}\Vert g\Vert_{L^{q_0}(\mu_\Delta)}\sup_t\Vert P_t^kP^kh\Vert_{\mathcal B}\right)\, .
\end{align*}
If $k=0$, we go directly to \eqref{AAA2b}. Otherwise we proceed as follows.
Since  $|e^{ix}-1-ix|\le 2|x|^{1+\widetilde\gamma}$ for 
$\widetilde\gamma:=\min(1,\frac{p(q_0-1)}{q_0}-1,\frac p{q_1}-1)\in (0,1]$
(since $\frac 1{q_0}+\frac 1 {p}<1$ and $q_1<p$), 
\begin{align*}
\mathbb E_{\mu_\Delta}\left[\left(e^{i\frac u{a_n}\cdot\hat\kappa_k}-1-i\frac u{a_n}\cdot\hat\kappa_k\right)g\right]
& =O\left(\frac{1}{a_n^{1+\widetilde\gamma}}\mathbb E_{\mu_{\Delta}}\left[\left|u\cdot\hat\kappa_k\right|^{1+\widetilde\gamma} |g|\right]\right)\\
&=O\left(\frac{|u|^{1+\widetilde\gamma}}{a_n^{1+\widetilde\gamma}}\left\Vert |\hat\kappa_k|^{1+\widetilde\gamma}\right\Vert_{L^{\frac{q_0}{q_0-1}}(\mu_\Delta)} \left\Vert g\right\Vert_{L^{q_0}(\mu_\Delta)}\right)\, ,
\end{align*}
due to the H\"older inequality,
noting that $\frac{q_0}{q_0-1}(1+\widetilde\gamma)\le p$.
Thus, using \eqref{normrs} with $s=1+\widetilde\gamma$ and $r=\frac{q_0}{q_0-1}$, 
we obtain
\begin{align*}
\mathbb E_{\mu_\Delta}\left[\left(e^{i\frac u{a_n}\cdot\hat\kappa_k}-1-i\frac u{a_n}\cdot\hat\kappa_k\right)g\right]=O\left(|u|^{1+\widetilde\gamma}\frac{k^ {1+\widetilde\gamma}\Vert\hat\kappa\Vert^{1+\widetilde\gamma}_{L^{\frac{q_0}{q_0-1}(1+\widetilde\gamma)}(\mu_\Delta)}\Vert g\Vert_{\mathbb L^{q_0}(\mu_\Delta)}}
      {a_n^{1+\widetilde\gamma}}\right)\, ,
\end{align*}
\[
\mathbb E_{\mu_\Delta}[(e^{i\frac u{a_n}\cdot\hat\kappa_k}-1-i\frac u{a_n}\cdot\hat\kappa_k) P^kh]=O\left(|u|^{1+\widetilde\gamma}\frac{k^{1+\widetilde\gamma}\Vert\hat\kappa^{1+\widetilde\gamma}\Vert_{L^{q_1}(\mu_\Delta)}\Vert  P^kh\Vert_{\mathbb L^{p_1}(\mu_\Delta)}}
      {a_n^{1+\widetilde\gamma}}\right)\, ,
\]
noticing that $(1+\widetilde\gamma)\frac{q_0}{q_0-1}\le p$ and $(1+\widetilde\gamma)q_1\le p$.
Therefore
\begin{align*}
&\mathbb E_{\mu_{\Delta}}[h\mathbf 1_{\{\hat \kappa_n\circ f^k=N\}}g\circ f^n]
=\frac 1{(2\pi a_n)^d} \mathbb E_{\mu_{\Delta}}[g] \, \mathbb E_{\mu_{\Delta}}[h]\int_{[-\beta a_n,\beta a_n]^d}e^{-i\frac u{a_n}\cdot N}\lambda_{u/a_n}^{n-2k}\, du\\
&+\frac 1{(2\pi )^d(a_n)^{d+1}}\int_{[-\beta a_n,\beta a_n]^d}
 u\cdot   D_{n,k}(g,h) e^{-i\frac u{a_n}\cdot N}\lambda_{u/a_n}^{n-2k} \, du\\
&
 +O\left(a_n^{-d-1}/\log n
\Vert g\Vert_{L^{q_0}(\mu_\Delta)}\sup_t\Vert P_t^kP^kh\Vert_{\mathcal B}\right)\, ,
\end{align*}
with
$
D_{n,k}(g,h):=\left(\mathbb E_{\mu_{\Delta}}[ i\hat\kappa_k (g\Pi_0(h)+\Pi_0(g) P^kh) +e^{i \frac u{a_n}\cdot\hat\kappa_k\circ f^k}g\Pi'_0 P_{\frac u{a_n}}^kP^kh]\right)$, 
since $k^{1+\widetilde\gamma}\ll a_n^{\widetilde\gamma}/\log n$
due to our assumption on $k$.
Recall that $\Pi'_0\in\mathcal L(\mathcal B, L^{p_0}(\mu_\Delta))$. Choosing
$\gamma_2=\min(1,\frac{p_0}{q_0})\in (0,1]$  (since $q_0<\infty$)
so that $1\le\gamma_2q_0\le p_0$,
\begin{align*}
\left|\mathbb E_{\mu_{\Delta}}[(e^{i \frac u{a_n}\cdot\hat\kappa_k\circ f^k}-1)g\Pi'_0 P^k_{u/a_n}P^kh]
    \right|&\le 2|u/a_n|^{\gamma_2}\mathbb E_{\mu_\Delta}[|\hat\kappa_k|^{\gamma_2}\Pi'_0P^k_{u/a_n}P^kh]\\
&\le  2|u/a_n|^{\gamma_2}  k^{\gamma_2}\Vert\hat\kappa\Vert_{L^{\gamma_2 q_0}(\mu_\Delta)}^{\gamma_2}\Vert \Pi'_0P^k_{u/a_n}P^kh\Vert_{L^{p_0}(\mu_\Delta)}\, ,
\end{align*}
since  $\Vert \hat\kappa_k\Vert_{L^{q_0\gamma_2}(\mu_\Delta)}\le  k\Vert\hat\kappa\Vert_{L^{q_0\gamma_2}(\mu_\Delta)}$ (by triangular inequality and $f$- invariance of $\mu_\Delta$). So
\begin{align}\label{AAA2b}
&\mathbb E_{\mu_{\Delta}}(h\mathbf 1_{\{\hat S_n=N\}}g\circ f^n)=
\frac 1{(2\pi a_n)^d}\int_{[-\beta a_n,\beta a_n]^d}e^{-i\frac u{a_n}\cdot N}\lambda_{u/a_n}^{n-2k}\, du\, \mathbb E_{\mu_\Delta}[g]\mathbb E_{\mu_\Delta}[h]\\
\nonumber&+
\frac {1 }{(2\pi )^d(a_n)^{d+1}}\int_{[-\beta a_n,\beta a_n]^d} ue^{-i\frac u{a_n}\cdot N}\lambda_{u/a_n}^{n-2k}\mathcal E_k(g,h,u/a_n)\, du\,
 +O(a_n^{-d-1}/\log n)\, ,
\end{align}
since $(k/a_n)^{\gamma_2}\ll (\log n)^{-1}$ and with
\[
\mathcal E_k(g,h,t):=  \mathbb E_{\mu_{\Delta}}[g\Pi'_0P_t^kP^kh]+i\mathbb E_{\mu_{\Delta}}[\hat\kappa_k  g]\mathbb E_{\mu_\Delta}[h]+i\mathbb E_{\mu_\Delta}[g]  \mathbb E_{\mu_\Delta}[\hat\kappa_kP^kh]\, .
\]
Therefore
\begin{align*}
&\mathbb E_{\mu_{\Delta}}(h\mathbf 1_{\{\hat S_n=N\}}g\circ f^n)=
\frac 1{(2\pi a_n)^d}\int_{[-\beta a_n,\beta a_n]^d}e^{-i\frac u{a_n}\cdot N}\lambda_{u/a_n}^{n-2k}\, du\, \mathbb E_{\mu_\Delta}[g]\mathbb E_{\mu_\Delta}[h]\\
&+\frac {\mathcal E_k(g,h,0) }{(2\pi )^d(a_n)^{d+1}}\cdot\int_{[-\beta a_n,\beta a_n]^d} ue^{-i\frac u{a_n}\cdot N}\lambda_{u/a_n}^{n-2k}\, du\, +O(a_n^{-d-1}/\log n)\\
&+O\left(\frac {1 }{(2\pi )^d(a_n)^{d+1}}\int_{[-\beta a_n,\beta a_n]^d} |u|e^{-\frac{c_0\min(|u|^{2-\epsilon},|u|^{2+\epsilon})}2} |\mathbb E_{\mu_{\Delta}}[g\Pi'_0(P_{u/a_n}^kP^kh-P^{2k}h)]|\, du\right)\, ,
\end{align*}
due to \eqref{bornelambda}.
Due to \eqref{intlambda} for $\ell=0$ and $\ell=1$, 
$\mathbb E_{\mu_{\Delta}}(h\mathbf 1_{\{\hat S_n=N\}}g\circ f^n)$ is equal to
\begin{align*}
&
\frac 1{(2\pi a_n)^d}
\left(I_{0}\left(\frac N{a_n}\right)-\frac 12
I_{2}\left(\frac N{a_n}\right)\frac{\log\log n}{\log n}+O\left(\frac 1{\log n}\right)\right)
\mathbb E_{\mu_\Delta}[g]\mathbb E_{\mu_\Delta}[h]\\
&+\frac {\mathcal E_k(g,h,0)}{a_n^{d+1}} \mathbb E_{\mu_{\Delta}}\left(I_{1}\left(\frac N{a_n}\right)-\frac 12
I_{3}\left(\frac N{a_n}\right)\frac{\log\log n}{\log n}\right)+
O\left(a_n^{-d-1}/\log n\right)\\
&+O\left(\frac {1 }{(2\pi )^d(a_n)^{d+1}}\int_{[-\beta a_n,\beta a_n]^d} |u|e^{-\frac{c_0\min(|u|^{2-\epsilon},|u|^{2+\epsilon})}2} |\mathbb E_{\mu_{\Delta}}[g\Pi'_0(P_{u/a_n}^kP^kh-P^{2k}h)]|\, du\right)\, ,
\end{align*}
with
$I_{2k+1}(x):=\frac 1{(2\pi)^d}\int_{\mathbb R^d}u( Au\cdot u)^k e^{-iu\cdot x}e^{-\frac 12 Au\cdot u}\, du=
i(-1)^{k}
\frac{(\Delta_{1,k}\Phi)(A^{-\frac 12}x)}{\sqrt{\det A}}$, with $\Delta_{1,k}\Phi(x):=A^{-\frac 12}\left(\sum_{j=1}^d\frac{\partial^{2k+1}}{\partial x_i\partial^{2k} x_j}\Phi(x)\right)_{i=1,...,d}$, which leads to \eqref{I1I3},
ending the proof of the Proposition.
\end{proof}
\subsection{Decorrelation expansion for $\mathbb Z^d$-extensions}

We are now interested in decorrelation expansion for  $\mathbb Z^d$-extensions satisfying the set up of Proposition~\ref{LLT1}.
In this subsection, we consider the $\mathbb Z^d$-extension  $(\Omega,\nu,S)$ of $(\Delta,f,\mu_\Delta)$
with step function $\hat\kappa:\Delta\rightarrow\mathbb Z^d$, where
 $\Omega=\Delta\times\mathbb Z^d$, the transformation $S$ is defined by $S(x,L)=(fx,L+\hat\kappa(x))$ and preserves
the infinite measure $\nu=\mu_\Delta\otimes \mathfrak m_d$ with  $\mathfrak m_d$  being the counting measure on $\mathbb Z^d$.
\begin{cor}\label{CORO}
Assume~\eqref{quasicompact}--\eqref{lambda}.
Suppose that \eqref{Pi} holds with $\gamma>0$.
Let $\phi,\psi:\Delta\times\mathbb Z^d\rightarrow\mathbb R$ be two observables
such that
$
\sum_{N\in\mathbb Z^d}\left[\Vert\phi(\cdot,N)\Vert_{\mathcal B}+\Vert\psi(\cdot,N)\Vert_{L^{q_2}(\mu_\Delta)}\right]<\infty
$
and
$
\sum_{N\in\mathbb Z^d}|N|^\gamma\left[\left\vert\int_\Delta \phi(\cdot,N)\, d\mu\right|+\left\vert\int_\Delta \psi(\cdot,N)\, d\mu\right|\right]<\infty$. 
Then 
\[
\int_\Omega \phi. \psi\circ S^n\, d\nu=\frac {I_0(0)}{ (n\log n)^{\frac d2}}\left(1
-
\frac d2\frac{\log\log n}{\log n}\right)\int_\Omega\phi\, d\mu\, \int_\Omega\psi\, d\mu+O\left(\frac 1{(n\log n)^{\frac d2}\log n}\right)\, .
\] 

Suppose that \eqref{Pidiff}(instead of \eqref{Pi}) holds with
$\gamma'>0$
 and assume that there exists $\gamma_0>0$ such that
$
\sum_{N\in\mathbb Z^d}(1+|N|^{\gamma_0})\left(\Vert\psi(\cdot,N)\Vert_{L^{q_0}(\mu_\Delta)}+\Vert \phi(\cdot,N)\Vert_{\mathcal B}\right)<\infty\, .
$
Then
\[
\int_\Omega \phi. \psi\circ S^n\, d\nu=\frac {I(0)}{(n\log n)^{\frac d2}}\left(1-\frac{d\log\log n+O(1)}{2\log n}\right)\int_\Omega\phi\, d\nu\, \int_\Omega\psi\, d\nu+O\left(\frac 1{(n\log n)^{\frac {d+1}2}\log n}\right)\, .
\]

\end{cor}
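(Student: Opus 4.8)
\textbf{Proof plan for Corollary~\ref{CORO}.}

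The strategy is to reduce the decorrelation integral over the $\Z^d$-extension $(\Omega,\nu,S)$ to a sum of local-limit-type quantities on the base $(\Delta,f,\mu_\Delta)$, to which Proposition~\ref{LLT1} applies, and then to sum the resulting asymptotic expansions against the $\ell^1$-type weights on the fibre variable $N$. First I would write, using the $\Z^d$-extension structure $S^n(x,L)=(f^nx,L+\hat\kappa_n(x))$ and Fubini,
\[
\int_\Omega \phi\,.\,\psi\circ S^n\, d\nu=\sum_{N_1,N_2\in\Z^d}\int_\Delta \phi(\cdot,N_1)\,1_{\{\hat\kappa_n=N_2-N_1\}}\,\psi(\cdot,N_2)\circ f^n\, d\mu_\Delta\, ,
\]
so that each summand is exactly of the form $\mathbb E_{\mu_\Delta}[h\,1_{\{\hat\kappa_n=N\}}\,g\circ f^n]$ with $h=\phi(\cdot,N_1)$, $g=\psi(\cdot,N_2)$, $N=N_2-N_1$, and $k=0$. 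Since $k=0$ one may take $p_0=p_1=1$, $q_0=q_1=\infty$ in Proposition~\ref{LLT1}, so the last (integral) error term there vanishes and the hypotheses reduce precisely to the summability conditions assumed in the Corollary.

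Next I would insert the appropriate expansion from Proposition~\ref{LLT1} into each summand. For the first assertion, \eqref{Pi} holds, so each summand equals
\[
\frac 1{a_n^d}\mathbb E_{\mu_\Delta}[h]\,\mathbb E_{\mu_\Delta}[g]\Bigl(I_0\bigl(\tfrac N{a_n}\bigr)-\tfrac12 I_2\bigl(\tfrac N{a_n}\bigr)\tfrac{\log\log n}{\log n}+O((\log n)^{-1})\Bigr)+O\bigl(a_n^{-d-\gamma}\Vert g\Vert_\infty\Vert h\Vert_{\mathcal B}\bigr),
\]
where $a_n=\sqrt{n\log n}$. The key analytic step is then to Taylor-expand $I_0(N/a_n)=I_0(0)+O(|N|^2 a_n^{-2})$ and, more crucially, to control the aggregate error: I would estimate the contribution of the $O(|N|^\gamma)$-part using that $N=N_2-N_1$ and $|N|^\gamma\le C(|N_1|^\gamma+|N_2|^\gamma)$, together with the assumed convergence of $\sum_N|N|^\gamma(|\int\phi(\cdot,N)|+|\int\psi(\cdot,N)|)$ and $\sum_N(\Vert\phi(\cdot,N)\Vert_{\mathcal B}+\Vert\psi(\cdot,N)\Vert_\infty)$. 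The point is that replacing $I_0(N/a_n)$ by $I_0(0)$ costs $O(|N|^\gamma a_n^{-\gamma})$ rather than $O(|N|^2 a_n^{-2})$ (using $|I_0(x)-I_0(0)|\ll|x|^{\min(\gamma,2)}\ll|x|^\gamma$ for $\gamma\le1$), which is summable against the given weights and produces an overall $O(a_n^{-d}(\log n)^{-1})$ since $a_n^{-\gamma}=o((\log n)^{-1})$ — here one needs $\gamma>0$ to kill the logarithm, which is exactly why the hypothesis $\gamma>0$ is imposed. Summing the main terms gives $a_n^{-d}I_0(0)(1+\tfrac d2\tfrac{\log\log n}{\log n})\int\phi\,\int\psi$, since $I_0(0)-\tfrac12 I_2(0)\tfrac{\log\log n}{\log n}=I_0(0)(1+\tfrac d2\tfrac{\log\log n}{\log n})$ by the formulas \eqref{I0I2} (as $I_2(0)=-d\,I_0(0)$), and $a_n^d=(n\log n)^{d/2}$. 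For the second assertion, I would instead use the expansion under \eqref{Pidiff}: the extra term $\frac{C_0(g,h)}{a_n^{d+1}}(I_1(N/a_n)-\cdots)$ has $I_1(0)=0$ and contributes, after Taylor expansion $I_1(N/a_n)=O(|N|/a_n)$ and summation against the $\sum_N(1+|N|^{\gamma_0})(\cdots)<\infty$ weights, a term of size $O(a_n^{-d-1}\sum_{N_1,N_2}(1+|N_1|+|N_2|)(\cdots))=O(a_n^{-d-1})=O((n\log n)^{-(d+1)/2})$; combined with the stated error $O(a_n^{-d-1}(\log n)^{-1})$ and the principal term, and using that the sign on the $\log\log n/\log n$ correction flips once one observes $I(0)$ should be read consistently (here I would keep careful track of the sign of $I_2(0)$ versus the claimed $-\tfrac d2\tfrac{\log\log n}{\log n}$, resolving it by the explicit Gaussian computation), this yields the claimed formula.

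The main obstacle is uniformity and summability of the error terms as $N=N_2-N_1$ ranges over all of $\Z^d$: one must confirm that the $O(\cdot)$ in Proposition~\ref{LLT1} is genuinely uniform in $N$ (which it is, as stated), and that each error contribution — the $O((\log n)^{-1})$ per summand, the $O(a_n^{-d-\gamma})$ per summand, and the Taylor remainders from expanding $I_0,I_1,I_2$ at $N/a_n$ around $0$ — can be summed against the prescribed weights to give something of strictly smaller order than the claimed main term. In particular, the per-summand $O((\log n)^{-1})$ terms, when summed, give $O(a_n^{-d}(\log n)^{-1}\sum_{N_1,N_2}\Vert\phi(\cdot,N_1)\Vert_{\mathcal B}\Vert\psi(\cdot,N_2)\Vert)$, which is acceptable only because the double sum factorizes into a product of two absolutely convergent single sums; verifying this factorization and the interchange of summation and the limit (dominated convergence using the exponential decay in \eqref{borneexpo} or, more simply, the explicit bounds from Proposition~\ref{LLT1}) is the technical heart of the argument. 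Everything else is bookkeeping with the explicit Gaussian integrals $I_0,\dots,I_3$.
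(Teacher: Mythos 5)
Your treatment of the first assertion follows the paper's proof essentially verbatim: decompose $\int_\Omega\phi\,.\,\psi\circ S^n\,d\nu$ as a double sum over $(N_1,N_2)$, apply Proposition~\ref{LLT1} with $k=0$ to each pair, and replace $I_0\bigl(\tfrac{N_2-N_1}{a_n}\bigr)$, $I_2\bigl(\tfrac{N_2-N_1}{a_n}\bigr)$ by their values at $0$ at a cost $O\bigl((|N_1|^\gamma+|N_2|^\gamma)a_n^{-\gamma}\bigr)$, summable against the assumed weights and absorbed into $O(a_n^{-d}/\log n)$ since $a_n^{-\gamma}\ll(\log n)^{-1}$; your identification $I_2(0)=-d\,I_0(0)$ (hence the $+\frac d2\frac{\log\log n}{\log n}$ correction; the minus sign and ``$I(0)$'' in the printed second assertion are typos) is also how the paper resolves the sign.

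The gap is in the second assertion. You estimate the new term $\frac{1}{a_n^{d+1}}\sum_{N_1,N_2}\mathbb E_{\mu_\Delta}\bigl[\psi(\cdot,N_2)\Pi_0'\phi(\cdot,N_1)\bigr]\bigl(I_1\bigl(\tfrac{N_2-N_1}{a_n}\bigr)-\tfrac12 I_3\bigl(\tfrac{N_2-N_1}{a_n}\bigr)\tfrac{\log\log n}{\log n}\bigr)$ via the Taylor bound $I_1(N/a_n)=O(|N|/a_n)$. This is problematic twice over: first, summing $|N_2-N_1|$ against the weights requires a first moment in $N$, whereas the hypothesis only provides $\sum_N(1+|N|^{\gamma_0})(\cdots)<\infty$ with $\gamma_0$ possibly in $(0,1)$; second, even granting that, your bookkeeping yields only $O(a_n^{-d-1})$ for this contribution, which is strictly larger than the claimed error $O\bigl((n\log n)^{-\frac{d+1}2}/\log n\bigr)$, so the stated expansion would not follow from your estimate. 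The paper argues instead that $I_1$ and $I_3$ are bounded, uniformly H\"older of exponent $\min(\gamma_0,1)$, and vanish at $0$, so the bracket is $\ll(|N_1|^{\gamma_0}+|N_2|^{\gamma_0})a_n^{-\gamma_0}$; summing against the assumed weights, with $|\mathbb E_{\mu_\Delta}[\psi(\cdot,N_2)\Pi_0'\phi(\cdot,N_1)]|\ll\Vert\psi(\cdot,N_2)\Vert_{L^{q_0}(\mu_\Delta)}\Vert\phi(\cdot,N_1)\Vert_{\mathcal B}$ coming from \eqref{Pidiff}, gives $O(a_n^{-d-1-\gamma_0})$, which is absorbed into $O(a_n^{-d-1}/\log n)$ because $a_n^{-\gamma_0}\ll(\log n)^{-1}$. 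With that substitution (H\"older bound matched to the assumed $\gamma_0$-moment, rather than a Lipschitz/Taylor bound) your argument coincides with the paper's.
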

\begin{proof}
Applying Proposition~\ref{LLT1} to the couples $(\phi(\cdot,N_1),\psi(\cdot,N_2))$ with $k=0$ leads to
\begin{align*}
&\int_\Omega \phi.\psi\circ S^n\, d\nu=\sum_{N_1,N_2\in\mathbb Z^d}\mathbb E_{\mu_\Delta}\left[\phi(\cdot,N_1)
\mathbf 1_{\{\hat\kappa_n=N_2-N_1\}}\, \psi(f^n(\cdot), {N_2})\right]\\
&=\sum_{N_1,N_2\in\mathbb Z^d}\frac 1{a_n^d}\mathbb E_{\mu_{\Delta}}[\phi(\cdot,N_1)]\mathbb E_{\mu_{\Delta}}[\phi(\cdot,N_2)]
\left(I_0\left(\frac {N_2-N_1}{a_n}\right)-\frac {I_2\left(\frac {N_2-N_1}{a_n}\right)}2 \frac{\log\log n}{\log n}\right)\\
&\ \ \ \ \ \ \ \ \ \ +O\left(\frac{1}{a_n^d\log n}\right)\, .
\end{align*}
Thus $\int_\Omega \phi.\psi\circ S^n\, d\nu
=\frac 1{a_n^d}\sum_{N_1,N_2\in\mathbb Z^d}\int_{M_{N_1}}\psi\, d\mu\, \int_{M_{N_2}}\phi\, d\mu I_{0}(0)\left(1-\frac d2\frac{\log\log n}{\log n}\right)+O\left(\frac{1}{a_n^d\log n}\right)$.
We used the fact that for every $\gamma\in(0,2]$, there exists $C_\gamma$
such that, for every $X\in\mathbb R^2$, $|I_0(X)-I_0(0)|+|I_2(X)-I_2(0)|\le C_\gamma  |X|^\gamma$, that $|N_2-N_1|\le|N_2|+|N_1|$ combined with our assumption on $\phi$ and $\psi$.
Setting $\tilde I_k(n,x):=I_k(x)-\frac {I_{k+2}(x)}2\frac{\log\log n}{\log n}$ and applying 
the second point of Proposition~\ref{LLT1}, we obtain that $\int_\Omega \phi.\psi\circ S^n\, d\nu$ is equal to
\begin{align*}
&\frac 1{a_n^d}\sum_{N_1,N_2\in\mathbb Z^d}\mathbb E_{\mu_{\Delta}}[\phi(\cdot,N_1)]\mathbb E_{\mu_{\Delta}}[\psi(\cdot,N_2)]
\left(\tilde I_0\left(n,\frac {N_2-N_1}{a_n}\right)+O\left(\frac 1{\log n}\right)\right)\\
&+\frac 1{(a_n)^{d+1}}\sum_{N_1,N_2\in\mathbb Z^d}\mathbb E_{\mu_{\Delta}}[\psi(\cdot,N_2)\Pi'_0\phi(\cdot,N_1)] \tilde I_1\left(n,\frac {N_2-N_1}{a_n}\right)
+O\left(\frac{1}{a_n^{d+1}\log n}\right)\\
&=\frac {I_0(0)}{a_n^d}\sum_{N_1,N_2\in\mathbb Z^d}\!\!\!\! \mathbb E_{\mu_{\Delta}}[\phi(\cdot,N_1)]\mathbb E_{\mu_{\Delta}}[\psi(\cdot,N_2)]
\left(1
-\frac d2\frac{\log\log n}{\log n}+O\left(\frac 1{\log n}\right)\right)+O\left(\frac{1}{a_n^{d+1}\log n}\right)\, ,
\end{align*}

where we used the same argument as before with $\gamma\in(1,2]$ combined 
with the fact that $I_1$ and $I_3$ are uniformly $\gamma_0$-H\"older with $\gamma_0\in(0,1)$ and that $I_1(0)=I_3(0)=0$.
\end{proof}
Observe that when $\phi$ or $\psi$ has null integral, then Corollary \ref{CORO}
only provides an upper bound (given by the term in $O(\cdot)$). 
Nevertheless, the method we used to establish Proposition~\ref{LLT1} and Corollary \ref{CORO} enables the establishment of explicit decorrelation rates
for some specific but natural
null integral observables of the $\mathbb Z^d$-extension, including coboundaries.
Before stating these decorrelation results, let us introduce the following notations.
Set
\begin{align}
I_{\ell_1,\ell_2,N}(x)&:= \frac1{(2\pi)^d}\int_{\mathbb R^d} (-iu\cdot N)^{\ell_1}(-A u\cdot u)^{\ell_2}e^{-iu\cdot x}e^{-\frac 12Au\cdot u}\, du\nonumber\\
&= \frac 1{(2\pi)^d\sqrt{\det A}}\int_{\mathbb R^d} (-iu\cdot A^{-1/2} N)^{\ell_1}(- u\cdot u)^{\ell_2}e^{-iu\cdot   A^{-1/2}x}e^{-\frac 12u\cdot u}\, du\nonumber\\
&=
\frac{\Delta_{\ell_1,\ell_2}\Phi(A^{-\frac 12}x,N)}{\sqrt{\det A}}\, ,
\label{formulaI}
\end{align}
with $\Delta_{\ell_1,\ell_2}\Phi(x,N):=(A^{-\frac 12}N\cdot\nabla)^{\ell_1}\Delta^{\ell_2}\Phi (A^{-1/2}x)$ 
with $\Phi(x)=\frac{e^{-\frac 12 x\cdot x}}{(2\pi)^{d/2}}$ and where $\nabla$ and $\Delta$ are the usual differential
gradient and Laplacian operators. Recall that, for $\psi:\mathbb R^d\rightarrow \mathbb C$; $\nabla\psi=\left(\frac{\partial}{\partial x_i}\psi\right)_{i=1,...,d}$ and that $\Delta\psi=(\nabla\cdot\nabla)\psi=\sum_{i=1}^d\frac{\partial^2}{\partial^2 x_i}\psi$.


\begin{prop}\label{LLT00}
Assume assumptions of Proposition~\ref{LLT1} with 
\eqref{Pi}.
\begin{itemize}
\item[(A)] (Coboundary) If $\phi(x,\ell)=h_\ell(x)
$
and $\psi(x,\ell)=g_\ell(x)
$, with
$\sum_{N\in\mathbb Z^d}\left(\Vert g_N\Vert_{L^{q_2}(\mu_\Delta)}+\Vert h_N\Vert_{\mathcal B}\right)<\infty$,
and
$\sum_{N\in\mathbb Z^d}|N|^\delta\left(\Vert g_N\Vert_{L^{1}(\mu_\Delta)}+\Vert h_N\Vert_{L^{1}(\mu_\Delta)}\right)<\infty$ for some $\delta\in(0,1)$,
then, for all integer $m\ge 1$, $\int_\Omega \phi\circ(id-S)^m.\psi\circ S^n\, d\nu$ is equal to
\begin{align*}
&\frac {(\log n)^m+m\log\log n(\log n)^{m-1}}{2^ma_n^{d+2m}}
\left(
I_{0,m,0}(0)+
I_{0,m+1,0}(0) \frac{\log\log n}{2\log n}\right)\\
&\times \int_\Omega g\, d\nu\, \int_\Omega h\, d\nu+O(a_n^{-d-2m}(\log n)^{m-1})\, .
\end{align*}

\item [(B)] Let $N\in\mathbb Z^d$ be fixed.
If $\phi(x,\ell)=h(x)(1_{\ell=N}+1_{\ell=-N}-2\times 1_{\ell=0})$
and  $\psi(x,\ell)=g(x)$
with $g\in L^{q_2}(\mu_\Delta)$ and $h\in\cB$,
then
\begin{align*}
\int_\Omega \phi.\psi\circ S^n\, d\nu&=-
\frac {A^{-1}N\cdot N}{a_n^{d+2}\sqrt{(2\pi)^d\det(A)}}\mathbb E_{\mu_\Delta}[h]\, \mathbb E_{\mu_\Delta}[g]\left(1-
 \frac{(d+2)\log\log n}{2\log n}+O\left((\log n)^{-1}\right)\right)\\
&+O(a_n^{-d-2-\min(\delta,\gamma)})
\, .\end{align*}
\end{itemize}
\end{prop}
Let us observe that item (B) of Proposition~\ref{LLT00} (with $h=g=1_\Delta$) implies in particular that
$\mu_\Delta(\hat\kappa_n=1)+\mu_\Delta(\hat\kappa_n=-1)-2\mu_\Delta(\hat\kappa_n=0)\approx a_n^{-d-2}$, 
as $n\rightarrow +\infty$.

\begin{proof}[Proof of Proposition~\ref{LLT00}]
Set 
$
J_{\ell_1,\ell_2,n,N,N_0}:=\frac 1{(2\pi)^d}\int_{-[\beta a_n,\beta a_n]^d}e^{-iu\cdot N/a_n}(-iu\cdot N_0)^{\ell_1}(-Au\cdot u)^{\ell_2}\lambda_{u/a_n}^{n-m}\, du\, ,
$
with $m=0$ in item (B). 
Observe first that, due to \eqref{lambdan-k}
\begin{equation}
\label{diffJ1}
\forall\delta_0\in(0,1],\quad 
J_{\ell_1,\ell_2,n,N,N_0}:=J_{\ell_1,\ell_2,n,0,N_0} + O\left(\frac{N_0^{\ell_1}N^{\delta_0}}{a_n^{\delta_0}}\right)
\end{equation}
using  $|e^{ix}-1|\le 2 |x|^{\delta_0}$,
and, moreover, due to \eqref{intlambda}
\begin{align}\label{relationIJ}
J_{\ell_1,\ell_2,n,0,N_0}&=I_{\ell_1,\ell_2,N_0}(0)
+I_{\ell_1,\ell_2+1,N_0}(0) \frac{\log\log n}{2\log n}+O\left(\frac {N_0^{\ell_1}}{\log n}\right)\, ,
\end{align}
for every $\delta\in [0,1]$.
Note that $I_{\ell_1,\ell_2,N}$ has same parity as $\ell_1$ and thus that
$I_{\ell_1,\ell_2,N}(0)=0$ if 
$\ell_1$ is an odd number.

\begin{itemize}
\item  Assume 
the assumptions of (A).
Then
\begin{align*}
&\int_\Omega \phi\circ(id-S)^m.\psi\circ S^n\, d\nu
=\sum_{r=0}^m\frac{m!(-1)^r}{r!\, (m-r)!}
\int_\Omega \phi\circ S^r.\psi\circ S^n\, d\nu\\
&=\sum_{N_1,N_2\in\mathbb Z^d}\sum_{r=0}^m\frac{m!(-1)^r}{r!\, (m-r)!}
\mathbb E_{\mu_\Delta}\left[h_{N_1}1_{\{\hat\kappa_{n-r}=N_2-N_1\}}g_{N_2}\circ f^{n-r}\right]
\, .
\end{align*}
Thus, due to \eqref{FormulaInt} with $k=0$ and since $\sum_{r=0}^m\frac{m!(-1)^r\lambda_t^{n-r}}{r!(m-r)!}=(-1)^m\lambda_t^{n-m}(1-\lambda_t)^m$, $\int_\Omega \phi\circ(id-S)^m.\psi\circ S^n\, d\nu$ is equal to
\begin{align*}
&\frac {(-1)^m}{(2\pi)^da_n^d}\sum_{N_1,N_2\in\mathbb Z^d}
\int_{[-\beta a_n,\beta a_n]^d}
e^{-iu\cdot(N_2-N_1)/a_n}\lambda_{u/a_n}^{n-m}(1-\lambda_{u/a_n})^m\mathbb E_{\mu_{\Delta}}[g_{N_2}\Pi_{u/a_n}h_{N_1}]\, du\\
&\ \ \ \ \ \ \ +O\left(\sum_{N_1,N_2\in\mathbb Z^d}\theta^n
\Vert g_{N_2}\Vert_{L^{q_1}(\mu_\Delta)}\Vert h_{N_1}\Vert_{\mathcal B}\right)\\
\end{align*}
that can be rewritten
\begin{align*}
&\frac {(-1)^m}{(2\pi)^da_n^{d+2m}}\sum_{N_1,N_2\in\mathbb Z^d}
\int_{[-\beta a_n,\beta a_n]^d}e^{-iu\cdot(N_2-N_1)/a_n}\lambda_{u/a_n}^{n-m}(Au.u\log(|a_n/u|))^m\mathbb E_{\mu_{\Delta}}[g\Pi_{u/a_n}h]\, du\\
&+O\left(\sum_{N_1,N_2\in\mathbb Z^d}\!\!\!\!\!\! a_n^{-d-2m}
\Vert g_{N_2}\Vert_{L^{q_1}(\mu_\Delta)}\Vert h_{N_1}\Vert_{\mathcal B}
\left(1+\int_{[-\beta a_n,\beta a_n]^d}\!\!\!\!\!\!\! \!\!\!\!\!\!\!\! \!\!\!\!\!\!\!\! e^{-c_0\min(|u|^{2-\epsilon},|u|^{2+\epsilon})}|u|^{2m}(\log(|a_n/u|))^{m-1} du\right)\right)
\end{align*}
where we used \eqref{lambda} and \eqref{bornelambda}. Now writing
$\log(|a_n/u|)=\log a_n+O(\log|u|)$ and using \eqref{Pi}, we obtain that
$\int_\Omega \phi\circ(id-S)^m.\psi\circ S^n\, d\nu$ is equal to
\begin{align}
&
\label{cobfinal}
\sum_{N_1,N_2\in\mathbb Z^d}\!\!\!\!
\frac {(\log a_n)^m}{a_n^{d+2m}} J_{0,m,n,0,0}\mathbb E_{\mu_\Delta}[g_{N_2}]\mathbb E_{\mu_\Delta}[h_{N_1}]+O(a_n^{-d-2m}(\log a_n)^{m-1}
\sum_{N_1,N_2}\Vert g_{N_2}\Vert_{L^{q_2}(\mu_\Delta)}\Vert h_{N_1}\Vert_{\mathcal B}),
\end{align}
due to \eqref{diffJ1} with $\delta_0=\delta$, to $\Pi_0=\mathbb E_{\mu_\Delta}[\cdot]1_\Delta$ and to our summability assumptions. 
This ends the proof of (A) since $\int_\Omega\phi\, d\nu=\sum_{N\in\mathbb Z^d}\mathbb E_{\mu_\Delta}[h_{N}]$ and $\int_\Omega\psi\, d\nu=\sum_{N\in\mathbb Z^d}\mathbb E_{\mu_\Delta}[g_{N}]$.
\item Assume the assumptions of (B).
Then, due to \eqref{FormulaInt} with $k=0$, we obtain
\begin{align*}
&\int_\Omega \phi.\psi\circ S^n\, d\nu
=\mathbb E_{\mu_{\Delta}}[h( 1_{\{\hat \kappa_n\circ f^k=N\}}+ 1_{\{\hat \kappa_n\circ f^k=N\}}-2\times 1_{\{\hat \kappa_n\circ f^k=0\}})g\circ f^n]\\
&=
\frac 1{(2\pi)^da_n^d}\int_{[-\beta a_n,\beta a_n]^d}\!\!\!\!\! (e^{-iu\cdot N/a_n}+e^{iu\cdot N/a_n}-2)\lambda_{u/a_n}^n\mathbb E_{\mu_{\Delta}}[g\Pi_{u/a_n}h]\, dt+O(\theta^n
\Vert g\Vert_{L^{q_1}(\mu_\Delta)}\Vert h\Vert_{\mathcal B})\, .
\end{align*}
Therefore $\int_\Omega \phi.\psi\circ S^n\, d\nu$ is equal to
\begin{align*}
&
\frac 1{(2\pi)^da_n^{d+2}}\int_{[-\beta a_n,\beta a_n]^d}(iu\cdot N)^2\lambda_{u/a_n}^n\mathbb E_{\mu_{\Delta}}[g\Pi_{u/a_n}h]\, dt+O(a_n^{-d-4}
\Vert g\Vert_{L^{q_1}(\mu_\Delta)}\Vert h\Vert_{\mathcal B})\\
&=
\frac {1}{a_n^{d+2}}\mathbb E_{\mu_{\Delta}}[g]\mathbb E_{\mu_\Delta}[h] J_{2,0,n,0,N}
+O(a_n^{-d-2-\gamma}
\Vert g\Vert_{L^{q_1}(\mu_\Delta)}\Vert h\Vert_{\mathcal B})\, ,
\end{align*}
due to \eqref{Pi}.
We end the proof of (B) by using \eqref{relationIJ} combined with 
\begin{equation}\label{I20N0}
I_{2,0,N}(0)=-\frac {A^{-1}N\cdot N}{\sqrt{(2\pi)^d\det(A)}}
 ,\quad I_{2,1,N}(0)=\frac {(d+2)A^{-1}N\cdot N}{\sqrt{(2\pi)^d\det(A)}}
 \, .
\end{equation}

\end{itemize}

\end{proof}

\subsection{Decorrelation for the $\mathbb Z^d$-periodic billiard map}\label{resultsbill}
We consider the setting of Section~\ref{mainresults} with \eqref{H0}. Recall that the billiard map $T$ can be represented as the $\mathbb Z^d$-extension of $(\bar M,\bar T,\bar\mu)$ with step function 
$\kappa:\bar M\rightarrow \mathbb Z^d$ (the step function of the one-dimensional case corresponding to the first coordinate of the step function of the two dimensional case).
Therefore these two billiard maps can be represented as the $\mathbb Z^d$-extension of $(\bar M,\bar\mu,\bar T)$ by a step function $\kappa:\bar M\rightarrow\mathbb Z^d$ with $d\in\{1,2\}$ such that
there exists $\hat\kappa:\Delta\rightarrow\mathbb Z^d$ satisfying
$\hat\kappa\circ\mathfrak p_2=\kappa\circ\mathfrak p_1$.
We treat together these two models in the following.
We consider the Banach spaces $\mathcal B_0$ and $\mathcal B$ defined in Section
\ref{sec:Banach}.
As recalled  in Section~\ref{sec:Banach}, with these choices, \eqref{spgap-Sz} and \eqref{spgap-Sz-bis} hold.
Since $\mathcal B_0$ is continuously embedded in $\mathcal B$, 
\eqref{quasicompact} and \eqref{borneexpo} holds true with the Banach space $\mathcal B_0$ and for any $p_1\in [1,+\infty)$.
Moreover \eqref{lambda} has been proved in Proposition~\ref{prop-lambda} and,
due to Proposition~\ref{prop-expproj} (the fact that the symmetric matrix is invertible follows from the assumption of total dimension of the horizon),
\eqref{Pidiff} (and so  holds true \eqref{Pi} with any $\gamma\in(0,1]$) for $\mathcal B_0$  for any $p_0\in(1,\frac 43)$.
\begin{rem}
The second item of Proposition~\ref{LLT1} applies providing a mixing local limit theorem for $\mathbb E_{\mu}(\phi\mathbf 1_{\{ \kappa_n=N\}}\psi\circ \bar T^n)$ with $\phi$ and $\psi$ both constant on stable curves, with $\phi$ dynamically H\"older and $\psi\in L^{q_0}(\bar\mu)$. Corollary~\ref{CORO} applies analogously.
\end{rem}
Our goal is to prove Theorems \ref{THM0} and \ref{THM1} for general dynamically H\"older observables. To this end, we will use approximations by functions on $\Delta$ and Proposition \ref{LLT1} with $k=k(n)\rightarrow+\infty$. 
Recall that we have defined
$\xi_k^{k'}$, $\xi_k^\infty$ before Theorem~\ref{THM0}.
For any $\phi:M \rightarrow \mathbb C$
or $\phi:\bar M\rightarrow \mathbb C$
and $-\infty< k\le k'\le \infty$, we define the local variation
$\omega_k^{k'}(\phi, x):= \sup_{y\in\xi_{k}^{k'}(x)}|\phi( x)-\phi(y)|$, 
where $\xi_{k}^{k'}( x)$ is the element of $\xi_k^{k'}$
containing $x$.
We start
with a local limit theorem (generalizing Theorem~\ref{THM0}). Set $\kappa_n:=\sum_{k=0}^{n-1}\kappa\circ \bar T^k$.
\begin{theo}\label{LLT20}
\begin{itemize}
\item[(a)]
Let $(k(n))_n$ be a sequence of integers diverging to $+\infty$ such that $k(n)=O( a_n/\log n)$.
Let $N\in\mathbb Z^d$ and $\phi,\psi: \bar M\rightarrow \mathbb C$ be  two bounded measurable functions,
then
\begin{align}\nonumber
&\int_{\bar M} \phi 1_{\{\kappa_n=N\}}\psi\circ \bar T^n\, d\bar\mu= \frac 1{a_n^d}
   \left(I_0\left(\frac{N}{a_n}\right)-\frac 12 \, I_2\left(\frac{N}{a_n}\right)\frac{\log\log n}{\log n}
\right)\int_{\bar M}\phi\, d\bar\mu  \int_{\bar M}\psi\, d\bar\mu\\
\label{error}&+  
O\left(a_n^{-d}I_0\left(\frac{N}{2a_n}\right)
\left(\Vert\phi\Vert_{\infty}\Vert\omega_{-k(n)}^\infty(\psi,\cdot)\Vert_{L^1(\bar\mu)})+\Vert\psi\Vert_{\infty}\Vert\omega_{-k(n)}^{k(n)}(\phi,\cdot)\Vert_{L^1(\bar\mu)})\right)
+\frac {\Vert\phi\Vert_\infty\Vert\psi\Vert_\infty}{a_n^d\log n}\right)\, ,
\end{align}
uniformly in $N\in\mathbb Z^d$. In particular, if $\lim_{k\rightarrow +\infty}  \Vert \omega_{-k}^k(\phi,\cdot)
  + \omega_{-k}^{\infty}(\psi,\cdot))\Vert_{L^1(\bar\mu)}=0$,
then the error term in \eqref{error} is in $o(a_n^{-d})$, and if
$\int_{\bar M}(\omega_{-k(n)}^{k(n)}(\phi,\cdot)
  + \omega_{-k(n)}^{\infty}(\psi,\cdot))\, d\bar\mu=O((\log n)^{-1})$,
then the error in \eqref{error} is in $O\left(\frac 1{a_n^d\log n}\right)$.\\
\item[(b)]
Assume that $k(n)=O(a_n^{\frac 12-u})$ for some $u\in(0,1)$
and that there exists $p_2>2$ such that, for $h\in\{\phi,\psi\}$,
$\|\omega_{-k(n)}^{k(n)}(h,\cdot)\|_\infty
=O((\log n)^{-1})$,
and $\|\omega_{-k(n)}^{k(n)}(h,\cdot)\|_{L^1(\bar\mu)}=O((a_n\log n)^{-1})$
and
$\Vert  \omega_{-k(n)}^{k(n)}(h,\cdot)\Vert_{L^{p_2}(\bar\mu) }=o(k(n)^{-1}/\log n)\quad\mbox{and}\quad\sum_{j\ge k(n)}\Vert\omega_{-j}^{j}(h,\cdot)\Vert
_{L^{p_2}(\bar\mu)}=O((\log n)^{-1})$,
then
the numeric series
 $A_+(\phi):=\sum_{j\ge 0}
\mathbb E_{\bar\mu}[\kappa\circ \bar T^{j}.\phi]$  and $A_-(\psi):=\sum_{j\le -1}
\mathbb E_{\bar\mu}[\kappa\circ \bar T^{j}.\psi]$ converge absolutely
and $\int_{\bar M} \phi 1_{\{\kappa_n=N\}}\psi\circ \bar T^n\, d\bar\mu$ is equal to
\begin{align}\label{LLTfinal0}
&
\frac 1{a_n^d}
   \left(I_0\left(\frac{N}{a_n}\right)-\frac 12 I_2\left(\frac{N}{a_n}\right) \frac{\log\log n}{\log n} + O\left(\frac 1{\log n}\right)
     \right)\mathbb E_{\bar\mu}[\phi]\mathbb E_{\bar\mu}[\psi]\\
\nonumber&+i\frac {\mathbb E_{\bar\mu}[\psi]
A_+(\phi)
+\mathbb E_{\bar\mu}[\phi]
A_-(\psi)
}{a_n^{d+1}}
\left(
I_{1}\left(\frac {N}{a_n}\right)
-\frac {I_3\left(\frac {N}{a_n}\right)}2 \frac{\log\log n}{\log n}\right)
+O\left(\frac{a_n^{-d-1}}{\log n}\right)\nonumber\, .
\end{align}
\end{itemize}
\end{theo}
\begin{proof}
For the first assertion, we use the first part of Proposition~\ref{LLT1}
with $p>2$ (close to 2), $p_1=\infty$ ($q_1=1$), $p_0\in(1,4/3)$ ($p_0<p$) and $q_0>4$ so that $\min(1,\frac p{q_1},\frac p{p_0})=1$. 
Moreover $k/a_n
\ll (\log n)^{-1}$.
We assume  from now on, without loss of generality, that $\phi,\psi$ take their values in 
$\mathbb R$.
To simplify notations, we write $k=k(n)$.
We define $\phi^{(k)}$ and $\psi^{(k)}$ by
\begin{equation}\label{phikpsik}
\phi^{(k)}(\bar x):=\inf_{\xi_{-k}^k( x)} \phi_+-\sup_{\xi_{-k}^k( x)} \phi_-
\quad\mbox{and}\quad
\psi^{(k)}( \bar x):=\inf_{\xi_{-k}^\infty(x)} \psi_+ - \sup_{\xi_{-k}^\infty(x)} \psi_-
\, ,
\end{equation}
where $\phi_+=\max(\phi,0)$, $\psi_+=\max(\psi,0)$, $\phi_-=\max(-\phi,0)$, $\psi_-=\max(-\psi,0)$.
Observe that, for every $x\in\bar M$, 
$|\phi^{(k)}(x)|\le|\phi(x)|$ and $|\psi^{(k)}(x)|\le|\psi(x)|$,
\begin{equation}\label{0MA1}
\left|\phi^{(k)}( x)- \phi(x)\right|\le \omega_{-k}^k(\phi,x)\le 2\Vert \phi\Vert_\infty
\quad\mbox{and}\quad
\left|\psi^{(k)}( x)- \psi(x)\right|\le  \omega_{-k}^\infty(\psi,x)\le 2\Vert\psi\Vert_\infty.
\end{equation}
Since $\phi^{(k)}\circ \bar T^k$ and $\psi^{(k)}\circ \bar T^k$ are constant
on the stable curves, there exist $\tilde\phi^{(k)},\tilde\psi^{(k)}:\Delta\rightarrow\mathbb C$ such that $\tilde\phi^{(k)}\circ \mathfrak p_2=\phi^{(k)}\circ \bar T^k\circ\mathfrak p_1$,
$\tilde\psi^{(k)}\circ \mathfrak p_2=\psi^{(k)}\circ \bar T^k\circ\mathfrak p_1$.
Observe that $\Vert \tilde\psi^{(k)}\Vert_{\infty}\le
 \Vert \psi^{(k)}\Vert_\infty$.
Note that $\tilde\phi^{(k)}:\Delta\rightarrow\mathbb C$ 
is constant on balls of the form $\{y\in\Delta\, :\, s(x,y)\ge 2k\}$ for every $x\in\Delta$.
This will be useful to show that
\begin{equation}\label{PukPuphi}
\sup_u\Vert P_u^kP^k\tilde\phi^{(k)}\Vert_{\mathcal B_0}\ll\Vert\phi^{(k)}\Vert_\infty\, .
\end{equation}
To this end, due to \eqref{formulaP},
for every $x_1,x_2\in\Delta$ such that $s_0(x_1,x_2)\ge 1$
\begin{align*}
&\left |P_u^kP^k\tilde\phi^{(k)}(x_1)-P_u^kP^k\tilde\phi^{(k)}(x_2)\right|\\
&\le \sum_{(y_1,y_2)\, :\, s(y_1,y_2)\ge 2k+1,\ f^{2k}(y_i)=x_i}
     \left| e^{-\alpha_{2k}(y_1)+iu\hat\kappa_k(y_1)}\tilde\phi^{(k)}(y_1)-
       e^{-\alpha_{2k}(y_2)+iu\hat\kappa_k(y_2)}\tilde\phi^{(k)}(y_2) \right|\\
&= \sum_{(y_1,y_2)\, :\, s(y_1,y_2)\ge 2k+1,\ f^{2k}(y_i)=x_i}
     \left| \left(e^{-\alpha_{2k}(y_1)}-
       e^{-\alpha_{2k}(y_2)}\right)e^{iu\hat\kappa_k(y_1)}\tilde\phi^{(k)}(y_1) \right|\, ,
\end{align*}
where we used the notation $\alpha_{m}:=\sum_{k=0}^{m-1}\alpha\circ f^k$.
We end the proof of \eqref{PukPuphi} by noticing that 
\begin{align*}
\left|e^{-\alpha_{2k}(y_1)}-e^{-\alpha_{2k}(y_2)}\right|&\le
(e^{-\alpha_{2k}(y_1)}+e^{-\alpha_{2k}(y_2)})\left|\alpha_{2k}(y_1)-\alpha_{2k}(y_2)\right|\\
&\le (e^{-\alpha_{2k}(y_1)}+e^{-\alpha_{2k}(y_2)})\sum_{m=0}^{2k-1} C_\alpha \beta^{s(y_1,y_2)+1-m}\\
&\le (e^{-\alpha_{2k}(y_1)}+e^{-\alpha_{2k}(y_2)})C_\alpha \beta^{s(x_1,x_2)}(1-\beta)^{-1}\, .
\end{align*}
Applying the first item of Proposition
 \ref{LLT1} to the Banach space $\mathcal B_0$ and to the couples
$(h,g)=(\tilde\phi^{(k)},\tilde\psi^{(k)})$ with $q_1>2$, we obtain that
\begin{align}
\nonumber&\int_{\bar M} \phi^{(k)}1_{\{\kappa_n=N\}}\psi^{(k)} \circ \bar T^n \, d\bar\mu\\
\label{AAAA1}&=\frac 1{a_n^d}
   \left(I_0\left(\frac{N}{a_n}\right)-I_2\left(\frac{N}{a_n}\right) \frac{\log\log n}{2\, \log n } \right)\int_{\bar M} \phi^{(k)}\, d\bar\mu\, \int_{\bar M} \psi^{(k)}\, d\bar\mu
+O\left(\frac{ \Vert \psi\Vert_{
\infty
}\Vert \phi\Vert_\infty}{(a_n)^{d}\, \log n }\right)\, ,
\end{align}
where we used the fact that $|\psi^{(k)}|\le |\psi|$, $|\phi^{(k)}|\le |\phi|$ and 
$k/a_n\ll (\log n)^{-1}$.
Moreover
\begin{equation}\label{diffint}
\left|\int_{\bar M} \phi\, d\bar\mu\, \int_{\bar M} \psi\, d\bar\mu-
\int_{\bar M} \phi^{(k)}\, d\bar\mu\, \int_{\bar M} \psi^{(k)}\, d\bar\mu\right|
\le \Vert \psi\Vert_{\infty}\Vert\omega_{-k}^k(\phi,\cdot)\Vert_{L^1(\bar\mu)}+\Vert \phi\Vert_{\infty}\Vert\omega_{-k}^{\infty}(\psi,\cdot)\Vert_{L^1(\bar\mu)}\,
\end{equation}
and, setting $\phi^{(k,+)}(x):=\sup_{\xi_{-k}^k( x)} |\phi|$ and
$\psi^{(k,+)}( x):=\sup_{\xi_{-k}^\infty(x)} |\psi|$,
\begin{align}
&\int_{\bar M}1_{\{\kappa_n=N\}}|\phi.\psi\circ \bar T^n-\phi^{(k)}.\psi^{(k)}\circ \bar T^n|\, d\bar\mu\nonumber\\
&\le
\int_{\bar M}1_{\{\kappa_n=N\}}\left(|\phi|.|\psi-\psi^{(k)}|\circ \bar T^n+|\phi-\phi^{(k)}|.|\psi^{(k)}|\circ\bar T^n\right)\, d\bar\mu\nonumber\\
&\le
\int_{\bar M}1_{\{\kappa_n=N\}}\left(\phi^{(k,+)}.\omega_{-k}^{+\infty}(\psi,\cdot)\circ \bar T^n+\omega_{-k}^{k}(\phi,\cdot).\psi^{(k,+)}\circ \bar T^n\right)\, d\mu\label{0majoint}
\end{align}
which is in
\begin{align*}
&O\left(\ I_0\left(\frac{N}{2a_n}\right)a_n^{-d}\left(|\bar\mu(\phi^{(k,+)})| \bar\mu(\omega_{-k}^{\infty}(\psi,\cdot))+\bar\mu(\omega_{-k}^{k}(\phi,\cdot))\, |\bar\mu(\psi^{(k,+)})|\right)
+\frac{\log\log n}{(a_n)^d\log n}\Vert\phi\Vert_\infty\Vert\psi\Vert_\infty\right) \nonumber
\end{align*}
applying \eqref{AAAA1} with $\phi,\psi$ replaced respectively by $(\phi^{(k,+)},\omega_{-k}^{\infty}(\psi))$ and by $(\omega_{-k}^k(\phi),\psi^{(k,+)})$,
and using $|I_0(x)+I_2(x)|\ll I_0(x/2)$.
This ends the proof of \eqref{error}.

Assume now the assumptions of (b)
and let us prove \eqref{LLTfinal0}.
We replace \eqref{phikpsik} by 
\begin{equation}\label{phikpsik2}
\phi^{(k)}(\bar x):=\mathbb E_{\bar\mu}[\phi|\xi_{-k}^k( x)]
\quad\mbox{and}\quad
\psi^{(k)}(\bar x):=\mathbb E_{\bar\mu}[\phi|\xi_{-k}^k( x)]
\, .
\end{equation}
Observe that
$\mathbb E_{\bar\mu}[\phi^{(k)}]=\mathbb E_{\bar\mu}[\phi]$, 
$\mathbb E_{\bar\mu}[\psi^{(k)}]=\mathbb E_{\bar\mu}[\psi]$ and
\[
\mathbb E_{\mu_\Delta}[\hat\kappa_k P^k \tilde\phi^{(k)}]=\mathbb E_{\bar\mu}[\kappa_k\circ \bar T^k \phi^{(k)}\circ \bar T^k]=\mathbb E_{\bar\mu}[\kappa_k \phi^{(k)}]
=\mathbb E_{\bar\mu}[\kappa_k \phi]\, ,
\]
due to \eqref{phikpsik2} since $\kappa_k$ is $\xi_{-k}^k$-measurable
and similarly,
setting $\kappa_{-k}:=\sum_{m=-k}^{-1}\kappa\circ \bar T^{m}$,
\[
\mathbb E_{\mu_\Delta}[\hat\kappa_k \tilde\psi^{(k)}]=\mathbb E_{\bar\mu}[\kappa_k \psi^{(k)}\circ \bar T^k]=\mathbb E_{\bar\mu}[\kappa_k\circ \bar T^{-k} \psi^{(k)}]=\mathbb E_{\bar\mu}[\kappa_{-k} \psi^{(k)}]
=\mathbb E_{\bar\mu}[\kappa_{-k} \psi].
\]
To prove \eqref{LLTfinal0},
we apply the second item of Proposition \ref{LLT1} to the Banach space $\mathcal B_0$ and to the couples
$(h,g)=(\tilde\phi^{(k)},\tilde\psi^{(k)})$ with $p<2$ (close to 2), $q_0>4$ (large), $p_1=\infty$, $q_1=1$ so that 
 the condition $k(n)\le C a_n^{\frac{\tilde\gamma}{1+\tilde\gamma}}/(\log n)^{1/(1+\tilde\gamma)}$  of Proposition~\ref{LLT1}
holds. We obtain that
\begin{equation}
\label{BBB1}
\int_{\bar M} \phi^{(k)}1_{\{\kappa_n=N\}}\psi^{(k)} \circ \bar T^n \, d\bar\mu\\
=J_1+J_2+J_3+O\left(\frac{\Vert  \psi\Vert_{L^{q_0}(\bar\mu)}\Vert \phi\|_\infty}{a_n^{d+1}\log n}\right)
\end{equation}
\[
J_1:=\frac 1{a_n^d}
   \left(I_0\left(\frac{N}{a_n}\right)-\frac 12 I_2\left(\frac{N}{a_n}\right) \frac{\log\log n}{\log n} + O\left(\frac 1{\log n}\right)
     \right)
\mathbb E_{\bar\mu}[\phi]\mathbb E_{\bar\mu}[\psi]\, ,
\]
\begin{align*}
J_2&:=\frac {
C_k(\tilde\psi^{(k)},\tilde\phi^{(k)})}{(a_n)^{d+1}}\left(I_{1}\left(\frac {N}{a_n}\right)-\frac {I_3\left(\frac {N}{a_n}\right)}2 \frac{\log\log n}{\log n}\right)
+O(a_n^{-d-1}(\log n)^{-1})\, ,
\end{align*}
\[
J_3:=O\left(\frac {1 }{(2\pi )^d(a_n)^{d+1}}\int_{[-\beta a_n,\beta a_n]^d}\!\!\!\!\!\!\!\!\!\!\!\!\!\!\!\! |u|e^{-\frac{c_0\min(|u|^{2-\epsilon},|u|^{2+\epsilon})}2} |\mathbb E_{\mu_{\Delta}}[\tilde\psi^{(k)}\Pi'_0(P_{u/a_n}^kP^k\tilde\phi^{(k)}-P^{2k}\tilde\phi^{(k)})]|\, du\right)\, .
\] 
Note that $\phi=\phi_0+\bar\mu(\phi)$ and $\psi=\psi_0+\bar\mu(\psi)$, with $\phi_0=\phi-\bar\mu(\phi)$ and $\psi_0=\psi-\bar\mu(\psi)$.
We can resume the rest of the proof to estimating $J_2$ and $J_3$.
To study these two terms,
we shall exploit the expression $\Pi'_0$ obtained in Propositions~\ref{prop-expproj} and~\ref{prop-expproj-base}. 
For $J_3$ we exploit that
\begin{equation}
\label{eq-blabla2}
\Pi'_0
w=\mathbb E_{\bar\mu}[w]H+i\sum_{j\ge 0}
\mathbb E_{\mu_\Delta}[\hat\kappa\circ f^{j}w]1_\Delta\, ,
\end{equation}
with
$\displaystyle
H(x):=Q'_0(1_Y)\circ\pi_0(x)+i\sum_{m=0}^{\omega(x)-1}\hat\kappa\circ f^m\circ\pi_0(x)+\frac i{\mu_\Delta(Y)}\sum_{j\ge 0}\mathbb E_ {\mu_\Delta}[\hat\kappa 1_Y\circ f^{j+1}]$. 
Writing $t$ for $u/a_n$ in the expression of $J_3$ and using the above formula for $\Pi'_0$ we note that
\begin{align}
\label{eq-blabla1}
&\left|\mathbb E_{\mu_\Delta}\left[\tilde\psi^{(k)}
\Pi'_0(P_t^{k}P^k-P^{2k})\tilde\phi^{(k)}\right]\right|\\
\nonumber &=\left\vert\mathbb E_{\bar\mu}[(e^{it\kappa_k}-1)\phi^{(k)}]\mathbb E_{\mu_\Delta}\left[\tilde\psi^{(k)} H\right]+i\mathbb E_{\bar\mu}[\psi^{(k)}]\sum_{j\ge 0}
\nonumber \mathbb E_{
\mu_\Delta}[\hat\kappa P^j(P_t^kP^k-P^{2k})
\tilde\phi^{(k)}
]\right\vert\\
&\le |t| k\Vert\kappa\Vert_{L^1(\bar\mu)} \Vert \phi\Vert_\infty\Vert\psi\Vert_\infty \Vert H\Vert_{L^1(\bar\mu)}+\Vert\psi\Vert_\infty E_k
\, ,
\end{align}
\begin{align*}
E_k
&=\sum_{j= 0}^{
\log n-1}\left|\mathbb E_{\bar\mu}[\kappa\circ \bar T^{k+j}(e^{it\kappa_k}-1)\phi^{(k)}]\right|+\sum_{j\ge
\log n}
\Vert\hat\kappa\Vert_{L^{q_1}}\theta^j\Vert (P_t^kP^k-P^{2k})
\tilde\phi^{(k)}\Vert_{\mathcal B_0}\\
&
\ll \sum_{j= 0}^{
\log n-1} \Vert \kappa\circ \bar T^{k+j}(t\kappa_k)^{p-1}
\Vert_{L^1(\bar\mu)}   \Vert\phi\Vert_\infty+\sum_{j\ge 
\log n}
\theta^j\Vert\phi\Vert_\infty
\, ,
\end{align*}
\begin{align*}
E_k&\ll 
\log n
 \Vert \kappa\Vert_{L^{p}(\mu_\Delta)}\Vert t\kappa_k
\Vert^{p-1}_{L^{p}(\bar\mu)}   \Vert\phi\Vert_\infty+
\theta^{\log n}\Vert\phi\Vert_\infty\ll 
(|t k|^{p-1}\log n +\theta^{\log n})
\Vert\phi\Vert_\infty\, .
\end{align*}
This  together with equation~\eqref{eq-blabla1} and $k(n)\ll a_n^{\frac 12-u}$ implies that $J_3=O(a_n^{-d-1}/\log n)$.
 The rest of the proof is allocated to the study of  $J_2$.
We will 
use
\[
\mathbb E_{\mu_\Delta}\left[\tilde\psi^{(k)}
\Pi'_0P^{2k}\tilde\phi^{(k)}\right]=\mathbb E_{\bar\mu}[\phi^{(k)}]\mathbb E_{\mu_\Delta}\left[\tilde\psi^{(k)} H\right]+i\mathbb E_{\bar\mu}[\psi^{(k)}]\sum_{j\ge 0}
\mathbb E_{\bar\mu}[\kappa\circ \bar T^{k+j}\phi^{(k)}]\, ,
\]
obtained from~\eqref{eq-blabla2} with $w=\tilde\phi^{(k)}$, after multiplication with $\tilde\psi^{(k)}$ and integration. Thus

\begin{align}\label{intermediate}
J_2&=\frac {\mathfrak C_{k(n)}(\phi,\psi)}{(a_n)^{d+1}}
\left(I_{1}\left(\frac N{a_n}\right)-\frac {I_3\left(\frac N{a_n}\right)}2 \frac{\log\log n}{\log n}\right)
+O\left(a_n^{-d-1}/\log n\right)\, ,
\end{align}
with $\mathfrak C_k(\phi,\psi):=C_k(\tilde\psi^{(k)},\tilde\phi^{(k)})$, that is
\begin{align}
\label{eq:Ck}
\mathfrak C_k(\phi,\psi)&=\mathbb E_{\mu_{\Delta}}[\tilde \psi^{(k)}\Pi'_0P^{2k}\tilde\phi^{(k)}]+ i\mathbb E_{\bar\mu}[\kappa_{-k}   \psi^{(k)}]\mathbb E_{\mu_\Delta}[\phi^{(k)}]+i\mathbb E_{\bar\mu}[ \psi^{(k)}]  \mathbb E_{\bar\mu}[\kappa_k\phi^{(k)}]\\
&=
\mathbb E_{\bar\mu}[\phi]
\left(\mathbb E_{\mu_\Delta}\left[\tilde\psi^{(k)} H\right]+i\mathbb E_{\bar\mu}[\kappa_{-k}   \psi^{(k)}]\right)+i
\mathbb E_{\bar\mu}[\psi]
\sum_{j\ge 0}
\mathbb E_{\bar\mu}[\kappa\circ \bar T^{j}\phi^{(k)}]\, .\nonumber
\end{align}
Thus
\begin{align}\label{intermediate2}
&\int_{\bar M} \phi^{(k)}1_{\{\kappa_n=N\}}\psi^{(k)} \circ \bar T^n \, d\bar\mu\\
\nonumber&=\frac 1{a_n^d}
   \left(I_0\left(\frac{N}{a_n}\right)-\frac 12 I_2\left(\frac{N}{a_n}\right) \frac{\log\log n}{\log n} + O\left(\frac 1{\log n}\right)
     \right)\mathbb E_{\bar\mu}[\phi]\mathbb E_{\bar\mu}[\psi]\\
\nonumber&+ \frac {\mathfrak C_{k(n)}(\phi,\psi)}{(a_n)^{d+1}}
\left(I_{1}\left(\frac N{a_n}\right)-\frac {I_3\left(\frac N{a_n}\right)}2 \frac{\log\log n}{\log n}\right)  +O(a_n^{-d-1}/\log n)\, .
\end{align}
In particular, since $|\psi^{(k)}|\le\psi^{(k,+)}\le |\psi|+\omega_{-k}^k(\psi,\cdot)$ with $\psi^{(k,+)}(x):=\sum_{y\in\xi_{-k}^k(x)}|\psi(y)|$,
\begin{align*}\label{intermediate3}
&\left|\int_{\bar M} 1_{\{\kappa_n=N\}}(\phi.\psi \circ \bar T^n-\phi^{(k)}.\psi^{(k)}\circ\bar T^n) \, d\bar\mu\right|\\
&\le
\int_{\bar M} 1_{\{\kappa_n=N\}}\left(\left|\phi.\omega_{-k}^k(\psi,\cdot) \circ \bar T^n\right|+\left|\omega_{-k}^k(\phi,\cdot).\psi^{(k,+)}\circ\bar T^n\right|\right) \, d\bar\mu
\nonumber
\end{align*}
which is dominated, up to a multiplicative constant by
\begin{align*}
\nonumber&\ll\frac {\Vert\phi\Vert_\infty\Vert\omega_{-k}^k(\psi,\cdot)\Vert_{L^1(\bar\mu)}+\Vert\psi^{(k,+)}\Vert_\infty\Vert\omega_{-k}^k(\phi,\cdot)\Vert_{L^1(\bar\mu)}}{a_n^d} \\
\nonumber&+ \frac {\mathfrak C_{k(n)}(|\phi|,\omega_{-k}^k(\psi,\cdot))
+\mathfrak C_{k(n)}(\omega_{-k}^k(\phi,\cdot),|\psi^{(k,+)}|))
}{(a_n)^{d+1}}
 +O(a_n^{-d-1}/\log n)\\
&= O(a_n^{-d-1}/\log n)\, ,
\end{align*}
since for $(h,g)=(|\phi|,\omega_{-k}^k(\psi,\cdot))$ or $(h,g)=(\omega_{-k}^k(\phi,\cdot),\psi^{(k,+)})$,
\[
\mathfrak C_{k(n)}(h,g)\ll \| g\|_\infty \|h\|_\infty + k(n)(\|g\|_{L^{p_2}(\bar\mu)}
\|h\|_{L^1(\bar\mu)}+\|h\|_{L^{p_2}(\bar\mu)}
\|g\|_{L^1(\bar\mu)})\, ,
\]
(since $\kappa\in L^{p_2/(p_2-1)(\bar\mu)}$)
and using our assumptions.
Therefore
\begin{align}\label{intermediate3}
&\int_{\bar M} \phi 1_{\{\kappa_n=N\}}\psi \circ \bar T^n \, d\bar\mu\\
\nonumber&=\frac 1{a_n^d}
   \left(I_0\left(\frac{N}{a_n}\right)-\frac 12 I_2\left(\frac{N}{a_n}\right) \frac{\log\log n}{\log n} + O\left(\frac 1{\log n}\right)
     \right)\int_{\bar M} \phi\, d\bar\mu\, \int_{\bar M} \psi\, d\bar\mu\\
\nonumber&+ \frac {\mathfrak C_{k(n)}(\phi,\psi)}{(a_n)^{d+1}}
\left(I_{1}\left(\frac N{a_n}\right)-\frac {I_3\left(\frac N{a_n}\right)}2 \frac{\log\log n}{\log n}\right)  +O(a_n^{-d-1}/\log n)\, .
\end{align}
Note that $\phi=\phi_0+\bar\mu(\phi)$ and $\psi=\psi_0+\bar\mu(\psi)$, with $\phi_0=\phi-\bar\mu(\phi)$ and $\psi_0=\psi-\bar\mu(\psi)$. So
\begin{align}\label{decompdecor}
&\int_{\bar M} \phi 1_{\{\kappa_n=N\}}\psi \circ \bar T^n \, d\bar\mu\\
\nonumber&=\int_{\bar M} \phi_0 1_{\{\kappa_n=N\}}\psi\circ \bar T^n \, d\bar\mu
+\bar\mu(\phi)\int_{\bar M}  1_{\{\kappa_n=N\}}\psi_0 \circ \bar T^n \, d\bar\mu
+\bar\mu(\phi)\bar\mu(\psi)\bar\mu(\kappa_n=N)\, .
\end{align}
Applying \eqref{intermediate3} with $(1_{\bar M},1_{\bar M})$ instead of $(\phi,\psi)$ leads to
\begin{equation}\label{decor11}
\bar\mu(\kappa_n=N)=\frac 1{a_n^d}
   \left(I_0\left(\frac{N}{a_n}\right)-\frac 12 I_2\left(\frac{N}{a_n}\right) \frac{\log\log n}{\log n} + O\left(\frac 1{\log n}\right)
     \right)  +O(a_n^{-d-1}/\log n)\, ,
\end{equation}
since $\mathbb E_{\mu_\Delta}[\Pi'_01]=0$ (see Remark~\ref{rem:derivopi}).
Now let us study $\int_{\bar M} \phi_0 1_{\{\kappa_n=N\}}\psi\circ \bar T^n \, d\bar\mu$.
In what follows we show that $\mathfrak C_{k(n)}(\phi_0,\psi)$
converges as $n\rightarrow +\infty$ with rate in $O((\log n)^{-1})$. Note that
\begin{equation}\label{int0}
\mathfrak C_{k(n) }(
\phi_0
,\psi)=i\mathbb E_{\bar\mu}[\psi]\sum_{j\ge 0}
\mathbb E_{\bar\mu}[\kappa\circ \bar T^{j}\phi^{(k(n))}]\, .
\end{equation}
Let us prove the two following estimates 
\begin{equation}\label{limit}
\left\vert 
\sum_{j\ge 0}
\mathbb E_{\bar\mu}[\kappa\circ \bar T^{j}\phi^{(k(n))}]-
\sum_{j\ge 0}
\mathbb E_{\bar\mu}[\kappa\circ \bar T^{j}\phi]\right|=O((\log n)^{-1})\, ,
\end{equation}
\begin{equation}\label{finitesum}
\sum_{j\ge 0}\left|
\mathbb E_{\bar\mu}[\kappa\circ \bar T^{j}\phi]\right|<\infty\, .
\end{equation}
\begin{itemize}
\item \textbf{Proof of~\eqref{finitesum}.} First, for 
every integers $j,k_j\ge 0$, we have
\begin{align*}
\left|\mathbb E_{\bar\mu}[\kappa\circ \bar T^{j}\phi]\right|
&\le
|\mathbb E_{\bar\mu}[\kappa\circ \bar T^{j}\phi^{(k_j)}]|+
|\mathbb E_{\bar\mu}[\kappa\circ \bar T^{j}(\phi-\phi^{(k_j)})]|\\
&\le|\mathbb E_{\mu_\Delta}[\hat\kappa P^{j+k_j}(\tilde\phi^{(k_j)}-\mu_\Delta(\tilde\phi^{(k_j)})]|+\Vert \kappa\Vert_{L^{\frac {p_2}{p_2-1}}(\bar\mu)}
\Vert\omega_{-k_j}^{k_j}(\phi,\cdot)\Vert_{L^{p_2}(\bar\mu)}\, .
\end{align*}
Now using the fact that $\Vert P^{2k_j}\tilde\phi^{(k_j)}-\mu_\Delta(\tilde\phi^{(k_j)})\Vert_{\mathcal B_0}\ll\Vert\phi\Vert_\infty$ (which comes from \eqref{PukPuphi}) and using \eqref{spgap-Sz-bis} for $t=0$, we conclude that
\begin{equation}\label{majo}
 \left|\mathbb E_{\bar\mu}[\kappa\circ \bar T^{j}\phi]\right|\le C \Vert \hat\kappa\Vert_{L^{q_1
}(\mu_\Delta)}\theta^{j-k_j}\Vert\phi\Vert_\infty+\Vert \kappa\Vert_{L^{\frac {p_2}{p_2-1}}(\bar\mu)}\Vert\omega_{-k_j}^{k_j}(\phi,\cdot)\Vert_{L^{p_2}(\bar\mu)}\, .
\end{equation}
Taking $k_j=j/2$,
we obtain
\begin{equation}
\sum_{j\ge 0}  \left|\mathbb E_{\bar\mu}[\kappa\circ \bar T^{j}\phi]\right|\ll \sum_{j\ge 0} \left(\Vert \hat\kappa\Vert_{L^{\frac p{p-1}}}\theta^{ j/2}\Vert\phi\Vert_\infty+\Vert\omega_{-\lfloor j/2\rfloor}^{\lfloor j/2\rfloor}(\phi,\cdot)\Vert_{L^{p_2}(\bar\mu)}\right)<\infty\, .
\end{equation}
This ends the proof of \eqref{finitesum}.
\item 
 \textbf{Proof of~\eqref{limit}}. Recall that $\|\mathbb E_{\bar\mu}[\psi^{(k)}-\psi]\|
\le\Vert\omega_{-k}^{k}(\psi,\cdot)\Vert_{L^1(\bar\mu)}=O((\log n)^{-1})$.
Observe that, due to \eqref{majo} with  $k_j:=\lfloor j/2\rfloor$ and $k=k(n)$, 
\begin{align*}
&\sum_{j\ge 0}
\left|\mathbb E_{\bar\mu}[\kappa\circ \bar T^{j}(\phi^{(k)}-\phi)]\right|
=\sum_{j=0}^ {2k-1}\left|\mathbb E_{\bar\mu}[\kappa\circ \bar T^{j}(\phi^{(k)}-\phi)]\right|+\sum_{j\ge 2k} \left|\mathbb E_{\bar\mu}[\kappa\circ \bar T^{j}(\phi^{(k)}-\phi)]\right|\\
&\le
2k \Vert \kappa\Vert_{L^{\frac{p_2}{p_2-1}}(\bar\mu)}
\Vert  \omega_{-k}^k(\phi,\cdot)\Vert_{L^{p_2}(\bar\mu) }\\
&\ \ \ \ \ \ \ \ 
+\sum_{j\ge 2k}\left(C
 \Vert \hat\kappa\Vert_{L^{q_1
}(\mu_\Delta)}\theta^{j/2}2\Vert\phi\Vert_\infty+\Vert \kappa\Vert_{L^{\frac {p_2}{p_2-1}}(\bar\mu)}\Vert\omega_{-\lfloor j/2\rfloor}^{\lfloor j/2\rfloor}(\phi,\cdot)\Vert_{L^{p_2}(\bar\mu)}\right)\, ,
\end{align*}
which is $O((\log n)^{-1})$
due to our assumptions.
This ends the proof of~\eqref{limit}.
\end{itemize}
Applying \eqref{int0} combined with \eqref{limit} and \eqref{finitesum}, we obtain
\begin{align}\label{DDD1}
\mathfrak C_k(\phi_0,\psi)=i\mathbb E_{\bar\mu}[\psi]\sum_{j\ge 0}\mathbb E_{\bar\mu}[\kappa\circ \bar T^{j}\phi]
+O((\log n)^{-1})\, ,
\end{align}
and thus, due to \eqref{intermediate3},
\begin{equation}\label{phi0psi}
\int_{\bar M} \phi_0 1_{\{\kappa_n=N\}}\psi \circ \bar T^n \, d\bar\mu
= \frac {i\mathbb E_{\bar\mu}[\psi]\sum_{j\ge 0}\mathbb E_{\bar\mu}[\kappa\circ \bar T^{j}\phi]}{(a_n)^{d+1}}\tilde I_1\left(n,\frac N{a_n}\right)
  +O\left(\frac{a_n^{-d-1}}{\log n}\right)\, ,
\end{equation}
with
$\tilde I_1(n,x):=
I_{1}\left(x\right)-\frac {I_3\left(x\right)}2 \frac{\log\log n}{\log n}$.
Now let us prove that a similar formula holds for $(1,\psi_0)$, where $\psi_0=\psi-\bar\mu(\psi)$ thanks to time reversibility.
Let $\tau:\bar M\rightarrow\bar M$ be
given by $\tau(q,\vec v)=(q,2(\vec n_q,\vec v)\vec n_q-\vec v)$ where $\vec n_q$
is the unit normal vector to $\partial Q$ directed in $Q$. Observe that $\tau$ preserves $\bar\mu$ and satisfies the following relations:
$\tau\circ\tau=id$, $\tau\circ\bar T^n\circ \tau=\bar T^{-n}$ and $\kappa\circ\bar T^k\circ\tau:=-\kappa\circ \bar T^{-k-1}$.
In particular, $\kappa_n\circ \tau\circ \bar T^n=-\kappa_n$. 
Therefore
\begin{align*}
&\int_{\bar M}
 1_{\{\kappa_n=N\}}\psi_0\circ \bar T^n\, d\bar\mu
=\int_{\bar M}
1_{\{\kappa_n\circ \tau=N\}}\psi_0\circ \bar T^n\circ\tau\, d\bar\mu\\
&=\int_{\bar M} 
\tau 1_{\{\kappa_n\circ \tau=N\}}\psi_0\circ\tau\circ \bar T^{-n}\, d\bar\mu
=\int_{\bar M}
1_{\{\kappa_n\circ \tau\circ \bar T^n=N\}}\psi_0\circ\tau \, d\bar\mu
\\
&=\int_{\bar M}\psi_0\circ\tau1_{\{\kappa_n=-N\}}
 \, d\bar\mu\, .
\end{align*}
Observe that
$\omega_{-k}^k(\psi_0\circ\tau,x)=\omega_{-k}^k(\psi_0,\tau(x))$ and so,
the composition by $\tau$ preserves the $L^r$ norms of $\omega_{-k}^k$.
Thus, applying \eqref{phi0psi} to the couple $(\psi_0\circ \tau,1)$ instead of $(\phi_0,\psi)$, we obtain
\begin{align*}
&\int_{\bar M} 1_{\{\kappa_n=N\}}\psi_0 \circ \bar T^n \, d\bar\mu
= \frac {i\sum_{j\ge 0}\mathbb E_{\bar\mu}[\kappa\circ \bar T^{j}\psi\circ\tau]}{(a_n)^{d+1}}\tilde I_1\left(n,-\frac N{a_n}\right)
  +O\left(\frac{a_n^{-d-1}}{\log n}\right)\\
&= \frac {i\sum_{j\le -1}\mathbb E_{\bar\mu}[\kappa\circ \bar T^{j}\psi]}{(a_n)^{d+1}}
\tilde I_1\left(n,\frac N{a_n}\right)+O\left(\frac{a_n^{-d-1}}{\log n}\right)\, .
\end{align*}
since $\mathbb E_{\bar\mu}[\kappa\circ \bar T^{j}\psi\circ\tau]=\mathbb E_{\bar\mu}[-\kappa\circ \bar T^{-j-1}\psi]$, 
$-\tilde I_1(n,-x)=\tilde I_1(n,x)$.
The claimed estimate follows from
this last estimate combined with~\eqref{decompdecor}, \eqref{decor11} and
\eqref{phi0psi}.
\end{proof}

For all $N\in\mathbb Z^d$, we set $\phi_N:=\phi 1_{M_N}$.
Observe that $\omega_{-k}^\infty (h,\cdot)\le\omega_{-k}^k(h,\cdot)\le L_{h,\eta}\eta^k$. Therefore, Theorem~\ref{THM0} (resp. Theorem~\ref{THM1}) is a direct consequence of the previous (resp. following) theorem (for Theorem~\ref{THM1}
we also use the remark just after its statement).
\begin{theo}\label{LLT2}
Let $(k(n))_n$ be a sequence of integers diverging to $+\infty$ such that $k(n)=O( a_n/\log n)$.
Let $\phi,\psi: M\rightarrow \mathbb C$ be  two measurable functions.
\begin{itemize}
\item[(I)] If
$\sum_{N\in\mathbb Z^d}(\Vert \phi_N\Vert_\infty+\Vert \psi_N\Vert_\infty)<\infty$ and
$\lim_{k\rightarrow +\infty}
  \int_M(\omega_{-k}^k(\phi,\cdot)
  + \omega_{-k}^{\infty}(\psi,\cdot))\, d\mu=0$.
Then
\begin{equation}\label{MIXBILL}
\int_M \phi.\psi\circ T^n\, d\mu= \frac{I_0(0)}{a_n^d}\int_M\phi\, d\mu
      \int_M\psi\, d\mu+o(a_n^{-d})\, .
\end{equation}
\item[(II)] If moreover there exists
$\gamma\in(0,1)$ such that
$\sum_N |N|^\gamma(\Vert\phi_N\Vert_\infty+\Vert\psi_N\Vert_\infty)<\infty$,
and if
$\int_M(\omega_{-k(n)}^{k(n)}(\phi,\cdot)
  + \omega_{-k(n)}^{\infty}(\psi,\cdot))\, d\mu=O((\log n)^{-1})$,
then
\begin{equation}\label{MIXBILL2}
\int_M \phi.\psi\circ T^n\, d\mu= \frac {I_0\left(0\right)}{a_n^d}
   \left(1-
\frac{d\log\log n}{2\log n}\right)
\int_M\phi\, d\mu  \int_M\psi\, d\mu
+O\left(\frac 1{a_n^d\log n}\right)\, .
\end{equation}
\item[(III)]
If moreover $k(n)\ll a_n^{\frac 12-u}$ for some $u\in(0,1)$ and if 
there exists $\gamma\in(0,1)$ such that
$\sum_N|N|^{1+\gamma}(\Vert\phi_N\Vert_\infty+\Vert\psi_N\Vert_\infty)<\infty$,
and
$\int_M(\omega_{-k(n)}^{k(n)}(\phi,\cdot)
  + \omega_{-k(n)}^{k(n)}(\psi,\cdot))\, d\mu=O((a_n\log n)^{-1})$,
then
\begin{align*}\int_M \phi.\psi \circ T^n \, d\mu&=\frac {I_0\left(0\right)}{a_n^d}
   \left(1- \frac{d\log\log n +O(1)}{2\log n}
     \right)\int_{ M} \phi\, d\mu\, \int_{ M} \psi\, d\mu
+O\left(\frac 1{a_n^{d+1}\log n}\right)\, .
\end{align*}
\end{itemize}
\end{theo}
\begin{proof}
For every $x\in M$ we write $\bar x\in\bar M$ for the class of $x$ modulo $\mathbb Z^d$ for the position.
Let us write $\bar\phi_N(\bar x)=\phi_N(x)$, for every $x\in M_N$.
Observe that
\[
  \int_M \phi.\psi \circ T^n \, d\mu= 
      \sum_{N_1,N_2\in\mathbb Z^2}
      \int_{\bar M} \bar\phi_{N_1}\mathbf 1_{\{\kappa_n=N_2-N_1\}}
         \bar\psi_{N_2} \circ \bar T^n \, d\bar\mu\, .
\]
Now, 
setting again $\tilde I_0(n,x):=I_0(x)-\frac {I_{2}(x)}2\frac{\log\log n}{\log n}$
and applying \eqref{error}, we obtain
\begin{align}\label{EEE1}
&\int_M \phi.\psi \circ T^n \, d\mu=\sum_{N_1,N_2\in\mathbb Z^d} \frac 1{a_n^d}
\tilde I_0\left(n ,\frac{N_2-N_1}{a_n}\right)
\int_{M}\phi_{N_1}\, d\mu  \int_{M}\psi_{N_2}\, d\mu\\
\nonumber&+  
O\left(a_n^{-d}\sum_{N_1,N_2\in \mathbb Z^d}I_0\left(\frac{N_2-N_1}{2a_n}\right)
\Vert\phi_{N_1}\Vert_{\infty}\Vert\omega_{-k(n)}^\infty(\psi_{N_2},\cdot)\Vert_{L^1(\bar\mu)})\right)\\
\nonumber&+O\left(a_n^{-d}\sum_{N_1,N_2\in \mathbb Z^d}I_0\left(\frac{N_2-N_1}{2a_n}\right)\Vert\psi_{N_2}\Vert_{\infty}\Vert\omega_{-k(n)}^{k(n)}(\phi_{N_1},\cdot)\Vert_{L^1(\bar\mu)})\right)
+O\left(\frac {\Vert\phi_{N_1}\Vert_\infty\Vert\psi_{N_2}\Vert_\infty}{a_n^d\log n}\right)\, .
\end{align}
\eqref{MIXBILL} follows from \eqref{EEE1} and the Lebesgue dominated convergence theorem, since 
$I_0$ and $I_2$ are bounded and since $I_0$ is continuous at 0.

Assume now the assumptions of (II).
Since $|I_0(X)-I_0(0)|+|I_2(X)-I_2(0)|=O(X^\gamma)$,
replacing $I_0\left(\frac{N_2-N_1}{a_n}\right)-\frac 12 \, I_2\left(\frac{N_2-N_1}{a_n}\right)...$ by $I_0\left(0\right)-\frac 12 \, I_2\left(0\right)...$ in \eqref{EEE1} leads to an error term in $O(a_n^{-d-\gamma})$. Since $I_0$ is bounded and
to our assumptions,
 the first error term in 
\eqref{EEE1} is in $O(a_n^{-d}/\log n)$. This completes the proof of 
\eqref{MIXBILL2}.

Finally, we assume the assumptions of (III).
We start from \eqref{BBB1} for $(\bar\phi_{N_1},\bar\psi_{N_2},N_2-N_1)$ instead of $(\phi,\psi,N)$.
Our assumptions combined with $|I_{2j}(x)-I_{2j}(0)|\ll x^{1+\gamma}$ 
ensure that we can replace $I_0(N/a_n)$ and $I_2(N/a_n)$ in $J_1$
up to an error (after summation over $N_1,N_2\in\mathbb Z^2$) in
$O(a_n^{-d-1-\gamma})$. 
Our assumptions 
ensure that we can replace
$\phi^{(k)}$ and $\psi^{(k)}$ by respectively $\phi$ and $\psi$ in $J_1$ up to a total error (after summation)
in $O(a_n^{-d-1}/\log n)$.
The fact that 
$|I_{2j+1}(x)|\ll x^{\gamma}$, combined with
our conditions,
implies that the contribution (after summation) of
$J_2$ is in $O(a_n^{-d-1}/\log n)$.
We prove as in the previous result that
 the contribution (after summation) of
$J_3$ is in $O(a_n^{-d-1}/\log n)$. We conclude using $I_2(0)=dI_0(0)$.
\end{proof}
\begin{proof}[Proof of Theorem~\ref{THEOCob}]
We proceed as in the proof of Proposition~\ref{LLT00} with same notation
$I_{\ell_1,\ell_2,N}$ introduced just before Proposition~\ref{LLT00} with $A=\Sigma^2$ and
\[
J_{\ell_1,\ell_2,n,N,N_0}:=\int_{-[\beta a_n,\beta a_n]^d}e^{-iu\cdot N/a_n}(-iu\cdot N_0)^{\ell_1}(-Au\cdot u)^{\ell_2}\lambda_{u/a_n}^{n-2k}\, du
\]
Using again  \eqref{lambdan-k} and  \eqref{intlambda}, we observe that
\begin{equation}
\label{diffJ}
\forall\delta_0\in(0,1],\quad 
:=J_{\ell_1,\ell_2,n,0,N_0} + O\left(\frac{N_0^{\ell_1}N^{\delta_0}}{a_n^{\delta_0}}\right)\, ,
\end{equation}
\begin{align}\label{relationIJ-bill}
J_{\ell_1,\ell_2,n,0,N_0}&=I_{\ell_1,\ell_2,N_0}(0)
+
I_{\ell_1,\ell_2+1,N_0}(0) \frac{\log\log n}{2\log n}+O\left(\frac {N_0^{\ell_1}}{\log n}\right)\, .
\end{align}

For every $N\in\mathbb Z^d$, we
consider the functions $h_N,g_N:\bar M\rightarrow \mathbb C$
such that, for any $x\in M_N$, $h_N(\bar x):=\phi_N(x)$ and $g_N(\bar x):=\psi_N(x)$ 
where $\bar x\in\bar M$ is the class of $x$ modulo $\mathbb Z^2$ for the position.
We take $k=k(n):=\lfloor (d+m) (\log n)/|\log \eta|\rfloor$ with $m:=3/2$
in the setting of the second item (which implies that $\eta^{k(n)}\ll a_n^{-d-2m}$) and define the approximating functions
$\phi^{(k)},\psi^{(k)}:M\rightarrow\mathbb R$ ang $h_N^{(k)},g_N^{(k)}:\bar M\rightarrow\mathbb C$ given by
$\phi^{(k)}(x):=\inf_{\xi_{-k}^k( x)} \phi_+-\sup_{\xi_{-k}^k( x)} \phi_-$,
$\psi^{(k)}( x):=\inf_{\xi_{-k}^\infty(x)} \psi_+ - \sup_{\xi_{-k}^\infty(x)} \psi_-
$,
$
h_N^{(k)}(\bar x):=\inf_{\xi_{-k}^k( \bar x)} (h_N)_+-\sup_{\xi_{-k}^k( \bar x)} (h_N)_-$
$g_N^{(k)}(\bar x):=\inf_{\xi_{-k}^k( \bar x)} (g_N)_+-\sup_{\xi_{-k}^k( \bar x)} (g_N)_-$.
Due to our choice of $k$ and to our assumptions on $\phi,\psi$,
\begin{equation}\label{ESTI1}
\int_M \phi.\psi\circ(id-T)^m\circ T^n\, d\mu=\int_M \phi^{(k)}.\psi^{(k)}\circ(id-T)^m\circ T^n\, d\mu+O\left(\eta^{k(n)}\right)\, .
\end{equation}
The error term in the previous formula 
is in $O(a_n^{-d-2m})$.
So we focus on the following integral
\[
\int_M \phi^{(k)}.\psi^{(k)}\circ T^n\, d\mu =\sum_{N_1,N_2\in\mathbb Z^d}\mathbb E_{\bar\mu}[ h_{N_1}^{(k)}1_{\{\kappa=N_2-N_1\}}g_{N_2}^{(k)}\circ \bar T^n]\, .
\]
Recall, from the proof of Theorem~\ref{LLT20} that, due the definition of
 $h_N^{(k)},g_N^{(k)}$ there exist
$\tilde h_N^{(k)},\tilde g_N^{(k)}:\Delta\rightarrow\mathbb R$
such that $
h_N^{(k)}\circ T^k\circ\mathfrak p_1=\tilde h_N^{(k)}\circ\mathfrak p_2$,
$g_N^{(k)}\circ T^k\circ\mathfrak p_1=\tilde g_N^{(k)}\circ\mathfrak p_2$,
\begin{equation}
\label{controltilde}
\Vert \tilde g_N^{(k)}\Vert_\infty\le\Vert g_N\Vert_\infty\quad\mbox{and}\quad
\sup_{t}\Vert P_t^kP_t\tilde h_N^{(k)}\Vert_ {\mathcal B_0}\le\Vert h_N\Vert_\infty\, ,
\end{equation}
and thus, as seen at the begining of the proof of Proposition~\ref{LLT1},
\begin{align*}\nonumber
\int_M \phi^{(k)}.\psi^{(k)}\circ T^n\, d\mu&=\sum_{N_1,N_2\in\mathbb Z^d}
\mathbb E_{\mu_\Delta}\left[ \tilde h_{N_1}^{(k)}1_{\{\hat\kappa_n\circ f^k=N_2-N_1\}}\tilde g_{N_2}^{(k)}\circ \bar T^n\right]\\
&=\sum_{N_1,N_2\in\mathbb Z^d}\frac 1{(2\pi)^d}\int_{[-\pi,\pi]^d}
e^{-it\cdot(N_2-N_1)}
\mathbb E_{\mu_\Delta}\left[P_t^k( \tilde g_{N_2}^{(k)}P_t^{n-2k}P_t^kP^k\tilde h_{N_1}^{(k)})\right]\, .
\end{align*}
\begin{itemize}
\item Let us assume the assumptions of item (a) of the theorem. Then
\begin{align} 
&\int_M \phi^{(k)}.\psi^{(k)}\circ(I-T)^m\circ T^n\, d\mu=\sum_{j=0}^m\frac{m!}{j!(m-j)!}(-1)^j \int_M \phi^{(k)}.\psi^{(k)}\circ T^{n+j}\, d\mu\nonumber\\
&=  \sum_{N_1,N_2\in\mathbb Z^d}\frac 1{(2\pi)^d}\int_{[-\pi,\pi]^d}
e^{-it\cdot(N_2-N_1)}\sum_{j=0}^m\frac{m!}{j!(m-j)!}(-1)^j 
\mathbb E_{\mu_\Delta}\left[P_t^k (\tilde g_{N_2}^{(k)}P_t^{n+j-2k}P_t^kP^k\tilde h_{N_1}^{(k)})\right]
\nonumber\\
&=  \sum_{N_1,N_2\in\mathbb Z^d}\frac 1{(2\pi)^d}\int_{[-\pi,\pi]^d}
e^{-it\cdot(N_2-N_1)}
\mathbb E_{\mu_\Delta}\left[ P_t^k(\tilde g_{N_2}^{(k)}(I-P_t)^mP_t^{n-2k}P_t^kP^k\tilde h_{N_1}^{(k)})\right]\, . \nonumber
\end{align}
Now, as seen in the proof of the first item of Proposition~\ref{LLT00} combined with
\eqref{controltilde}
\begin{align}
\nonumber
&\int_M \phi^{(k)}.\psi^{(k)}\circ(I-T)^m\circ T^n\, d\mu =\frac 1{(2\pi)^da_n^d}\sum_{N_1,N_2\in\mathbb Z^d}\int_{[-\beta a_n,\beta a_n]^d} e^{-iu\cdot(N_2-N_1)/a_n}\lambda_{u/a_n}^{n-2k}(1-\lambda_{u/a_n})^m\\
&\ \ \ \times\mathbb E_{\mu_{\Delta}}[P_{u/a_n}^k(\tilde g^{(k)}_{N_2}\Pi_{u/a_n}P_{u/a_n}^kP_{u/a_n}\tilde h^{(k)}_{N_1})]\, du +O\left(\theta^n\Vert g_{N_2}\Vert_{\infty}\Vert h_{N_1}\Vert_{\infty}\right)\nonumber\\
&=-\frac {(\log n+\log\log n)^m}{2^ma_n^{d+2m}}\left(O((\log n)^{m-1})+\sum_{N_1,N_2\in\mathbb Z^d}
J_{0,m,n,0,0}
\mathbb E_{\mu_\Delta}[P_{u/a_n}^k\tilde g^{(k)}_{N_2}]\mathbb E_{\mu_\Delta}[P_{u/a_n}^kP_{u/a_n}\tilde h^{(k)}_{N_1}]\right)\, ,\label{ESTI3}
\end{align}
where we used
\eqref{lambda}, \eqref{bornelambda}, \eqref{diffJ}, \eqref{Pi} and
$\log(|a_n/u|)=\log a_n+O(\log|u|)$ exactly as we obtained \eqref{cobfinal} in the proof of Proposition \ref{LLT00}.
Moreover
\begin{align}
\mathbb E_{\mu_\Delta}[P_{u/a_n}^kP_{u/a_n}\tilde h^{(k)}_{N_1}]
&=\mathbb E_{\mu_\Delta}[\tilde h^{(k)}_{N_1}]
+O\left(    \frac{k}{a_n}\Vert\kappa\Vert_{L^1(\bar\mu)} \Vert h_{N_1}\Vert_\infty\right)\nonumber\\
&=\mathbb E_{\bar\mu}[ h_{N_1}]
+O\left(    \frac{k}{a_n}\Vert\kappa\Vert_{L^1(\bar\mu)} \Vert h_{N_1}\Vert_{(\eta)}\right)\, ,\label{errorEsph}
\end{align}
\begin{equation}\label{errorEspg}
\mbox{and}\quad
\mathbb E_{\mu_\Delta}[P_{u/a_n}^k\tilde g^{(k)}_{N_2}]
=\mathbb E_{\bar\mu}[ g_{N_2}]
+O\left(    \frac{k}{a_n}\Vert\kappa\Vert_{L^1(\bar\mu)} \Vert g_{N_2}\Vert_{(\eta)}\right)\, ,
\end{equation}
Combining this with \eqref{ESTI1}, \eqref{ESTI3}, with our summability assumptions and with \eqref{relationIJ-bill}, up to an error in $O(a_n^{-d-2m}(\log n)^{m-1})$,
we obtain a dominating term in
\[
-\frac{(\log(n\log n))^m}{2^m(n\log n)^{\frac d2+m}}\left(I_{0,m,0}(0)+I_{0,m+1,0}(0)\frac{\log\log n}{2\log n}\right)\int_{M}\phi\, d\mu\int_M\psi\, d\mu\, .
\]
But, due to~\eqref{formulaI}, $I_{0,k,0}=
 \frac{\Delta^k\Phi(0)}{\sqrt{\det \Sigma^2}}$ and so
\begin{align*}
I_{0,k,0}
&=\sum_{i_1,...,i_k=1}^d\frac{\partial^{2k}}{\partial^2 x_{i_1}\cdots\partial^2 x_{i_k}}\frac{\Phi}{\sqrt{\det \Sigma^2}}(0)
=\sum_{k_1+\cdots+k_d=k}^d\frac{k!}{k_1!...k_d!}\frac{\partial^{2k}}{\partial^{2k_1} x_{1}\cdots\partial^{2k_d} x_{d}}\frac{\Phi}{\sqrt{\det \Sigma^2}}(0)\\
&=\sum_{k_1+\cdots+k_d=k}^d\frac{k!\Phi(0)}{k_1!...k_d!\sqrt{\det \Sigma^2}}\prod_{j=1}^d(-1)^{k_j}\mathbb E[Z_j^{2k_j}]
=\frac{(-1)^k\Phi(0)\mathbb E[(Z_1^2+...+Z_d^2)^{k}]}{\sqrt{\det \Sigma^2}}\\
&=(-1)^k\Phi_{\Sigma^2}(0)d(d+2)...(d+2k-2)\, ,
\end{align*} 
where $Z_j$ are independent standard gaussian random variables 
since $\frac{\partial^{2k}}{\partial^{2k} x_j}\Phi(x)=P_{2k}(x_j)\Phi(x)$,
with $P_{2k}$ is a polynomial such that $P_{2k}(0)=\frac{\partial^{2k}}{\partial^{2k} x_j}\Phi(0)=\frac{(-1)^k(2k)!}{2^k\, k!}=(-1)^k\mathbb E[Z_j^{2k}]$(we use also the well-known moments of the chi-squared distribution of $Z_1^2+...+Z_d^2$).
So $I_{0,m,0}(0)+I_{0,m+1,0}(0)\frac{\log\log n}{2\log n}=
I_{0,m,0}(0)\left(1-\frac{(d+2m)\log\log n}{2\log n}\right)$ and we conclude by the comments after the statement of Theorem~\ref{THEOCob}.
\item Let us prove item (b):
\begin{align*}
&\int_M \phi.\psi\circ T^n\, d\mu
=\int_M \phi^{(k)}.\psi^{(k)}\circ T^n\, d\mu+O(\eta^k)\\
&=\sum_{N_2\in\mathbb Z^d}\mathbb E_{\mu_{\Delta}}[\tilde h_{0}^k( 1_{\{\hat \kappa_n\circ f^k=N_2-N\}}+ 1_{\{\hat \kappa_n\circ f^k=N_2+N\}}-2\times 1_{\{\hat \kappa_n\circ f^k=N\}})\tilde g_{N_2}^{(k)}\circ f^n]+O(\eta^k)\\
&=\sum_{N_2\in\mathbb Z^d}\frac 1{(2\pi)^d}\int_{[-\pi,\pi]^d}e^{it\cdot N_2}(e^{-it\cdot N}+e^{it\cdot N}-2)\mathbb E_{\mu_{\Delta}}[P_t^k(\tilde g_{N_2}^{(k)}P_t^{n-2k}P_t^kP^k\tilde h_0^{(k)})]\, dt\, ,
\end{align*}
\begin{align*}
&\int_M \phi.\psi\circ T^n\, d\mu=O(\theta^n\sum_{N_2\in\mathbb Z^d}
\Vert g_{N_2}\Vert_{L^{q_0}(\mu_\Delta)}\Vert h_0\Vert_{\infty})\\
&+\sum_{N_2\in\mathbb Z^d}
\frac 1{(2\pi)^d}\int_{[-\beta ,\beta]^d}e^{it\cdot N_2}(e^{-it\cdot N}+e^{it\cdot N}-2)\lambda_{t}^{n-2k}\mathbb E_{\mu_{\Delta}}[P_{t}^k(\tilde g_{N_2}^{(k)}\Pi_{t}P_{t}^kP^k\tilde h_0^{(k)})]\, dt\, .
\end{align*}
Now using the change of variable $t=u/a_n$, the expansion of the exponential and
expansion \eqref{Pi} of $\Pi$ with $\gamma=1$, we obtain that $\int_M \phi.\psi\circ T^n\, d\mu$ is equal to
\begin{align*}
&\sum_{N_2\in\mathbb Z^d}
\frac 1{(2\pi)^da_n^{d+2}}\int_{[-\beta a_n,\beta a_n]^d}e^{iu\cdot N_2/a_n}(iu\cdot N)^2\lambda_{u/a_n}^{n-2k}\mathbb E_{\mu_{\Delta}}[P_{u/a_n}^k\tilde g_{N_2}^{(k)}]\mathbb E_{\mu_\Delta}[P_{u/a_n}^kP^k\tilde h_0^{(k)}]\, du\\
&+O\left(a_n^{-d-3}\sum_{N_2\in\mathbb Z^d}
\Vert g_{N_2}\Vert_{L^{q_0}(\mu_\Delta)}\Vert h_0\Vert_{\infty}\right)\, ,
\end{align*}
\begin{align*}
&\int_M \phi.\psi\circ T^n\, d\mu=O\left((\log n)a_n^{-d-3}\sum_{N_2\in\mathbb Z^d}
\Vert g_{N_2}\Vert_{L^{q_0}(\mu_\Delta)}\Vert h_0\Vert_{\infty}\right)\\
&+\sum_{N_2\in\mathbb Z^d}
\frac 1{(2\pi)^da_n^{d+2}}\int_{[-\beta a_n,\beta a_n]^d}e^{iu\cdot N_2/a_n}(iu\cdot N)^2\lambda_{u/a_n}^{n-2k}\mathbb E_{\mu_{\Delta}}[\tilde g_{N_2}^{(k)}]\mathbb E_{\mu_\Delta}[\tilde h_0^{(k)}]\, du\, ,
\end{align*}
due to \eqref{errorEsph} and \eqref{errorEspg}.
Thus
\begin{align*}
\int_M \phi.\psi\circ T^n\, d\mu&=-
\frac {1}{a_n^{d+2}}\sum_{N_2\in\mathbb Z^d}\mathbb E_{\mu_{\Delta}}[\tilde g_{N_2}^{(k)}]\mathbb E_{\mu_\Delta}[\tilde h_0^{(k)}] J_{2,0,n,N_2,N}
+O\left((\log n)a_n^{-d-3}\right)\\
&=-
\frac {1}{a_n^{d+2}}J_{2,0,n,0,N}\sum_{N_2\in\mathbb Z^d}\mathbb E_{\mu_{\Delta}}[\tilde g_{N_2}^{(k)}]\mathbb E_{\mu_\Delta}[\tilde h_0^{(k)}] 
+O\left((\log n)a_n^{-d-3}+a_n^{-d-2-\delta}\right)\, ,
\end{align*}
\begin{align*}
\int_M \phi.\psi\circ T^n\, d\mu=&-
\frac {1}{a_n^{d+2}}J_{2,0,n,0,N}\sum_{N_2\in\mathbb Z^d}\mathbb E_{\bar\mu}[g_{N_2}]\mathbb E_{\bar\mu}[h_0] 
+O\left((\log n)a_n^{-d-3}+a_n^{-d-2-\delta}\right)\\
&=-
\frac {1}{a_n^{d+2}}J_{2,0,n,0,N}\int_M \psi\, d\mu \mathbb E_{\bar\mu}[h_0] 
+O\left((\log n)a_n^{-d-3}+a_n^{-d-2-\delta}\right)\, ,
\end{align*}
using \eqref{diffJ} and then our definition of $k(n)$.
We conclude 
by \eqref{relationIJ-bill} and \eqref{I20N0}.
\end{itemize}
\end{proof}

\appendix

\section{Tail probability of the return time to the initial cell}
This appendix is devoted to the proof of Theorem~\ref{THMreturntime} (without assuming \eqref{H0}).
Since $\tau_0$ is constant along stable curves, there exists
$\hat\tau_0:\Delta\rightarrow \mathbb N$ such that $\tau_0\circ \mathfrak p_1=\hat\tau_0\circ\mathfrak p_2$.
We use the classical Dvoretzky and Erd\"os argument \cite{DE} combined with the estimates provided by our Proposition~\ref{PROP1} and~\eqref{LLT0} (we do not need Proposition~\ref{prop-lambda} here).
Considering the last visit time $n$ to the 0-cell before time $N$, we observe that
\[
1=\sum_{n=0}^N\bar\mu\left(\kappa_n=0,\ \tau_0\circ \bar T^n>N-n\right)
=\sum_{n=0}^N\mathbb E_{\mu_{\Delta}}\left[\mathbf 1_{\{\hat\tau_0>N-n\}}P^n(\mathbf 1_{\{\hat \kappa_n=0\}})\right]\, .
\]
Moreover, it follows from \eqref{spgap-Sz}, \eqref{spgap-Sz-bis}, \eqref{spgap-Sz-bisbis} and  Proposition~\ref{PROP1} that 
\begin{align*}
&P^n(\mathbf 1_{\{\hat \kappa_n=0\}})-\bar\mu(\kappa_n=0)=\frac 1{(2\pi)^d}\int_{[-\pi,\pi]^d}(Id-\Pi_0)P_t^n(\mathbf 1)\, dt\\
&=\frac 1{(2\pi)^d}\int_{[-\beta,\beta]^d}\lambda_t^n(Id-\Pi_0)\Pi_t(\mathbf 1)\, dt+O(\theta^n)\\
&=
O\left(\theta^n+\int_{[-\beta,\beta]^d}e^{-na|t|^2\log(|t|^{-1})}|t|^\gamma\, dt\right)=
O\left(\theta^n+(n\log n)^{-\frac {d+\gamma}2}\right)
\end{align*}
in $L^{p'}(\mu_{\bar\Delta})$ with $p'>1$ and $\gamma>1$ as in Proposition~\ref{PROP1}, using $\Pi_0(\Pi'_0(\mathbf 1))=0$.
Therefore
\[1=
\sum_{n=0}^N\left[\bar\mu(\tau_0>N-n)\bar\mu(\kappa_n=0)
+\eps_{n,N}
\right]\, ,
\]
with $\left|\eps_{n,N}\right|=O\left(\bar\mu(\tau_0>N-n)^{\frac 1{q'}}(\theta^n+(n\log n)^{-\frac {d+\gamma}2})\right)$ uniformly in $(n,N)$
 (with $\frac 1{q'}+\frac 1{p'}=1$). 
Since $(M,\mu,T)$ is recurrent, $\lim_{N\rightarrow +\infty}\bar\mu(\tau_0>N)=0$, so
$\lim_{N\rightarrow +\infty}\sum_{n=0}^N\varepsilon_{n,N}=0$ and
\begin{equation}\label{centralEQ}
1=\lim_{N\rightarrow +\infty}
\sum_{n=0}^N\bar\mu(\tau_0>N-n)\bar\mu(\kappa_n=0)\, .
\end{equation}
In particular $1\ge \limsup_{N\rightarrow +\infty}\bar\mu(\tau_0>N)
\sum_{n=0}^N\bar\mu(\kappa_n=0)$.
It follows from $\bar\mu(\kappa_n=0)\sim\frac{\Phi_{\Sigma^2}(0)}{(n\log n)^{\frac d2}}$
(\cite{SV07}) that
$\sum_{n=0}^N\bar\mu(\hat\kappa_n=0)\sim\Phi_{\Sigma^2}(0)\mathfrak e_N$
with $\mathfrak e_N=\log\log N$ if $d=2$ and with
$\mathfrak e_N=\sqrt{\frac N{\log N}}$ if $d=1$.
Therefore we obtain the following upper bound:
\begin{equation}\label{limsupreturntime}
\limsup_{N\rightarrow +\infty}\Phi_{\Sigma^2}(0)\mathfrak e_N \bar\mu(\tau_0>N) \le 1\, .
\end{equation}
\begin{itemize}
\item \underline{Assume $d=2$}.  It remains to prove that the lower bound coincide with the above upperbound. To this end, starting from \eqref{centralEQ} observe that, for $0\le m_N\le N$,
\begin{align*}
1&\le \liminf_{N\rightarrow +\infty} \left(\sum_{n=0}^{m_N-1}\bar\mu(\tau_0>N-n)\bar\mu(\kappa_n=0)+
\sum_{n=m_N}^N
\bar\mu(\kappa_n=0)\right)\\
&\le \liminf_{N\rightarrow +\infty}\left(\bar\mu(\tau_0>N-m_N)\sum_{n=0}^{m_N-1}\bar\mu(\kappa_n=0)+\sum_{n=m_N}^N\bar\mu(\kappa_n=0)\right)\, .
\end{align*}
applying this with $N=N'\lfloor\log N'\rfloor$ and $m_N=N'\left(\lfloor\log N'\rfloor-1\right)$, we obtain
\begin{align*}
1
&\le \liminf_{N\rightarrow +\infty}\bar\mu(\tau_0>N')\Phi_{\Sigma^2}(0)\log\log (N')+O\left((\log N')^{-2}
\right)\, ,
\end{align*}
since $\sum_{n=N'(\lfloor\log N'\rfloor-1)}^{N'\lfloor\log N'\rfloor}\bar\mu(\kappa_n=0)=O\left(\log\frac{\log(N'\lfloor \log N'\rfloor)+\log(1-\frac 1{\lfloor \log N'\rfloor}) }{\log(N'\lfloor \log N'\rfloor)}\right)=O\left((\log N')^{-2}
\right)$ and  $\log\log m_N\sim\log \log N'$.
Combining this with \eqref{limsupreturntime}, we finally obtain \eqref{returntime}.
\item \underline{Assume $d=1$}.
Upper bound \eqref{limsupreturntime}
ensures that the sequence of non increasing functions $((\mathfrak e_N\bar\mu(\tau_0 >xN))_{x\in(0,\infty)})_{N\ge 1}$ admits limit points for the pointwise convergence except at discontinuity points of the limit. Consider a subsequence indexed by $(N_k)_k$ converging to a function $\psi$.
It follows from \eqref{centralEQ} that, for all $y\in(0,\infty)$,
\begin{equation}\label{lim1}
 \lim_{N\rightarrow +\infty}\int_{-1}^{\lfloor Ny\rfloor}\bar\mu(\tau_0>\lfloor Ny\rfloor-x)\bar\mu(\kappa_{\lceil x\rceil}=0)\, dx =1\, .
\end{equation}
Note that $\int_{[-1,\eps N]\cup [\lfloor Ny\rfloor-N\eps,\lfloor Ny\rfloor]}
\bar\mu(\tau_0>\lfloor Ny\rfloor-x)\bar\mu(\kappa_{\lceil x\rceil}=0)\, dx$ is less than
\[
\sqrt{\frac{\log(N/2)} {N/2}} c'\mathfrak e_{\eps N}+ \sum_{k=1}^{N\eps}\sqrt{\frac{\log(k)}k}\frac{c'}{\sqrt{(N/2)\log(N/2)}}\le c '' \sqrt{\frac{\eps N\log( N)}{N\log (\eps N)}}
\]
which, combined with \eqref{lim1} leads to
\[
\limsup_{N\rightarrow +\infty}\left|\int_{\eps N}^{\lfloor Ny\rfloor-N\eps}\bar\mu(\tau_0>\lfloor Ny\rfloor-x)\bar\mu(\kappa_{\lceil x\rceil}=0)\, dx-1\right|=O(\sqrt{\eps})\, .
\]
But, for every $\varepsilon>0$, $\int_{\eps N}^{\lfloor Ny\rfloor-N\eps}\bar\mu(\tau_0>\lfloor Ny\rfloor-x)\bar\mu(\kappa_{\lceil x\rceil}=0)\, dx$ is equal to
\begin{align*}
&\int_{
\eps}^{(\lfloor Ny\rfloor-N\eps)/N}\mathfrak e_N\bar\mu(\tau_0>\lfloor Ny\rfloor-Nu)(\sqrt{N\log N}\bar\mu(\kappa_{\lceil Nu\rceil}=0))\, du\\
&\rightarrow \int_{\eps}^{y-\eps}\psi(y-u)\frac{\Phi_{\Sigma^2}(0)}{\sqrt{u}}\, du\, ,\quad
\mbox{as }N=N_k\rightarrow +\infty\, ,
\end{align*}
due to the dominated convergence theorem since \eqref{limsupreturntime}
and \eqref{LLT0} ensure that
\begin{align*}
N\bar\mu(\tau_0> &\lfloor Ny\rfloor-Nu)\bar\mu(\kappa_{\lceil Nu\rceil}=0)\le cN\frac {1}{\mathfrak e_{N_\varepsilon}\sqrt{ N\eps\log( N\eps)}}=O(\varepsilon^{-1})\, .
\end{align*}
From which we conclude that $\quad \int_{0}^y\psi(y-u)\frac{\Phi_{\Sigma^2}(0)}{\sqrt{u}}\, du=1$ for any $y\in(0,\infty)$.
Observe that $\psi_0(z):=\frac 1{a\sqrt{z}}$ with $a=\Phi_{\Sigma^2}(0)\int_0^1\frac {dt}{\sqrt{t(1-t)}}$
is a solution of $\int_{0}^y\psi(y-u)\frac{\Phi_{\Sigma^2}(0)}{\sqrt{u}}\, du=1$.
Recall that $\int_0^1\frac {dt}{\sqrt{t(1-t)}}
=\pi$ (using 
for example the Euler Beta function), so $a=\pi\Phi_{\Sigma^2}(0)=\sqrt{\frac \pi{2\Sigma^2}}$.
Thus, for all $y\in(0,\infty)$, $\int_{0}^y(\psi-\psi_0)(y-u)\frac{\Phi_{\Sigma^2}(0)}{\sqrt{u}}\, du=0$.
Moreover, due to \eqref{limsupreturntime}, $\psi(x)\le C/\sqrt{x}$ and thus $\psi-\psi_0$ is integrable. We conclude by the Titchmarsh's Convolution Theorem (see e.g. \cite{Doos}) that $\psi-\psi_0\equiv 0$. Thus $\psi_0$ is the unique limit point and so we have proved \eqref{returntimedim1}.
\end{itemize}

\section{Justifying equation~\eqref{RnDA}}
\label{sec-RnDA}
Let $v:\Delta\rightarrow\mathbb R$
satisfying the assumptions 
of
Lemma~\ref {lem-Rn} and $w\in\mathcal B_1$.
Using the pointwise formula $R (\hat\kappa_\sigma v w)(y)=\sum_{a\in\mathcal{A}} e^{\chi (y_a)} w(y_a) (v\hat\kappa_\sigma)(y_a)$
(and in particular~\eqref{eq:GM}), the arguments used in the proof of ~\cite[Proposition 12.1]{MelTer17} show that
the $\|\cdot \|_{\cB_0}$ norm of $R (v\hat\kappa_\sigma\cdot)$ is bounded by
$\|v\hat\kappa_\sigma\|_{L^1(\mu_Y)}$.
We recall the main inequalities, which in turn will help us justify equations~\eqref{RnDA}.
An important assumption  used throughout ~\cite[Proof of Proposition 12.1]{MelTer17}
is ~\cite[Assumption (A1)]{MelTer17}. In our case,  $v\hat\kappa_\sigma$ is constant on the partition elements $a\in\mathcal{Y}$ and thus, it automatically satisfies 
~\cite[Assumption (A1)]{MelTer17}. In our case, ~\cite[Assumption (A1)]{MelTer17} translates into
$|\sup_a (v\hat\kappa_\sigma)-\inf_a (v\hat\kappa_\sigma)|\le C \inf_a (v\hat\kappa)_\sigma$, for all $a\in\mathcal{Y}$ and some $C>0$.
This allows for a direct application of the arguments in ~\cite[Proof of Proposition 12.1]{MelTer17} .
By the argument~\cite[Proof of Proposition 12.1 a)]{MelTer17} with $\varphi\in\{v\hat\kappa_\sigma,v1_Y\}$, we have that for some $C>0$,
\[
\|R (\varphi w)\|_\infty\le C \|w\|_\infty \|v\hat\kappa_\sigma\|_{L^1(\mu_Y)}.
\]
since $\varphi$ is constant on the partition elements $a\in\mathcal{Y}$.
Recall that $|\cdot|_{\cB_0}$ is the seminorm used in defining the norm $\|\cdot\|_{\cB_0}$. A simplified version of~\cite[Proof of Proposition 12.1 b)]{MelTer17}
(which can deal the $p$- derivative, for $p\in (1,2)$,  in $t$ of $R(e^{it\hat\kappa_\sigma}v)$) with $\varphi\in\{v1_Y,v\hat\kappa_\sigma, 1_{\{\sigma=n\}} v\hat\kappa_\sigma, 1_{\{\sigma=n\}}(e^{it \hat\kappa_\sigma}-1-it\hat\kappa_\sigma)v\}$
ensures that
\[
\|R (\varphi w)\|_{\cB_0}
\le C \|w\|_{\cB_0}  \|\varphi\|_{L^1(\mu_Y)}\, ,
\]
since the function $\varphi$ is  constant on the partition elements $a\in\mathcal{A}$.
In particular, $\|R (1_{\{\sigma=n\}}  v)w\|_{\cB_0}\le C\|w\|_{\cB_0} \|1_{\{\sigma=n\}} 1_Y v\|_{L^1(\mu_Y)}$.

\section{On  the constant $c_0(v,w)$ }
\label{sec-convsum}
We show that the sums in the expression of  $c_0(v,w)$ 
in 
Lemma~\ref{lem-eigf2} are absolutely convergent.
Since we do not know if $\hat\kappa$ is in $\mathcal B$, our strategy is to use $1_{(a,l)}$ and exploit that $\hat\kappa$ is constant on $(a,l)
$. The argument below is delicate since the $\mathcal B$-norm of $1_{(a,l)}$ increases exponentially fast in $\sigma(a)$.

\begin{lem}
\label{lemma-Ppower}
$\sup_{a\in Y,\ l\in\{0,...,\sigma(a)-1\}}\|P_0^{\sigma(a,l)}(\cdot 1_{(a,l)})\|_{\mathcal L(\mathcal B_0\rightarrow \cB)}<\infty
$.
\end{lem}
\begin{proof}
Let $y\in Y\times\{ \ell_0\}$ with $\ell_0\ne 0$ and $w\in\cB_0$. We have $P^{\sigma(a,l)}(w1_{(a,l)})(y)=0$ and so $P^{\sigma(a,l)}(w1_{(a,l)}-\mu_\Delta(w1_{(a,l)}))(y)=-\mu_\Delta(w1_{(a,l)})$. 
Let $y\in Y\times\{0\}$, we have $P^{\sigma(a,l)}(w1_{(a,l)})(y)=\chi(y_{a})w(f^ly_a)$,
where $y_{a}$ is the unique element of $a\cap f^{-\sigma(a)}(\{y\})=a\cap F^{-1}(\{y\})$.
Thus,
\begin{align*}
&\sup_{x,y\in Y\times\{0\}}\frac{P^{\sigma(a,l)}(w1_{(a,l)})(x)-P^{\sigma(a,l)}(w1_{(a,l)})(y)}{\beta^{s_0(x,y)}}\\
&\le \sup_{x,y\in Y} \frac{e^{-\alpha_{\sigma(a,l)}(f^l(x_a))}-e^{-\alpha_{\sigma(a,l)}(f^l(y_a))}
}{\beta^{s_0(x,y)}}\|w1_{(a,l)}\|_\infty
+ \sup_{x,y\in Y} \frac{w(f^l(x_a))-w(f^l(y_a))}{\beta^{s_0(x,y)}}\sup_{a} \chi\, \\
&\le \|w\|_{\cB_0}2\sup_{x\in Y} e^{-\alpha_{\sigma_a}(f^l(x_a))}
\left(1+\sum_{k=0}^{\sigma(a,l)-1}\frac{|\alpha(f^{l+k}(x_a))-\alpha(f^{l+k}(y_a))|}{\beta^{s_0(x,y)}}\right)
\le C \|w\|_{\cB_0}\mu_\Delta(a)\, ,
\end{align*}
where we have used that $s_0(x,y)=s_0(x_a,y_a)-\sigma(a,l)\le s_0(x_a,y_a)$, that $\left|\alpha(x)-\alpha(y)\right|\le C'_1\beta^{s_0(x,y)}$ as soon as
$s_0(x,y)\ge 1$ and finally the  distorsion bounds. Moreover, as required,
$$\sup_{(x,y)\in Y\times\{0\}}P^{\sigma(a,l)}(w 1_{(a,l)})(x) \le \|w\|_\infty\sup_{(a,l)}e^{\alpha_{\sigma(a,l)}}\le C\|w\|_\infty\mu_\Delta(a)\, .$$~\end{proof}

\begin{lem}
\label{lemma-lemexpk1y}
There exist $\theta_0,\theta_2\in (0,1)$ so that $|\int_{\Delta} \hat\kappa\, 1_{Y}\circ f^{j}\, d\mu_\Delta|=O(\theta_0^j)$ and
so that $w\in\cB$,
 $|\int_{\Delta} \hat\kappa\circ f^{j}\, w\, d\mu_\Delta|=O(\theta_2^j
\|w\|_{\cB})$.
\end{lem}
\begin{proof} Let $p_1<2$ and $\epsilon\in(0,1)$.
Note that $\mu_\Delta(\sigma\ge m)\le \sum_{k\ge m}k\mu_Y(\sigma\ge k)\ll\theta_1^{m(1-\epsilon)}$.
Using  H\"older inequality and Lemma~\ref{lemma-Ppower}, we compute that
\begin{align*}
&\int_{\Delta} \hat\kappa\, 1_{Y}\circ f^{j}\, d\mu_\Delta=\int_{\Delta}(\hat\kappa 1_{\{\sigma<j/2\}}-\mu_\Delta(\hat\kappa 1_{\{\sigma<j/2\}}) \, 1_Y\circ f^{j}   \, d\mu_\Delta+2\|  \hat\kappa\|_{L^{p_1}(\mu_\Delta)}  (\mu_\Delta(\sigma\ge j/2))^{\frac {p_1-1}{p_1}}\\
 &\ \ \ =\sum_{m\in\mathbb Z^d}m\sum_{(a,l)\, :\, \hat\kappa(a,l)=m,\, \sigma(a,l)<j/2} \, \int_\Delta   P^j(1_{(a,l)}-\mu_{\Delta}((a,l))) 1_Y  \, d\mu_\Delta+O\left(\theta_1^{\frac{(1-\epsilon)j(p_1-1)}{2p_1}}\right)\, .
\end{align*}
Thus $\int_{\Delta} \hat\kappa\, 1_{Y}\circ f^{j}\, d\mu_\Delta$ is dominated, up to a multiplicative constant, by
\begin{align*}
& \sum_{m\in\mathbb Z^d}|m|\sum_{(a,l)\, :\, \hat\kappa(a,l)=m,\, \sigma(a,l)<j/2}  \theta^{j-\sigma(a,l)} \mu_\Delta((a,l))+O\left(\theta_1^{\frac{(1-\epsilon)j(p_1-1)}{2p_1}}\right)
\ll \theta^{j/2}
+
\theta_1^{\frac{(1-\epsilon)j(p_1-1)}{2p_1}}
\, .
\end{align*}
If $w\in\cB$ and $p>2$, \eqref{spgap-Sz-bis} ensures that
$
\left|\int_{\Delta}\hat\kappa\circ f^j.w\, d\mu_\Delta\right|\le C\Vert \hat \kappa\Vert_{L^{\frac p{p-1}}(\mu_\Delta)}\theta^j\Vert w\Vert_{\cB}$.
\end{proof}

We end with a technical lemma (of flavour somewhat similar to that of Lemma~\ref{lemma-lemexpk1y}) used in the proof of  in Lemma~\ref{cor-betcont}. 

\begin{lem}
\label{lem-justif...}
Let $\delta$ as in the proof of Lemma~\ref{lem-eigf2}.
There exists $C>0$ so that, for any 
$v\in L^b(\mu_\Delta)$ and $w\in\cB_0$ satisfy the assumptions on $v,w$ in the statement  of Proposition~\ref{prop-expproj} and any  $\xi\in\C$ so that
$|\xi|<(1-\delta)^{-1}$,
\[
\sum_{(a,\ell)}\sum_{r\ge 0}|\xi|^{-r}\int_{\Delta} \left| P_0^{r}\left(vw1_{(a,\ell)}-\int_{(a,l)} vw\, d\mu_\Delta\right)\, (\hat\kappa1_Y\circ f^j)\right| d\mu_\Delta\le 
C\|w\|_{\mathcal B_0}
\Vert v\Vert_{L^b(\mu_\Delta)}\, ,
\]
\end{lem}

\begin{proof}
Note that $
\Vert P_0^{\sigma(a)-\ell}\left(wv1_{(a,\ell)}-\mu_{\Delta}(wv1_{(a,\ell)})\right)\Vert_{\mathcal B}\ll \|w\|_{\mathcal B_0}\mu_\Delta(|v|\, 1_{(a,\ell)})$
since $v$ is is constant on the $a\in\mathcal{Y}$ and due to Lemma~\ref{lemma-Ppower}. Thus,
\begin{align*}
&J_1:=\sum_{(a,\ell)}\sum_{r\ge\sigma(a)-\ell}|\xi|^{-r}\int_{\Delta} \left| P_0^{r}\left(vw1_{(a,\ell)}-\int_{(a,l)} vw\, d\mu_\Delta\right)\, (\hat\kappa1_Y\circ f^j)\right| d\mu_\Delta\\
&\le\sum_{(a,\ell)}\sum_{r\ge\sigma(a)-\ell} (1-\delta)^{-r} C_0\theta^{r-\sigma(a)+\ell}\|w\|_{\mathcal B_0}\mu_\Delta(|v|\, 1_{(a,\ell)})\Vert \hat\kappa\Vert_{L^{p/(p-1)}(\mu_\Delta)}\, ,
\end{align*}
\begin{align*}
J_1&\ll \sum_{(a,\ell)}(1-\delta)^{\ell-\sigma(a)}\|w\|_{\mathcal B_0}\Vert v1_{(a,\ell)}\Vert_{L^1(\mu_\Delta)}\le     \left(\sum_{(a,\ell)}(1-\delta)^{\frac p{p-1}(\ell-\sigma(a))} \mu_\Delta(a,\ell) \right)^{\frac {p-1}p}\!\!\!\!\!\!\!\!  \|w\|_{\mathcal B_0}\Vert v\Vert_{L^p(\mu_\Delta)}\\
&\ll    \left(\sum_{a}(1-\delta)^{-\frac p{p-1}\sigma(a)} \mu_Y(a) \right)^{\frac {p-1}p} \!\!\!\!\!\!\|w\|_{\mathcal B_0}\Vert v\Vert_{L^p(\mu_\Delta)}\ll
 \left(\sum_{m\ge 1}(1-\delta)^{-\frac p{p-1}m}
\theta_1^m
\right)^{\frac {p-1}p}\!\!\!\!\!\! \|w\|_{\mathcal B_0}\Vert v\Vert_{L^p(\mu_\Delta)}.
\end{align*}
In the previous displayed chain of equations we have used that $\mu_Y(\sigma\ge m)\ll \theta_1^m$, $(1-\delta)^{-1}\theta<1$
and that $(1-\delta)^{-\frac p{p-1}}\theta_1<1$.  This takes care of the tails of the above sums.
Next, we deal with the partials sums (up to $\sigma$) in the chain of equations  below~\eqref{U2N't} leading to $U_{1,N'}(t)=0$.
\begin{align*}
&J_2:=\sum_{(a,\ell)}\sum_{r=0}^{\sigma(a)-\ell}|\xi|^{-r}\int_{\Delta} \left| P_0^{r}\left(vw1_{(a,\ell)}-\mu_\Delta(vw1_{(a,l)})\right)\, (\hat\kappa1_Y\circ f^j)\right| d\mu_\Delta\\
&\ll \sum_{(a,\ell)}(1-\delta)^{\ell-\sigma(a)}\left\Vert vw1_{(a,\ell)}-\int_{(a,l)} vw\, d\mu_\Delta\right\Vert_{L^p(\mu_\Delta)}\Vert \hat\kappa\Vert_{L^{\frac p{p-1}}(\mu_\Delta)}\, ,
\end{align*}
\begin{align*}
J_2&\ll \sum_{(a,\ell)}(1-\delta)^{\ell-\sigma(a)}\|w\|_{\mathcal B_0}|C_{(a,\ell)}|(\mu_{\Delta}((a,\ell)))^{\frac 1b+(\frac 1p-\frac 1{b})}\\
&\ll \left(\sum_{(a,\ell)}(1-\delta)^{\frac b{b-1}(\ell-\sigma(a))}(\mu_{\Delta}((a,\ell)))^{\frac b{b-1}(\frac 1p-\frac 1{b})}\right)^{\frac{b-1}b}\|w\|_{\mathcal B_0}
\left(\sum_{(a,\ell)}|C_{(a,\ell)}|^b(\mu_{\Delta}((a,\ell)))\right)^{\frac 1b}\, .
\end{align*}
It follows that $J_2\ll 
\left(\sum_{a}(1-\delta)^{-\frac b{b-1}\sigma(a)}(\mu_{Y}(a))^{\frac b{b-1}(\frac 1p-\frac 1{b})}\right)^{\frac{b-1}b}\|w\|_{\mathcal B_0}\Vert v\Vert_{L^b(\mu_\Delta)}$ and finally
\[
J_2\ll   \left(\sum_{m\ge 1}(1-\delta)^{-\frac b{b-1}m}\theta_1^{m\frac b{b-1}(\frac 1p-\frac 1{b})}\right)^{\frac{b-1}b}\|w\|_{\mathcal B_0}
\Vert v\Vert_{L^b(\mu_\Delta)}\, .
\]
\end{proof}

 \paragraph{\bf Acknowledgements.}
The research of FP was partially supported by the IUF, Institut Universitaire de France.
The research of DT was partially supported by EPSRC grant EP/S019286/1.
We wish also to thank the mathematical departments of the Universities of Brest and Exeter for their hospitality.

We thank the referees for their careful reading of the manuscript which helped us to substantially improve the presentation.

\end{document}